\numberwithin{equation}{section}
\theoremstyle{plain}
\newtheorem{theo}{Theorem}[section]
\newtheorem{prop}[theo]{Proposition}
\newtheorem{coro}[theo]{Corollary} 
\newtheorem{lemm}[theo]{Lemma}
\theoremstyle{definition}
\newtheorem{defi}[theo]{Definition}
\newtheorem{rema}[theo]{Remark}
\newtheorem{theo-defi}[theo]{Theorem-Definition}
\newtheorem{prop-defi}[theo]{Proposition-Definition}
\newtheorem{rema-defi}[theo]{Remark-Definition}
\newtheorem{exem-defi} [theo]{Example-Definiton}
\newtheorem{exem}[theo]{Example}
\newtheorem{conj}[theo]{Conjecture}
\newtheorem{prob}[theo]{Problem}
\def \al{\alpha}
\def \bet{\beta}
\def \bul{\bullet}
\def \col{\colon}
\def \Del{\Delta}
\def \del{\delta}
\def \Gam{\Gamma}
\def \kap{\kappa}
\def \Lam{\Lambda}
\def \lam{\lambda}
\def \Lo{\Longrightarrow}
\def \lo{\longrightarrow}
\def \lom{\longmapsto}
\def \mab{\mathbb}
\def \Om{\Omega}
\def \om{\omega}
\def \ol{\overline}
\def \os{\overset}
\def \parno{\par\noindent}
\def \sig{\sigma}
\def \sus{\subset}
\def \ul{\underline}
\def \us{\underset}
\def \vil{\varinjlim}
\def \wh{\widehat}
\def \wt{\widetilde}
\newcommand{\getsfrom}
{\ensuremath{\longleftarrow\kern-.
52em\lower-.1ex\hbox%
{$\shortmid\,$}}}
\begin{document}

\title{Degenerations of log Hodge de Rham spectral sequences, 
log Kodaira vanishing theorem in characteristic $p>0$  
and log weak Lefschetz conjecture for log crystalline cohomologies}
\author{Yukiyoshi Nakkajima, Fuetaro Yobuko
\date{}\thanks{2010 Mathematics subject 
classification number: 14F30, 14F40, 14J32. 
The first named author is supported from JSPS
Grant-in-Aid for Scientific Research (C)
(Grant No.~80287440, 18K03224). 
The second named author is supported by 
JSPS Fellow (Grant No.~15J05073).\endgraf}}
\maketitle

$${\bf Abstract}$$
In this article we prove that the 
log Hodge de Rham spectral sequences of 
certain proper log smooth schemes of Cartier type 
in characteristic $p>0$ degenerate at $E_1$.  
We also prove that the log Kodaira vanishings for them hold 
when they are projective. 
We formulate the log weak Lefschetz conjecture 
for log crystalline cohomologies 
and prove that it is true in certain cases.

$${\bf Contents}$$

\medskip 
\parno
\S\ref{sec:int}. Introduction
\parno 
\S\ref{sec:amf}. The heights of Artin-Mazur formal groups 
of certain schemes
\parno 
\S\ref{sec:dc}. The dimensions of cohomologies of closed differential forms 
\parno 
\S\ref{sec:latv}. Log deformation theory vs log deformation theory 
with abrelative and relative Frobenus morphisms
\parno 
\S\ref{sec:app}. Applications of log deformation theory with relative Frobenius morphisms
\parno 
\S\ref{sec:lw}. Log weak Lefschetz conjecture 
\parno 
\S\ref{sec:qfs}. Quasi-$F$-split log schemes
\parno
\parno 
\S\ref{sec:lif}. Lifts of certain log schemes over ${\cal W}_2$

\medskip 
\parno 
{\bf Appendix}
\medskip 
\parno 
Yukiyoshi Nakkajima
\medskip 
\parno 
\S\ref{sec:wlt}. 
Weak Lefschetz theorem for isocrystalline cohomologies 
\bigskip
\parno

\section{Introduction}\label{sec:int} 
Let $\kap$ be a perfect field of characteristic $p>0$.  
Let ${\cal W}$ (resp.~${\cal W}_n$ $(n\in {\mab Z}_{>0})$) be the Witt ring of $\kap$  
(resp.~the Witt ring of $\kap$ of length $n$).  
\par 
In \cite{mup} Mumford has shown that the $E_1$-degeneration of 
the Hodge de Rham spectral sequence of a proper smooth scheme
over $\kap$ does not hold in general unlike the case of characteristic 0 in \cite{dl}. 
In \cite{raykv} Raynaud has shown that 
the Kodaira vanishing for a projective smooth scheme
over $\kap$ does not hold in general unlike the case of characteristic 0 in \cite{ko}. 
However, in their famous article \cite{di},  
Deligne and Illusie have given a sufficient condition 
for the $E_1$-degeneration and the Kodaira vanishing theorem: 
if a proper (resp.~projective) smooth scheme $X$ 
over $\kap$ has a smooth lift over ${\cal W}_2$, 
then the $E_1$-degeneration (resp.~the vanishing theorem) 
holds in characteristic $p$ in a restricted sense. 
However there is no concretely calculable criterion for the existence of 
a smooth lift over ${\cal W}_2$ of a given $X/\kap$ in general.  
Consequently one does not know whether 
the $E_1$-degeneration and 
the vanishing theorem for $X/\kap$ hold a priori. 
On the other hand, in \cite{az}
Achinger and Zdanowicz have constructed 
projective smooth schemes over $\kap$ 
which do not have smooth lifts over ${\cal W}_2$ 
and for which the $E_1$-degenerations hold.  
In \cite{eknl} Ekedahl has shown that 
the Hirokado variety in \cite{hi} 
does not have a smooth lift over 
${\cal W}_2$ when $p=3$. However, 
in \cite{tay} Takayama has proved that 
(a part of) Kodaira vanishing theorem holds for it. 
\par 
On the other hand, Deligne has proved the hard Lefschetz theorem 
for the $l$-adic \'{e}tale cohomologies of $X/\kap$ 
in \cite{dw2}  as in the case of characteristic $0$. 
Using this result and Berthelot's 
weak Lefschetz theorem for crystalline cohomologies of $X/\kap$ 
for any hypersurface sections of high degrees in \cite{bwl}, 
Katz and Messing have proved 
the hard Lefschetz theorem and the weak Lefschetz theorem 
for isocrystalline cohomologies of $X/\kap$ (\cite{kme}). 
However we would like to point out that 
there is a gap in the proof of 
Berthelot's weak Lefschetz theorem in \cite{bwl} 
and we fill this gap in the text. 
\par   
Let $X$ be a proper (smooth) scheme over $\kap$ of 
pure dimension $d\geq 1$. 
Let $q$ be a nonnegative integer. 
Let $\Phi^q_{X/\kap}$ be the Artin-Mazur functor of $X/\kap$ in degree $q$: 
$\Phi^q_{X/\kap}$ is the functor defined by the following: 
$$\Phi^q_{X/\kap}(A):={\rm Ker}(H^q_{\rm et}
(X\otimes_{\kap}A,{\mab G}_m)\lo H^q_{\rm et}(X,{\mab G}_m)) \in ({\rm Ab}).$$
Here $A$ is an artinian local $\kap$-algebra with residue field $\kap$. 
If $\Phi^{q-1}_{X/\kap}$ is formally smooth, 
then $\Phi^q_{X/\kap}$ is pro-represented by 
a commutative formal group over $\kap$ (\cite{am}). 
Let $h^q(X/\kap)$ be the height of $\Phi^q_{X/\kap}$ if it is 
pro-representable. 
We call $h^q(X/\kap)$  
the $q$-th {\it Artin-Mazur height} of $X/\kap$. 
\par 
Let $X$ be a geometrically irreducible proper smooth scheme over $\kap$ 
of dimension $d\geq 1$. 
We say that $X$ is a Calabi-Yau variety over $\kap$ of dimension $d$ if    
$H^q(X,{\cal O}_X)=0$ $(0< q< d)$ and 
$\Omega^d_{X/\kap}\simeq {\cal O}_X$. 
\par 

In \cite{y} the second named author of this article  
has recently proved the following:

\begin{theo}[{\bf \cite{y}}]\label{theo:y}
Let $X$ be a Calabi-Yau variety over $\kap$ of dimension $d\geq 1$. 
If $h^d(X/\kap)< \infty$,    
then there exists a proper smooth scheme ${\cal X}$ over ${\cal W}_2$ 
such that ${\cal X}\otimes_{{\cal W}_2}\kap=X$. 
\end{theo}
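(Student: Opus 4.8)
The plan is to show that the deformation-theoretic obstruction to lifting $X$ from $\kap$ to ${\cal W}_2$ vanishes, by converting the hypothesis $h^d(X/\kap)<\infty$ into a Frobenius-splitting statement at a finite Witt level and feeding the resulting splitting into the obstruction calculus. First I would recall the relevant deformation theory. Since $X/\kap$ is smooth and ${\cal W}_2\to\kap$ is a square-zero extension with ideal $p{\cal W}_2\simeq\kap$, the obstruction to the existence of a smooth lift ${\cal X}/{\cal W}_2$ lives in $H^2(X,T_{X/\kap})$, and once it vanishes the set of lifts is a torsor under $H^1(X,T_{X/\kap})$. Using the Calabi--Yau hypothesis $\Omega^d_{X/\kap}\simeq{\cal O}_X$, the duality $T_{X/\kap}\simeq\Omega^{d-1}_{X/\kap}$ holds, so the obstruction class $o(X)$ lies in $H^2(X,\Omega^{d-1}_{X/\kap})$, which by Serre duality is dual to $H^{d-2}(X,\Omega^1_{X/\kap})$. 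This group is generally nonzero, so the point is not that the ambient space vanishes but that the particular class $o(X)$ does; the finiteness of the Artin--Mazur height is exactly what will force this.

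Next I would reinterpret the hypothesis. The Artin--Mazur formal group $\Phi^d_{X/\kap}$ is one-dimensional, since its tangent space is $H^d(X,{\cal O}_X)\simeq H^0(X,\Omega^d_{X/\kap})\simeq\kap$, and its Dieudonn\'e module is computed by the Witt-vector cohomology $H^d(X,W{\cal O}_X)$ together with the operators $F$ and $V$. Finiteness of $h^d(X/\kap)$ is equivalent to $H^d(X,W{\cal O}_X)$ being a free ${\cal W}$-module of finite rank on which $F$ is injective. Concretely, I would show that $h^d(X/\kap)<\infty$ produces, for some finite $n$, a splitting $\rho\colon F_*W_n{\cal O}_X\to{\cal O}_X$ of the canonical map ${\cal O}_X\to F_*W_n{\cal O}_X$; this is the \emph{quasi-$F$-split} structure, and its minimal $n$ is the Yobuko height. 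The proof of this equivalence runs through the identification of $\Phi^d_{X/\kap}$ with the formal completion associated to $H^d(X,W{\cal O}_X)$ and a Nakayama/limit argument reducing the infinite-level Frobenius condition to a splitting at finite level $n$.

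Finally I would use the splitting $\rho$ to annihilate $o(X)$. The ringed space $(X,W_2{\cal O}_X)$ is a flat but non-smooth lift of $X$ over ${\cal W}_2$, sitting in $0\to{\cal O}_X\xrightarrow{V}W_2{\cal O}_X\xrightarrow{R}{\cal O}_X\to0$; the failure to replace it by a smooth lift is precisely measured by $o(X)$ through the de Rham--Witt complex $W_2\Omega^\bullet_{X/\kap}$ and its Cartier-type map. The quasi-$F$-split section $\rho$ lifts the inverse Cartier operator modulo $p^2$, giving a decomposition of $F_*W_2\Omega^\bullet_{X/\kap}$ in the manner of Deligne and Illusie; pairing this decomposition against $o(X)$ via Serre duality and the Calabi--Yau trivialization of $\Omega^d_{X/\kap}$ identifies $o(X)$, under a comparison map, with a class annihilated by $F$ in $H^d(X,W{\cal O}_X)$. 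Since $h^d(X/\kap)<\infty$ makes $F$ injective there, that class vanishes, whence $o(X)=0$ and the desired lift ${\cal X}/{\cal W}_2$ exists.

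The main obstacle is exactly this last bridge: making the comparison between the Witt-level Frobenius splitting $\rho$ and the Kodaira--Spencer obstruction class compatible and explicit, and checking that the comparison is injective on the relevant piece, so that injectivity of $F$ on $H^d(X,W{\cal O}_X)$ genuinely translates into the vanishing of $o(X)$. This is the technical heart of the argument, where the de Rham--Witt formalism and the Calabi--Yau self-duality must be combined with care; the earlier steps (deformation theory, the duality $T_{X/\kap}\simeq\Omega^{d-1}_{X/\kap}$, and the formal-group computation) are comparatively routine once this comparison is in place.
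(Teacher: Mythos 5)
Your two-step skeleton (finite Artin--Mazur height $\Rightarrow$ quasi-$F$-splitting at a finite Witt level $\Rightarrow$ lift over ${\cal W}_2$) is indeed the strategy of \cite{y} and of this paper, but both load-bearing steps are gaps as you have sketched them. The passage from $h^d(X/\kap)<\infty$ to a splitting $\rho\col F_*({\cal W}_n({\cal O}_X))\lo {\cal O}_X$ is not obtained by a ``Nakayama/limit argument'' on the Dieudonn\'{e} module: injectivity of $F$ on the free module $H^d(X,{\cal W}({\cal O}_X))$ does not by itself produce a sheaf-level retraction. What is actually proved (the fundamental equality $h_F(X)=h^d(X/\kap)$, i.e.\ the trivial-log case of (\ref{theo:cyh})) is a dimension count: $h_F(X)$ is the minimal $n$ for which the extension class $e_n$ of the pushout $0\lo {\cal O}_X\lo {\cal E}_n\lo B_n\Om^1_{X/\kap}\lo 0$ of Serre's exact sequence --- regarded, and this is essential, as an exact sequence of ${\cal W}_n({\cal O}_X)$-{\it modules}, cf.~(\ref{prop:wus}) --- vanishes; Serre duality together with the Calabi--Yau trivialization $\Om^d_{X/\kap}\simeq {\cal O}_X$ and the Katsura--Van der Geer formula ((\ref{theo:nex}), (\ref{coro:chob})) give ${\rm dim}_{\kap}{\rm Ext}^1_X(B_n\Om^1_{X/\kap},{\cal O}_X)={\rm dim}_{\kap}H^{d-1}(X,B_n\Om^1_{X/\kap})={\rm min}\{n,h^d(X/\kap)-1\}$; and since $e_n=C^{(n-1)*}(e_1)$, one concludes that $e_n=0$ if and only if $n\geq h^d(X/\kap)$. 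This is precisely where the hypotheses $H^{d-1}(X,{\cal O}_X)=0$, $H^{d-2}(X,{\cal O}_X)=0$ and $\Om^d_{X/\kap}\simeq{\cal O}_X$ are consumed; your sketch never uses them at this step, which is a sign the step is not actually carried out.

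Second, your mechanism for annihilating the obstruction is not the one that works, and you yourself flag it as unproved. There is no natural comparison sending $o(X)\in {\rm Ext}^2_X(\Om^1_{X/\kap},{\cal O}_X)$ to an $F$-torsion class in $H^d(X,{\cal W}({\cal O}_X))$, and the Deligne--Illusie decomposition is a \emph{consequence} of the existence of a lift, not a tool for producing one. The correct bridge is the Nori--Srinivas deformation theory with Frobenius morphisms (generalized in \S\ref{sec:latv} and \S\ref{ldtv}): the obstruction to lifting the pair $(X,F)$ lies in ${\rm Ext}^1(\Om^1_{X/\kap},B\Om^1_{X/\kap})$ and equals the extension class of $0\lo B\Om^1_{X/\kap}\lo Z\Om^1_{X/\kap}\os{C}{\lo}\Om^1_{X/\kap}\lo 0$ ((\ref{theo:uaryn})); the obstruction to lifting $X$ alone is the image $\partial({\rm obs}_{(X,F)})$ under the connecting map of Serre's sequence $0\lo{\cal O}\lo F_*({\cal O})\lo B\Om^1\lo 0$ ((\ref{theo:ts})); and the quasi-$F$-splitting at level $n$ splits $0\lo{\cal O}_X\lo{\cal E}_n\lo B_n\Om^1_{X/\kap}\lo 0$, hence kills the level-$n$ boundary map, while the Cartier class factors through $C^{n-1}$ from level $n$ via the compatibility of the $B_n$, $Z_n$ sheaves with $C$, so that $\partial({\rm obs}_{(X,F)})=0$ --- this is the proof of (\ref{theo:fshl}). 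Without this factorization-through-level-$n$ argument, which is exactly the bridge you defer, your proof does not close.
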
   
Using Deligne-Illusie's theorem and (\ref{theo:y}), 
we see that 
the Hodge de Rham spectral sequence 
\begin{align*} 
E_1^{ij}=H^j(X,\Om^i_{X/\kap})\Lo H^{i+j}_{\rm dR}(X/\kap)
\tag{1.1.1}\label{ali:oedxs} 
\end{align*}  
of $X/\kap$ degenerates at $E_1$ if $d\leq p$. 
In particular, if $p\not=2$ and $h^3(X/\kap)<\infty$, 
then (\ref{ali:oedxs}) degenerates at $E_1$ 
for a 3-dimensional Calabi-Yau variety $X/\kap$. 
Using Joshi's theorem (\cite{j}), we easily see that
the slope spectral sequence 
\begin{align*} 
E_1^{ij}=H^j(X,{\cal W}\Om^i_{X/\kap})\Lo H^{i+j}_{\rm crys}(X/{\cal W})
\tag{1.1.2}\label{ali:oesdxs} 
\end{align*}  
degenerates at $E_1$ for 
a 3-dimensional Calabi-Yau variety $X/\kap$ such that 
$h^3(X/\kap)<\infty$. 
Furthermore we see that it is of Hodge-Witt type 
by a fundamental theorem in \cite{ir}: 
$H^q_{\rm crys}(X/{\cal W})
=\bigoplus_{i+j=q}H^j(X,{\cal W}\Om^i_{X/\kap})$ $(q\in {\mab N})$. 
(This is a $3$-dimensional analogue of the Hodge-Witt decomposition 
of a $K3$-surface over $\kap$ with finite second Artin-Mazur height (\cite{idw}).)
Using Ekedahl's remark in \cite{ir}, we see that 
the following spectral sequence 
\begin{align*} 
E_1^{ij}=H^j(X,{\cal W}_n\Om^i_{X/\kap})\Lo H^{i+j}_{\rm crys}(X/{\cal W}_n)
\quad (n\in {\mab N})
\tag{1.1.3}\label{ali:oednxs} 
\end{align*}
degenerates at $E_2$. 
\par 
For an ample line bundle ${\cal L}$ on $X$, 
in \cite{yoh} the second named author  
has also proved that $H^j(X,{\cal L})=0$ $(j>0)$ 
without any assumption on $d$ and $p$.  
\par 
To prove (\ref{theo:y}), he has introduced 
a new invariant $h_F(X)$ of $X$ as follows. 
\par 
Let $Y$ be a (proper smooth) scheme over $\kap$. 
Let $F_Y$ be the Frobenius endomorphism of $Y$. 
Set $F:={\cal W}_n(F_Y^*)\col 
{\cal W}_n({\cal O}_Y)\lo F_{Y*}({\cal W}_n({\cal O}_Y))$. 
This is a morphism of ${\cal W}_n({\cal O}_Y)$-modules. 
In \cite{y} he has introduced the notion of the 
{\it quasi-Frobenius splitting height} $h_F(Y)$ 
for any (proper smooth) scheme $Y$ over $\kap$. 
(In [loc.~cit.] he has denoted it by ${\rm ht}^S(Y)$.) 
It is the minimum of positive integers  $n$'s such that 
there exists a morphism 
$\rho \col F_{Y*}({\cal W}_n({\cal O}_Y))\lo {\cal O}_Y$ 
of ${\cal W}_n({\cal O}_Y)$-modules 
such that 
$\rho \circ F\col {\cal W}_n({\cal O}_Y)\lo {\cal O}_Y$ 
is the natural projection. 
(If there does not exist such $n$, then  we set $h_F(Y)=\infty$.)  
(Because the ``quasi-Frobenius splitting height'' is too long, 
we call this the {\it quasi-$F$-split height} simply.) 
This is a nontrivial generalization of the notion of the Frobenius splitting 
by Mehta and Ramanathan in \cite{mr} 
because they have said that, for a scheme $Z$ of characteristic $p>0$, 
$Z$ is a Frobenius splitting(=$F$-split) scheme if 
$F\col {\cal O}_Z\lo F_{Z*}({\cal O}_Z)$ 
has a section of ${\cal O}_Z$-modules. 
Mehta has already remarked that any proper smooth $F$-split scheme 
over $\kap$ has a proper smooth lift over ${\cal W}_2$ (\cite{j}) 
as a corollary of Nori and Srinivas' beautiful deformation theory with 
absolute Frobenius endomorphisms  
in \cite{ns} and \cite{sr}. 
By using their theory, 
the second named author has proved that 
any proper smooth scheme over $\kap$ 
has a proper smooth lift over ${\cal W}_2$ if $h_F(Y)<\infty$ (\cite{y}). 
Furthermore he has proved a fundamental equality $h_F(X)=h^d(X/\kap)$ 
by using Serre's exact sequence in \cite{smxco}, 
the calculation of the dimensions of the cohomologies of sheaves 
of closed differential forms of degree $1$ due to 
Katsura and Van der Geer (\cite{vgk}) and Serre's duality (\cite{y}).   
As a result, he has obtained (\ref{theo:y}). 
Recently Achinger has proved that, if $Z/\kap$ is a (proper smooth) scheme 
over $\kap$ with finite quasi-split height, then $Z/\kap$ has a (proper smooth) lift 
over ${\cal W}_2$ by a method in  the Appendix of \cite{az}. 

\par
This article is a continuation of \cite{y} in an expanded form. 
The results in this article are the log versions of \cite{y}, a part of \cite{yoh}, 
\cite{ns}, \cite{sr}, \cite{bwl} and more. 
\par  
The philosophy of log geometry of 
Fontaine-Illusie-Kato (\cite{klog1}, \cite{klog2}) tells us that 
one can give statements and prove them for certain non-smooth schemes 
by similar methods for smooth schemes 
if one can endow them with fine or fs(=fine and saturated) log structures and 
if one makes multiplicative calculations of local sections of log structures in addition to 
multiplicative and additive calculations of local sections 
of structure sheaves of schemes with 
the use of various cohomologies of various sheaves. 
Supported by this philosophy, we give the log versions of results 
in the articles in the previous paragraph. 
Though the proofs of a lot of results in this article are not 
psychologically  extremely seriously difficult (after giving nontrivial formulations), 
the results themselves 
are nontrivial generalizations of the results in the articles above. 
(Of course there are often technically hard points in the proofs.)
This is the typical merit of the log geometry of Fontaine-Illusie-Kato: 
it gives us appropriate languages. 
\par 
Next let us recall Kawamata-Namikawa's  result briefly. 
This gives a not a little influence to this article. 
\par 
\par 
Let $\kap$ be a field of any characteristic.   
Let $s$ be an fs log scheme whose underlying scheme is 
${\rm Spec}(\kap)$ 
and whose log structure is associated to a morphism 
${\mab N}\owns 1\lom a\in \kap$ for some $a\in \kap$ 
(see \cite{klog1} and \cite{klog2} for fundamental terminologies of log schemes).  
If $a=0$, then $s$ is called the log point of $\kap$; 
if $a\not=0$, then $s=({\rm Spec}(\kap),\kap^*)$. 
For a log scheme $Z$, we denote 
by $\os{\circ}{Z}$ the underlying scheme of $Z$. 
For a relative log scheme $Z/s$, we denote the log de Rham complex of 
$Z/s$ by $\Om^{\bul}_{Z/s}$ and we set  
$H^q_{\rm dR}(Z/s):=H^q(Z,\Om^{\bul}_{Z/s})$ $(q\in {\mab N})$. 

When $\kap={\mab C}$, Kawamata and Namikawa 
have proved the following theorem in \cite {kwn}: 

\begin{theo}[{\bf \cite[(4.2)]{kwn}}]\label{theo:kn}
Let $s$ be the log point of ${\mab C}$.  
Let $X$ be a proper SNCL$($=simple normal crossing log$)$ scheme
over $s$ of pure dimension $d$. Assume that $d\geq 3$. 
Let $S$ be a small disk with canonical log structure. 
Let $\os{\circ}{X}{}^{(0)}$ be the disjoint union of 
the irreducible components of $\os{\circ}{X}$. 
Assume that the following three conditions hold$:$
\par 
$({\rm a})$ $H^{d-1}(X,{\cal O}_X)=0$, 
\par 
$({\rm b})$ $H^{d-2}(\os{\circ}{X}{}^{(0)},{\cal O}_{\os{\circ}{X}{}^{(0)}})=0$, 
\par 
$({\rm c})$ $\Om^d_{X/s}\simeq {\cal O}_X$. 
\parno 
Then there exists an analytically strict semistable family ${\cal X}$ over $S$ 
such that ${\cal X}\times_Ss=X^{\rm an}$, where $X^{\rm an}$ is the 
associated log analytic space to $X/s$ $($cf.~{\rm \cite{ktn}}$)$. 
\end{theo}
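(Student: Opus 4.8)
The plan is to adapt the Bogomolov--Tian--Todorov strategy to the logarithmic category: I would prove that the functor of log smooth deformations of $X/s$ is unobstructed, and then observe that such deformations over a disk automatically smooth $X$. The point that makes the argument clean is that if ${\cal X}/S$ is log smooth and the log structure of $S$ is the canonical one (trivial away from the origin), then ${\cal X}$ is smooth over the punctured disk; hence every log smooth deformation of $X/s$ over $S$ is already an analytically strict semistable family, the strictness being inherited from the SNCL structure of $X$. So the whole burden is to produce a log smooth deformation over all of $S$.

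First I would set up the functor. For an artinian local ${\mab C}$-algebra $A$, let $S_A$ be $\os{\circ}{S}\otimes_{\mab C}A$ with its canonical log structure, so $S_{\mab C}=s$, and let ${\rm Def}_{X/s}(A)$ classify log smooth liftings of $X/s$ to some $X_A/S_A$. Kato's log deformation theory identifies the tangent space with $H^1(X,\Theta_{X/s})$ and the obstructions with a subspace of $H^2(X,\Theta_{X/s})$, where $\Theta_{X/s}={\cal H}om_{{\cal O}_X}(\Om^1_{X/s},{\cal O}_X)$ is the log tangent sheaf. Moreover the exact sequence
\[
0\lo \Theta_{X/s}\lo \Theta_X^{\log}\lo {\cal O}_X\lo 0
\]
relating the relative log tangent sheaf to the absolute one $\Theta_X^{\log}=\Theta^{\log}_{X/{\mab C}}$ produces, via its connecting map $H^0(X,{\cal O}_X)\lo H^1(X,\Theta_{X/s})$, a canonical class $\delta(1)$; this is the Kodaira--Spencer image of the base direction $t\,d/dt$, i.e. the first-order smoothing direction.

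The heart of the proof is the unobstructedness of ${\rm Def}_{X/s}$, which I would establish by the $T^1$-lifting criterion of Kawamata and Ran: the functor is smooth once one knows that for every $n$ the relative $T^1$ of the universal $n$-th order deformation is locally free over the artinian base, equivalently that $\dim H^1(X_n,\Theta_{X_n/S_n})$ does not jump under small extensions. To reach this I would use the log Calabi--Yau hypothesis $({\rm c})$, $\Om^d_{X/s}\simeq {\cal O}_X$: contraction with a nowhere--vanishing log $d$-form gives $\Theta_{X/s}\simeq \Om^{d-1}_{X/s}$, converting the deformation cohomology into log Hodge cohomology. The $E_1$-degeneration of the log Hodge de Rham spectral sequence of $X/s$ over ${\mab C}$ --- available in characteristic $0$ through the mixed Hodge theory of semistable degenerations --- then forces the relevant log Hodge numbers, and with them the ranks of the $T^1$'s, to be constant, yielding the freeness and hence the smoothness of ${\rm Def}_{X/s}$. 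The vanishing hypotheses $({\rm a})$ $H^{d-1}(X,{\cal O}_X)=0$ and $({\rm b})$ $H^{d-2}(\os{\circ}{X}{}^{(0)},{\cal O}_{\os{\circ}{X}{}^{(0)}})=0$ enter exactly here, through the normalization sequence computing the cohomology of $\Theta_{X/s}\simeq\Om^{d-1}_{X/s}$: they guarantee the vanishings needed to run the degeneration/freeness argument in the critical degrees, together with the nonvanishing of the smoothing class $\delta(1)$. Promoting the $E_1$-degeneration into freeness of the relative $T^1$ over a non-reduced base for the \emph{log} functor is the technically delicate step and the main obstacle.

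With smoothness in hand the canonical class $\delta(1)$ integrates to all orders, producing a smooth formal log deformation of $X/s$ over ${\mab C}[[t]]$ whose Kodaira--Spencer class is the base direction. I would then algebraize it by log Grothendieck existence (or Artin approximation in the analytic category) to an honest log smooth family over a disk $S$, and restrict to recover ${\cal X}\times_S s=X^{\rm an}$. By the observation of the first paragraph this ${\cal X}/S$ is log smooth over $S$ with smooth generic fiber and central fiber $X^{\rm an}$, hence is the desired analytically strict semistable family.
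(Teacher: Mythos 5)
You should first be aware that the paper you are reading contains no proof of this statement: Theorem \ref{theo:kn} is quoted verbatim from Kawamata--Namikawa \cite{kwn} purely as motivation for the characteristic-$p$ results (\ref{theo:ny1}) and their proof in \S\ref{sec:lif}, so the only benchmark is the original argument of \cite{kwn}. Measured against that, your outline follows essentially the same route Kawamata and Namikawa take: a logarithmic deformation functor over the artinian thickenings $({\rm Spec}\,{\mab C}[t]/(t^{n+1}),\,{\mab N}\owns 1\mapsto t)$ of the log point inside the log disk, unobstructedness via the $T^1$-lifting criterion of Ran and Kawamata (the very papers \cite{ran}, \cite{kaw} that this article invokes in \S\ref{sec:mix}), the Calabi--Yau identification $\Theta_{X/s}\simeq \Om^{d-1}_{X/s}$ converting deformation cohomology into log Hodge cohomology, and analytic algebraization at the end. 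Your opening observation is also correct and worth stressing: for a log smooth lift over the canonical log disk, Kato's structure theorem gives \'etale-local charts of the form $x_0\cdots x_a=t\cdot({\rm unit})$, so semistability and the smoothing of the double locus are automatic from the log structure. But note that this very observation makes your appeal to the ``nonvanishing of the smoothing class $\delta(1)$'' superfluous: there is no smoothing direction to select in the log category, only liftability order by order to establish.

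The genuine gap is the one you flag yourself and then do not close: the verification of the $T^1$-lifting hypothesis, i.e.\ that $H^1(X_n,\Theta_{X_n/S_n})\simeq H^1(X_n,\Om^{d-1}_{X_n/S_n})$ is free over ${\mab C}[t]/(t^{n+1})$, compatibly with base change, for every $n$-th order log deformation $X_n/S_n$. This is not a formal consequence of $E_1$-degeneration for the central fiber $X/s$; one needs a relative degeneration-plus-base-change statement for log Hodge cohomology over the non-reduced bases $S_n$, and this is exactly where essentially all of the work in \cite{kwn} is done and where the hypotheses (a) and (b) are actually consumed. Your account of their role (``they guarantee the vanishings needed to run the degeneration/freeness argument in the critical degrees, together with the nonvanishing of $\delta(1)$'') is too vague to substitute for an argument, and the $\delta(1)$ part of it is, as noted above, misdirected. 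Since you describe the decisive step only as ``the technically delicate step and the main obstacle'' and offer nothing to surmount it, what you have is a correct road map of the Kawamata--Namikawa proof rather than a proof: the bridge is drawn on the map, but the span over the river is missing.
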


\par   
Let us go back to the case 
where $\kap$ is a perfect field of characteristic $p>0$. 
Let $s$ be an fs log scheme before (\ref{theo:kn}).    
Let $X$ be a proper log smooth log scheme over $s$ of Cartier type. 
Let ${\cal I}_{X/s}$ be Tsuji's ideal sheaf of the log structure $M_X$ of $X$ 
defined in \cite{tsp} and denoted by $I_f$, where $f\col X\lo s$ is 
the structural morphism.  
Here ${\cal I}_{X/s}$ stems from the ``horizontal'' log structure on $X$; 
in the text we shall recall the definition of ${\cal I}_{X/s}$.
We say that $X/s$ is {\it of vertical type} 
if ${\cal I}_{X/s}{\cal O}_X={\cal O}_X$.  
If $X/s$ is an SNCL scheme (\cite{nlk3}, \cite{nlw}), more generally, 
if $X/s$ is locally a product of SNCL schemes, 
then $X/s$ is of vertical type. 
One of the main results in this article is the following theorem:    

\begin{theo}\label{theo:ny1} 
Let $X$ be a proper log smooth log scheme over $s$ of Cartier type. 
Let ${\cal W}_2(s)$ be a log scheme whose underlying scheme 
is ${\rm Spec}({\cal W}_2)$ and whose log structure 
is associated to a morphism 
${\mab N}\owns 1\lom (a,0)\in {\cal W}_2$. 
Then the following hold$:$
\par 
$(1)$ If $h_F(\os{\circ}{X})<\infty$, then 
there exists a proper log smooth log scheme
${\cal X}$ over ${\cal W}_2(s)$ such that ${\cal X}\times_{{\cal W}_2(s)}s=X$.
\par 
$(2)$ 
Furthermore, assume that $\os{\circ}{X}$ is of pure dimension $d$ and 
that $X/s$ is of vertical type and that 
the following three conditions hold$:$
\par 
$({\rm a})$ $H^{d-1}(X,{\cal O}_X)=0$ if $d\geq 2$, 
\par 
$({\rm b})$ $H^{d-2}(X,{\cal O}_X)=0$ if $d\geq 3$, 
\par 
$({\rm c})$ $\Om^d_{X/s}\simeq {\cal O}_X$, 
\parno 
Then $h_F(\os{\circ}{X})=h^d(\os{\circ}{X}/\kap)$. 
\end{theo}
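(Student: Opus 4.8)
The plan is to treat the two parts by different mechanisms. Part $(1)$ is a lifting statement and will follow from the log analogue of Nori--Srinivas' deformation theory (via \cite{ns}, \cite{sr}) developed in \S\ref{sec:latv} and \S\ref{ldtv}, generalizing the smooth case of \cite{y}; part $(2)$ is an identity of two Frobenius-theoretic invariants and will follow from a log version of Serre's exact sequence together with log Serre duality and the dimension computations of \S\ref{sec:dc} (the log analogue of \cite{vgk}).

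For part $(1)$, I would first recall that the quasi-$F$-split height $h_F(\os{\circ}{X})$ is intrinsic to the underlying scheme: realizing $h_F(\os{\circ}{X})=n$ means giving a morphism $\rho\col F_{*}({\cal W}_n({\cal O}_{\os{\circ}{X}}))\lo {\cal O}_{\os{\circ}{X}}$ of ${\cal W}_n({\cal O}_{\os{\circ}{X}})$-modules splitting the Witt-vector Frobenius $F$. The obstruction to lifting $X/s$ over ${\cal W}_2(s)$ is a class in $H^2(X,\mathcal{H}om_{{\cal O}_X}(\Om^1_{X/s},{\cal O}_X))$, an ${\cal O}_X$-module cohomology group; here the Cartier type hypothesis is what makes the log cotangent complex behave like that of a smooth morphism, through the log Cartier isomorphism. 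The key step is then to run the log Nori--Srinivas argument: the log (relative) Frobenius acts on this obstruction, and since $d$ of $p$-th powers and $d\log$ of $p$-th powers both vanish in characteristic $p$, the Frobenius pullback of the obstruction lands in the image of the Witt-vector Frobenius, so that applying the splitting $\rho$ furnished by finiteness of $h_F(\os{\circ}{X})$ forces the obstruction to vanish. I would carry this out inductively in the Witt length exactly as in the translation of \cite{ns}, \cite{sr} and \cite{y} performed in \S\ref{ldtv}; note that vertical type is not needed for this part.

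For part $(2)$, under the additional hypotheses I would first check that $h^d(\os{\circ}{X}/\kap)$ is well defined: condition $({\rm a})$ gives $H^{d-1}(X,{\cal O}_X)=0$, hence $\Phi^{d-1}_{\os{\circ}{X}/\kap}=0$ is formally smooth, so by the Artin--Mazur criterion $\Phi^{d}_{\os{\circ}{X}/\kap}$ is pro-representable, and $({\rm c})$ together with vertical type and log Serre duality makes it a one-dimensional formal group whose Dieudonn\'e module is the Witt-vector cohomology $H^{d}(\os{\circ}{X},{\cal W}{\cal O}_{\os{\circ}{X}})$. The heart of the argument is then to express both invariants through the Frobenius action on $H^{d}(X,{\cal W}_\bul({\cal O}_X))$: on one side its structure reads off the height $h^{d}(\os{\circ}{X}/\kap)$, and on the other side, dualizing the defining retraction of $h_F$ by log Serre duality identifies the least Witt length admitting a quasi-$F$-splitting with that same height. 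To pass cleanly between the two, I would use the log version of Serre's exact sequence relating ${\cal W}_n({\cal O}_X)$ to the sheaves of log closed and exact differential forms and invoke the dimension computations of \S\ref{sec:dc}; conditions $({\rm a})$, $({\rm b})$ and vertical type are precisely what kill the intermediate-degree cohomologies of these sheaves, so that the quasi-$F$-splitting obstruction is detected solely in degree $d$ on the one-dimensional $H^{d}(X,{\cal O}_X)$.

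The main obstacle I expect is in part $(1)$: one must show that the purely scheme-theoretic retraction $\rho$ of $F$ genuinely splits the log obstruction class, i.e.\ that the log tangent-complex obstruction and the Witt-vector Frobenius are compatible under the Cartier type structure and over the nontrivial base $s$ (the log point, with lift ${\cal W}_2(s)$ determined by $1\mapsto(a,0)$). Making this compatibility precise --- so that a splitting defined only in terms of ${\cal O}_{\os{\circ}{X}}$ and ${\cal W}_n({\cal O}_{\os{\circ}{X}})$ controls a deformation problem genuinely involving the log structure $M_X$ and the base $s$ --- is the step where the log geometry must be handled carefully, and is exactly what the deformation theory with relative Frobenius morphisms of \S\ref{ldtv} is designed to supply.
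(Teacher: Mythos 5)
Your proposal follows essentially the same route as the paper, which deduces the theorem from Theorem \ref{theo:fshl} for part (1) --- where the deformation theory with Frobenius morphisms of \S\ref{ldtv} identifies the obstruction via ${\rm obs}_{Y/(S_0\subset S)}=\partial({\rm obs}_{(Y,F)/(S_0\subset S,F_S)})$ with ${\rm obs}_{(Y,F)}$ the extension class of $0\lo B_1\Om^1_{X/s}\lo Z_1\Om^1_{X/s}\os{C}{\lo}\Om^1_{X/s}\lo 0$, killed because the quasi-$F$-splitting splits the push-out of the log Serre exact sequence in level $n=h_F$ and the class lies in the image of $C^{n-1}$ --- and from Theorem \ref{theo:cyh} for part (2), combining the log Serre exact sequence, the dimension formulas of \S\ref{sec:amf}--\S\ref{sec:dc} (the generalized Katsura--Van der Geer computation giving ${\rm dim}_{\kap}H^{d-1}(X,B_n\Om^1_{X/s})={\rm min}\{n,h^d-1\}$ and the vanishing in degree $d-2$ from conditions (a), (b)) and Tsuji's log Serre duality, with verticality and (c) making the dualizing sheaf ${\cal O}_X$. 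Your sketch of the mechanism in (1) is looser than the paper's actual Ext-diagram chase, but you correctly locate all the ingredients and the needed compatibility in the \S\ref{ldtv} theory, so the approach is the same.
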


\parno 
By using K.~Kato's theorem in 
\cite{klog1}(=the log version of Deligne-Illusie's theorem) 
and (\ref{theo:ny1}) (1), we obtain the following: 

\begin{theo}\label{theo:np}
Let $Y\lo s$ be a proper log smooth morphism of Cartier type of dimension $d$.   
Assume that $h_F(\os{\circ}{Y})<\infty$. 
Then the log Hodge de Rham spectral sequence   
\begin{align*} 
E_1^{ij}=H^j(Y,\Om^i_{Y/s})\Lo H^{i+j}_{\rm dR}(Y/s)
\tag{1.4.1}\label{ali:oemxs} 
\end{align*}  
degenerates at $E_1$ if $d< p$.
If $F_{Y*}({\cal O}_Y)$ is 
a locally free ${\cal O}_Y$-modules $($of finite rank$)$ and if $d\leq p$, 
then this spectral sequence degenerates at $E_1$. 
Here $F_Y\col Y\lo Y$ is the absolute Frobenius endomorphism of $Y$. 
\end{theo}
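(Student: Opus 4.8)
The plan is to deduce everything from K.~Kato's log version of the Deligne--Illusie decomposition theorem (\cite{klog1}), the only genuine input being the construction of a lift of $Y$ over ${\cal W}_2(s)$. Since $h_F(\os{\circ}{Y})<\infty$ by hypothesis, Theorem \ref{theo:ny1}~(1) supplies a proper log smooth log scheme ${\cal Y}$ over ${\cal W}_2(s)$ with ${\cal Y}\times_{{\cal W}_2(s)}s=Y$. This is the sole place where the assumption $h_F(\os{\circ}{Y})<\infty$ is used; from here on the argument is a pure application of the log Deligne--Illusie machinery to the lift ${\cal Y}$, so no further finiteness of the quasi-$F$-split height is needed.

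Next I would feed ${\cal Y}$ into Kato's theorem. Write $Y':=Y\times_{s,F_s}s$ for the log Frobenius twist and $F_{Y/s}\col Y\lo Y'$ for the relative Frobenius, which is finite since $Y/s$ is of Cartier type. Kato's theorem attaches to the lift ${\cal Y}$ over ${\cal W}_2(s)$, via the log Cartier isomorphism $C^{-1}\col \Om^i_{Y'/s}\os{\sim}{\lo}{\cal H}^i(F_{Y/s*}\Om^{\bul}_{Y/s})$ valid for Cartier type, a decomposition
\[
\bigoplus_{i<p}\Om^i_{Y'/s}[-i]\ \os{\sim}{\lo}\ \tau_{<p}F_{Y/s*}\Om^{\bul}_{Y/s}
\]
in the derived category of $\os{\circ}{Y'}$. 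Taking hypercohomology and using ${\mab H}^n(Y',F_{Y/s*}\Om^{\bul}_{Y/s})=H^n_{\rm dR}(Y/s)$ (the underlying morphism of $F_{Y/s}$ being a homeomorphism), together with the fact that $\os{\circ}{Y'}\to\os{\circ}{Y}$ is the base change of the Frobenius of $s$, which is an isomorphism on underlying schemes because $\kap$ is perfect and hence preserves the $\kap$-dimensions $\dim_{\kap}H^j(Y',\Om^i_{Y'/s})=\dim_{\kap}H^j(Y,\Om^i_{Y/s})$, one reads off the Hodge numbers on both sides.

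When $d<p$ the argument closes at once: $\Om^i_{Y/s}=0$ for $i>d$, so ${\cal H}^i(F_{Y/s*}\Om^{\bul}_{Y/s})=0$ for $i\geq p$, the truncation $\tau_{<p}$ is harmless, and the displayed map upgrades to an isomorphism $\bigoplus_i\Om^i_{Y'/s}[-i]\os{\sim}{\lo}F_{Y/s*}\Om^{\bul}_{Y/s}$. Passing to hypercohomology then gives $\dim_{\kap}H^n_{\rm dR}(Y/s)=\sum_{i+j=n}\dim_{\kap}H^j(Y,\Om^i_{Y/s})$ for every $n$, which is exactly the $E_1$-degeneration of (\ref{ali:oemxs}).

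The only real obstacle is the boundary case $d=p$, and this is precisely where the local freeness of $F_{Y*}({\cal O}_Y)$ is needed. Here the top Hodge piece $\Om^p_{Y/s}=\Om^d_{Y/s}$ escapes the truncation $\tau_{<p}$: the decomposition controls all bidegrees with $i<p$ but not the top row $i=p$, and equivalently one must show that the connecting morphisms of the distinguished triangle
\[
\tau_{<p}F_{Y/s*}\Om^{\bul}_{Y/s}\lo F_{Y/s*}\Om^{\bul}_{Y/s}\lo \Om^p_{Y'/s}[-p]\os{+1}{\lo}
\]
vanish, since a total-dimension count shows the degeneration defect equals the sum of the ranks of these maps. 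The local freeness of $F_{Y*}({\cal O}_Y)$ is exactly the extra hypothesis under which Kato's theorem handles this top degree, in the same way that the automatic local freeness of $F_*{\cal O}$ in the classical smooth situation allows Deligne and Illusie to extend their degeneration from $\dim<p$ to $\dim\leq p$ in \cite{di}. Thus under this hypothesis the $E_1$-degeneration persists for $d\leq p$, and I expect the verification that this boundary connecting map vanishes to be the main technical point of the proof.
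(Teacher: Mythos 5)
Your reduction of the first statement is sound and is exactly the paper's route: the hypothesis $h_F(\os{\circ}{Y})<\infty$ is used once, via (\ref{theo:ny1}) (1) (i.e.\ the lifting theorem (\ref{theo:fshl})), to produce a log smooth lift over ${\cal W}_2(s)$, and then Kato's theorem \cite[(4.12)]{klog1} (reproved in the paper as (\ref{theo:kdg})) gives the decomposition $\bigoplus_{i<p}\Om^i_{Y'/s}[-i]\os{\sim}{\lo}\tau_{<p}F_*(\Om^{\bul}_{Y/s})$, from which the $E_1$-degeneration for $d<p$ follows by the standard hypercohomology dimension count (the log version of \cite[(4.1.2), (4.1.4)]{di}, as in (\ref{coro:cfd}) (1)).

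However, for the second statement ($d\leq p$ with $F_{Y*}({\cal O}_Y)$ locally free) your proposal has a genuine gap. You correctly isolate what must be shown -- that the triangle $\tau_{<p}F_*(\Om^{\bul}_{Y/s})\lo F_*(\Om^{\bul}_{Y/s})\lo \Om^p_{Y'/s}[-p]\os{+1}{\lo}$ splits -- but you then defer this (``I expect the verification \dots to be the main technical point of the proof'') and, worse, you attribute its resolution to Kato's theorem: \cite[(4.12)]{klog1} contains no such clause, so nothing in your argument actually closes the case $d=p$. The missing ingredient is the duality argument of \cite[(2.3)]{di}, which the paper carries out in (\ref{coro:cfd}) (2): since $F_*({\cal O}_Y)$ is locally free, the composite pairing $F_*(\Om^i_{Y/s})\times F_*(\Om^{p-i}_{Y/s})\lo F_*(\Om^p_{Y/s})\lo {\cal H}^p(F_*(\Om^{\bul}_{Y/s}))\os{C}{\lo}\Om^p_{Y'/s}$ is a perfect pairing of locally free ${\cal O}_{Y'}$-modules of finite rank, and dualizing the decomposition of $\tau_{<p}$ (following \cite[(2.3), (3.7)]{di}) splits off the top piece, yielding $\bigoplus_{i\leq p}\Om^i_{Y'/s}[-i]\os{\sim}{\lo}F_*(\Om^{\bul}_{Y/s})$ and hence the degeneration. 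This is precisely where the local freeness hypothesis enters (it replaces the automatic flatness of $F_*{\cal O}_X$ in the classical smooth case, so that the perfectness of the pairing survives pushforward by $F$); without supplying this argument, the second half of the theorem remains unproved.
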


\parno
We also give another short proof of 
Kato's theorem by using our log deformation theory 
with absolute Frobenius endomorphisms explained soon later. 
This is the log version of a generalization of 
Srinivas' another short proof of Deligne-Illusie's theorem (\cite{sr}). 
\par 
\par 
Let $Y/s$ be a log smooth log scheme of Cartier type. 
\par 
One of the new key ingredient for the proof of (\ref{theo:ny1}) is  
our log deformation theory with absolute Frobenius endomorphisms. 
This is the log version of Nori and Srinivas' deformation theory with 
absolute Frobenius endomorphisms in \cite{ns} and \cite{sr}.  
In this theory, the sheaf 
\begin{align*}
B_1\Om^1_{Y/s}:=
F_{Y*}(B\Om^1_{Y/s}):=F_{Y*}({\rm Im}(d\col {\cal O}_Y\lo \Om^1_{Y/s}))
(\simeq F_{Y*}({\cal O}_Y)/{\cal O}_Y))
\end{align*} 
plays an important role as follows 
(In the trivial log case, $B_1\Om^1_{Y/s}$ 
in this article is equal to $B\Om^1_{Y/s}$ in [loc.~cit.].): 

\begin{theo}\label{theo:td}
Let $F_{{\cal W}_2(s)}\col {\cal W}_2(s)\lo {\cal W}_2(s)$ 
be the Frobenius~endomorphism of ${\cal W}_2(s)$. 
Let ${\rm Lift}_{(Y,F_Y)/({\cal W}_2(s),F_{{\cal W}_2(s)})}$ 
be the following sheaf 
\begin{align*} 
{\rm Lift}_{(Y,F_Y)/({\cal W}_2(s),F_{{\cal W}_2(s)})}&(U):=
\{{\rm isomorphism~classes~of}~(\wt{U},\wt{F})\,\vert \,
\wt{U}~{\rm is~a~log~smooth~lift}\\
&~{\rm of}~U~{\rm over}~{\cal W}_2(s)~{\rm and}~
\wt{F}\col \wt{U}\lo \wt{U}~{\rm~is~a~lift~of~}F_U~
{\rm over}~F_{{\cal W}_2(s)}\}
\end{align*}
for each log open subscheme $U$ of $Y$, 
where $F_U$ is the absolute Frobenius endomorphism of $U$. 
Then ${\rm Lift}_{(Y,F_Y)/({\cal W}_2(s),F_{{\cal W}_2(s)})}$ on 
$\os{\circ}{Y}$ is a torsor under 
${\cal H}{\it om}_{{\cal O}_{Y}}(\Om^1_{Y/s},B_1\Om^1_{Y/s})$. 
In particular, the obstruction class of a log smooth lift of $(Y,F_Y)/s$ 
over ${\cal W}_2(s)$ is a canonical element of 
${\rm Ext}^1_Y(\Om^1_{Y/s},B_1\Om^1_{Y/s})$ if $\os{\circ}{Y}$ is separated.  
This obstruction class is the extension class of 
the following exact sequence of 
${\cal O}_Y$-modules$:$
\begin{align*} 
0\lo B_1\Om^1_{Y/s}\lo Z_1\Om^1_{Y/s}\os{C}{\lo} \Om^1_{Y/s}\lo 0,  
\end{align*} 
where 
$Z_1\Om^1_{Y/s}:=F_{Y*}({\rm Ker}(d\col \Om^1_{Y/s}\lo \Om^2_{Y/s}))$ 
and $C$ is the log Cartier operator$\,:$
$C\col Z_1\Om^1_{Y/s}\os{\rm proj.}{\lo} Z_1\Om^1_{Y/s}/B_1\Om^1_{Y/s}
\os{C^{-1},\sim}{\longleftarrow} \Om^1_{Y/s}$. 
Here $C^{-1}$ is the log Cartier isomorphism defined in {\rm \cite{klog1}}.  
\end{theo}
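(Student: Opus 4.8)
The plan is to prove (\ref{theo:td}) by the standard torsor-and-obstruction formalism of deformation theory, carried out sheaf-locally on $\os{\circ}{Y}$ and then globalized, in the log version of the Nori--Srinivas argument with absolute Frobenius. First I would establish local existence of the pair $(\wt{U},\wt{F})$: since $Y/s$ is log smooth, by Kato's log smoothness deformation theory every sufficiently small log open $U$ admits a log smooth lift $\wt{U}$ over ${\cal W}_2(s)$, and on such a $U$ one lifts the absolute Frobenius $F_U$ to some $\wt{F}\col \wt{U}\lo \wt{U}$ over $F_{{\cal W}_2(s)}$ by lifting a chart (sending the chosen coordinates and monoid generators to any lifts of their $p$-th powers and extending by log smoothness). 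Thus ${\rm Lift}_{(Y,F_Y)/({\cal W}_2(s),F_{{\cal W}_2(s)})}$ is locally nonempty, so it suffices to identify its structure sheaf and its obstruction.

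Next I would compute the difference of two local lifts. Given $(\wt{U}_1,\wt{F}_1)$ and $(\wt{U}_2,\wt{F}_2)$ over the same $U$, the underlying lifts $\wt{U}_1,\wt{U}_2$ are locally isomorphic (they form a torsor under ${\cal H}{\it om}_{{\cal O}_Y}(\Om^1_{Y/s},{\cal O}_Y)$), so after choosing an isomorphism I may assume $\wt{U}_1=\wt{U}_2=\wt{U}$ and compare $\wt{F}_1,\wt{F}_2$. Writing $h:=\wt{F}_1^{*}-\wt{F}_2^{*}\col {\cal O}_{\wt{U}}\lo p{\cal O}_{\wt{U}}={\cal O}_U$ and using that both reduce to $F_U^{*}$, a direct Leibniz computation shows that $h$ is an $F$-twisted derivation, i.e.\ $h(ab)=a^{p}h(b)+b^{p}h(a)$; hence $h$ defines a section of ${\cal H}{\it om}_{{\cal O}_Y}(\Om^1_{Y/s},F_{Y*}{\cal O}_Y)$. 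The ambiguity in the choice of isomorphism $\wt{U}_1\simeq\wt{U}_2$ is an automorphism $1+p\theta$ with $\theta\in{\cal H}{\it om}_{{\cal O}_Y}(\Om^1_{Y/s},{\cal O}_Y)$, and conjugation changes the difference class by $\pm F\circ\theta$, the image of $\theta$ under post-composition with $F\col {\cal O}_Y\lo F_{Y*}({\cal O}_Y)$, because $\theta\circ F_Y^{*}=0$ in characteristic $p$. Therefore the well-defined difference class lives in the cokernel, and by the log Serre exact sequence (\ref{ali:oofo}) for $n=1$, namely $0\lo {\cal O}_Y\os{F}{\lo}F_{Y*}({\cal O}_Y)\lo B_1\Om^1_{Y/s}\lo 0$, together with the local freeness of $\Om^1_{Y/s}$, this cokernel is exactly ${\cal H}{\it om}_{{\cal O}_Y}(\Om^1_{Y/s},B_1\Om^1_{Y/s})$. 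This both shows the difference of two lifts is a canonical section of this sheaf and, by reversing the construction, that one may twist any lift by an arbitrary such section, giving the torsor structure.

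For the obstruction I would use the usual \v{C}ech mechanism: choose a cover $\{U_i\}$ with local lifts $(\wt{U}_i,\wt{F}_i)$, form the differences $c_{ij}\in{\cal H}{\it om}_{{\cal O}_Y}(\Om^1_{Y/s},B_1\Om^1_{Y/s})(U_{ij})$ from the torsor structure, and observe that $(c_{ij})$ is a $1$-cocycle whose class in ${\rm Ext}^1_Y(\Om^1_{Y/s},B_1\Om^1_{Y/s})$ vanishes if and only if the local lifts glue. It remains to identify this class with the extension class of $0\lo B_1\Om^1_{Y/s}\lo Z_1\Om^1_{Y/s}\os{C}{\lo}\Om^1_{Y/s}\lo 0$. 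For this I would attach to each local $\wt{F}_i$ the operator $\frac{1}{p}\wt{F}_i^{*}$ on log differentials: since $F_Y^{*}=0$ on $\Om^1_{Y/s}$, one has $\wt{F}_i^{*}(\Om^1_{\wt{U}_i})\subset p\,\Om^1_{\wt{U}_i}$, and $\frac{1}{p}\wt{F}_i^{*}$ reduces to a local lift landing in $Z_1\Om^1_{Y/s}$ whose composite with the projection $Z_1\Om^1_{Y/s}\lo Z_1\Om^1_{Y/s}/B_1\Om^1_{Y/s}$ is Kato's log inverse Cartier isomorphism $C^{-1}$, i.e.\ a local splitting of $C$. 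The \v{C}ech differences of these local splittings are precisely the $c_{ij}$, so the obstruction class coincides with the extension class of the log Cartier sequence, as asserted.

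The main obstacle is the log bookkeeping in the two identifications. On one hand one must run the local Frobenius lift and the twisted-derivation computation entirely for log structures, lifting the monoid compatibly with $F_{{\cal W}_2(s)}$ and checking that the resulting $h$ and the conjugation term land in $F_{Y*}{\cal O}_Y$ and in ${\rm im}(F)={\cal O}_Y$ rather than in their schematic analogues; this is where the precise appearance of $B_1\Om^1_{Y/s}$ via (\ref{ali:oofo}), rather than of $F_{Y*}{\cal O}_Y$, is pinned down. On the other hand, matching the obstruction with the Cartier extension requires the compatibility of $\frac{1}{p}\wt{F}^{*}$ with the log Cartier isomorphism $C^{-1}$ of \cite{klog1}; verifying that $\frac{1}{p}\wt{F}^{*}$ indeed lands in $Z_1\Om^1_{Y/s}$ and induces $C^{-1}$ on the quotient is the technical heart, and it is exactly the log analogue of the Deligne--Illusie splitting computation.
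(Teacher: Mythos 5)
You follow the same overall route as the paper: local existence of pairs $(\wt{U},\wt{F})$, comparison of Frobenius lifts through Kato's torsor, well-definedness of the difference modulo ${\rm Im}(F^*)$ so that it lands in ${\cal H}{\it om}_{{\cal O}_Y}(\Om^1_{Y/s},B_1\Om^1_{Y/s})$ via the level-one Serre sequence, and identification of the obstruction with the Cartier extension by means of $p^{-1}\wt{F}{}^*$; this is precisely the skeleton of the paper's proof via (\ref{theo:taas}), (\ref{coro:nac}) and (\ref{theo:uryn})--(\ref{theo:uaryn}). However, there is a genuine gap at the step you present as an ``observation'', namely that the \v{C}ech class $(c_{ij})$ ``vanishes if and only if the local lifts glue'' (and, equivalently, that one may ``twist any lift by an arbitrary such section''). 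Since the comparison classes are well defined only in the quotient sheaf, the hypothesis that $(c_{ij})$ is a coboundary yields, after choosing lifts, only equalities of the form $\wt{\om}_{ij}=\wt{\om}_j-\wt{\om}_i+F^*(\eta_{ij})$ in ${\cal H}{\it om}_{{\cal O}_Y}(\Om^1_{Y/s},F_{Y*}({\cal O}_Y))$. To produce a global lift one must correct the local Frobenius lifts $\wt{F}_i$ by $-\wt{\om}_i$ \emph{and} the gluing isomorphisms $g_{ij}$ of the underlying lifts by $\eta_{ij}$, and then prove that the corrected data are compatible \emph{on the nose} --- the exact identity (\ref{ali:gfij}) in $F_{Y*}({\cal O}_Y)$, not an identity modulo ${\rm Im}(F^*)$. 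This requires controlling exactly how a twist of $g_{ij}$ changes the conjugated Frobenius lift, which is the nontrivial lemma (\ref{lemm:ges}) together with the computations (\ref{ali:gfg})--(\ref{ali:gffj}); the paper stresses in (\ref{rema:tlt}) that precisely these ingredients are missing from \cite{ns}, whose level of detail your proposal reproduces. Note that this difficulty is present already in the trivial log case, so it is not part of the ``log bookkeeping'' you defer to the end.

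Second, even once the exact compatibility is established, gluing still does not follow from the ``usual \v{C}ech mechanism'': the corrected isomorphisms $g'_{ij}$ are chosen ad hoc, and nothing in your argument shows that they satisfy the cocycle condition $g'_{ik}=g'_{jk}g'_{ij}$. The paper derives this from the rigidity statement (\ref{lemm:fl}) (5) (equivalently, the triviality of the automorphism group in (\ref{theo:ts}) (1)): an isomorphism of underlying lifts intertwining the Frobenius lifts is unique, so $g'_{ik}$ and $g'_{jk}g'_{ij}$, both of which conjugate $\wt{F}{}'_i$ to $\wt{F}{}'_k$, must coincide. The same rigidity is what makes ${\rm Lift}_{(Y,F_Y)/({\cal W}_2(s),F_{{\cal W}_2(s)})}$ a sheaf of isomorphism classes at all and what gives simple transitivity of the action; your proposal has no substitute for it. The parts of your proposal that are sound --- local existence, the twisted-derivation computation, well-definedness of the difference class (this is (\ref{lemm:fl}) (6)), and the identification of the obstruction with the class of $0\lo B_1\Om^1_{Y/s}\lo Z_1\Om^1_{Y/s}\os{C}{\lo}\Om^1_{Y/s}\lo 0$ via the local splittings $p^{-1}\wt{F}{}^*_i$ of $C$ --- match the paper's (\ref{theo:ts}) (2) and (\ref{theo:uryn})--(\ref{theo:uaryn}); and the integrality subtlety you do not mention is indeed harmless here, because the chart ${\mab N}\lo {\cal W}_2$ makes every log smooth lift over ${\cal W}_2(s)$ automatically integral.
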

\parno 
This is a special case of the main result in \S\ref{sec:latv} below. 
Note that, because the log structure of ${\cal W}_2(s)$ has a chart 
${\mab N}\lo {\cal W}_2$, the structural morphism $\wt{U}\lo {\cal W}_2(s)$ 
is automatically integral (\cite{klog1}). 
In the case where a base log scheme is more general, 
we have to consider log smooth {\it integral} lifts instead of log smooth lifts; 
the integrality is an essential condition in log deformation theory: 
deformation theory for log smooth schemes in \cite{klog1} (and \cite{kaf}) 
has a serious defect to be corrected in general. 
\par  
To construct the log deformation theory with 
absolute Frobenius endomorphisms itself is our aim in this article.  
To give the correct proof of (\ref{theo:td}) is very involved. 
Indeed, even in the trivial logarithmic case in \cite{ns}, 
we need a new additional quite extraordinary argument. 
More generally, we construct the log deformation theory with 
two kinds of {\it relative} Frobenius morphisms 
instead of {\it absolute} Frobenius endomorphisms in [loc.~cit.] 
because relative Frobenius morphisms 
go well with (log) inverse Cartier isomorphisms 
when we consider log deformation theory 
with Frobenius morphisms over a more general fine log base scheme 
of characteristic $p>0$. 
Our log deformation theory with Frobenius morphisms
also has an application for 
the canonical lift of 
a log ordinary projective log smooth log scheme over $s$ 
with trivial log cotangent bundle 
over the canonical lift ${\cal W}(s)$ of $s$ over ${\rm Spec}({\cal W})$  (\cite{nclw}). 
(This is the log version of theory of a canonical lift in \cite{ns}.)
\par 
Other necessary new ingredient for the proof of (\ref{theo:ny1}) 
is the caluculation of  
dimension of $H^q(X,B_n\Om^1_{X/s})$ $(d-2\leq q\leq d)$ 
by following the method of Katsura and Van der Geer in \cite{vgk}.

\par 
As a corollary of (\ref{theo:np}), 
we also prove the log version of 
Raynaud's vanishing theorem(=an analogue in characteristic $p$ of 
Kodaira-Akizuki-Nakano's vanishing theorem in characteristic $0$) as in 
\cite{di}: 

\begin{theo}[{\bf Log Kodaira-Akizuki-Nakano-Raynaud Vanishing theorem}]\label{theo:ray} 
Let the notations and the assumption be as in {\rm (\ref{theo:np})}. 
Furthermore, assume that $Y$ is fs, that the structural morphism 
$\os{\circ}{Y}\lo \os{\circ}{s}$ of schemes 
is projective and that 
$\os{\circ}{Y}$ is of pure dimension $d$.  
Let ${\cal L}$ be an ample invertible ${\cal O}_Y$-module. 
Then 
$H^j(Y,{\cal I}_{Y/s}\Om^i_{Y/s}\otimes_{{\cal O}_Y}{\cal L})=0$ for 
$i+j>\max \{d,2d-p\}$.  
\end{theo}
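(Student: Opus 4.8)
The plan is to derive the Log Kodaira-Akizuki-Nakano-Raynaud Vanishing theorem (\ref{theo:ray}) from the $E_1$-degeneration result (\ref{theo:np}) by adapting Raynaud's characteristic-$p$ argument in \cite{di} to the logarithmic setting. The key idea in Raynaud's approach is to produce a cyclic covering trick: given the ample invertible sheaf ${\cal L}$ on $\os{\circ}{Y}$, one considers the $p$-th power ${\cal L}^{\otimes p}$ together with a suitable section to construct an auxiliary log smooth scheme $Y'$ whose log Hodge de Rham spectral sequence degenerates (by (\ref{theo:np}), since $h_F$ remains finite for the construction), and then extract the desired vanishing by comparing Hodge and de Rham cohomologies with twists by ${\cal L}$.

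First I would set up the log analogue of the decomposition of the de Rham complex modulo $p$. By K.~Kato's log version of Deligne-Illusie (invoked in (\ref{theo:np})), the Frobenius-pushforward $F_{Y*}\Om^{\bul}_{Y/s}$ decomposes in the derived category as $\bigoplus_i \Om^i_{Y'/s}[-i]$, where $Y'$ is the Frobenius twist, and this decomposition is compatible with twisting by a line bundle pulled back along relative Frobenius, namely ${\cal L}$ itself viewed on $Y'$ since $F^* {\cal L} \cong {\cal L}^{\otimes p}$. The ampleness of ${\cal L}$ then lets me apply Serre vanishing after a sufficiently high Frobenius twist, but the point of Raynaud's argument is to avoid passing to high powers and instead to exploit the degeneration directly. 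Concretely, I would consider the hypercohomology $\mab{H}^*(Y, \Om^{\bul}_{Y/s}\otimes {\cal I}_{Y/s}\otimes {\cal L})$ and use the degeneration to bound it below by the individual Hodge pieces.

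The core computation is then a comparison of dimensions. Using the log Cartier isomorphism and the decomposition, $\sum_{i+j=n} \dim H^j(Y,{\cal I}_{Y/s}\Om^i_{Y/s}\otimes {\cal L})$ is controlled by the hypercohomology of the twisted de Rham complex, while the ampleness of ${\cal L}$ forces the twisted de Rham hypercohomology (equivalently, after Frobenius descent, cohomology of ${\cal L}$ itself twisted) to vanish in the relevant degrees. The bound $i+j > \max\{d, 2d-p\}$ arises precisely as in \cite{di}: the term $d$ comes from the topological dimension bound on $\os{\circ}{Y}$, while $2d-p$ comes from iterating the Frobenius decomposition and tracking the shift by $p$ forced by the length of the truncated de Rham complex that can be split under the hypothesis $d\leq p$. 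Here the vertical type assumption and the use of Tsuji's ideal sheaf ${\cal I}_{Y/s}$ (recalled in \S\ref{sec:lst}) ensure that the twisted log forms behave well under the Cartier operator and that Tsuji's log Serre duality in \cite{tsp} can be invoked to pass between the two ends of the range.

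The main obstacle I expect is establishing that the Frobenius decomposition is genuinely compatible with the twist by ${\cal I}_{Y/s}\otimes {\cal L}$ in a way that respects the log structure; in the classical case this compatibility is straightforward because ${\cal L}$ pulls back to ${\cal L}^{\otimes p}$ under absolute Frobenius, but in the log setting one must check that Tsuji's ideal sheaf interacts correctly with the relative Frobenius morphism and the log Cartier isomorphism, and that the vertical type hypothesis (guaranteeing ${\cal I}_{Y/s}{\cal O}_Y = {\cal O}_Y$ locally in a controlled manner) suffices to carry the splitting through the twist. A secondary technical point is verifying that the hypothesis $d\leq p$ (or $d<p$) needed for (\ref{theo:np}) to apply is exactly what yields the sharp cutoff $2d-p$, rather than a weaker bound; this requires carefully tracking the length of the log de Rham complex whose decomposition survives, which is where the projectivity and the log Serre duality become essential to close the argument symmetrically.
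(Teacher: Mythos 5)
Your skeleton---decomposition of $F_*(\Om^{\bul}_{Y/s})$ coming from the ${\cal W}_2(s)$-lift, a Raynaud-style use of ampleness, and Tsuji's log Serre duality at the end---is indeed the paper's skeleton (the theorem is proved there as (\ref{coro:lkan})), but two of your concrete steps would fail as written. First, Raynaud's argument is not a cyclic covering trick: no auxiliary scheme is built from a section of ${\cal L}^{\otimes p}$, and your opening is inconsistent with your own second paragraph, in which $Y'$ is simply the Frobenius twist. Second, and more seriously, your claim that the point is ``to avoid passing to high powers and instead to exploit the degeneration directly'' is exactly backwards, and a proof organized around it cannot close: in characteristic $p$ the only access to ampleness is Serre's theorem, i.e.\ very high twists. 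The paper's proof is a descending induction. With ${\cal M}:={\cal L}^{-1}$ and $b:=\min\{d,p\}$, the base case $H^j(Y,\Om^i_{Y/s}\otimes{\cal M}^{\otimes p^n})=0$ for $j<d$ and $n\gg 0$ comes from Serre vanishing plus Tsuji duality; the inductive step tensors the decomposition $\bigoplus_{i<b}\Om^i_{Y'/s}[-i]\simeq F_*(\Om^{\bul}_{Y/s})$ over ${\cal O}_{Y'}$ with $W^*({\cal M}^{\otimes p^{n-1}})$ (where $W\col Y'\lo Y$ is the projection) and uses the projection formula $F_*(F^*W^*({\cal M}^{\otimes p^{n-1}})\otimes_{{\cal O}_Y}\Om^i_{Y/s})=W^*({\cal M}^{\otimes p^{n-1}})\otimes_{{\cal O}_{Y'}}F_*(\Om^i_{Y/s})$ together with $F^*W^*({\cal M}^{\otimes p^{n-1}})={\cal M}^{\otimes p^n}$ to descend the vanishing from the twist $-p^n$ to $-p^{n-1}$, eventually reaching ${\cal M}$ itself. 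Note also that the twist must sit on $Y'$ against $F_*(\Om^{\bul}_{Y/s})$: your object $\Om^{\bul}_{Y/s}\otimes{\cal I}_{Y/s}\otimes{\cal L}$ on $Y$ is not a complex, because $d$ is not ${\cal O}_Y$-linear.

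The obstacle you single out as the main one---making ${\cal I}_{Y/s}$ and the positive twist ${\cal L}$ compatible with the Frobenius decomposition and the Cartier operator---is an artifact of your chosen route and is entirely absent from the paper's. The paper proves the ideal-sheaf-free dual statement $H^j(Y,\Om^i_{Y/s}\otimes{\cal L}^{-1})=0$ for $i+j<\min\{d,p\}$, and then (\ref{theo:ray}) follows in one line from Tsuji's log Serre duality (\ref{theo:ico}): since the dualizing complex is ${\cal I}_{Y/s}\Om^d_{Y/s}[d]$, the dual of $H^j(Y,\Om^i_{Y/s}\otimes{\cal L}^{-1})$ is $H^{d-j}(Y,{\cal I}_{Y/s}\Om^{d-i}_{Y/s}\otimes{\cal L})$, and the condition $i+j<\min\{d,p\}$ is precisely $(d-i)+(d-j)>\max\{d,2d-p\}$. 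So ${\cal I}_{Y/s}$ enters only at the very last step through the dualizing complex and never has to be carried through Frobenius at all. Two smaller points: vertical type is not a hypothesis of (\ref{theo:ray}), so you should not appeal to it; and the bound $\max\{d,2d-p\}$ is not obtained by tracking any shift on the positive-twist side, but simply by dualizing the range $\min\{d,p\}$ of the negative-twist statement.
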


\parno 
In the most important case $i=d$ in (\ref{theo:ray}), 
we prove a stronger theorem than this theorem 
(this stronger theorem is also one of the main results in this article): 

\begin{theo}[{\bf Log Kodaira Vanishing theorem I}]\label{theo:stk} 
Let the notations and the assumptions be as in {\rm (\ref{theo:ray})}. 
Then $H^j(Y,{\cal I}_{Y/s}\Om^d_{Y/s}\otimes_{{\cal O}_Y}{\cal L})=0$ for $j>0$. 
\end{theo}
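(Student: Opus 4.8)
The plan is to deduce the statement from a Frobenius--splitting type vanishing for ${\cal L}^{-1}$, using Tsuji's log Serre duality to pass between the two and using the hypothesis $h_F(\os{\circ}{Y})<\infty$ as a substitute for an honest Frobenius splitting. First I would record the duality reduction. Since $\os{\circ}{Y}\lo \os{\circ}{s}$ is projective and $Y/s$ is proper log smooth of Cartier type of pure dimension $d$, Tsuji's log Serre duality (\cite{tsp}) applies with log dualizing sheaf ${\cal I}_{Y/s}\Om^d_{Y/s}$. Applied to the invertible sheaf ${\cal L}^{-1}$ it gives a perfect pairing between $H^j(Y,{\cal I}_{Y/s}\Om^d_{Y/s}\otimes {\cal L})$ and $H^{d-j}(Y,{\cal L}^{-1})$. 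Hence the asserted vanishing $H^j(Y,{\cal I}_{Y/s}\Om^d_{Y/s}\otimes {\cal L})=0$ $(j>0)$ is equivalent to $H^i(Y,{\cal L}^{-1})=0$ for $0\leq i<d$, and it suffices to prove the latter.

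Next I would establish the essential injectivity on cohomology. If $\os{\circ}{Y}$ were Frobenius split $(h_F=1)$ this is immediate: tensoring a splitting of ${\cal O}_Y\os{F}{\lo} F_{Y*}({\cal O}_Y)$ with ${\cal L}^{-1}$, using $F_Y^*({\cal L}^{-1})\simeq {\cal L}^{-p}$ together with the projection formula, yields split injections $H^i(Y,{\cal L}^{-1})\hookrightarrow H^i(Y,{\cal L}^{-p})\hookrightarrow \cdots \hookrightarrow H^i(Y,{\cal L}^{-p^N})$ for every $N$ (this is the Mehta--Ramanathan argument, \cite{mr}). For the quasi-$F$-split case I would replace the single splitting by the retraction $\rho\col F_{Y*}({\cal W}_n({\cal O}_Y))\lo {\cal O}_Y$ with $\rho\circ F=\mathrm{proj}$, where $n=h_F(\os{\circ}{Y})$, and exploit the log Serre exact sequence (\ref{ali:oofo}). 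Reinterpreting $\rho$ as a splitting of an ${\cal O}_Y$-linear map ${\cal O}_Y\lo Q_{Y,n}$, where $Q_{Y,n}$ is the ${\cal O}_Y$-module underlying $F_{Y*}({\cal W}_n({\cal O}_Y))$, the Verschiebung filtration on ${\cal W}_n({\cal O}_Y)$ endows $Q_{Y,n}$ with a finite filtration whose graded pieces are, up to Frobenius twist, copies of $F_{Y*}({\cal O}_Y)$; tensoring with ${\cal L}^{-1}$, its cohomology is then filtered by copies of $H^i(Y,{\cal L}^{-p})$. Iterating the construction $N$ times produces an injection of $H^i(Y,{\cal L}^{-1})$ into finitely many groups of the form $H^i(Y,{\cal L}^{-p^k})$ with $k$ arbitrarily large as $N\to\infty$.

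Finally I would conclude by Serre vanishing. For $i<d$ and $k\geq 1$, log Serre duality again identifies $H^i(Y,{\cal L}^{-p^k})$ with the dual of $H^{d-i}(Y,{\cal I}_{Y/s}\Om^d_{Y/s}\otimes {\cal L}^{p^k})$; since $d-i>0$, ${\cal L}$ is ample and ${\cal I}_{Y/s}\Om^d_{Y/s}$ is coherent, this group vanishes once $p^k$ is large enough. Choosing $N\gg 0$ kills every target group, and the injectivity from the previous step forces $H^i(Y,{\cal L}^{-1})=0$ for all $i<d$; by the duality reduction this is exactly the theorem.

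The main obstacle is the middle step. The Witt-vector sheaf ${\cal W}_n({\cal O}_Y)$ cannot literally be tensored by ${\cal L}^{-1}$, so producing the twisted splitting requires realizing $Q_{Y,n}$ as an honest ${\cal O}_Y$-module that simultaneously carries the filtration by Frobenius pushforwards and a retraction compatible with $\rho$, and then checking that the iteration preserves the quasi-$F$-split data (so that each stage again lands in the same setup and the powers $p^k$ genuinely grow). It is precisely here that (\ref{ali:oofo}) and Tsuji's framework, together with the Cartier-type and fs hypotheses, must be used with care; the rest of the argument is formal once this injectivity is in place.
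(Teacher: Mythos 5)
Your outer steps coincide with the paper's proof of this theorem (via (\ref{theo:lkdv})): Tsuji's log Serre duality (\ref{theo:ico}) reduces the statement to $H^i(Y,{\cal L}^{-1})=0$ for $i<d$, and Serre's theorem, dualized back, supplies the vanishing for all sufficiently large $p$-power twists. The gap is your middle step, and it is genuine. The object you call $Q_{Y,n}$, ``the ${\cal O}_Y$-module underlying $F_{Y*}({\cal W}_n({\cal O}_Y))$'', does not exist: the ${\cal W}_n({\cal O}_Y)$-module structure on $F_{Y*}({\cal W}_n({\cal O}_Y))$ given by $F$ does not factor through $R^{n-1}\col {\cal W}_n({\cal O}_Y)\lo {\cal O}_Y$, because $F(V(y))=py$ and $p\neq 0$ on ${\cal W}_n({\cal O}_Y)$ for $n\geq 2$; nor is there an ${\cal O}_Y$-linear map ${\cal O}_Y\lo F_{Y*}({\cal W}_n({\cal O}_Y))$ to be split, since the Teichm\"uller section of $R^{n-1}$ is not additive. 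The correct device --- and it is the heart of the paper's argument, in (\ref{lemm:vp}) --- is to push out the log Serre exact sequence (\ref{ali:wnox}) along $R^{n-1}$, producing an honest ${\cal O}_Y$-module extension $0\lo {\cal O}_Y\lo {\cal E}_n\lo B_n\Om^1_{Y/s}\lo 0$, with ${\cal E}_n:={\cal O}_Y\oplus_{{\cal W}_n({\cal O}_Y),F}F_{Y*}({\cal W}_n({\cal O}_Y))$, whose splitting at $n=h_F$ is exactly the quasi-$F$-split hypothesis.

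Even after this fix, your ``injection into finitely many $H^i(Y,{\cal L}^{-p^k})$'' scheme does not come out. By the snake lemma, ${\cal E}_h$ sits in $0\lo F_{Y*}(B_{h-1}\Om^1_{Y/s})\lo {\cal E}_h\lo F_{Y*}({\cal O}_Y)\lo 0$, so the pieces one must control are not Frobenius twists of ${\cal O}_Y$ alone but the sheaves $B_m\Om^1_{Y/s}$; moreover long exact sequences only sandwich the relevant groups (which suffices for vanishing by induction) rather than produce injections. This is why (\ref{lemm:vp}) is a simultaneous descending induction on the exponent $e$ in two statements --- $H^q(Y,{\cal L}^{-p^e})=0$ and $H^q(Y,B_1\Om^1_{Y/s}\otimes{\cal L}^{-p^e})=0$ --- with (\ref{lemm:nle}) propagating $B_1$-vanishing to all $B_n$, the exact sequence $0\lo {\cal O}_Y\lo F_{Y*}({\cal O}_Y)\lo B_1\Om^1_{Y/s}\lo 0$ linking the two statements, and the base case requiring Serre vanishing also for ${\cal H}{\it om}_{{\cal O}_Y}(B_1\Om^1_{Y/s},{\cal I}_{Y/s}\Om^d_{Y/s})\otimes{\cal L}^{\otimes m}$, using the local freeness of $B_1\Om^1_{Y/s}$ ((\ref{lemm:locf})) --- a dualization your third step does not provide, since you only dualize the structure-sheaf twists. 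So the part you defer as ``formal once the injectivity is in place'' is in fact the entire content of the proof; as written, your argument cannot be completed without reconstructing the ${\cal E}_n$-induction of (\ref{lemm:vp}) (or Yobuko's original version of it in \cite{yoh}), or else developing a genuine theory of Witt-vector sheaves twisted by invertible sheaves, which is not what you do.
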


\parno   
This theorem is the log version of a nontrivial generalization of 
Mehta and Ramanathan's vanishing theorem in \cite{mr}; 
the proof of this theorem is more nontrivial than that of their theorem. 
This theorem is important because 
we can obtain the new class of log schemes such that 
Kodaira vanishing theorem holds in characteristic $p>0$. 
The theorem (\ref{theo:stk}) has  
an interesting application for congruences of the cardinalities of 
rational points of log Fano varieties  
with finite quasi-$F$-split heights over the log point of a finite field 
(\cite{nlfano}). 
This is a generalization of Esnault's theorem  
in \cite{es} (under the (mild) assumption ``the finiteness of the quasi-$F$-split height'').
We hope that (\ref{theo:stk}) will have more important applications for 
algebraic geometry in characteristic $p$. 
\par 
\par 
As a corollary of the vanishing theorem (\ref{theo:ray}), 
we prove an analogous vanishing theorem in characteristic $0$.

\par
Lastly in this introduction, 
we formulate the log weak Lefschetz conjecture for 
log crystalline cohomologies and 
we give an affirmative result for this conjecture. 
\par 
Let $Y$ be a projective SNCL scheme over the log point $s$. 
Let $E$ be a horizontal smooth divisor on $Y$ 
which will be defined in the text;  
roughly speaking, $E$ is locally defined by a local coordinate which 
has ``no relation with a nontrivial local section of $M_Y/{\cal O}_Y^*$''. 
Let $q$ be a nonnegative integer. 
For a proper log smooth scheme $Y/s$, 
let $H^q_{\rm crys}(Y/{\cal W}(s))$ 
be the log crystalline cohomology of $Y/{\cal W}(s)$ (\cite{klog1}). 
By the works in \cite{msemi} and \cite{ndw} (cf.~\cite{nlw}), 
$H^q_{\rm crys}(Y/{\cal W}(s))$ and 
$H^q_{\rm crys}(E/{\cal W}(s))$ have the weight filtrations $P$'s.
Set $K_0:={\rm Frac}({\cal W})$. 
For a module $M$ over ${\cal W}$, 
set $M_{K_0}:=M\otimes_{\cal W}K_0$. 
Let $\iota \col E\os{\sus}{\lo} Y$ be the closed immersion. 
By a general theorem in \cite{nlw}, the pull-back  of $\iota$ 
\begin{align*} 
\iota^*_{\rm crys}  \col H^q_{\rm crys}(Y/{\cal W}(s))_{K_0}\lo 
H^q_{\rm crys}(E/{\cal W}(s))_{K_0} \quad (q\in {\mab Z})
\tag{1.7.1}\label{ali:crixw} 
\end{align*} 
is strictly compatible with $P$'s. 
In this article we conjecture the following: 

\begin{conj}[{\bf Log weak Lefschetz conjecture for log isocrystalline cohomologies}]\label{conj:lwl}
Assume that ${\cal O}_Y(E)$ is ample. Then the morphism 
{\rm (\ref{ali:crixw})} is a filtered isomorphism with respect to $P$'s 
if $q\leq d-2$ and strictly injective for $q=d-1$. 
\end{conj}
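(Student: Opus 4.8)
The plan is to reduce Conjecture \ref{conj:lwl} to the weak Lefschetz theorem for isocrystalline cohomologies of the smooth proper strata of $Y$, by means of the $p$-adic weight (Steenbrink type) spectral sequence. Write $\os{\circ}{Y}{}^{(b)}$ for the disjoint union of the $(b+1)$-fold intersections of the irreducible components of $\os{\circ}{Y}$; these are smooth projective schemes over $\kap$ of pure dimension $d-b$. Because $E$ is a horizontal smooth divisor, it is transverse to the log structure, so $E$ is itself a projective SNCL scheme over $s$ of pure dimension $d-1$ whose strata are $E^{(b)}:=E\times_Y\os{\circ}{Y}{}^{(b)}$, smooth divisors in $\os{\circ}{Y}{}^{(b)}$ of dimension $d-b-1$; since ${\cal O}_Y(E)$ is ample, each $E^{(b)}$ is an ample divisor in $\os{\circ}{Y}{}^{(b)}$.

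First I would invoke the weight spectral sequence of \cite{msemi}, \cite{ndw} (cf.~\cite{nlw}) for $Y/s$,
\begin{align*}
E_1^{-k,\,q+k}=\bigoplus_{j\geq \max\{0,-k\}}
H^{q-k-2j}_{\rm crys}(\os{\circ}{Y}{}^{(2j+k)}/{\cal W})_{K_0}(-k-j)
\Lo H^q_{\rm crys}(Y/{\cal W}(s))_{K_0},
\end{align*}
and the analogous one for $E/s$. The crucial structural observation is that, setting $b:=2j+k$, every $E_1$-term is a Tate twist of $H^{q-b}_{\rm crys}(\os{\circ}{Y}{}^{(b)}/{\cal W})_{K_0}$, i.e.~isocrystalline cohomology in degree $q-b$ of a smooth proper scheme of dimension $d-b$. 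The closed immersion $\iota$ induces a morphism between the two weight spectral sequences which on $E_1$-terms is, summand by summand, the restriction map $H^{q-b}_{\rm crys}(\os{\circ}{Y}{}^{(b)})_{K_0}(-k-j)\lo H^{q-b}_{\rm crys}(E^{(b)})_{K_0}(-k-j)$ attached to $E^{(b)}\sus\os{\circ}{Y}{}^{(b)}$. After $\otimes_{\cal W}K_0$ both spectral sequences degenerate at $E_2$ by Frobenius weight purity, so the graded pieces $\mathrm{gr}^P$ are computed by these $E_1$-terms.

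Since \eqref{ali:crixw} is strictly compatible with $P$ (the general theorem of \cite{nlw} quoted above), it suffices to check that $\iota^*_{\rm crys}$ is an isomorphism (resp.~an injection) on each $\mathrm{gr}^P$. On $\mathrm{gr}^P$ the map is a direct sum of the restriction maps to the ample smooth divisors $E^{(b)}$, so I would apply the weak Lefschetz theorem for isocrystalline cohomologies of \cite{kme}, \cite{bwl} (with Berthelot's gap repaired in the Appendix, \S\ref{sec:wlt}). The numerics match uniformly in $b$: the restriction $H^{q-b}_{\rm crys}(\os{\circ}{Y}{}^{(b)})\lo H^{q-b}_{\rm crys}(E^{(b)})$ is an isomorphism when $q-b<(d-b)-1$, that is $q\leq d-2$, and injective when $q-b=(d-b)-1$, that is $q=d-1$; the Tate twists $(-k-j)$ are irrelevant to these properties. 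Thus for $q\leq d-2$ the map is an isomorphism on every $\mathrm{gr}^P$, whence a filtered isomorphism by strictness, and for $q=d-1$ it is injective on every $\mathrm{gr}^P$, whence injective and, combined with strict compatibility, strictly injective.

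The main obstacle — and the reason only ``certain cases'' are accessible — is the availability of the isocrystalline weak Lefschetz theorem for the divisors $E^{(b)}$ in the required generality: the theorem of \cite{bwl}, \cite{kme} is proved for hypersurface sections of sufficiently high degree, not for an arbitrary smooth ample divisor, so the reduction above yields the conjecture only when each $E^{(b)}\sus\os{\circ}{Y}{}^{(b)}$ is of the type for which the isocrystalline weak Lefschetz theorem of \S\ref{sec:wlt} applies. A secondary obstacle is the $E_2$-degeneration after $\otimes_{\cal W}K_0$, which relies on weight purity for log crystalline cohomology and is itself known only under suitable hypotheses in characteristic $p>0$; controlling these two inputs is exactly what confines the affirmative answer to certain cases.
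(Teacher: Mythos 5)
First, a point of order: the statement you set out to prove is stated in the paper as a \emph{conjecture}, and the paper contains no proof of it. What the paper proves are special cases: (\ref{theo:qwl}) (= (\ref{theo:qfls}) (1), (2)), under the extra hypotheses that $Y$ and $E$ admit log smooth lifts over ${\cal W}_2(s)$ and $\dim \os{\circ}{Y}\leq p$, and (\ref{theo:qfls}) (3), (4) for hypersurface sections of sufficiently large degree. The method there is entirely different from yours: one forms the mapping cone of $\iota^*_{\rm crys}$, reduces modulo $p$ via the universal coefficient theorem and Kato's log base change theorem, and proves the resulting \emph{de Rham} weak Lefschetz statement either by the log Kodaira--Akizuki--Nakano vanishing (the Deligne--Illusie--Raynaud method, whence the ${\cal W}_2$-lift and $d\leq p$ hypotheses) or by Serre vanishing together with Tsuji's log Serre duality; the filtered refinements then come from the strict compatibility theorem (\ref{theo:imp}). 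Your route --- weight spectral sequence plus stratum-level weak Lefschetz --- is genuinely different, and if it were complete it would prove the full conjecture with no liftability or $d\leq p$ hypothesis, which should itself make you suspicious.

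The genuine gap is your ``crucial structural observation'': you assert, without proof or precise reference, that $\iota$ induces a morphism of the two weight spectral sequences whose effect on $E_1$-terms is, summand by summand, the restriction map $H^{q-b}_{\rm crys}(\os{\circ}{Y}{}^{(b)}/{\cal W})_{K_0}\lo H^{q-b}_{\rm crys}(E^{(b)}/{\cal W})_{K_0}$. The only functoriality the paper supplies (quoting \cite[(5.4.7)]{nlw}, i.e.\ (\ref{theo:imp})) is strict compatibility of $\iota^*_{\rm crys}$ with the abutment filtrations $P$; it says nothing about the induced maps on $E_1$ or on ${\rm gr}^P$. Establishing that identification is precisely the hard point in crystalline weight-spectral-sequence theory (the spectral sequence is constructed from local embeddings, resp.\ the log de Rham--Witt complex, and comparing the maps a closed immersion induces there with naive restrictions involves nontrivial Gysin-type compatibilities); this is presumably the content of the announced paper \cite{nwls}, and it is why the statement remains a conjecture. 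By contrast, the two obstacles you flag are the less serious ones: $E_2$-degeneration modulo torsion is a theorem of Mokrane \cite{msemi}, and the Appendix's rigid-cohomology proof of weak Lefschetz uses only that the complement of the divisor is smooth and affine, which holds for any effective ample divisor, not merely for hypersurface sections. Finally, a minor imprecision you should repair: ${\rm gr}^P$ is computed by $E_2=E_\infty$, i.e.\ by \emph{subquotients} of your $E_1$-terms, not by the $E_1$-terms themselves; the conclusion survives because, for $q\leq d-2$, the $E_1$-maps are bijective in total degrees $q-1$ and $q$ and injective in total degree $q+1$, and, for $q=d-1$, bijective in total degree $q-1$ and injective in total degree $q$, so a short diagram chase yields bijectivity (resp.\ injectivity) on $E_2$ --- but this chase has to be written out.
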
 

In the text we give affirmative results for this conjecture. 
For example, we prove the following: 

\begin{theo}\label{theo:qwl}   
Assume that $Y$ and $E$ have log smooth lifts over ${\cal W}_2(s)$. 
Assume also that $\dim \os{\circ}{Y}\leq p$. 
Then the following pull-back 
\begin{align*} 
\iota^*_{\rm crys}  \col H^q_{\rm crys}(Y/{\cal W}(s))\lo 
H^q_{\rm crys}(E/{\cal W}(s)) \quad (q\in {\mab Z})
\tag{1.9.1}\label{ali:crxlw} 
\end{align*} 
is an isomorphism if $q< d-1$ and 
injective for $q= d-1$ with torsion free cokernel. 
In particular, {\rm (\ref{conj:lwl})} is true under the assumptions above.  
\end{theo}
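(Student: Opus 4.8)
The plan is to first prove the asserted weak Lefschetz statement at the level of log de Rham cohomology over $s$, and then to descend the mod-$p$ information to the crystalline cohomology over ${\cal W}(s)$ by a homological bootstrap. Since $Y$ is a projective SNCL scheme it is of vertical type, so ${\cal I}_{Y/s}{\cal O}_Y={\cal O}_Y$ and the ideal ${\cal I}_{Y/s}$ acts trivially; the horizontal divisor $E$ is again a projective SNCL scheme over $s$ of dimension $d-1$ (of vertical type), and ${\cal L}:={\cal O}_Y(E)$, as well as its restriction ${\cal L}|_E$, is ample. First I would record the negative Akizuki--Nakano-type vanishing obtained by dualizing (\ref{theo:ray}) through Tsuji's log Serre duality: for vertical type the duality pairs $H^j(Y,\Om^i_{Y/s}\otimes{\cal L}^{-1})$ with $H^{d-j}(Y,\Om^{d-i}_{Y/s}\otimes{\cal L})$, so using $d\leq p$ (whence $\max\{d,2d-p\}=d$) one gets $H^j(Y,\Om^i_{Y/s}\otimes{\cal L}^{-1})=0$ for $i+j<d$, and likewise $H^j(E,\Om^i_{E/s}\otimes({\cal L}|_E)^{-1})=0$ for $i+j<d-1$.

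Next I would run the two exact sequences attached to the smooth horizontal divisor. The restriction sequence $0\lo\Om^i_{Y/s}\otimes{\cal L}^{-1}\lo\Om^i_{Y/s}\lo\Om^i_{Y/s}|_E\lo 0$, together with the vanishing on $Y$, shows that $H^j(Y,\Om^i_{Y/s})\lo H^j(E,\Om^i_{Y/s}|_E)$ is an isomorphism for $i+j\leq d-2$ and injective for $i+j=d-1$. The log conormal sequence $0\lo\Om^{i-1}_{E/s}\otimes({\cal L}|_E)^{-1}\lo\Om^i_{Y/s}|_E\lo\Om^i_{E/s}\lo 0$ for the horizontal divisor, together with the vanishing on $E$, shows that $H^j(E,\Om^i_{Y/s}|_E)\lo H^j(E,\Om^i_{E/s})$ is an isomorphism for $i+j\leq d-2$ and injective for $i+j=d-1$. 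Composing, the Hodge-level restriction $H^j(Y,\Om^i_{Y/s})\lo H^j(E,\Om^i_{E/s})$ is an isomorphism for $i+j\leq d-2$ and injective for $i+j=d-1$. Since $Y$ and $E$ both admit log smooth lifts over ${\cal W}_2(s)$ and have dimension $\leq p$, Kato's log version of Deligne--Illusie (used for (\ref{theo:np})) gives $E_1$-degeneration of (\ref{ali:oemxs}) for both. Hence $\iota^*_{\rm dR}$ respects the Hodge filtrations and induces on the $i$-th graded piece the map $H^{q-i}(Y,\Om^i_{Y/s})\lo H^{q-i}(E,\Om^i_{E/s})$; as these are isomorphisms for $q\leq d-2$ and injections for $q=d-1$, the filtered map $\iota^*_{\rm dR}\col H^q_{\rm dR}(Y/s)\lo H^q_{\rm dR}(E/s)$ is an isomorphism for $q<d-1$ and injective for $q=d-1$.

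To pass to crystalline cohomology I would use the base-change isomorphism $R\Gamma_{\rm crys}(Y/{\cal W}(s))\otimes^{\mab L}_{\cal W}\kap\cong R\Gamma_{\rm crys}(Y/{\cal W}_1(s))\cong R\Gamma_{\rm dR}(Y/s)$ (and likewise for $E$), valid because ${\cal W}_1(s)=s$; by properness and log smoothness both complexes are perfect over ${\cal W}$. Write $C_Y$, $C_E$ for these complexes, let $D$ be the cone of $\iota^*_{\rm crys}\col C_Y\lo C_E$, and note $D\otimes^{\mab L}_{\cal W}\kap$ is the cone of the mod-$p$ restriction. The long exact sequence of that cone, together with the previous paragraph (an isomorphism for $q<d-1$ and injective for $q=d-1$), gives ${\cal H}^q(D\otimes^{\mab L}_{\cal W}\kap)=0$ for $q\leq d-2$. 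By the derived Nakayama lemma applied to the perfect complex $D$ over the local ring ${\cal W}$, it follows that ${\cal H}^q(D)=0$ for $q\leq d-2$. The long exact sequence of the triangle $C_Y\lo C_E\lo D$ then yields that $\iota^*_{\rm crys}$ is an isomorphism for $q<d-1$ and injective for $q=d-1$.

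Finally, for torsion-freeness of the cokernel at $q=d-1$ I would show that $H^{d-1}_{\rm crys}(Y/{\cal W}(s))\otimes_{\cal W}\kap\lo H^{d-1}_{\rm crys}(E/{\cal W}(s))\otimes_{\cal W}\kap$ is injective, which is equivalent to ${\rm Tor}_1^{\cal W}({\rm coker}\,\iota^*_{\rm crys},\kap)=0$. This follows from the commutative square whose horizontal arrows are the universal-coefficient injections $H^{d-1}_{\rm crys}(-)\otimes_{\cal W}\kap\hookrightarrow H^{d-1}_{\rm dR}(-)$ and whose right vertical arrow $H^{d-1}_{\rm dR}(Y/s)\lo H^{d-1}_{\rm dR}(E/s)$ is injective by the second step; hence the left vertical arrow is injective too. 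The ``in particular'' for (\ref{conj:lwl}) is then immediate: an isomorphism is a fortiori a filtered isomorphism for $P$, and injectivity for $q=d-1$ combined with the strict compatibility of (\ref{ali:crixw}) with $P$ gives strict injectivity. The hard part will be the second step: producing the correct log conormal exact sequence for the horizontal divisor $E\sus Y$ in the SNCL setting and matching its twisted terms with the precise hypotheses of Tsuji's log Serre duality so that the dualized form of (\ref{theo:ray}) applies verbatim; once these sequences and the vanishing are in place, the descent to crystalline cohomology is formal.
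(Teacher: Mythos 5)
Your overall skeleton coincides with the paper's: the paper likewise reduces (\ref{ali:crxlw}) to a mod~$p$ statement by forming the mapping cone of $\iota^*_{\rm crys}$, identifying its derived reduction mod $p$ with the cone of the de Rham restriction via Kato's log base change theorem, and then applying the universal coefficient sequence plus Nakayama (your ``derived Nakayama''), exactly as in the proof of the paper's crystalline weak Lefschetz theorem. Where you genuinely diverge is the de Rham input. The paper does not argue Hodge-degree by Hodge-degree: it identifies ${\rm Ker}(\Om^{\bul}_{Y/s}\lo \iota_*\Om^{\bul}_{E/s})$ with the twisted log complex $\Om^{\bul}_{Y(E)/s}(-E)$ and kills its hypercohomology in degrees $<\min\{d,p\}$ by applying the vanishing (\ref{coro:lkan}) (1) to the log scheme $Y(E)$; this avoids any appeal to $E_1$-degeneration, but requires the vanishing for $Y(E)$, hence (implicitly) a ${\cal W}_2(s)$-lift of $Y(E)$. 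You instead use the restriction sequence together with the log conormal sequence $0\lo \Om^{i-1}_{E/s}\otimes({\cal L}\vert_E)^{-1}\lo \Om^i_{Y/s}\vert_E\lo \Om^i_{E/s}\lo 0$ --- precisely the sequence the paper itself establishes and exploits in the large-degree case (part (3) of its theorem) --- applying (\ref{coro:lkan}) (1) only to $Y$ and to $E$, which matches the stated lifting hypotheses more literally, and you then pass to de Rham cohomology by degeneration. This is a valid alternative route; note only that when $d=p$ you do not get full $E_1$-degeneration for $Y$ from the lift alone, but you only need $E_1^{ij}=E_{\infty}^{ij}$ in total degrees $\leq d-1<p$, which (\ref{coro:cfd}) (1) supplies, and that injectivity (resp.\ bijectivity) on all Hodge graded pieces does imply injectivity (resp.\ bijectivity) on the filtered de Rham groups, the Hodge filtration being finite and separated.

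The genuine flaw is in your last step. The assertion that injectivity of $H^{d-1}_{\rm crys}(Y/{\cal W}(s))\otimes_{\cal W}\kap\lo H^{d-1}_{\rm crys}(E/{\cal W}(s))\otimes_{\cal W}\kap$ is ``equivalent to ${\rm Tor}_1^{\cal W}({\rm coker}\,\iota^*_{\rm crys},\kap)=0$'' is false. From $0\lo A\lo B\lo C\lo 0$ one gets the exact sequence ${\rm Tor}_1^{\cal W}(B,\kap)\lo {\rm Tor}_1^{\cal W}(C,\kap)\lo A\otimes\kap\lo B\otimes\kap$, so mod-$p$ injectivity only says that ${\rm Tor}_1^{\cal W}(B,\kap)$ surjects onto ${\rm Tor}_1^{\cal W}(C,\kap)$; if $B=H^{d-1}_{\rm crys}(E/{\cal W}(s))$ has torsion (which nothing rules out), no conclusion about $C$ follows. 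Concretely, $A={\cal W}\lo B={\cal W}\oplus\kap$, $a\lom (pa,\bar{a})$, is injective mod $p$, yet its cokernel is ${\cal W}/p^2$. The repair is immediate from what you have already proved, and it is exactly the paper's argument: since ${\cal H}^{d-2}(D\otimes^{\mab L}_{\cal W}\kap)=0$ for your cone $D$, the universal coefficient sequence $0\lo {\cal H}^{d-2}(D)\otimes_{\cal W}\kap\lo {\cal H}^{d-2}(D\otimes^{\mab L}_{\cal W}\kap)\lo {\rm Tor}_1^{\cal W}({\cal H}^{d-1}(D),\kap)\lo 0$ forces ${\cal H}^{d-1}(D)$ to be $p$-torsion free, and the cokernel of $\iota^*_{\rm crys}$ in degree $d-1$ embeds into ${\cal H}^{d-1}(D)$ by the long exact sequence of the cone, hence is torsion free. (Your deduction of (\ref{conj:lwl}) is fine once one invokes, as the paper does, the strict compatibility of (\ref{ali:crixw}) with the weight filtrations coming from (\ref{theo:imp}).)
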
 
We prove this theorem by 
following but nontrivially correcting the method of Berthelot in \cite{bwl}. 
In the future we would like to prove that this conjecture is true in general. 
Note that because in \cite{nlpi} and \cite{nlw} 
we have proved that 
the log hard Lefschetz conjecture is true in the strict semistable 
cases in mixed characteristics and equal characteristic $p>0$,  
we can prove that the log weak Lefschetz conjecture is true 
in these important cases as a corollary. 
\par 
In \cite{nlpi} we have proved that 
the log hard Lefschetz conjecture is true 
in characteristic $0$ by using M.~Saito's result (\cite{sm}). 
As a corollary, we can prove that 
the log weak Lefschetz conjecture in characteristic $0$ is true. 
In this article we prove this theorem by an algebraic method 
as Deligne and Illusie have proved 
the $E_1$-degeneration of the Hodge-de Rham spectral sequence 
of a proper smooth scheme   
in characteristic 0 in \cite{di} by an algebraic method.

\par 
The contents of this article are as follows.  
\par 
Let $Z$ be a proper scheme over $\kap$.  
Let $q$ be a nonnegative integer. 
Assume that $H^q(Z,{\cal O}_Z)\simeq \kap$, 
that $H^{q+1}(Z,{\cal O}_Z)=0$ and 
that the $q$-th Artin-Mazur functor 
$\Phi^q_{Z/\kap}:=\Phi^q_{Z/\kap}({\mab G}_m)$ is pro-representable. 
Assume also 
that the Bockstein operator 
\begin{align*} 
\bet \col H^{q-1}(Z,{\cal O}_Z)\lo 
H^q(Z,{\cal W}_{n-1}({\cal O}_Z))
\end{align*} 
arising from the following exact sequence 
\begin{align*} 
0\lo {\cal W}_{n-1}({\cal O}_Z)\os{V}{\lo} {\cal W}_n({\cal O}_Z)
{\lo} {\cal O}_Z\lo 0
\end{align*} 
is zero for any $n\in {\mab Z}_{\geq 2}$. 
In \S\ref{sec:amf} we prove that the $q$-th Artin-Mazur height 
$h^q(Z/\kap)$ of $Z/\kap$ is equal to 
the minimum of positive integers $n$'s of the non-vanishing of 
the Frobenius endomorphism $F\col H^q(Z,{\cal W}_n({\cal O}_Z))
\lo H^q(Z,{\cal W}_n({\cal O}_Z))$ by 
imitating the proof in \cite{vgk} completely. 
(However we have needed a work to give this generalized statement.) 
Recently it has turned out that this characterization of $h^q(Z/\kap)$ also 
has two applications for the congruences of the cardinalities of 
rational points of (log) Calabi-Yau varieties 
over the log point of a finite field
(\cite{nlfano}) and for the fundamental inequality between Aritn-Mazur heights  
and a quasi-$F$-split height (\cite{nlfc}). 
\par 
In \S\ref{sec:dc} we prove that there exists 
the log version of Serre's exact sequence in \cite{smxco}
in an elementary but elegant way and 
calculate ${\rm dim}_{\kap}H^q(X,B_n\Om^1_{X/s})$ 
$(d-2\leq q \leq d)$ for $X/s$ in (\ref{theo:ny1}). 
\par
In \S\ref{sec:latv}, following the methods in \cite{ns} and \cite{sr} but 
modifying and generalizing them,  
we construct log deformation theory with relative Frobenius morphisms. 
Our new theory is a geometric key part for the proof of (\ref{theo:ny1}).   
This is the most complicated part in this article. 
In addition, we give an additional result for 
the deformation theory for log smooth schemes in \cite{klog1} (and \cite{kaf}), 
which is an important correction of the theory in [loc.~cit.], 
and we establish a relationship between these two deformation theories. 
\par 
In \S\ref{sec:app}, as applications of our deformation theory, 
we give another short proof of Kato's theorem (cf.~(\ref{theo:np})). 
We also prove (\ref{theo:ray}) by using Tsuji's log Serre duality in \cite{tsp}. 
As in \cite{di} we prove the log versions of the weak Lefschetz theorems for 
log de Rham cohomologies in characteristics $p>0$ and $0$. 
The proof of the weak Lefschetz theorem in the case characteristics $p>0$ 
includes an immediate correction of an elementary error in \cite{di}. 
Using this theorem in characteristics $p>0$, we prove (\ref{theo:qwl}). 
We also prove the log version of Berthelot's weak Lefschetz theorem 
and we fill a gap in the proof in \cite{bwl}. 
\par 
In \S\ref{sec:qfs} we give the notion of quasi-$F$-split schemes, which is 
the relative version of the notion of 
quasi-$F$-split varieties in \cite{y}. 
In this section we prove two fundamental theorems 
for quasi-$F$-split log schemes as in [loc.~cit.]: 
a lifting theorem and two vanishing theorems for them. 
The lifting theorem and one of the vanishing theorems 
are the relative and log versions of theorems in \cite{y} and \cite{yoh}. 
This vanishing theorem is a generalization of one of 
Mehta and Ramanathan's vanishing theorems in \cite{mr}.
We also prove another vanishing theorem(=(\ref{theo:stk})),  
which is a generalization of their another vanishing theorem in [loc.~cit.].
\par 
In \S\ref{sec:lif} we prove (\ref{theo:ny1}) by following the method in \cite{y} 
and by using results in \S\ref{sec:dc}$\sim$\S\ref{sec:qfs}.  
\par 
In \S\ref{sec:wlt} we give a short proof of 
the weak Lefschetz theorem for crystalline cohomologies of 
proper smooth schemes over $\kap$ due to 
Berthelot-Katz-Messing (\cite{kme}) by 
using theory of rigid cohomologies of Berthelot 
(\cite{brc}, \cite{bfi}, \cite{bd}). 

\bigskip
\parno
{\bf Acknowledgment.}
We greatly appreciate the referee 
for reading every part of this article unbelievably carefully  
and for pointing out a lot of notational errors 
in the previous version of this article with great patience. 
By his/her very help we can improve the proof of (\ref{theo:nex}) and 
we can give the precisely correct proof of (\ref{theo:ts}) (3). 
We also thank P.~Achinger very much 
for explaining the log version of the method in \cite{az} to us.

\bigskip
\par\noindent
{\bf Notations.} 
(1) For a commutative ring $A$ with unit element and 
two $A$-modules $M$ ($M$ has two distinct $A$-module structures)
and for $f\in {\rm Hom}_A(M,M)$, ${}_fM$ (resp.~$M/f$) denotes ${\rm Ker}(f\col M\lo M)$ 
(resp.~${\rm Coker}(f\col M\lo M)$). 
We use the same notation for an endomorphism of 
two ${\cal A}$-modules on a topological space, 
where ${\cal A}$ is a sheaf of commutative rings with unit elements 
on the topological space.

\par 
(2) For a log scheme $Z$ in the sense of 
Fontaine-Illusie-Kato (\cite{klog1}, \cite{klog2}), 
we denote by $\os{\circ}{Z}$ (resp.~$M_Z:=(M_Z,\al_Z)$)
the underlying scheme (resp.~the log structure) of $Z$. 
In this article we consider the log structure 
on the Zariski site on $\os{\circ}{Z}$.  
\par 
(3) For a morphism $f\col Z\lo T$ of log schemes, 
we denote by $\os{\circ}{f}\col \os{\circ}{Z}\lo \os{\circ}{T}$ 
the underlying morphism of schemes of $f$. 
\par
(4) For a morphism $Z\lo T$ of log schemes, 
we denote by $\Om^{\bul}_{Z/T}$ the log de Rham complex 
of $Z/T$ which was denoted by $\om^{\bul}_{Z/T}$ 
in \cite{klog1}. 
\bigskip
\par\noindent
{\bf Convention.} 
We omit the second ``log'' in the terminology a ``log smooth (integral) log scheme''. 

\section{The heights of Artin-Mazur formal groups 
of certain schemes}\label{sec:amf}
Let $\kap$ be a perfect field of characteristic $p>0$. 
Let ${\cal W}$ (resp.~${\cal W}_n$) be the Witt ring of $\kap$ 
(resp.~the Witt ring of $\kap$ of length $n>0$).  
Let $Y$ be a proper scheme over $\kap$. 
Let $q$ be a nonnegative integer. 
Assume that $H^q(Y,{\cal O}_Y)\simeq \kap$, $H^{q+1}(Y,{\cal O}_Y)=0$  
and 
that the Bockstein operator 
\begin{align*} 
\bet \col H^{q-1}(Y,{\cal O}_Y)\lo 
H^q(Y,{\cal W}_{n-1}({\cal O}_Y))
\end{align*} 
arising from the following exact sequence 
\begin{align*} 
0\lo {\cal W}_{n-1}({\cal O}_Y)\os{V}{\lo} {\cal W}_n({\cal O}_Y)
{\lo} {\cal O}_Y\lo 0
\end{align*} 
is zero for any $n\in {\mab Z}_{\geq 2}$. 
In this section we characterize the height of 
the $q$-th Artin-Mazur formal group of $Y/\kap$ (if it is pro-representable) 
by using the operator 
\begin{align*} 
F\col H^q(Y,{\cal W}_n({\cal O}_Y))\lo H^q(Y,{\cal W}_n({\cal O}_Y)) 
\quad (n\in {\mab Z}_{\geq 1}).
\end{align*}  
This is a generalization of a result of Katsura and Van der Geer (\cite{vgk}). 
Though they have proved this characterization for 
a Calabi-Yau variety over $\kap$, 
it is not necessary to assume this strong condition 
nor to assume even that $Y$ is smooth over $\kap$. 
Though the proof of our generalization is essentially the same as that of 
their result,  
we 
reprove our generalization 
because 
we would like to clarify  
how the assumptions above 
are necessary for the characterization. 
\par 
The following is easy to prove.

\begin{prop}\label{prop:ehe}
Let $g\col Z\lo S_0$ be a proper morphism of schemes of characteristic $p>0$.  
Let ${\cal W}_n({\cal O}_Z)$ $(n\in {\mab Z}_{\geq 1})$ be the sheaf of Witt rings of 
${\cal O}_Z$ of length $n$. 
Let $V\col {\cal W}_n({\cal O}_Z)\lo {\cal W}_{n+1}({\cal O}_Z)$ 
be the Verschiebung and 
let $F\col  {\cal W}_n({\cal O}_Z)\lo {\cal W}_n({\cal O}_Z)$ 
be the Frobenius operator. 
Let $R\col {\cal W}_n({\cal O}_Z)\lo {\cal W}_{n-1}({\cal O}_Z)$ 
be the projection. 
Let $q$ be a nonnegative integer. 
Assume that 
the Bockstein operator 
\begin{align*} 
\bet \col R^{q-1}g_*({\cal O}_Z)\lo 
R^qg_*({\cal W}_{n-1}({\cal O}_Z))
\tag{2.1.1}\label{ali:bos}
\end{align*} 
arising from the following exact sequence 
\begin{align*} 
0\lo {\cal W}_{n-1}({\cal O}_Z)\os{V}{\lo} {\cal W}_n({\cal O}_Z)
\os{R^{n-1}}{\lo} {\cal O}_Z\lo 0
\tag{2.1.2}\label{ali:fes}
\end{align*} 
of abelian sheaves on $Z$ is zero for any $n\in {\mab Z}_{\geq 2}$. 
Assume that $R^{q+1}g_*({\cal O}_Z)=0$. 
Then the following hold$:$
\par 
$(1)$ 
The following sequence 
\begin{align*} 
0\lo R^qg_*({\cal W}_{n-1}({\cal O}_Z))\os{V}{\lo} 
R^qg_*({\cal W}_n({\cal O}_Z))\os{R^{n-1}}{\lo} 
R^qg_*({\cal O}_Z)\lo 0
\tag{2.1.3}\label{ali:nov}
\end{align*} 
of abelian sheaves on $S_0$ is exact. 
Consequently, if the projective system 
$\{R^qg_*({\cal W}_n({\cal O}_Z))\}_{n=1}^{\infty}$ 
satisfies the Mittag-Leffler condition, 
then the following sequence 
\begin{align*} 
0\lo R^qg_*({\cal W}({\cal O}_Z))\os{V}{\lo} 
R^qg_*({\cal W}({\cal O}_Z)){\lo} 
R^qg_*({\cal O}_Z)\lo 0
\tag{2.1.4}\label{ali:fyes}
\end{align*} 
of abelian sheaves on $S_0$ 
is exact. 
\par 
$(2)$ 
Assume that $Z$ is reduced and that $S_0$ is perfect.  
Assume also that $g_*({\cal O}_Z)={\cal O}_{S_0}^{\oplus c}$ 
for some positive integer $c$. 
Then 
$g_*({\cal W}_n({\cal O}_Z)/F)$ is a 
subsheaf of 
$R^1g_*({\cal W}_n({\cal O}_Z))$ of 
${\cal W}_n({\cal O}_{S_0})$-modules. 
\par 
$(3)$ Let the notations be as in {\rm (2)}. 
Assume that $R^{q-1}g_*({\cal O}_Z)=0$ if $q\geq 2$ 
and that $R^{q-2}g_*({\cal O}_Z)=0$ if $q\geq 3$. 
If $q=2$, assume also that $g_*({\cal O}_Z)={\cal O}_{S_0}^{\oplus c}$ 
for some positive integer $c$. 
Then $R^{q-2}g_*({\cal W}_n({\cal O}_Z)/F)=0$. 
\par 
$(4)$ Let the assumptions be as in {\rm (2)} and {\rm (3)}. 
If $R^ig_*({\cal O}_Z)=0$ $(0<i<q)$, 
then $R^ig_*({\cal W}_n({\cal O}_Z)/F)=0$ 
for $0\leq i\leq q-2$. 
\end{prop}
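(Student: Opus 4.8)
The plan is to prove the vanishing by induction on $n$: the base case $n=1$ is treated by hand, using the Frobenius exact sequence, the hypothesis $R^ig_*({\cal O}_Z)=0$ $(0<i<q)$ and assertion $(2)$, while the inductive step is governed by a short exact sequence of Frobenius--cokernels extracted from (\ref{ali:fes}) by the snake lemma. Throughout I assume $q\geq 2$, since for $q\leq 1$ the range $0\leq i\leq q-2$ is empty. Recall that $F$ is the functorial Frobenius ${\cal W}_n(F_Z^*)$, i.e.\ the map raising the Witt components to the $p$-th power, so that ${\cal W}_n({\cal O}_Z)/F={\rm Coker}(F\col {\cal W}_n({\cal O}_Z)\lo F_{Z*}({\cal W}_n({\cal O}_Z)))$.

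First I would build the inductive device. Because the Verschiebung $V$ and the projection $R^{n-1}$ are natural transformations of the Witt functors, the functorial $F$ commutes with both; applying $F$ to (\ref{ali:fes}) therefore yields a morphism of short exact sequences from (\ref{ali:fes}) to the sequence obtained from it by applying $F_{Z*}$. Since $Z$ is reduced, every vertical arrow $F$ is injective, so the snake lemma collapses to the short exact sequence
\begin{align*}
0\lo {\cal W}_{n-1}({\cal O}_Z)/F\os{V}{\lo} {\cal W}_n({\cal O}_Z)/F\os{R^{n-1}}{\lo} {\cal O}_Z/F\lo 0
\tag{$*$}
\end{align*}
of abelian sheaves on $Z$, the connecting map vanishing because all three kernels are zero.

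Next comes the base case $n=1$, where ${\cal W}_1({\cal O}_Z)/F={\cal O}_Z/F$. Since $F_Z$ and $F_{S_0}$ are homeomorphisms and $g\circ F_Z=F_{S_0}\circ g$, one has $R^ig_*(F_{Z*}({\cal O}_Z))=F_{S_0*}R^ig_*({\cal O}_Z)$, which vanishes exactly when $R^ig_*({\cal O}_Z)$ does. For $1\leq i\leq q-2$ the long exact sequence of $0\lo {\cal O}_Z\os{F}{\lo} F_{Z*}({\cal O}_Z)\lo {\cal O}_Z/F\lo 0$ squeezes $R^ig_*({\cal O}_Z/F)$ between $R^ig_*(F_{Z*}({\cal O}_Z))=0$ (as $0<i<q$) and $R^{i+1}g_*({\cal O}_Z)=0$ (as $0<i+1<q$), so it vanishes. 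The degree $i=0$ is where this squeeze breaks; there I would invoke assertion $(2)$ with $n=1$, by which $g_*({\cal O}_Z/F)$ is a subsheaf of $R^1g_*({\cal O}_Z)=0$. Hence $R^ig_*({\cal O}_Z/F)=0$ for all $0\leq i\leq q-2$.

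Finally I would run the induction through the long exact sequence of $(*)$: in each degree $i$ the group $R^ig_*({\cal W}_n({\cal O}_Z)/F)$ lies between $R^ig_*({\cal W}_{n-1}({\cal O}_Z)/F)$ and $R^ig_*({\cal O}_Z/F)$, and for $0\leq i\leq q-2$ both outer groups vanish --- the former by the inductive hypothesis, the latter by the base case --- so the middle group vanishes, as claimed. I expect the only delicate points to be the construction of $(*)$, where it is essential that $F$ be the functorial Frobenius (the Witt Frobenius $\mathbf{F}$ satisfies $\mathbf{F}V=p$ and does not commute with $V$) and that its injectivity genuinely uses the reducedness of $Z$, together with the degree-zero vanishing $g_*({\cal O}_Z/F)=0$, which is imported from assertion $(2)$ (equivalently, from the surjectivity of $F$ on $g_*({\cal O}_Z)={\cal O}_{S_0}^{\oplus c}$, using that $S_0$ is perfect). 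Since assertion $(3)$ is precisely the top case $i=q-2$, the argument reproves and extends it, and all remaining steps are formal diagram chases.
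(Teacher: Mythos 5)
Your argument for part (4) is correct, but it takes a genuinely different route from the paper's. The paper never constructs your short exact sequence of Frobenius cokernels; instead it runs the induction on $n$ one stage earlier, at the level of the Witt sheaves themselves: from the long exact sequence (2.1.5) of (2.1.2) and the hypothesis $R^ig_*({\cal O}_Z)=0$ $(0<i<q)$ it first deduces $R^ig_*({\cal W}_n({\cal O}_Z))=0$ for all $0<i<q$ and all $n$, and then applies the long exact sequence (2.1.6) of the Frobenius sequence just once, at level $n$, squeezing $R^ig_*({\cal W}_n({\cal O}_Z)/F)$ between $R^ig_*({\cal W}_n({\cal O}_Z))=0$ and $R^{i+1}g_*({\cal W}_n({\cal O}_Z))=0$ for $0<i<q-1$, and quoting part (2) at level $n$ (together with $R^1g_*({\cal W}_n({\cal O}_Z))=0$) for $i=0$. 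You instead induct on the quotients ${\cal W}_n({\cal O}_Z)/F$ directly, through the snake-lemma sequence $0\to {\cal W}_{n-1}({\cal O}_Z)/F\to {\cal W}_n({\cal O}_Z)/F\to {\cal O}_Z/F\to 0$; this sequence is legitimate, exactly for the reasons you give (in characteristic $p$ the functorial $F$ is the componentwise $p$-th power, so it commutes with the shift $V$ and the truncation $R^{n-1}$, and reducedness of $Z$ makes the vertical $F$'s injective), and it appears nowhere in the paper. Your route buys two things: part (2) is needed only at $n=1$, where it is just surjectivity of Frobenius on $g_*({\cal O}_Z)={\cal O}_{S_0}^{\oplus c}$ over the perfect base, and, because your induction is carried out degree by degree, its $i=q-2$ slice uses only the vanishing of $R^{q-1}g_*({\cal O}_Z)$ and $R^{q-2}g_*({\cal O}_Z)$, so it genuinely recovers part (3) under that part's weaker hypotheses, as you claim. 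The paper's route is shorter --- no snake lemma is needed --- and its intermediate output $R^ig_*({\cal W}_n({\cal O}_Z))=0$ $(0<i<q)$ is the same mechanism its proof of (3) runs on. One remark on scope: the proposition has four parts and you prove only (4), taking (2) as given; that is acceptable here, since the parts are proved in order and (4) is stated under the assumptions of (1)--(3), but a complete write-up would still include the short arguments for (1)--(3).
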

\begin{proof} 
(1): 
Taking the long exact sequence of the exact sequence (\ref{ali:fes}), 
we have the following exact sequence 
\begin{align*} 
\cdots &\lo R^qg_*({\cal W}_{n-1}({\cal O}_Z))
\os{V}{\lo} R^qg_*({\cal W}_n({\cal O}_Z))\os{R^{n-1}}{\lo} 
R^qg_*({\cal O}_Z) \tag{2.1.5}\label{ali:fels}\\
&\lo R^{q+1}g_*({\cal W}_{n-1}({\cal O}_Z))\lo  \cdots.  
\end{align*} 
Hence $R^{q+1}g_*({\cal W}_n({\cal O}_Z))=0$.  
By the assumption, the morphism 
$V\col R^qg_*({\cal W}_{n-1}({\cal O}_Z))
\lo R^qg_*({\cal W}_n({\cal O}_Z))$ is injective. 
Hence we obtain the exact sequence (\ref{ali:nov}). 
Taking the projective limit of (\ref{ali:nov}), we obtain 
the exact sequence (\ref{ali:fyes}).
\par 
(2): Because $Z$ is reduced, the following sequence 
\begin{align*} 
0\lo {\cal W}_n({\cal O}_Z)\os{F}{\lo} {\cal W}_n({\cal O}_Z)\lo 
{\cal W}_n({\cal O}_Z)/F\lo 0
\end{align*} 
is exact. Taking the long exact sequence of this exact sequence, 
we have the following exact sequence 
\begin{align*} 
\cdots & \lo R^qg_*({\cal W}_n({\cal O}_Z))\os{F}{\lo} 
R^qg_*({\cal W}_n({\cal O}_Z))\lo 
R^qg_*({\cal W}_n({\cal O}_Z)/F)
\tag{2.1.6}\label{ali:fqels}\\
&\lo R^{q+1}g_*({\cal W}_n({\cal O}_Z))\lo \cdots.  
\end{align*} 
We claim that the following natural morphism 
\begin{align*} 
{\cal W}_n({\cal O}_{S_0})^{\oplus c}
={\cal W}_n({\cal O}_{S_0}^{\oplus c})\lo 
{\cal W}_n(g_*({\cal O}_Z))=
g_*({\cal W}_n({\cal O}_Z))
\end{align*} 
is an isomorphism. 
Indeed, assume that 
$g_*({\cal W}_{n-1}({\cal O}_Z))={\cal W}_{n-1}({\cal O}_{S_0})^{\oplus c}$. 
Then, by the following commutative diagram 
\begin{equation*} 
\begin{CD} 
0@>>> {\cal W}_{n-1}({\cal O}_{S_0})^{\oplus c} @>{V^{\oplus c}}>> 
{\cal W}_n({\cal O}_{S_0})^{\oplus c}@>{R^{n-1}}>> {\cal O}_{S_0}^{\oplus c}@>>> 0 \\ 
@. @| @VVV @| @. \\
0@>>> g_*({\cal W}_{n-1}({\cal O}_Z))@>{g_*(V)}>> 
g_*({\cal W}_n({\cal O}_Z))@>{g_*(R^{n-1})}>> g_*({\cal O}_Z)@>>> 0
\end{CD} 
\end{equation*}  
of exact sequences, we see that 
${\cal W}_n({\cal O}_{S_0})^{\oplus c}=g_*({\cal W}_n({\cal O}_Z))$. 
Because $F\col {\cal W}_n({\cal O}_{S_0})
\lo {\cal W}_n({\cal O}_{S_0})$ is bijective by the assumption, 
the morphism 
$g_*({\cal W}_n({\cal O}_Z)/F) \lo R^1g_*({\cal W}_n({\cal O}_Z))$ is injective. 
\par 
(3): By (\ref{ali:fels}) we easily see that 
$R^{q-1}g_*({\cal W}_n({\cal O}_Z))=0$ $(n\in {\mab Z}_{\geq 1})$ 
if $q\geq 2$. 
Hence we have the following exact sequence 
\begin{align*} 
R^{q-2}g_*({\cal W}_n({\cal O}_Z))\os{F}{\lo} 
R^{q-2}g_*({\cal W}_n({\cal O}_Z))\lo 
R^{q-2}g_*({\cal W}_n({\cal O}_Z)/F)\lo 0 \quad (q\geq 2). 
\end{align*} 
\par 
First assume that $q\geq 3$. 
Then $R^{q-2}g_*({\cal W}_n({\cal O}_Z))=0$.  
Hence $R^{q-2}g_*({\cal W}_n({\cal O}_Z)/F)=0$. 
\par 
Assume that $q=2$.  Then $R^1g_*({\cal W}_n({\cal O}_Z))=0$. 
Hence $g_*({\cal W}_n({\cal O}_Z)/F)=0$ by (2).  
\par 
If $q=1$, then $R^{q-2}g_*({\cal W}_n({\cal O}_Z)/F)$ 
obviously vanishes. 
\par 
(4): Because $R^ig_*({\cal O}_Z)=0$ $(0< i<q)$, 
$R^ig_*({\cal W}_n({\cal O}_Z))=0$ $(0< i<q)$ by (\ref{ali:fels}). 
Hence  $R^ig_*({\cal W}_n({\cal O}_Z)/F)=0$ 
$(0< i<q-1)$ by (\ref{ali:fqels}). By (2),  
$g_*({\cal W}_n({\cal O}_Z)/F)=0$. 
\end{proof}

\begin{coro}\label{coro:esa} 
Let the assumptions be as in {\rm (\ref{prop:ehe}) (1)}.  
Furthermore, assume 
that $R^qg_*({\cal O}_Z)$ is equal to a line bundle ${\cal L}$ on $S_0$. 
Then $R^qg_*({\cal W}({\cal O}_Z))/V={\cal L}$.
\end{coro}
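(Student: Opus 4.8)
The plan is to read the assertion directly off the exact sequence (\ref{ali:fyes}) produced in (\ref{prop:ehe}) (1). That sequence is
\[
0\lo R^qg_*({\cal W}({\cal O}_Z))\os{V}{\lo} R^qg_*({\cal W}({\cal O}_Z)){\lo} R^qg_*({\cal O}_Z)\lo 0,
\]
and by the notational convention $M/f=\mathrm{Coker}(f\col M\lo M)$, its exactness says precisely that the cokernel of the Verschiebung $V$ on $R^qg_*({\cal W}({\cal O}_Z))$ is $R^qg_*({\cal O}_Z)$. Under the extra hypothesis $R^qg_*({\cal O}_Z)={\cal L}$ this is exactly the claimed identity $R^qg_*({\cal W}({\cal O}_Z))/V={\cal L}$. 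So the entire content of the corollary is to guarantee that (\ref{ali:fyes}) is available, i.e.\ that the projective system $\{R^qg_*({\cal W}_n({\cal O}_Z))\}_{n}$ satisfies the Mittag--Leffler condition, which is the only hypothesis of (\ref{prop:ehe}) (1) beyond the standing assumptions.

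Thus the single step I need is to verify this Mittag--Leffler condition, and for this I would use the companion exact sequence of abelian sheaves on $Z$
\[
0\lo {\cal O}_Z\os{V^{n-1}}{\lo} {\cal W}_n({\cal O}_Z)\os{R}{\lo} {\cal W}_{n-1}({\cal O}_Z)\lo 0,
\]
whose transition map is the restriction $R$ (not $R^{n-1}$ as in (\ref{ali:fes})); here ${\cal O}_Z={\cal W}_1({\cal O}_Z)$ and $V^{n-1}$ is the iterated Verschiebung onto the top component, which is injective. Passing to the long exact sequence of $Rg_*$ and invoking the standing hypothesis $R^{q+1}g_*({\cal O}_Z)=0$, the connecting map $R^qg_*({\cal W}_{n-1}({\cal O}_Z))\lo R^{q+1}g_*({\cal O}_Z)$ vanishes, so the transition map $R\col R^qg_*({\cal W}_n({\cal O}_Z))\lo R^qg_*({\cal W}_{n-1}({\cal O}_Z))$ is surjective for every $n$. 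Surjectivity of the transition maps is the Mittag--Leffler condition in its strongest form, so (\ref{ali:fyes}) holds and the two steps together finish the proof.

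I expect no genuine obstacle here: the statement is a formal consequence of (\ref{prop:ehe}) (1), and the line-bundle hypothesis on $R^qg_*({\cal O}_Z)$ is used only to name the cokernel, the identity $R^qg_*({\cal W}({\cal O}_Z))/V=R^qg_*({\cal O}_Z)$ holding as ${\cal O}_{S_0}$-modules regardless of whether $R^qg_*({\cal O}_Z)$ is invertible. The one place where a short argument is genuinely needed is the Mittag--Leffler verification, and even that reduces to the surjectivity of the restriction maps, which is immediate from the vanishing $R^{q+1}g_*({\cal O}_Z)=0$ already imposed in (\ref{prop:ehe}).
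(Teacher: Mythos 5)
Your proposal is correct and follows the same route as the paper: the paper's entire proof of (\ref{coro:esa}) is the word ``Obvious,'' i.e., one reads the cokernel of $V$ directly off the exact sequence (\ref{ali:fyes}) and substitutes $R^qg_*({\cal O}_Z)={\cal L}$. The only thing you do beyond the paper is to verify the Mittag--Leffler hypothesis rather than regard it as part of ``the assumptions as in (\ref{prop:ehe}) (1)''; your verification is correct, since the exact sequence $0\lo {\cal O}_Z\os{V^{n-1}}{\lo}{\cal W}_n({\cal O}_Z)\os{R}{\lo}{\cal W}_{n-1}({\cal O}_Z)\lo 0$ of abelian sheaves together with the standing vanishing $R^{q+1}g_*({\cal O}_Z)=0$ makes every transition map $R\col R^qg_*({\cal W}_n({\cal O}_Z))\lo R^qg_*({\cal W}_{n-1}({\cal O}_Z))$ surjective, and surjective transition maps give Mittag--Leffler. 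This is a worthwhile addition, as it shows that under the hypotheses of (\ref{prop:ehe}) the Mittag--Leffler condition is automatic, so the corollary requires no hypothesis beyond those explicitly stated.
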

\begin{proof} 
Obvious. 
\end{proof}

\par 

\par 
Let ${\rm Art}_{\kap}$ be the category of artinian local $\kap$-algebras 
with residue fields $\kap$. 
Let us go back to the beginning of this section. 
Let $q$ be a nonnegative integer. 
Let $\Phi^q_{Y/{\kap}}\col {\rm Art}_{\kap}\lo ({\rm Ab})$ 
be the following  functor:  
for $A\in {\rm Art}_{\kap}$, set 
\begin{align*} 
\Phi^q_{Y/{\kap}}(A):={\rm Ker}
(H^q_{\rm et}(Y\otimes_{\kap}A,{\mab G}_m)
\lo H^q_{\rm et}(Y,{\mab G}_m))\in ({\rm Ab}). 
\end{align*}
By \cite[II (2.11)]{am}, $\Phi^q_{Y/{\kap}}$ 
is pro-represented by a formal group over $\kap$ 
if $\Phi^{q-1}_{Y/{\kap}}$ is formally smooth. 
($\Phi^1_{Y/{\kap}}$ is pro-represented by 
a formal group over $\kap$ \cite[(3.2)]{sch}.)
By \cite[II (4.3)]{am} the covariant Dieudnonn\'{e} module 
$D(\Phi^q_{Y/\kap})$ of $\Phi^q_{Y/\kap}$ is equal to $H^q(Y,{\cal W}({\cal O}_Y))$. 
Let $h^q(Y/\kap)$ be the height of $\Phi^q_{Y/\kap}$. 
If $H^{q+1}(Y,{\cal O}_Y)=0$, 
then $\Phi^q_{Y/\kap}$ is formally smooth over $\kap$. 
Moreover, if $H^q(Y,{\cal O}_Y)\simeq \kap$, then 
$\Phi^q_{Y/\kap}$ is a formal Lie group over $\kap$ of dimension 1 
and  $D(\Phi^q_{Y/\kap})$ is a free ${\cal W}$-module of rank 
$h^q(Y/\kap)$ if $h^q(Y/\kap)< \infty$ (\cite[V (28.3.10)]{ha}).

The following is a generalization of 
Katsura and Van der Geer's theorem 
(\cite[(5.1), (5.2), (16.4)]{vgk}).  

\begin{theo}[{\bf cf.~\cite[(5.1), (5.2), (16.4)]{vgk}}]\label{theo:nex} 
Let $Y$ be a proper scheme over $\kap$.
$($We do not assume that $Y$ is smooth over $\kap$.$)$ 
Let $q$ be a nonnegative integer. 
Assume that $H^q(Y,{\cal O}_Y)\simeq \kap$, that 
$H^{q+1}(Y,{\cal O}_Y)=0$ and 
that $\Phi^q_{Y/\kap}$ is pro-representable. 
Assume also that the Bockstein operator 
\begin{align*} 
\bet \col H^{q-1}(Y,{\cal O}_Y)\lo 
H^q(Y,{\cal W}_{n-1}({\cal O}_Y))
\tag{2.3.1}\label{ali:fbs}
\end{align*} 
arising from the following exact sequence 
\begin{align*} 
0\lo {\cal W}_{n-1}({\cal O}_Y)\os{V}{\lo} {\cal W}_n({\cal O}_Y)
\os{R^{n-1}}{\lo} {\cal O}_Y\lo 0
\end{align*} 
is zero for any $n\in {\mab Z}_{\geq 2}$. 
Let $n^q(Y)$ be the minimum of positive integers $n$'s  
such that 
$$F\col H^q(Y,{\cal W}_n({\cal O}_Y))\lo H^q(Y,{\cal W}_n({\cal O}_Y))$$ 
is not zero. 
$($If $F=0$ for all $n$, then set $n^q(Y):=\infty.)$ 
Then $h^q(Y/\kap)=n^q(Y)$. 
\end{theo}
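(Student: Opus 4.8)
The plan is to reformulate the entire statement inside the Dieudonn\'{e} module of $\Phi^q_{Y/\kap}$ and to read off the height from the action of the Verschiebung $V$ and the Frobenius $F$ on the $V$-adic filtration. By \cite[II (4.3)]{am} the Dieudonn\'{e} module of $\Phi^q_{Y/\kap}$ is $M:=H^q(Y,{\cal W}({\cal O}_Y))$, and under the hypotheses $H^q(Y,{\cal O}_Y)\simeq\kap$ and $H^{q+1}(Y,{\cal O}_Y)=0$ this is the Dieudonn\'{e} module of a one-dimensional formal Lie group, free of rank $h:=h^q(Y/\kap)$ over ${\cal W}$ when $h<\inf$. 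Writing $M_n:=H^q(Y,{\cal W}_n({\cal O}_Y))$, so that $F$ in the statement is the endomorphism of $M_n$ induced by the Witt Frobenius, the goal $h^q(Y/\kap)=n^q(Y)$ becomes the assertion that $h$ is the least $n$ for which $F\not=0$ on $M_n$.

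First I would identify $M_n=M/V^nM$. Applying (\ref{prop:ehe}) (1) to $g\col Y\lo {\rm Spec}(\kap)$ (here $S_0={\rm Spec}(\kap)$ is reduced and $R^qg_*=H^q(Y,-)$), the Bockstein-vanishing hypothesis produces the short exact sequences (\ref{ali:nov}); by induction from $M_1=H^q(Y,{\cal O}_Y)\simeq\kap$ these give $\dim_{\kap}M_n=n$ for all $n$, and the case $n=1$ of the sheaf sequence $0\lo {\cal W}({\cal O}_Y)\os{V}{\lo}{\cal W}({\cal O}_Y)\lo {\cal O}_Y\lo 0$ gives $M/VM\simeq H^q(Y,{\cal O}_Y)\simeq\kap$, reflecting that the formal group has dimension $1$. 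Using the vanishing $H^{q+1}(Y,{\cal W}_n({\cal O}_Y))=0$ recorded in (\ref{prop:ehe}) (1) (so that $\{M_n\}_n$ satisfies the Mittag-Leffler condition and $H^{q+1}(Y,{\cal W}({\cal O}_Y))=0$), the long exact sequence attached to $0\lo {\cal W}({\cal O}_Y)\os{V^n}{\lo}{\cal W}({\cal O}_Y)\lo {\cal W}_n({\cal O}_Y)\lo 0$ yields the canonical identification $M_n=M/V^nM$. In particular $V$ is topologically nilpotent on $M$ and each graded piece $V^kM/V^{k+1}M$ of the $V$-adic filtration is one-dimensional, so $V^aM\sus V^bM$ if and only if $a\geq b$.

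Next I would pin down the interaction of $F$ with this filtration, which is exactly where $h$ enters. The decisive relation is $FM=V^{h-1}M$ as submodules of $M$, equivalently $V^hM=pM$: given $V^hM=pM$ and the identities $FV=VF=p$, one has $FM=pV^{-1}M=V^{-1}(V^hM)=V^{h-1}M$, and $V(FM)=pM$ conversely. To establish $V^hM=pM$ I would reduce to $\kap=\ol{\kap}$ through the faithfully flat base change $\kap\lo\ol{\kap}$, which tensors each $M_n$ with ${\cal W}_n(\ol{\kap})$, leaves $\dim_{\kap}M_n$ and $h$ unchanged, and detects the (non)vanishing of $F$ on $M_n$; over $\ol{\kap}$ the one-dimensional formal group of finite height $h$ is isomorphic to the standard one, whose Dieudonn\'{e} module is $\bigoplus_{i=0}^{h-1}{\cal W}e_i$ with $Ve_i=e_{i+1}$ for $i<h-1$ and $Ve_{h-1}=pe_0$, so that $V^h=p$ and the relation holds by inspection.

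Finally I would conclude: since the graded pieces of the $V$-adic filtration are one-dimensional, $F$ annihilates $M_n=M/V^nM$ precisely when $FM=V^{h-1}M\sus V^nM$, that is, precisely when $n\leq h-1$. Hence the least $n$ with $F\not=0$ on $M_n$ is $h$, giving $n^q(Y)=h=h^q(Y/\kap)$. When $h=\inf$ the formal group is $\wh{{\mab G}}_a$, on whose Dieudonn\'{e} module $F=0$; then $F=0$ on every $M_n$ and $n^q(Y)=\inf=h^q(Y/\kap)$ as well. The main obstacle is the third step: proving $V^hM=pM$ cleanly and checking that both the dimension count $M_n=M/V^nM$ and the (non)vanishing of $F$ survive the reduction to $\ol{\kap}$ and the truncation, since it is exactly here that the Bockstein-vanishing hypothesis, the vanishing $H^{q+1}(Y,{\cal O}_Y)=0$, and the one-dimensionality $H^q(Y,{\cal O}_Y)\simeq\kap$ are genuinely used.
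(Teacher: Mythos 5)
Your proof is correct, and it reorganizes the argument rather than reproducing the paper's. The shared core is the same: both proofs reduce to algebraically closed $\kappa$ via Illusie's flat base change ${\cal W}_n({\cal O}_{Y\otimes_{\kappa}\kappa'})={\cal W}_n({\cal O}_Y)\otimes_{{\cal W}_n}{\cal W}_n(\kappa')$, and both rest on the structure of the Dieudonn\'{e} module of a one-dimensional formal group of height $h$; your basis $(e_i)$ with $Ve_i=e_{i+1}$, $Ve_{h-1}=pe_0$ is exactly the module $D(\kappa)/D(\kappa)(F-V^{h-1})$ that the paper quotes from \cite{vgk}, and your relation $FM=V^{h-1}M$ is the same pivotal identity. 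What you do differently is to prove the identification $M_n\simeq M/V^nM$ once and for all (via the sequence $0\to {\cal W}({\cal O}_Y)\overset{V^n}{\to}{\cal W}({\cal O}_Y)\to{\cal W}_n({\cal O}_Y)\to 0$, the vanishing $H^{q+1}(Y,{\cal W}_n({\cal O}_Y))=0$ and a Mittag-Leffler/$\varprojlim^1$ argument), after which both implications fall out of the single criterion: $F=0$ on $M_n$ $\Leftrightarrow$ $FM=V^{h-1}M\subset V^nM$ $\Leftrightarrow$ $n\leq h-1$. The paper never forms $M/V^nM$: it proves that $F=0$ on $M_n$ forces $n\leq h-1$ by a dimension count, using the $p$-torsion-freeness of $M$, the exact sequence $0\to M/F\overset{V}{\to}M/p\to M/V\to 0$ and (\ref{coro:esa}) to get $\dim_{\kappa}(M/F)=h-1$, together with the surjection $M/F\to M_n/F=M_n$; and it proves the converse by pushing the filtration $V^{h-1}M\subset\cdots\subset M$ onto $M_{h-1}$ and counting lengths there. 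Each route buys something: yours is uniform and makes the role of the $V$-adic filtration completely transparent, while the paper's dimension-count direction needs neither the passage to $\overline{\kappa}$ nor any inverse-limit bookkeeping beyond (\ref{prop:ehe}) (1). Two small points of care in your write-up: the Mittag-Leffler property of $\{M_n\}_n$ comes from the surjectivity of $R\colon M_n\to M_{n-1}$, which follows from $H^{q+1}(Y,{\cal O}_Y)=0$ and the sequence $0\to{\cal O}_Y\overset{V^{n-1}}{\to}{\cal W}_n({\cal O}_Y)\overset{R}{\to}{\cal W}_{n-1}({\cal O}_Y)\to 0$, not directly from $H^{q+1}(Y,{\cal W}_n({\cal O}_Y))=0$; and in the step ``$FM=pV^{-1}M$'' one should argue that $V(FM)=pM=V^hM$ and invoke the injectivity of $V$, as you implicitly do.
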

\begin{proof} 
(Though the proof is essentially the same as that of \cite[(5.1)]{vgk} 
as stated in the beginning of this section, 
we reproduce the proof because the setting of 
(\ref{theo:nex}) is considerably more general than that in [loc.~cit.].)
Set $h:=h^q(Y/\kap)$, $M:=H^q(Y,{\cal W}({\cal O}_Y))$ and 
$M_n:=H^q(Y,{\cal W}_n({\cal O}_Y))$. 
It suffices to prove that $h-1\geq n$ if and only if the morphism 
$F\col M_n\lo M_n$ is zero. 
\par 
By (\ref{ali:nov}) we see that ${\rm length}_{{\cal W}_n}(M_n)=n$. 
First we prove the implication ``if''-part. 
If $h=\infty$, then the implication is obvious. 
Hence we may assume that $h<\infty$. 
Since $M=D(\Phi^q_{Y/\kap})$ 
is $p$-torsion free, the following sequence 
\begin{align*} 
0\lo M/F\os{V}{\lo} M/p\lo M/V\lo 0
\end{align*}
of abelian groups is exact. 
Let $\sig \col \kap \lo \kap$ be the $p$-th power map. 
Since $V(\sig(a)\cdot x)=aV(x)$ $(a\in \kap, x\in M/F)$ and $\sig\in {\rm Aut}(\kap)$, 
we have the following exact sequence 
\begin{align*} 
0\lo \sig_*(M/F)\os{V}{\lo} M/p\lo M/V\lo 0
\end{align*} 
of $\kap$-vector spaces. 
Hence 
\begin{align*} 
{\rm dim}_{\kap}(M/F)={\rm dim}_{\kap}(\sig_*(M/F))
={\rm dim}_{\kap}(M/p)-{\rm dim}_{\kap}(M/V)=h-1
\end{align*} 
by (\ref{coro:esa}). The surjective morphism 
$H^q(Y,{\cal W}({\cal O}_Y))\lo H^q(Y,{\cal W}_n({\cal O}_Y))$ 
((\ref{ali:fyes}))
induces a surjective morphism 
$M/F\lo M_n/F=M_n$. 
Because ${\rm dim}_{\kap}M_n=n$ by (\ref{ali:nov}), 
we obtain the inequality $h-1\geq n$. 
\par 
Next we prove the converse implication. 
(In \cite{vgk} $\kap$ is assumed to be algebraically closed; 
it is not necessary to assume this.) 
\par 
Let $\kap \lo \kap'$ be a morphism of perfect fields.  
Set $Y':=Y\otimes_{\kap}\kap'$. 
In the proof of \cite[I (1.9.2)]{idw}, Illusie has proved 
that ${\cal W}_n({\cal O}_{Y'})
={\cal W}_n({\cal O}_Y)\otimes_{{\cal W}_n}{\cal W}_n(\kap')$. 
Since the morphism 
${\cal W}_n\lo {\cal W}_n(\kap')$ is flat, 
$H^i(Y,{\cal W}_n({\cal O}_{Y'}))=
H^i(Y,{\cal W}_n({\cal O}_Y))\otimes_{{\cal W}_n}{\cal W}_n(\kap')$. 
Let $\ol{\kap}$ be an algebraic closure of $\kap$. 
Since the morphism ${\cal W}_n\lo {\cal W}_n(\ol{\kap})$ is faithfully flat, 
we may assume that $\kap$ is algebraically closed. 
If $h=\infty$, then $F=0$ on $M=D(\Phi_{Y/\kap})=D(\wh{\mab G}_a)$. 
Hence $F=0$ on $M_n$ for all $n$. 
We may assume that $h< \infty$.  
Let $D(\kap)$ be the Cartier-Dieudonn\'{e} algebra over $\kap$.  
As explained in \cite[p.~266]{vgk}, 
$M=D(\Phi^q_{Y/\kap})\simeq D(\kap)/D(\kap)(F-V^{h-1})$. 
(In [loc.~cit.] $D(\kap)$ has been denoted by ${\cal W}[F,V]$; 
this is misleading.) 
\par 
(The following argument is due to the referee.)
It is easy to see that $V^{h-1}M=FM$ as in [loc.~cit.]. 
Consider the composite morphism 
$V^{h-1}M=FM\os{\subset}{\lo} M\os{{\rm proj}.}{\lo} M_{h-1}$. 
Obviously this composite morphism is a zero morphism, while 
the image of this morphism is equal to $FM_{h-1}$ since the following diagram 
\begin{equation*} 
\begin{CD}
M@>{F}>>M\\
@VVV @VVV \\
M_{h-1}@>{F}>>M_{h-1}
\end{CD}
\end{equation*} 
is commutative and since the morphism $M\lo M_{h-1}$ is surjective. 
Hence $FM_{h-1}=0$ and $F=0$ on $M_n$ for all $n\leq h-1$. 
\end{proof}

The following is a generalization of \cite[(5.6)]{vgk}: 
\begin{coro}[{\bf cf.~\cite[(5.6)]{vgk}}]\label{coro:dim}
Set ${}_FH^q(Y,{\cal W}_n({\cal O}_Y))
:={\rm Ker}(F\col H^q(Y,{\cal W}_n({\cal O}_Y))
\lo H^q(Y,{\cal W}_n({\cal O}_Y)))$. 
Then 
\begin{align*} 
{\rm dim}_{\kap}({}_FH^q(Y,{\cal W}_n({\cal O}_Y)))=
{\rm min}\{n, h^q(Y/\kap)-1\}.
\tag{2.4.1}\label{ali:fdwy}
\end{align*} 
Consequently 
\begin{align*} 
{\rm dim}_{\kap}(H^q(Y,{\cal W}_n({\cal O}_Y))/F)
={\rm min}\{n, h^q(Y/\kap)-1\}. 
\tag{2.4.2}\label{ali:fdwhy}
\end{align*} 
\end{coro}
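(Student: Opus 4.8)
The plan is to deduce both displayed formulas from the facts already assembled in the proof of (\ref{theo:nex}), the decisive structural input being the identity $FM=V^{h-1}M$ (with $M:=H^q(Y,{\cal W}({\cal O}_Y))$ and $h:=h^q(Y/\kap)$) together with an analysis of the $V$-adic filtration on $M$. First I would reduce (\ref{ali:fdwhy}) to (\ref{ali:fdwy}). Writing $M_n:=H^q(Y,{\cal W}_n({\cal O}_Y))$, the map $F\col M_n\lo M_n$ is $\sig$-semilinear for the $p$-th power map $\sig$ on $\kap$; since $\kap$ is perfect, $\sig$ is bijective, so the image of $F$ is a $\kap$-subspace and the semilinear rank--nullity identity ${\rm dim}_{\kap}\ker(F)={\rm dim}_{\kap}{\rm coker}(F)$ holds on the finite-dimensional space $M_n$. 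Hence ${\rm dim}_{\kap}(M_n/F)={\rm dim}_{\kap}({}_FM_n)$, and it remains only to prove (\ref{ali:fdwy}).

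Next I would record the linear-algebra data on $M$. From (\ref{ali:fyes}) the map $V\col M\lo M$ is injective and $M/VM\simeq H^q(Y,{\cal O}_Y)\simeq \kap$ is one-dimensional. The injectivity of $V$ shows that $V$ induces an isomorphism $M/VM\os{\sim}{\lo}V^iM/V^{i+1}M$ for every $i\geq 0$ (if $Vx\in V^{i+1}M$ then $x\in V^iM$), so every graded piece of the $V$-adic filtration is one-dimensional over $\kap$ and ${\rm dim}_{\kap}(M/V^nM)=n$. Moreover the transition maps $R\col M_{m+1}\lo M_m$ are surjective, because in the long exact sequence attached to $0\lo {\cal O}_Y\os{V^m}{\lo}{\cal W}_{m+1}({\cal O}_Y)\os{R}{\lo}{\cal W}_m({\cal O}_Y)\lo 0$ the next term $H^{q+1}(Y,{\cal O}_Y)$ vanishes by hypothesis. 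Therefore the projection $\pi\col M\lo M_n$ is surjective; its kernel contains $V^nM$, and since ${\rm dim}_{\kap}M_n=n$ by (\ref{ali:nov}) the induced surjection $M/V^nM\lo M_n$ is a dimension-preserving isomorphism. This identifies $M_n=M/V^nM$ compatibly with $F$, $V$ and $R$, because $\pi$ is a morphism of Witt-module cohomologies.

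With these identifications the computation is immediate. If $n\leq h-1$ (in particular whenever $h=\infty$), then $F=0$ on $M_n$ by (\ref{theo:nex}), so ${}_FM_n=M_n$ and ${\rm dim}_{\kap}({}_FM_n)=n={\rm min}\{n,h-1\}$. If $n\geq h$, then $h<\infty$ and $V^nM\sus V^{h-1}M$; since $\pi$ commutes with $F$ and $FM=V^{h-1}M$, the image of $F$ on $M_n=M/V^nM$ equals $\pi(FM)=\pi(V^{h-1}M)=V^{h-1}M/V^nM$, whose $\kap$-dimension is $n-(h-1)$ by the filtration computation above. Rank--nullity then gives ${\rm dim}_{\kap}({}_FM_n)=n-(n-(h-1))=h-1={\rm min}\{n,h-1\}$, completing the proof of (\ref{ali:fdwy}).

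The hard part is not any single calculation but setting up the two compatibilities correctly: the $F$-equivariant identification $M_n\simeq M/V^nM$ (which is what lets me transport $FM=V^{h-1}M$ into a statement about the image of $F$ on $M_n$) and the one-dimensionality of the graded pieces $V^iM/V^{i+1}M$. Both rest on the injectivity of $V$ and the vanishing $H^{q+1}(Y,{\cal O}_Y)=0$, which are exactly the outputs of the Bockstein-vanishing hypotheses used earlier; once they are in place the argument is bookkeeping with the filtration. I would note as a sanity check that $F$ descends to $M/V^nM$ automatically, consistently with $FV^nM=pV^{n-1}M=V^{n-1}(pM)\sus V^nM$, which follows from the Dieudonn\'e relation $FV=p=VF$.
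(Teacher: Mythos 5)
Your proof is correct in substance and, in the decisive case $n\geq h$, takes a genuinely different route from the paper's. After the same preliminaries (base change to an algebraic closure, and the observation that $F=0$ on $M_n$ for $n\leq h-1$, which settles that range including $h=\infty$), the paper proves (\ref{ali:fdwy}) for $n\geq h$ by exhibiting an explicit basis of the kernel: writing $\{\omega,V(\omega),\ldots,V^{h-1}(\omega)\}$ for a ${\cal W}$-basis of $M$ and $\overline{\omega}$ for the image of $\omega$ in $M_n$, it shows that $\{V^{n-1}R^{n-1}(\overline{\omega}),\ldots,V^{n-(h-1)}R^{n-(h-1)}(\overline{\omega})\}$ is a basis of ${}_FM_n$, by induction on $n$ using the injectivity of $V\colon M_n\to M_{n+1}$ and the relation $FV=VF$; it then deduces (\ref{ali:fdwhy}) from the four-term exact sequence $0\to{\rm Ker}(F)\to M_n\to\sigma_*(M_n)\to{\rm Coker}(F)\to 0$, which is exactly your semilinear rank--nullity. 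You avoid the inductive basis construction entirely: you establish the $F$-equivariant identification $M_n\cong M/V^nM$ (surjectivity of the transition maps plus the length count from (\ref{ali:nov})), push the single structural identity $FM=V^{h-1}M$ through it to get ${\rm Im}(F~{\rm on}~M_n)=V^{h-1}M/V^nM$, and conclude by rank--nullity. Your version is cleaner bookkeeping and isolates the one essential input; the paper's version yields an explicit basis of ${}_FM_n$, which is strictly more information than the dimension count.

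Two points must be made explicit for your argument to be complete. First, $FM=V^{h-1}M$ rests on the presentation $M\simeq D(\kappa)/D(\kappa)(F-V^{h-1})$, which is available only over an algebraically closed field; so, exactly as in the paper, you should begin by performing the flat base change ${\cal W}_n\to{\cal W}_n(\overline{\kappa})$ used in the proof of (\ref{theo:nex}) and note that the quantities in (\ref{ali:fdwy}) are unchanged by it, rather than leaving this buried in the phrase "facts already assembled." Second, for $n>h$ the modules $M_n\cong M/V^nM$ and $V^{h-1}M/V^nM$ are not killed by $p$ (since $pM=VFM=V^hM$), hence are ${\cal W}_n$-modules rather than $\kappa$-vector spaces; your "$\dim_\kappa$" of these intermediate objects should be length over ${\cal W}_n$. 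The statement of the corollary itself is unaffected, because ${}_FM_n$ and $M_n/F$ are killed by $p$ (if $Fx=0$ then $px=VFx=0$, and $pM_n=FVM_n\subset FM_n$), so for them length and $\kappa$-dimension agree. With these adjustments your computation goes through verbatim; the same length-versus-dimension looseness occurs in the paper, but your proof leans on it more heavily, since your entire argument is a length count on modules that genuinely fail to be $\kappa$-vector spaces.
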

\begin{proof}
Let the notations be as in the proof of (\ref{theo:nex}).  
As in the proof of (\ref{theo:nex}), 
we may assume that $\kap$ is algebraically closed. 
We may assume that $h<\infty$. 
If $n\leq h-1$, then $F=0$ on $M_n$. 
Hence ${\rm Ker}(F\col M_n\lo M_n)=M_n$ and 
this is an $n$-dimensional vector space over $\kap$. 
\par 
Assume that $n\geq h$.
Because $M=D(\Phi^q_{Y/\kap})\simeq D(\kap)/D(\kap)(F-V^{h-1})$, 
there exists an element $\om \in M$ such that 
$\{\om, V(\om),\ldots, V^{h-1}(\om)\}$ is 
a basis of  $M$ over ${\cal W}$. 
Let $\ol{\om}$ be the image of $\om$ in $M_n$. 
Let $R\col M_m\lo M_{m-1}$ 
$(m\geq 2)$ be the induced morphism by the projection 
$R\col {\cal W}_m({\cal O}_Y)\lo {\cal W}_{m-1}({\cal O}_Y)$. 
Then we claim that 
$$\{V^{n-1}R^{n-1}(\ol{\om}), \ldots, V^{n-(h-1)}R^{n-(h-1)}(\ol{\om})\}$$  
is a basis of ${}_FM_n$.  
Indeed, this follows from the consideration in the case $n=h$ and induction on $n$ 
(by using the injectivity of the morphism $V\col M_n\lo M_{n+1}$) 
and the relation $FV=VF$. 
The claim implies (\ref{ali:fdwy}). 
\par 
The equality (\ref{ali:fdwhy}) follows from the following exact sequence 
\begin{align*} 
0\lo {\rm Ker}(F)\lo M_n\os{F}{\lo} \sig_*(M_n)\lo {\rm Coker}(F)\lo 0. 
\end{align*} 
(Note that, since $\kap$ is perfect, 
${\rm length}_{{\cal W}_n}(M_n)={\rm length}_{{\cal W}_n}(\sig_*(M_n))$.
\end{proof} 


\begin{rema}\label{rema:gend} 
We can generalize a part of (\ref{theo:nex}) as follows. 
\par 
Assume that $H^d(Y,{\cal O}_Y)\simeq \kap^m$ for a positive integer $m$ instead of the assumption 
$H^d(Y,{\cal O}_Y)\simeq \kap$ 
and the operator $F\col 
H^d(Y,{\cal W}_n({\cal O}_Y))\lo 
H^d(Y,{\cal W}_n({\cal O}_Y))$ is zero. 
Then $n\leq m^{-1}h^q(Y/\kap)-1$. 
The proof of this fact is the same as that of 
a part of the proof of (\ref{theo:nex}). 
\end{rema}

\section{The dimensions of cohomologies of closed differential forms}\label{sec:dc} 
Let $S_0$ be a fine log scheme of characteristic $p>0$. 
Let $F_{S_0}\col S_0\lo S_0$ be the Frobenius endomorphism of $S_0$.  
Let $Y$ be a log smooth scheme of Cartier type over $S_0$. 
Let $g\col Y\lo S_0$ be the structural morphism. 
Set $Y':=Y\times_{S_0,F_{S_0}}S_0$. 
Let $W\col Y'\lo Y$ be the projection and 
let $F\col Y\lo Y'$ be the relative Frobenius morphism over $S_0$. 
First recall the log inverse Cartier isomorphism due to Kato 
(\cite[(4.12) (1)]{klog1}). 
It is the following isomorphism 
of sheaves of ${\cal O}_{Y'}$-modules: 
\begin{align*} 
C^{-1}\col \Om^i_{Y'/S_0}
\os{\sim}{\lo} F_*({\cal H}^i(\Om^{\bul}_{Y/S_0})). 
\tag{3.0.1}\label{ali:oxs}
\end{align*} 
Consider the case $i=0$ in (\ref{ali:oxs}). 
Then $C^{-1}\col {\cal O}_{Y'}\os{\sim}{\lo} F_*({\cal H}^0(\Om^{\bul}_{Y/S_0}))$ 
is the following isomorphism 
\begin{align*} 
{\cal O}_{Y'}\owns  a \lom F^*(a)\in F_*({\cal H}^0(\Om^{\bul}_{Y/S_0})).
\tag{3.0.2}\label{ali:cyfis}
\end{align*} 
In particular, the following composite morphism 
\begin{align*} 
{\cal O}_{Y'}\os{\sim}{\lo} F_*({\cal H}^0(\Om^{\bul}_{Y/S_0}))\os{\sus}{\lo} F_*({\cal O}_Y)
\tag{3.0.3}\label{ali:cis}
\end{align*} 
is injective.

\begin{rema}\label{rema:rds}  
Assume that $\os{\circ}{S}_0$ is reduced. 
Then $F_{S_0}$ induces an injective morphism 
$F_{S_0}^*\col {\cal O}_{S_0}\lo F_{S_0*}({\cal O}_{S_0})$. 
By \cite[(4.5)]{klog1} the structural morphism $\os{\circ}{Y}\lo \os{\circ}{S}_0$ is flat. 
Hence the natural morphism 
${\cal O}_{Y}\lo W_*({\cal O}_{Y'})$ is injective. 
Because the composite morphism of this morphism and 
$W_*((\ref{ali:cis}))$ is the $p$-th power endomorphism of ${\cal O}_Y$,  
$\os{\circ}{Y}$ is reduced (cf.~\cite[(2.3.2)]{s1}, 
Tsuji's result (\ref{prop:ktj}) below). 
\end{rema} 

The following is Tsuji's result (\cite{tsa}), which will be used in later sections. 

\begin{prop}[{\bf \cite[II (2.11) (1), (2.11) (2), (2.13) (1), (2.13) (2), (2.14), (4.2)]{tsa}}]\label{prop:ktj} 
The following hold$:$
\par 
$(1)$ The composite morphism of two saturated morphisms of integral log schemes is saturated. 
\par 
$(2)$ The saturated morphisms of 
integral log schemes are stable 
under the base change of integral log schemes. 
\par 
$(3)$ Let $g\col Y\lo Z$ be an integral morphism of 
$($fine$)$ saturated log schemes. 
Then $g$ is saturated if and only if the base change 
$Y'$ of $Y$ with respect to any morphism $Z'\lo Z$ from 
any $($fine$)$ saturated log scheme are saturated. 
\par 
$(4)$ Let $g\col Y\lo Z$ be a morphism of integral  
log schemes in characteristic $p>0$. 
Then $g$ is $p$-saturated 
if and only if $g$ is of Cartier type. 
\par 
$(5)$ Let $g\col Y\lo Z$ be a log smooth integral morphism of fs 
log schemes.
Then $g$ is saturated 
if and only if every fiber of $\os{\circ}{g}$ is reduced. 
\end{prop}

\begin{prop}\label{prop:ee}
Set $B\Om^1_{Y/S_0}:={\rm Im}(d\col {\cal O}_Y\lo \Om^1_{Y/S_0})$. 
Then the following sequence 
\begin{align*} 
0\lo {\cal O}_{Y'}\os{F^*}{\lo} F_*({\cal O}_Y)\os{F_*(d)}{\lo} 
F_*(B\Om^1_{Y/S_0})\lo 0
\tag{3.3.1}\label{ali:cifes}
\end{align*} 
of ${\cal O}_{Y'}$-modules is exact.
\end{prop}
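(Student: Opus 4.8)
The plan is to deduce everything from the degree-zero log Cartier isomorphism (\ref{ali:oxs}) together with the exactness of the pushforward $F_*$ along the relative Frobenius. Since the underlying morphism $\os{\circ}{F}\col \os{\circ}{Y}\lo \os{\circ}{Y'}$ is a homeomorphism of topological spaces (indeed $\os{\circ}{F}_{Y}=\os{\circ}{W}\circ \os{\circ}{F}$ with $\os{\circ}{F}_Y$ and $\os{\circ}{W}$ homeomorphisms, as $\os{\circ}{W}$ is the base change of the homeomorphism $F_{S_0}$), the functor $F_*$ is exact and in particular commutes with the formation of kernels and images. Because exactness of a sequence of ${\cal O}_{Y'}$-modules may be tested on the underlying sheaves of abelian groups, I would first establish exactness as abelian sheaves and then record the ${\cal O}_{Y'}$-linearity of the maps separately.

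First I would treat the right-hand surjection. By the very definition $B\Om^1_{Y/S_0}={\rm Im}(d\col {\cal O}_Y\lo \Om^1_{Y/S_0})$, the morphism $d\col {\cal O}_Y\lo B\Om^1_{Y/S_0}$ is an epimorphism of sheaves on $\os{\circ}{Y}$; applying the exact functor $F_*$ shows $F_*(d)\col F_*({\cal O}_Y)\lo F_*(B\Om^1_{Y/S_0})$ is surjective. The left-hand injectivity is nothing but the injectivity of the composite (\ref{ali:cis}), which has already been recorded; here one uses that the map $F^*$ of the statement is precisely this composite, namely the isomorphism $C^{-1}$ of (\ref{ali:cyfis}) followed by the inclusion $F_*({\cal H}^0(\Om^{\bul}_{Y/S_0}))\os{\sus}{\lo} F_*({\cal O}_Y)$.

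The heart of the argument is exactness in the middle, and the degree-zero Cartier isomorphism does all the work. On one hand $\ker(d\col {\cal O}_Y\lo \Om^1_{Y/S_0})={\cal H}^0(\Om^{\bul}_{Y/S_0})$, so the exactness of $F_*$ gives $\ker(F_*(d))=F_*({\cal H}^0(\Om^{\bul}_{Y/S_0}))$. On the other hand, (\ref{ali:oxs}) with $i=0$ is an isomorphism $C^{-1}\col {\cal O}_{Y'}\os{\sim}{\lo} F_*({\cal H}^0(\Om^{\bul}_{Y/S_0}))$, and by (\ref{ali:cyfis}) this isomorphism is exactly the map underlying $F^*$. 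Hence ${\rm Im}(F^*)=F_*({\cal H}^0(\Om^{\bul}_{Y/S_0}))=\ker(F_*(d))$, which is the asserted exactness.

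What then remains is only bookkeeping: one checks that the three arrows are ${\cal O}_{Y'}$-linear, so that (\ref{ali:cifes}) is genuinely a sequence of ${\cal O}_{Y'}$-modules. The only nonformal point is the linearity of $F_*(d)$, which amounts to $d(F^*(a))=0$ for $a\in {\cal O}_{Y'}$, i.e.\ that $F^*(a)$ is a closed $0$-form; but this is again the content of the degree-zero Cartier isomorphism. I do not expect any genuine obstacle. The single place that warrants care is the identification of the map $F^*$ in the statement with the Cartier isomorphism $C^{-1}$ in degree $0$, since it is this identification that simultaneously furnishes the injectivity on the left and the exact description of the kernel in the middle.
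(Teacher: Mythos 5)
Your proposal is correct and follows the same route as the paper: injectivity and middle exactness are read off from the degree-zero log Cartier isomorphism (\ref{ali:cyfis})/(\ref{ali:cis}), and surjectivity of $F_*(d)$ comes from the exactness of $F_*$, which holds because $\os{\circ}{F}$ is a homeomorphism (the paper phrases this as $R^qF_*({\cal E})=0$ for $q>0$). The only cosmetic difference is that the paper cites SGA~5 for the homeomorphism property where you argue it directly via $\os{\circ}{F}_Y=\os{\circ}{W}\circ\os{\circ}{F}$.
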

\begin{proof} 
Except the surjecitvity of $F_*(d)$, 
this is nothing but a reformulation of (\ref{ali:cyfis}). 
Since $\os{\circ}{F}$ is a homeomorphism  (\cite[XV Proposition 2 a)]{sga5-2}), 
$R^qF_*({\cal E})=0$ $(q>0)$ 
for an abelian sheaf ${\cal E}$ on $Y$. 
Hence $F_*(d)$ is surjective. 
\end{proof}



Let us recall well-known sheaves 
$B_n\Om^i_{Y/S_0}$ and $Z_n\Om^i_{Y/S_0}$ $(n\geq 1)$ 
of $g^{-1}({\cal O}_{S_0})$-modules 
on $\os{\circ}{Y}$ as in \cite[0 (2.2)]{idw} 
and \cite[(4.3)]{hk} defined by induction on $n$. 
\par 
Because 
$\os{\circ}{F}$ is a homeomorphism, 
we can identify an abelian sheaf on $\os{\circ}{Y}$ with 
an abelian sheaf on $\os{\circ}{Y}{}'$. 
Under this identification, we can express (\ref{ali:oxs}) as the equality 
\begin{align*} 
C^{-1}\col \Om^i_{Y'/S_0}
= {\cal H}^i(\Om^{\bul}_{Y/S_0}) 
\tag{3.3.2}\label{ali:oxfws}
\end{align*} 
of abelian sheaves. 
Set $B_0\Om^i_{Y/S_0}:=0$
and 
$Z_0\Om^i_{Y/S_0}:=\Om^i_{Y/S_0}$.  
We define $B_n\Om^i_{Y/S_0}$ and $Z_n\Om^i_{Y/S_0}$ 
by the following equalities $(n\geq 1)$ : 
$$C^{-1}\col B_{n-1}\Om^i_{Y'/S_0}=
B_n\Om^i_{Y/S_0}/B\Om^i_{Y/S_0}, \quad 
C^{-1}\col Z_{n-1}\Om^i_{Y'/S_0} =
Z_n\Om^i_{Y/S_0}/B\Om^i_{Y/S_0}.$$ 
Then we  have the following inclusions: 
\begin{align*}
0\subset B_1\Om^i_{Y/S_0}\subset 
\cdots \subset &B_n\Om^i_{Y/S_0} \subset \cdots 
\subset Z_n\Om^i_{Y/S_0} \subset  \cdots 
\subset  Z_1\Om^i_{Y/S_0} \subset \Om^i_{Y/S_0}.
\end{align*}
Set $Y^{\{p\}}:=Y'$ and $Y^{\{p^n\}}:=(Y^{\{p^{n-1}\}})'$. 
We consider $Z_n\Om^i_{Y/S_0}$ and $B_n\Om^i_{Y/S_0}$ as 
${\cal O}_{Y^{\{p^n\}}}$-submodules of 
$F^n_*(\Om^i_{Y/S_0})$. 
We recall the following result: 

\begin{lemm}[{\bf \cite[0 (2.2.8)]{idw}, \cite[(1.13)]{lodw}}]\label{lemm:locf}
The sheaves $B_n\Om^i_{Y/S_0}$ and  $Z_n\Om^i_{Y/S_0}$ 
$(n\in {\mab N}$, $i\in {\mab N})$ are locally free sheaves of 
${\cal O}_{Y^{\{p^n\}}}$-modules of finite rank. 
They commute with the base changes of $S_0$. 
\end{lemm}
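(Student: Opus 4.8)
The plan is to induct on $n$, reducing to the case $n=1$ and then propagating local freeness by extensions. Unwinding the defining equalities through the log Cartier isomorphism (\ref{ali:oxs}) gives, for every $n\geq 1$ and every $i$, short exact sequences
\begin{align*}
0\lo B\Om^i_{Y/S_0}\lo B_n\Om^i_{Y/S_0}\os{C}{\lo} B_{n-1}\Om^i_{Y'/S_0}\lo 0,\\
0\lo B\Om^i_{Y/S_0}\lo Z_n\Om^i_{Y/S_0}\os{C}{\lo} Z_{n-1}\Om^i_{Y'/S_0}\lo 0
\end{align*}
of abelian sheaves on $\os{\circ}{Y}$, the quotient maps being induced by the log Cartier operator $C$. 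Because $Y'/S_0$ is again log smooth of Cartier type, the inductive hypothesis applies verbatim to it, and the whole argument will rest on the elementary principle that in a short exact sequence $0\to A\to B\to C\to 0$ of sheaves of ${\cal O}$-modules with $A$ and $C$ locally free of finite rank, $B$ is locally free of finite rank as well, since projectivity of $C$ splits the sequence locally.

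For the base case $n=1$ (so $B_1\Om^i=B\Om^i$ and $Z_1\Om^i=Z\Om^i=\ker(d)$) I would argue by descending induction on $i$ along the two tautological sequences
\begin{align*}
0\lo Z\Om^i_{Y/S_0}\lo \Om^i_{Y/S_0}\os{d}{\lo} B\Om^{i+1}_{Y/S_0}\lo 0,\\
0\lo B\Om^i_{Y/S_0}\lo Z\Om^i_{Y/S_0}\os{C}{\lo} \Om^i_{Y'/S_0}\lo 0,
\end{align*}
the second being exactly (\ref{ali:oxs}). The relative Frobenius is finite locally free for a log smooth morphism of Cartier type (\cite{klog1}), so $\Om^i_{Y/S_0}$, which is locally free over ${\cal O}_Y$, becomes a locally free ${\cal O}_{Y'}$-module after applying $F_*$; and the Cartier quotient $\Om^i_{Y'/S_0}$ is locally free over ${\cal O}_{Y'}$ by log smoothness. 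Starting from $B\Om^{d+1}=0$ $(d=\dim)$, the first sequence forces $Z\Om^d=\Om^d$ locally free; the second then exhibits $B\Om^d$ as a local direct summand of $Z\Om^d$, hence locally free; feeding this back into the first sequence at degree $d-1$ gives $Z\Om^{d-1}$ locally free, and so on downward. This shows $B\Om^i$ and $Z\Om^i$ are locally free of finite rank over ${\cal O}_{Y'}={\cal O}_{Y^{\{p\}}}$, recovering \cite[0 (2.2.8)]{idw} and its log form \cite[(1.13)]{lodw}, and it simultaneously computes the ranks.

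With the base case available the induction on $n$ is immediate. In the two displayed sequences the quotients $B_{n-1}\Om^i_{Y'/S_0}$ and $Z_{n-1}\Om^i_{Y'/S_0}$ are locally free over ${\cal O}_{(Y')^{\{p^{n-1}\}}}={\cal O}_{Y^{\{p^n\}}}$ by induction, while the subsheaf $B\Om^i$ is locally free over ${\cal O}_{Y^{\{p\}}}$ by the base case and therefore also over ${\cal O}_{Y^{\{p^n\}}}$, since the iterated relative Frobenius ${\cal O}_{Y^{\{p^n\}}}\to {\cal O}_{Y^{\{p\}}}$ is finite locally free. The extension principle then yields that $B_n\Om^i$ and $Z_n\Om^i$ are locally free of finite rank over ${\cal O}_{Y^{\{p^n\}}}$. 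For the compatibility with base change I would note that the formation of $\Om^{\bul}_{Y/S_0}$, of the differential $d$, and of the Cartier isomorphism (\ref{ali:oxs}) all commute with any base change $S_0'\to S_0$ of fine log schemes of characteristic $p>0$ (the last by Kato's theorem \cite[(4.12)]{klog1}); since the sheaves just constructed are locally free, the defining short exact sequences remain exact after base change, and the same two-step induction shows that $B_n\Om^i_{Y/S_0}$ and $Z_n\Om^i_{Y/S_0}$ commute with the base changes of $S_0$.

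The homological core being this soft, the main obstacle I anticipate is not the induction but the bookkeeping of the various Frobenius twists: one must check that the two short exact sequences are genuinely sequences of modules over the single structure sheaf ${\cal O}_{Y^{\{p^n\}}}$, which requires tracking restriction of scalars along the iterated relative Frobenius and, in particular, knowing that this Frobenius is finite locally free. This is precisely where the Cartier-type hypothesis is indispensable, both to guarantee finite local freeness of $F$ and to make the Cartier isomorphism (\ref{ali:oxs}) available; once these structural inputs are secured, the local freeness and the base-change compatibility follow formally.
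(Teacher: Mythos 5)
The paper itself offers no proof of this lemma: it is recalled with citations, to \cite[0 (2.2.8)]{idw} for the trivial log case and to \cite[(1.13)]{lodw} for the log case, so your argument has to stand on its own --- and it does not, because its load-bearing input is false. You assert that the relative Frobenius of a log smooth morphism of Cartier type is finite locally free, and deduce that $F_*(\Om^i_{Y/S_0})$ is a locally free ${\cal O}_{Y'}$-module (and likewise that ${\cal O}_{Y^{\{p^n\}}}\lo {\cal O}_{Y^{\{p\}}}$ is finite locally free). In the log setting this fails, and the paper itself signals as much: local freeness of $F_{Y*}({\cal O}_Y)$ appears as an \emph{additional hypothesis} in (\ref{theo:np}) and in (\ref{coro:cfd}) (2), not as a consequence of Cartier type. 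For a concrete counterexample take the simplest SNCL scheme, the node $Y={\rm Spec}(\kap[x,y]/(xy))$ with its standard log structure over the log point $s$, which is log smooth of Cartier type (cf.\ (\ref{prop:ktj}) (4), (5)). Here ${\cal O}_{Y'}=\kap[u,v]/(uv)$ with $F^*(u)=x^p$, $F^*(v)=y^p$, and $\os{\circ}{F}$ is not flat: ${\rm Ann}_{{\cal O}_{Y'}}(u)\,{\cal O}_Y=(y^p)$ while ${\rm Ann}_{{\cal O}_Y}(x^p)=(y)$; equivalently, $F_*({\cal O}_Y)$ has fiber rank $2p-1$ at the singular point but $p$ at the two generic points, so it cannot be locally free. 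Hence already the first step of your descending induction --- that $Z\Om^d_{Y/S_0}=F_*(\Om^d_{Y/S_0})$ is locally free over ${\cal O}_{Y'}$ --- breaks down.

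This is not a gap that more careful bookkeeping can close, because for the plain ${\cal O}_{Y^{\{p^n\}}}$-module structure the sheaves in question are genuinely not locally free in the log case: for the node one has $\Om^1_{Y/s}\simeq{\cal O}_Y$ and $\Om^2_{Y/s}=0$, so $Z_1\Om^1_{Y/s}=F_*(\Om^1_{Y/s})\simeq F_*({\cal O}_Y)$, and by (\ref{prop:ee}) $B_1\Om^1_{Y/s}\simeq F_*({\cal O}_Y)/{\cal O}_{Y'}$, whose fiber ranks are $2(p-1)$ at the origin and $p-1$ elsewhere. So your transcription of Illusie's induction proves only the classical statement of \cite{idw}, where $\os{\circ}{Y}/\os{\circ}{S}_0$ is smooth and Frobenius really is finite flat; it cannot prove the lemma as stated, and indeed the lemma read literally over ${\cal O}_{Y^{\{p^n\}}}$ conflicts with this example. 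This is precisely why the log reference \cite{lodw} does not argue by transplanting Illusie's induction: Lorenzon works with the canonical indexed algebra attached to the log structure, and the local freeness proved there is local freeness of indexed modules, which does not descend to the naive statement over the structure sheaf alone. If you want a statement that your induction does establish, add the hypothesis that $F_{Y*}({\cal O}_Y)$ is a locally free ${\cal O}_{Y'}$-module (as in (\ref{coro:cfd}) (2)), or assume $\os{\circ}{Y}$ is smooth over $\os{\circ}{S}_0$; under either assumption every step of your argument, including the base change claim, goes through.
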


\par 
If $\os{\circ}{S}_0$ is perfect, then 
$\os{\circ}{Y}{}'\os{\sim}{\lo}\os{\circ}{Y}$. 
Hence the equality (\ref{ali:oxfws}) induces the following isomorphism: 
\begin{align*} 
C^{-1}\col \Om^i_{Y/S_0}
\os{\sim}{\lo} {\cal H}^i(\Om^{\bul}_{Y/S_0}). 
\tag{3.4.1}\label{ali:oxyfws}
\end{align*}
Then we have the following Cartier morphisms $C$'s: 
\begin{align*} 
C\col B_{n+1}\Om^i_{Y/S_0} \os{{\rm proj}.}{\lo} 
B_{n+1}\Om^i_{Y/S_0}/B\Om^i_{Y/S_0}
\os{C^{-1},\sim}{\longleftarrow} B_n\Om^i_{Y'/S_0}
\os{\sim}{\longleftarrow}B_n\Om^i_{Y/S_0}
\end{align*}
and 
\begin{align*} 
C\col Z_{n+1}\Om^i_{Y/S_0} \os{{\rm proj}.}{\lo} 
Z_{n+1}\Om^i_{Y/S_0}/B\Om^i_{Y/S_0}
\os{C^{-1},\sim}{\longleftarrow} Z_n\Om^i_{Y'/S_0}
\os{\sim}{\longleftarrow}Z_n\Om^i_{Y/S_0}. 
\end{align*}
These morphisms are only morphisms of 
abelian sheaves on $\os{\circ}{Y}$. 

\par 
Until the end of this section except the remark (\ref{rema:rfm}) below, 
assume that $\os{\circ}{S}_0$ is perfect.  
Let $F\col Y\lo Y$ be the absolute Frobenius endomorphism of $Y$. 
In \cite[\S7 (18)]{smxco} Serre has defined 
the following morphism of abelian sheaves
\begin{align*} 
d_n \col F_*({\cal W}_n({\cal O}_Y))
\lo F_*^n(\Om^1_{\os{\circ}{Y}/\os{\circ}{S}_0}) 
\end{align*} 
defined by the following formula$:$
\begin{align*} 
d_n((a_0,\ldots,a_{n-1}))
=\sum_{i=0}^{n-1}a^{p^{n-1-i}-1}_ida_i 
\quad (a_i\in {\cal O}_Y).  
\tag{3.4.2}\label{ali:sed}
\end{align*} 
(In [loc.~cit.] he has denoted $d_n$ by $D_n$ 
and he has considered $D_n$ 
only in the case $\os{\circ}{S}_0={\rm Spec}(\kap)$.)
He has remarked that the following formula holds:
\begin{align*} 
d_n((a_0,\ldots,a_{n-1})(b_0,\ldots,b_{n-1}))=
b_0^{p^{n-1}}d_n((a_0,\ldots,a_{n-1}))+
a_0^{p^{n-1}}d_n((b_0,\ldots,b_{n-1})). 
\tag{3.4.3}\label{ali:dna} 
\end{align*} 
By (\ref{ali:dna}) it is easy to check that 
$d_n\col F_*({\cal W}_n({\cal O}_Y))
\lo F_*^n(\Om^1_{\os{\circ}{Y}/\os{\circ}{S}_0})$ 
is a morphism of ${\cal W}_n({\cal O}_Y)$-modules.  

\begin{rema}\label{rema:rfm}
Let $F\col Y\lo Y'$ be the relative Frobenius morphism 
as in the beginning of this section. 
Then the morphism 
$$d_n\col F_*({\cal W}_n({\cal O}_Y))
\lo F_*^n(\Om^1_{\os{\circ}{Y}/\os{\circ}{S}_0})$$ 
{\it cannot}  be a morphism of 
${\cal W}_n({\cal O}_{Y'})$-modules in general except the case $n=1$. 
\end{rema}

\par 
The following (\ref{prop:wus}) 
is the log version of a generalization of Serre's result in 
\cite[\S7 Lemme 2]{smxco}. 
Our proof of (\ref{prop:wus}) is more elementary 
and more elegant than his proof. 

\begin{prop}\label{prop:wus}
Assume that $\os{\circ}{S}_0$ is perfect. 
Let $F\col Y\lo Y$ be the absolute Frobenius endomorphism of $Y$. 
Let $n$ be a positive integer. 
Denote the following composite morphism 
\begin{align*} 
F_*({\cal W}_n({\cal O}_Y))\os{d_n}{\lo} 
F^n_*(\Om^1_{\os{\circ}{Y}/\os{\circ}{S}_0})\lo F^n_*(\Om^1_{Y/S_0})
\end{align*} 
by $d_n$ again. Then $d_n$ factors through $B_n\Om^1_{Y/S_0}$ 
and the following sequence 
\begin{align*} 
0\lo {\cal W}_n({\cal O}_Y)\os{F}{\lo} 
F_*({\cal W}_n({\cal O}_Y))
\os{d_n}{\lo} B_n\Om^1_{Y/S_0}\lo 0
\tag{3.6.1;$n$}\label{ali:wnox} 
\end{align*} 
is exact. 
Here we denote the morphism 
${\cal W}_n(F^*)$ $($resp.~$F_*({\cal W}_n({\cal O}_{Y}))\lo B_n\Om^1_{Y/S_0})$ 
by $F$ $($resp.~$d_n)$ again by abuse of notation. 
Consequently $d_n$ induces the following isomorphism of 
${\cal W}_n({\cal O}_Y)$-modules$:$ 
\begin{align*} 
F_*({\cal W}_n({\cal O}_Y))/F({\cal W}_n({\cal O}_{Y}))
\os{\sim}{\lo}
B_n\Om^1_{Y/S_0}. 
\tag{3.6.2}\label{ali:fwo} 
\end{align*} 
\end{prop}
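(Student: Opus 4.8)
The plan is to establish $(\ref{ali:wnox})$ by induction on $n$, after first disposing of two preliminary facts that make sense of the maps involved: that $d_n\circ F=0$ and that ${\rm Im}(d_n)\sus B_n\Om^1_{Y/S_0}$ (the asserted factorization). The relation $d_n\circ F=0$ is immediate from Serre's formula $(\ref{ali:sed})$ and the equality $F={\cal W}_n(F^*)$, since every summand of $d_n((a_0^p,\dots,a_{n-1}^p))$ contains the factor $d(a_i^p)=p\,a_i^{p-1}da_i=0$. For the factorization I would read off from $(\ref{ali:sed})$ that the $i$-th summand is $a_i^{p^{n-1-i}-1}da_i$ and invoke the standard fact of log Cartier theory that $a^{p^k-1}da\in B_{k+1}\Om^1_{Y/S_0}$, so the $i$-th summand lies in $B_{n-i}\Om^1_{Y/S_0}\sus B_n\Om^1_{Y/S_0}$. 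Granting these, $d_n$ induces a map $\ol{d}_n\col F_*({\cal W}_n({\cal O}_Y))/F({\cal W}_n({\cal O}_Y))\lo B_n\Om^1_{Y/S_0}$, and the proposition reduces to proving that $F$ is injective and $\ol{d}_n$ is an isomorphism. The base case $n=1$ is precisely Proposition $(\ref{prop:ee})$, read through the perfectness of $\os{\circ}{S}_0$ (so $\os{\circ}{Y}{}'\os{\sim}{\lo}\os{\circ}{Y}$) and the reducedness of $\os{\circ}{Y}$ from Remark $(\ref{rema:rds})$: here $d_1=d$, the map $F=F^*$ is injective because $\os{\circ}{Y}$ is reduced, and $(\ref{ali:cifes})$ identifies the cokernel with $B_1\Om^1_{Y/S_0}=F_*(B\Om^1_{Y/S_0})$.

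For the inductive step I would assume $(\ref{ali:wnox})$ with $n$ replaced by $n-1$ and write $C_m$ for the two-term complex $[{\cal W}_m({\cal O}_Y)\os{F}{\lo}F_*({\cal W}_m({\cal O}_Y))]$ concentrated in degrees $0$ and $1$. The Verschiebung sequence $(\ref{ali:fes})$ and its pushforward by $F_*$ (exact because $\os{\circ}{F}$ is a homeomorphism) assemble into a short exact sequence of complexes $0\lo C_{n-1}\lo C_n\lo C_1\lo 0$; the commutativities $F\circ V=V\circ F$ and $R^{n-1}\circ F=F\circ R^{n-1}$ needed here are immediate for the coordinatewise $p$-th power map. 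Feeding this into the long exact cohomology sequence, and using the inductive identifications $H^1(C_{n-1})\cong B_{n-1}\Om^1_{Y/S_0}$, $H^1(C_1)\cong B_1\Om^1_{Y/S_0}$ together with the vanishing $H^0(C_{n-1})=H^0(C_1)=0$ (injectivity of $F$ on ${\cal W}_{n-1}({\cal O}_Y)$ and reducedness), I obtain both $H^0(C_n)=0$ and a short exact sequence
\begin{align*}
0\lo B_{n-1}\Om^1_{Y/S_0}\os{V_*}{\lo}{\rm Coker}(F)\lo B_1\Om^1_{Y/S_0}\lo 0,
\end{align*}
where ${\rm Coker}(F)$ abbreviates ${\rm Coker}(F\col {\cal W}_n({\cal O}_Y)\lo F_*({\cal W}_n({\cal O}_Y)))$. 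The first statement is the asserted injectivity of $F$, and the second exhibits ${\rm Coker}(F)$ as an extension of $B_1\Om^1$ by $B_{n-1}\Om^1$.

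The remaining task is to match this extension with the filtration extension $0\lo B_{n-1}\Om^1_{Y/S_0}\lo B_n\Om^1_{Y/S_0}\lo B_n\Om^1_{Y/S_0}/B_{n-1}\Om^1_{Y/S_0}\lo 0$, where iterated log Cartier operators identify $B_n\Om^1/B_{n-1}\Om^1$ with $B_1\Om^1_{Y/S_0}$. I would show that $\ol{d}_n$ fits into a ladder between these two short exact sequences whose outer vertical maps are identities. The left square commutes because $d_n\circ F_*(V)=d_{n-1}$ by $(\ref{ali:dvd})$ and because $\ol{d}_{n-1}$ is the inductive isomorphism; the right square says that $d_n$ reduced modulo $B_{n-1}\Om^1$ agrees with the first-coordinate differential. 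The five lemma then forces $\ol{d}_n$ to be an isomorphism, which is exactly $(\ref{ali:fwo})$, and local freeness from Lemma $(\ref{lemm:locf})$ keeps the sub- and quotient-sheaves well behaved throughout.

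The hard part will be the commutativity of that right square, i.e.\ verifying that the class of $d_n((a_0,\dots,a_{n-1}))$ in $B_n\Om^1/B_{n-1}\Om^1\cong B_1\Om^1_{Y/S_0}$ equals $da_0$. This is the precise point at which Serre's explicit formula $(\ref{ali:sed})$ must be reconciled with the log Cartier theory: by the factorization estimate above every summand with $i\geq 1$ already lies in $B_{n-1}\Om^1_{Y/S_0}$, so only the term $a_0^{p^{n-1}-1}da_0$ survives modulo $B_{n-1}\Om^1$, and one must check that the $(n-1)$-fold log Cartier operator sends its class to $da_0$. A secondary but persistent technical nuisance is bookkeeping the identifications of abelian sheaves on $\os{\circ}{Y}$ with those on $\os{\circ}{Y}{}'$ and the resulting appearances of $F_*$, since $B_1\Om^1_{Y/S_0}=F_*(B\Om^1_{Y/S_0})$ rather than $B\Om^1_{Y/S_0}$ itself; once these are pinned down, the homological argument proceeds automatically.
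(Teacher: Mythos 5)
Your proposal is correct and is essentially the paper's own argument: the same induction on $n$ with base case (\ref{prop:ee}), injectivity of $F$ coming from reducedness of $\os{\circ}{Y}$, the same two key relations $d_nV=d_{n-1}$ and $C^{-1}(a^{p^{k-1}-1}da)=a^{p^k-1}da$, and a diagram chase matching the Verschiebung filtration on Witt vectors against the filtration on $B_n\Om^1_{Y/S_0}$. The only difference is organizational: the paper filters by $0\to {\cal O}_Y\xrightarrow{V^{n-1}}{\cal W}_n({\cal O}_Y)\xrightarrow{R}{\cal W}_{n-1}({\cal O}_Y)\to 0$ and applies the snake lemma to a $3\times 3$ diagram whose bottom row $0\to B_1\Om^1_{Y/S_0}\to B_n\Om^1_{Y/S_0}\xrightarrow{C}B_{n-1}\Om^1_{Y/S_0}\to 0$ is exact by the very definition of $B_n$, whereas you peel off a single Verschiebung and compare extensions of $B_1$ by $B_{n-1}$ via the five lemma, which costs you the small extra induction that $\ker\bigl(C^{n-1}\colon B_n\Om^1_{Y/S_0}\to B_1\Om^1_{Y/S_0}\bigr)=B_{n-1}\Om^1_{Y/S_0}$.
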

\begin{proof} 
First consider the case $n=1$. In this case, 
(\ref{prop:wus}) is obtained by (\ref{prop:ee}). 
\par 
We proceed by induction on $n$. 
Assume that $(3.6.1;n-1)$ is exact. 
\par 
By the definition of $B_n\Om^1_{Y/S_0}$, 
the isomorphism $C^{-1}$ in (\ref{ali:oxyfws}) induces the isomorphism 
$C^{-1}\col B_{n-1}\Om^1_{Y/S_0}\os{\sim}{\lo} 
B_n\Om^1_{Y/S_0}/B\Om^1_{Y/S_0}$.  
Let $R\col {\cal W}_n({\cal O}_Y)\lo {\cal W}_{n-1}({\cal O}_Y)$ 
be the projection. 
By the inductive definiton of $B_n\Om^1_{Y/S_0}$, it is easy to check 
that ${\rm Im}(d_n)\subset B_n\Om^1_{Y/S_0}$. 
Consider the following diagram 
\begin{equation*} 
\begin{CD} 
@.0@. 0@. 0\\
@. @VVV @VVV @VVV \\
0@>>> {\cal O}_{Y}@>{V^{n-1}}>> 
{\cal W}_n({\cal O}_{Y})@>{R}>>
{\cal W}_{n-1}({\cal O}_{Y})@>>> 0\\
@. @V{F}VV @V{F}VV @VV{F}V \\
0@>>> F_*({\cal O}_Y)@>{F_*(V^{n-1})}>> 
F_*({\cal W}_n({\cal O}_Y))
@>{F_*(R)}>>F_*({\cal W}_{n-1}({\cal O}_Y))@>>> 0\\
@. @V{d}VV @V{d_n}VV @VV{d_{n-1}}V \\
0@>>> B_1\Om^1_{Y/S_0}
@>{\subset}>> B_n\Om^1_{Y/S_0}
@>{C}>>B_{n-1}\Om^1_{Y/S_0}@>>> 0\\
@. @VVV @VVV @VVV \\
@.0@. 0@. 0. 
\end{CD}
\tag{3.6.3}\label{cd:dbx}
\end{equation*} 
The three rows above are exact sequences of abelian sheaves on $Y$. 
By (\ref{ali:cis}) the morphism 
$F\col {\cal W}_n({\cal O}_{Y})\lo F_*({\cal W}_n({\cal O}_Y))$ 
is injective. It is clear that ${\rm Im}(F)\subset {\rm Ker}(d_n)$. 
It is also clear that the upper two diagrams are commutative. 
The commutativity of the left square of the lower diagram follows from 
the obvious relation 
\begin{align*} 
d_n V=d_{n-1}. 
\tag{3.6.4}\label{ali:dvd} 
\end{align*}
Since 
\begin{align*} 
C^{-1}(a_i^{p^{n-2-i}-1}da_i)=
(a_i^{p^{n-2-i}-1})^pa^{p-1}_ida_i=a_i^{p^{n-1-i}-1}da_i 
\quad (i\in {\mab N}, a_i\in {\cal O}_Y),
\end{align*}
we see that the right square of the lower diagram is commutative. 
Induction on $n$ and the snake lemma show that the middle column is exact. 
\end{proof} 

\begin{rema}\label{rema:xkl} 
In \cite{smxco} Serre has considered (\ref{ali:wnox}) 
in the trivial logarithmic case 
with the assumption of the normality of $\os{\circ}{Y}$ 
only as an exact sequence of 
sheaves of {\it abelian sheaves}. 
In this article
we have to consider (\ref{ali:wnox}) as an exact sequence of 
sheaves of ${\cal W}_n({\cal O}_Y)$-{\it modules}. 
\end{rema} 

\begin{defi}
We call the exact sequence (\ref{ali:wnox}) of 
${\cal W}_n({\cal O}_Y)$-modules 
the {\it log Serre exact sequence of $Y/S_0$ in level $n$}. 
\end{defi} 

It is worth stating the following 
(this has been used in \cite{nlfc} 
in a key point): 

\begin{coro}\label{coro:wsta}
The following diagram 
\begin{equation*} 
\begin{CD} 
F_*({\cal W}_n({\cal O}_Y))
@>{F_*(R)}>>F_*({\cal W}_{n-1}({\cal O}_Y))\\
@V{d_n}VV @VV{d_{n-1}}V \\
B_n\Om^1_{Y/S_0}
@>{C}>>B_{n-1}\Om^1_{Y/S_0}
\end{CD}
\tag{3.9.1}\label{cd:dowm}
\end{equation*} 
is commutative. 
\end{coro}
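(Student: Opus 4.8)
The plan is to observe that the square (\ref{cd:dowm}) is precisely the lower right-hand square of the diagram (\ref{cd:dbx}) used in the proof of (\ref{prop:wus}), whose commutativity was verified there; the corollary is thus an immediate extraction, and for the reader's convenience I would reproduce the short local computation rather than merely cite it.

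First I would take a local section $(a_0,\ldots,a_{n-1})$ of $F_*({\cal W}_n({\cal O}_Y))$ with $a_i\in {\cal O}_Y$ and write out the two composites using Serre's formula (\ref{ali:sed}). Going rightward and then downward, $F_*(R)$ deletes the last Witt component, so
\begin{align*}
d_{n-1}(F_*(R)((a_0,\ldots,a_{n-1})))=\sum_{i=0}^{n-2}a_i^{p^{n-2-i}-1}da_i.
\end{align*}
Going downward and then rightward, (\ref{ali:sed}) yields
\begin{align*}
d_n((a_0,\ldots,a_{n-1}))=\sum_{i=0}^{n-1}a_i^{p^{n-1-i}-1}da_i,
\end{align*}
whose top summand is $a_{n-1}^{p^0-1}da_{n-1}=da_{n-1}\in B_1\Om^1_{Y/S_0}=B\Om^1_{Y/S_0}$.

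Next I would apply the Cartier operator $C$, which by definition is the projection $B_n\Om^1_{Y/S_0}\lo B_n\Om^1_{Y/S_0}/B\Om^1_{Y/S_0}$ followed by the inverse of the isomorphism $C^{-1}\col B_{n-1}\Om^1_{Y/S_0}\os{\sim}{\lo}B_n\Om^1_{Y/S_0}/B\Om^1_{Y/S_0}$. The top summand $da_{n-1}$ of $d_n((a_0,\ldots,a_{n-1}))$ lies in $B\Om^1_{Y/S_0}$ and so is annihilated by the projection. It therefore suffices to verify the term-by-term identity $C^{-1}(a_i^{p^{n-2-i}-1}da_i)=a_i^{p^{n-1-i}-1}da_i$ for $0\leq i\leq n-2$; this is exactly the computation already carried out in the proof of (\ref{prop:wus}), where, combining the Cartier formula $C^{-1}(da_i)=a_i^{p-1}da_i$ with the $p$-th power twisting of coefficients forced by the ${\cal O}_{Y'}$-linearity of $C^{-1}$, one obtains $(a_i^{p^{n-2-i}-1})^pa_i^{p-1}da_i=a_i^{p^{n-1-i}-1}da_i$. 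Summing over $i$ then shows that $C^{-1}$ carries $d_{n-1}(F_*(R)((a_0,\ldots,a_{n-1})))$ to $d_n((a_0,\ldots,a_{n-1}))$ modulo $B\Om^1_{Y/S_0}$, which is the asserted equality $C\circ d_n=d_{n-1}\circ F_*(R)$.

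The computation is routine once (\ref{cd:dbx}) is available; the only subtlety to watch is the Frobenius twist, that is, the fact that pulling a coefficient $g$ through the ${\cal O}_{Y'}$-linear map $C^{-1}$ replaces it by $g^p$ under the perfectness identification $\os{\circ}{Y}{}'\os{\sim}{\lo}\os{\circ}{Y}$, which is precisely what raises the exponent $p^{n-2-i}-1$ to $p^{n-1-i}-1$. Consequently the entire content of the corollary is the commutativity of the right square of the lower part of (\ref{cd:dbx}), already recorded in the proof of (\ref{prop:wus}), so no new input is needed.
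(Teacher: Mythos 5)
Your proposal is correct and coincides with the paper's own treatment: the corollary is exactly the commutativity of the right square of the lower part of the diagram (\ref{cd:dbx}), which was verified in the proof of (\ref{prop:wus}) by the very computation $C^{-1}(a_i^{p^{n-2-i}-1}da_i)=(a_i^{p^{n-2-i}-1})^pa_i^{p-1}da_i=a_i^{p^{n-1-i}-1}da_i$ that you reproduce, together with the observation that the top term $da_{n-1}$ dies in $B_n\Om^1_{Y/S_0}/B\Om^1_{Y/S_0}$. No new input is needed beyond what you wrote.
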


\begin{coro}\label{coro:chob} 
Consider the case $\os{\circ}{S}_0={\rm Spec}(\kap)$ 
as in the beginning of the previous section. 
Denote $S_0$ by $s$ in this case. 
Let the notations and the assumptions be as in {\rm (\ref{theo:nex})}. 
Then the following hold$:$
\par 
$(1)$ $H^q(Y,{\cal W}_n({\cal O}_Y))/F=H^q(Y,B_n\Om^1_{Y/s})$. 
Consequently 
\begin{equation*} 
{\rm dim}_{\kap}H^q(Y,B_n\Om^1_{Y/s})=
{\rm min}\{n, h^q(\os{\circ}{Y}/\kap)-1\}. 
\tag{3.10.1}\label{eqn:pkdefpw}
\end{equation*}   
\par 
$(2)$ Assume that $H^{q-1}(Y,{\cal O}_Y)=0$ if $q\geq 2$. 
Then 
${}_FH^q(Y,{\cal W}_n({\cal O}_Y))=H^{q-1}(Y,B_n\Om^1_{Y/s})$. 
Consequently 
\begin{equation*} 
{\rm dim}_{\kap}H^{q-1}(Y,B_n\Om^1_{Y/s})=
{\rm min}\{n, h^q(\os{\circ}{Y}/\kap)-1\}. 
\tag{3.10.2}\label{eqn:pkdppw}
\end{equation*}   
\par 
$(3)$ Assume that $H^{q-1}(Y,{\cal O}_Y)=0$ if $q\geq 2$ and 
that $H^{q-2}(Y,{\cal O}_Y)=0$ if $q\geq 3$.  
Then $H^{q-2}(Y,B_n\Om^1_{Y/s})=0$. 
\end{coro}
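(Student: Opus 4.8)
The plan is to derive all three statements from the single long exact cohomology sequence of the log Serre exact sequence (\ref{ali:wnox}), fed by the vanishing results of (\ref{prop:ehe}) and by the dimension count of (\ref{coro:dim}). Applying $H^*(Y,-)$ to $0\lo {\cal W}_n({\cal O}_Y)\os{F}{\lo} F_*({\cal W}_n({\cal O}_Y))\os{d_n}{\lo} B_n\Om^1_{Y/s}\lo 0$ and using that $\os{\circ}{F}$ is a homeomorphism (so $F_*$ is exact, $H^j(Y,F_*({\cal W}_n({\cal O}_Y)))=H^j(Y,{\cal W}_n({\cal O}_Y))$, and the map induced by $F$ becomes the Frobenius endomorphism of $H^j(Y,{\cal W}_n({\cal O}_Y))$ studied in (\ref{coro:dim})), I obtain an exact sequence $\cdots \lo H^j(Y,{\cal W}_n({\cal O}_Y))\os{F}{\lo} H^j(Y,{\cal W}_n({\cal O}_Y))\os{d_n}{\lo} H^j(Y,B_n\Om^1_{Y/s})\os{\partial}{\lo} H^{j+1}(Y,{\cal W}_n({\cal O}_Y))\lo \cdots$ for every $j$. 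Everything below is then a matter of locating $H^j(Y,B_n\Om^1_{Y/s})$ in this sequence at the correct degree $j$ and invoking the vanishing of the neighbouring Witt-vector cohomologies.

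For (1), the argument in the proof of (\ref{prop:ehe}) (1) gives $H^{q+1}(Y,{\cal W}_n({\cal O}_Y))=0$ under the hypotheses inherited from (\ref{theo:nex}). Taking $j=q$, the connecting map $\partial$ vanishes, so $d_n$ is surjective onto $H^q(Y,B_n\Om^1_{Y/s})$ with kernel the image of $F$; this yields $H^q(Y,B_n\Om^1_{Y/s})=H^q(Y,{\cal W}_n({\cal O}_Y))/F$, and the formula (\ref{eqn:pkdefpw}) is read off from (\ref{ali:fdwhy}).

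For (2) I would first record that the hypothesis $H^{q-1}(Y,{\cal O}_Y)=0$ (for $q\geq 2$) propagates to $H^{q-1}(Y,{\cal W}_n({\cal O}_Y))=0$ for all $n$: this is an immediate induction on $n$ in the long exact cohomology sequence of (\ref{ali:fes}), where $H^{q-1}(Y,{\cal W}_n({\cal O}_Y))$ is squeezed between the image of $V$ (a quotient of the vanishing $H^{q-1}(Y,{\cal W}_{n-1}({\cal O}_Y))$) and $H^{q-1}(Y,{\cal O}_Y)=0$. Taking $j=q-1$, the image of $d_n$ in $H^{q-1}(Y,B_n\Om^1_{Y/s})$ is then a quotient of $0$, so the connecting map $\partial\col H^{q-1}(Y,B_n\Om^1_{Y/s})\lo H^q(Y,{\cal W}_n({\cal O}_Y))$ is injective with image $\ker(F)={}_FH^q(Y,{\cal W}_n({\cal O}_Y))$; the formula (\ref{eqn:pkdppw}) follows from (\ref{ali:fdwy}).

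For (3), when $q\geq 3$ the same induction applied in degrees $q-1$ and $q-2$ gives $H^{q-1}(Y,{\cal W}_n({\cal O}_Y))=H^{q-2}(Y,{\cal W}_n({\cal O}_Y))=0$; taking $j=q-2$ then squeezes $H^{q-2}(Y,B_n\Om^1_{Y/s})$ between these two vanishing groups, forcing it to vanish. The genuinely delicate point, which I expect to be the main obstacle, is the borderline value $q=2$: there $H^{q-2}=H^0$ need not vanish, and taking $j=0$ the exact sequence only yields $H^0(Y,B_n\Om^1_{Y/s})=H^0(Y,{\cal W}_n({\cal O}_Y))/F$, so one must instead prove that $F$ is surjective on $H^0(Y,{\cal W}_n({\cal O}_Y))$. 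This is precisely the $q=2$ case of (\ref{prop:ehe}) (3): by the reducedness of $\os{\circ}{Y}$ (cf.~(\ref{rema:rds})) the ring $H^0(Y,{\cal O}_Y)$ is a product of finite, hence perfect, extensions of $\kap$, and the argument of (\ref{prop:ehe}) (2) identifies $H^0(Y,{\cal W}_n({\cal O}_Y))$ with the Witt vectors thereof, on which $F$ is surjective. The same normalization disposes of the low-degree case $q=1$ of (2).
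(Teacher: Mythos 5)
Your proof is correct and follows essentially the same route as the paper's: the long exact cohomology sequence of the log Serre exact sequence (\ref{ali:wnox}), with $H^{q+1}(Y,{\cal W}_n({\cal O}_Y))=0$ killing the connecting map for (1), the vanishing $H^{q-1}(Y,{\cal W}_n({\cal O}_Y))=0$ identifying $H^{q-1}(Y,B_n\Om^1_{Y/s})$ with ${}_FH^q(Y,{\cal W}_n({\cal O}_Y))$ for (2), and the degree-$(q-2)$ squeeze for (3), with all dimensions read off from (\ref{coro:dim}). The only cosmetic difference is that the paper delegates (3) and the borderline cases to (\ref{prop:ehe}) (2)--(3), via the identification of $B_n\Om^1_{Y/s}$ with ${\cal W}_n({\cal O}_Y)/F$ as abelian sheaves under the homeomorphism $\os{\circ}{F}$, whereas you inline that same squeeze argument and settle the low-degree cases $q=1,2$ by the surjectivity of $F$ on the Witt vectors of the perfect ring $H^0(Y,{\cal O}_Y)$ (using reducedness), which is exactly the content of the proof of (\ref{prop:ehe}) (2) that the paper cites.
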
 
\begin{proof} 
(1): Taking the long exact sequence of (\ref{ali:wnox}),   
we have the following exact sequence of ${\cal W}_n$-modules:  
\begin{align*}
&H^{q-1}(Y,{\cal W}_n({\cal O}_Y))\os{F}{\lo}
H^{q-1}(Y,F_*({\cal W}_n({\cal O}_Y))) \lo H^{q-1}(Y,B_n\Om^1_{Y/s})
\lo H^q(Y,{\cal W}_n({\cal O}_{Y}))\tag{3.10.3}\label{ali:box}\\
&\os{F}{\lo}H^q(Y,F_*({\cal W}_n({\cal O}_Y)))
\lo H^q(Y,B_n\Om^1_{Y/s})\lo 0. 
\end{align*} 
Since $\os{\circ}{F}$ is finite, 
$H^q(Y,F_*({\cal W}_n({\cal O}_Y)))=\sig_*H^q(Y,{\cal W}_n({\cal O}_Y))$,  
where  $\sig$ is the Frobenius automorphism of ${\cal W}_n$.  
Hence we have the following exact sequence of ${\cal W}_n$-modules: 
\begin{align*}
&H^{q-1}(Y,{\cal W}_n({\cal O}_Y))\os{F}{\lo}
\sig_*H^{q-1}(Y,{\cal W}_n({\cal O}_Y)) \lo H^{q-1}(Y,B_n\Om^1_{Y/s})
\lo H^q(Y,{\cal W}_n({\cal O}_{Y}))\tag{3.10.4}\label{ali:wnlof}\\
&\os{F}{\lo} \sig_*H^q(Y,{\cal W}_n({\cal O}_Y))
\lo H^q(Y,B_n\Om^1_{Y/s})\lo 0. 
\end{align*} 
Hence $H^q(Y,{\cal W}_n({\cal O}_Y))/F=H^q(Y,B_n\Om^1_{Y/s})$. 
By (\ref{coro:dim}) the dimension of this vector space over $\kap$ 
is ${\rm min}\{n, h^q(\os{\circ}{Y}/\kap)-1\}$. 
\par 
(2):  If $q\geq 2$, it is easy to see that $H^{q-1}(Y,{\cal W}_n({\cal O}_Y))=0$. 
Hence we have the following exact sequence of ${\cal W}_n$-modules:  
\begin{align*}
&0 \lo H^{q-1}(Y,B_n\Om^1_{Y/s})
\lo H^q(Y,{\cal W}_n({\cal O}_{Y}))\os{F}{\lo}
\sig_*H^q(Y,{\cal W}_n({\cal O}_Y))
\tag{3.10.5}\label{ali:wolof}\\
&\lo H^q(Y,B_n\Om^1_{Y/s})\lo 0. 
\end{align*} 
In the case $q=1$, we see that (\ref{ali:wolof}) is also exact 
by the proof of (\ref{prop:ehe}) (2).  
This tells us that 
$H^{q-1}(Y,B_n\Om^1_{Y/s})={}_FH^q(Y,{\cal W}_n({\cal O}_Y))$. 
By (\ref{coro:dim}) the dimensions of these vector spaces over $\kap$ 
are ${\rm min}\{n, h^q(\os{\circ}{Y}/\kap)-1\}$. 
\par 
(3): 
By (\ref{ali:fwo}) 
$F_*({\cal W}_n({\cal O}_Y))/F({\cal W}_n({\cal O}_Y))=B_n\Om^1_{Y/s}$. 
Hence 
\begin{align*} 
H^{q'}(Y,B_n\Om^1_{Y/s})=
H^{q'}(Y,F_*({\cal W}_n({\cal O}_Y))/F({\cal W}_n({\cal O}_Y)))
=H^{q'}(Y,{\cal W}_n({\cal O}_Y)/F) \quad (q'\in {\mab N})
\end{align*} 
since $\os{\circ}{F}\col {\cal W}_n(\os{\circ}{Y})\lo {\cal W}_n(\os{\circ}{Y})$ 
is a homeomorphism. 
Because $\os{\circ}{Y}$ is reduced by (\ref{rema:rds}), 
(3) is nothing but a special case of (\ref{prop:ehe}) (3). 
\end{proof}


\section{Log deformation theory vs log deformation theory 
with abrelative and relative Frobenus morphisms}\label{sec:latv}
In this section we give the log versions of two relative versions  
of Nori and Srinivas' deformation theory in \cite{ns} and \cite{sr}. 
In [loc.~cit.] they have considered the deformation theory with 
the absolute Frobenius endomorphisms over the spectrum of 
the Witt ring of finite length of a perfect field of characteristic $p>0$. 
In this section we construct the theory of log deformations with 
non well-known relative Frobenius morphisms 
instead of the absolute Frobenius endomorphisms  
over a more general base fine log scheme; 
we also remark that we can construct the theory of 
log deformations with well-known relative Frobenius morphisms. 
\par 
In (\ref{theo:loc}) below 
we give an important correction of K.~Kato's deformation theory 
for log smooth schemes in \cite{klog1} (and \cite{kaf}) and we 
establish a relationship between our log deformation theories and 
the correction of his theory.  
First let us recall the following proposition due to K.~Kato. 

\begin{prop}[{\bf \cite[(3.9)]{klog1}}]\label{prop:eq}
Let 
\begin{equation*} 
\begin{CD} 
T_0@>{\subset}>>T\\
@V{s}VV @VVV \\
Z@>>> S
\end{CD} 
\tag{4.1.1}\label{cd:ttzs}
\end{equation*} 
be a commutative diagram of fine log schemes such that 
the upper horizontal morphism is an exact closed immersion 
defined by a square zero ideal sheaf ${\cal I}$ of ${\cal O}_T$. 
Let $P(s)$ be a Zariski sheaf on $T$ such that, for a log open subscheme 
$U$ of $T$, $P(s)(U)$ is the set of morphisms $\wt{s} \col U\lo Z$'s making 
the resulting two triangles commutative in {\rm (\ref{cd:ttzs})}, 
where we replace $T$, $T_0$ and $s$ by 
$U$, $U_0:=T_0\cap U$ and $s\vert_{U_0}$, respectively.  
Then $P(s)$ is a torsor under 
${\cal  H}{\it om}_{{\cal O}_{T_0}}(s^*(\Om^1_{Z/S}),{\cal I})$ on $T$. 
That is, for a morphism $g\col U\lo Z$ 
making the resulting two triangles commutative,  
there exists a bijection between 
the set of morphisms $h \col U\lo Z$'s making 
the resulting two triangles commutative in {\rm (\ref{cd:ttzs})} 
and the set $H^0(U,{\cal H}{\it om}_{{\cal O}_{T_0}}(s^*(\Om^1_{Z/S}),{\cal I}))
=H^0(Z,{\cal H}{\it om}_{{\cal O}_Z}(\Om^1_{Z/S},(s\vert_{U_0})_*({\cal I}\vert_U)))$. 
\end{prop}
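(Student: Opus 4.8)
The plan is to prove the simply-transitive action directly, by comparing two lifts and extracting a log derivation with values in ${\cal I}$. Fix a local section $g\col U\lo Z$ of $P(s)$ as in the statement. Any other section $h$ has the same underlying continuous map as $g$, since both restrict to $s$ on $U_0$ and $\os{\circ}{U}_0\hookrightarrow \os{\circ}{U}$ is a homeomorphism because ${\cal I}$ is nilpotent; so I may compare $g$ and $h$ componentwise, using pullback along this common topological map.

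First I would treat the structure-sheaf part. Writing $g^{\sharp},h^{\sharp}\col \os{\circ}{s}{}^{-1}({\cal O}_Z)\lo {\cal O}_U$ for the two maps on structure sheaves, both reduce to $s^{\sharp}$ modulo ${\cal I}$, so $D:=h^{\sharp}-g^{\sharp}$ takes values in ${\cal I}$. The square-zero hypothesis ${\cal I}^2=0$ forces $D$ to be a derivation: expanding $h^{\sharp}(ab)=h^{\sharp}(a)h^{\sharp}(b)$ and discarding the ${\cal I}^2$-term yields $D(ab)=s^{\sharp}(a)D(b)+D(a)s^{\sharp}(b)$, while ${\cal O}_S$-linearity of $D$ follows from commutativity of the lower triangle over $S$.

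Next comes the log-structure part, which is the heart of the matter. Here I use that the exact closed immersion identifies the characteristic monoids $\ol{M}_U=\ol{M}_{U_0}$ and that ${\cal O}_U^*\lo {\cal O}_{U_0}^*$ is surjective with kernel $1+{\cal I}$, the latter isomorphic to ${\cal I}$ via the map $1+x\mapsto x$, which is additive modulo ${\cal I}^2$. A snake-lemma comparison then shows that $M_U^{\rm gp}\lo M_{U_0}^{\rm gp}$ has kernel $1+{\cal I}$; hence the two maps on log structures $g^{\flat},h^{\flat}\col \os{\circ}{s}{}^{-1}(M_Z)\lo M_U$, which agree after projection to $M_{U_0}$, differ by a factor in $1+{\cal I}$. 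This defines an additive map $\delta\col \os{\circ}{s}{}^{-1}(M_Z)\lo {\cal I}$ through $h^{\flat}(m)=g^{\flat}(m)\cdot(1+\delta(m))$. Compatibility of $g^{\flat}$ and $h^{\flat}$ with the structure maps $\al$ then forces the Leibniz-type identity $D(\al(m))=\al(m)\delta(m)$, so that $(D,\delta)$ is precisely a log derivation of $Z/S$ valued in ${\cal I}$, that is, a section of ${\cal H}{\it om}_{{\cal O}_{T_0}}(s^*(\Om^1_{Z/S}),{\cal I})$ by the universal property of the log differentials.

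For the converse and for simple transitivity, given such a log derivation $(D,\delta)$ together with $g$, I would set $h^{\sharp}:=g^{\sharp}+D$ and $h^{\flat}(m):=g^{\flat}(m)(1+\delta(m))$ and verify directly that $h$ is again a morphism of log schemes over $S$ lifting $s$: the ring-homomorphism property of $h^{\sharp}$, the multiplicativity of $h^{\flat}$, and the compatibility $\al_U\circ h^{\flat}=h^{\sharp}\circ \al_Z$ all hold exactly because $(D,\delta)$ obeys the log-derivation axioms and ${\cal I}^2=0$. These two constructions are mutually inverse, giving the asserted bijection; local existence of some $g$, which upgrades $P(s)$ from a pseudo-torsor to a genuine torsor, is the point where log smoothness of $Z/S$ enters, via the infinitesimal lifting criterion. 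I expect the main obstacle to be the log-structure bookkeeping of the third step---checking that $\delta$ is well defined, additive, and correctly coupled to $D$ through $\al$---since this is exactly where the argument departs from the classical smooth case and where the logarithmic part $d\log m$ inside $\Om^1_{Z/S}$, which records $\delta$, becomes indispensable.
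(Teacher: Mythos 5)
Your proof is correct and follows essentially the same route as the paper, which defers the argument to Kato's \cite[(3.9)]{klog1} and only records the bijection explicitly: your pair $(D,\delta)$, defined by $h^{\sharp}=g^{\sharp}+D$ and $h^{\flat}(m)=g^{\flat}(m)(1+\delta(m))$, is exactly the paper's assignment $da\lom h^*(a)-g^*(a)$, $d\log m\lom u_{h,g}(m)-1$ with $u_{h,g}(m)=1+\delta(m)$. Your closing remark is also the right reading of the statement: the bijection itself needs no log smoothness, which enters only to get local existence of a lift and hence a genuine (rather than pseudo-) torsor.
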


\parno 
The bijection in (\ref{prop:eq}) is obtained by the following two maps 
$$h\lom (da\lom h^*(a)-g^*(a)\in {\cal I})\quad (a\in {\cal O}_Z)$$
and 
$$h\lom (d\log m\lom u_{h,g}(m)-1\in {\cal I}) \quad (m\in M_Z),$$
where $u_{h,g}(m)\in {\cal O}^*_T$ is a unique local section such 
that $h^*(m)=g^*(m)u_{h,g}(m)$. 
We denote the corresponding element to $h$ in 
${\rm Hom}_{{\cal O}_Z}(\Om^1_{Z/S},(s\vert_{U_0})_*({\cal I}\vert_U))$ by $h^*-g^*$. 

\begin{prop}\label{prop:ep}
Let the notations be as in {\rm (\ref{prop:eq})}. Then the following hold$:$ 
\par 
$(1)$ {\rm (cf.~\cite[III (5.6)]{sga1}, \cite[(2.11)]{ifh})}
In 
$$H^1(T_0,{\cal H}{\it om}_{{\cal O}_{T_0}}(s^*(\Om^1_{Z/S}),{\cal I})),$$  
there exists a canonical obstruction class of the existence of a morphism 
$T\lo Z$ making the diagrams of 
the two resulting triangles commutative in {\rm (\ref{cd:ttzs})}.  
If $s^*(\Om^1_{Z/S})$ is a locally free ${\cal O}_{T_0}$-module, 
then this group is equal to ${\rm Ext}^1_{T_0}(s^*(\Om^1_{Z/S}),{\cal I})$. 
\par 
$(2)$ 
In $$H^1(Z,{\cal H}{\it om}_{{\cal O}_Z}(\Om^1_{Z/S},s_*({\cal I}))),$$  
there exists a canonical obstruction class of the existence of a morphism 
$T\lo Z$ making the diagrams of 
the two resulting triangles commutative in {\rm (\ref{cd:ttzs})}.  
If $\Om^1_{Z/S}$ is a locally free ${\cal O}_Z$-module, 
then this group is equal to ${\rm Ext}^1_{Z}(\Om^1_{Z/S},s_*({\cal I}))$. 
\end{prop}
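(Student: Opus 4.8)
The plan is to read off both obstruction classes directly from the torsor structure furnished by (\ref{prop:eq}), via the standard identification of isomorphism classes of torsors under an abelian sheaf with first cohomology, and then to pass from $H^1$ to ${\rm Ext}^1$ by a local-to-global spectral sequence.

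For $(1)$, write ${\cal F}:={\cal H}{\it om}_{{\cal O}_{T_0}}(s^*(\Om^1_{Z/S}),{\cal I})$. By (\ref{prop:eq}) the sheaf $P(s)$ of local lifts is a torsor under ${\cal F}$ on the underlying space of $T_0$, which coincides with that of $T$ since $T_0\hookrightarrow T$ is the square-zero thickening defined by ${\cal I}$. (That $P(s)$ is locally nonempty, hence a genuine torsor, reflects the log smoothness of $Z/S$ in the intended setting.) A global morphism $T\lo Z$ completing the two triangles is precisely a global section of $P(s)$, so the obstruction to its existence is the class $[P(s)]\in H^1(T_0,{\cal F})$, which vanishes if and only if the torsor is trivial. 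Concretely I would pick a cover $\{U_i\}$ carrying local lifts $\wt{s}_i\in P(s)(U_i)$, set $c_{ij}:=\wt{s}_i^*-\wt{s}_j^*\in {\cal F}(U_{ij})$ via the difference map of (\ref{prop:eq}), verify the cocycle relation $c_{ij}+c_{jk}=c_{ik}$ from the additivity of that difference map, and take the obstruction to be the \v{C}ech class of $(c_{ij})$; independence of the choices of cover and of lifts, and the vanishing criterion, are the usual torsor computations. To replace $H^1$ by ${\rm Ext}^1$ when $s^*(\Om^1_{Z/S})$ is flat, I would invoke the local-to-global Ext spectral sequence
\begin{align*}
E_2^{p,q}=H^p(T_0,{\cal E}{\it xt}^q_{{\cal O}_{T_0}}(s^*(\Om^1_{Z/S}),{\cal I}))\Lo
{\rm Ext}^{p+q}_{T_0}(s^*(\Om^1_{Z/S}),{\cal I}).
\end{align*}
Since $s^*(\Om^1_{Z/S})$ is finitely presented and flat, it is locally free, hence locally a direct summand of a finite free module; therefore ${\cal E}{\it xt}^q_{{\cal O}_{T_0}}(s^*(\Om^1_{Z/S}),{\cal I})=0$ for $q\geq 1$, the spectral sequence degenerates, and $H^1(T_0,{\cal F})={\rm Ext}^1_{T_0}(s^*(\Om^1_{Z/S}),{\cal I})$.

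For $(2)$ I would transport everything to $Z$ through the sheaf-level adjunction $s_*{\cal H}{\it om}_{{\cal O}_{T_0}}(s^*(\Om^1_{Z/S}),{\cal I})={\cal H}{\it om}_{{\cal O}_Z}(\Om^1_{Z/S},s_*({\cal I}))$, already used on global sections in (\ref{prop:eq}). Pushing the torsor $P(s)$ forward along $s$, equivalently applying the Leray edge map $H^1(T_0,{\cal F})\lo H^1(Z,s_*{\cal F})$, yields a canonical class in $H^1(Z,{\cal H}{\it om}_{{\cal O}_Z}(\Om^1_{Z/S},s_*({\cal I})))$ corresponding to the one constructed in $(1)$, and the same local-to-global argument (now with $\Om^1_{Z/S}$ flat over ${\cal O}_Z$) identifies this group with ${\rm Ext}^1_Z(\Om^1_{Z/S},s_*({\cal I}))$.

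The step I expect to require the most care is precisely this transition in $(2)$: to know that the edge map is an isomorphism, so that canonicity and the ``vanishes if and only if a lift exists'' property genuinely pass from $(1)$ to $(2)$, one must control $R^qs_*({\cal F})$ for $q>0$. This is immediate when $s$ is topologically a closed immersion, since then $s_*$ is exact and cohomology-preserving; in the required generality one checks this vanishing by arguing on the support of $s_*({\cal I})$. The remaining verifications — the cocycle identity, independence of choices, and the degeneration of the spectral sequence — are routine once the torsor description of (\ref{prop:eq}) is in hand.
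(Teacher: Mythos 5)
Your treatment of (1) is correct and is the paper's own route: the obstruction is the class of the torsor $P(s)$ of (\ref{prop:eq}) in $H^1(T_0,{\cal H}{\it om}_{{\cal O}_{T_0}}(s^*(\Om^1_{Z/S}),{\cal I}))$ (the paper disposes of this by citing SGA~1 and Giraud), and the passage to ${\rm Ext}^1$ is the local-to-global spectral sequence; your remark that flat plus finitely presented gives locally free is exactly what makes the terse ``flat'' hypothesis work. The genuine gap is in (2), at the step you yourself flag. For the Leray spectral sequence $E_2^{pq}=H^p(Z,R^qs_*({\cal F}))\Rightarrow H^{p+q}(T_0,{\cal F})$, where ${\cal F}:={\cal H}{\it om}_{{\cal O}_{T_0}}(s^*(\Om^1_{Z/S}),{\cal I})$ and $s_*({\cal F})={\cal H}{\it om}_{{\cal O}_Z}(\Om^1_{Z/S},s_*({\cal I}))$ by adjunction, the edge map goes in the direction $H^1(Z,s_*({\cal F}))\lo H^1(T_0,{\cal F})$ and is injective but in general not surjective; there is no natural map in the direction you invoke, and the class from (1) lifts to $H^1(Z,s_*({\cal F}))$ only if its image in $H^0(Z,R^1s_*({\cal F}))$ dies. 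Moreover your proposed control of $R^qs_*$ fails in the stated generality: $s\col T_0\lo Z$ in (\ref{cd:ttzs}) is an arbitrary $S$-morphism, not topologically a closed immersion (in the paper's application in (\ref{theo:ts}) it is a Frobenius followed by an infinitesimal thickening, which happens to be a homeomorphism, but nothing of the sort is assumed in (\ref{prop:ep})), and $R^1s_*({\cal F})$ is governed by the fibres of $\os{\circ}{s}$, not by the support of $s_*({\cal I})$, so ``arguing on the support'' cannot give the required vanishing.

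The paper avoids Leray altogether by building the class directly on $Z$: take a log affine open covering $\{Z_i\}_i$ of $Z$, local lifts $\wt{s}_i$ of $s$ defined on the open subscheme of $T$ lying over $Z_i$, and note that by the very formulation of (\ref{prop:eq}) the differences $\wt{s}{}^*_j-\wt{s}{}^*_i$ are sections of ${\cal H}{\it om}_{{\cal O}_Z}(\Om^1_{Z/S},s_*({\cal I}))$ over $Z_i\cap Z_j$; these form a \v{C}ech $1$-cocycle whose class is checked by hand to be independent of the lifts and of the covering and to vanish exactly when a global lift exists, and the ${\rm Ext}$ identification is then the same spectral sequence as in (1). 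This is the correct version of your ``push the torsor forward'' idea: $s_*(P(s))$ is a pseudo-torsor under $s_*({\cal F})$ on $Z$, and the one thing that must be checked is its local non-emptiness, i.e.\ the existence of lifts over $s^{-1}(Z_i)$ for small $Z_i$ (which the paper secures by ``shrinking $T$''), not any vanishing of higher direct images. Indeed, even when $H^1(Z,s_*({\cal F}))\subsetneq H^1(T_0,{\cal F})$, the class of (2) exists and maps to the class of (1); your argument, which tries to go the other way across the edge map, cannot produce it.
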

\begin{proof} 
(1): By (\ref{prop:eq}) this is only a special case of a general well-known result 
(see \cite[p.~70--71]{sga1}, \cite{gir}).  We can also give the proof of (1) which is similar
to the proof of (2) below. 
\par 
(2): Let ${\cal U}:=\{Z_i\}_i$ be a log affine open covering of $Z$.
Let $\wt{s}_i\col T\lo Z_i$ be a local lift of a restriction of $s\col T_0\lo Z$ 
obtained by shrinking $T$. It is easy to see that 
$\{\wt{s}{}^*_j-\wt{s}{}_i^*\}_{ij}$ is an element of 
$Z^1({\cal U},{\cal H}{\it om}_{{\cal O}_Z}(\Om^1_{Z/S},s_*({\cal I})))$. 
Then we claim that the obstruction class stated in (\ref{prop:ep}) is the class 
$$\{\wt{s}{}^*_j-\wt{s}{}_i^*\}_{ij}\in 
\us{\cal U}{\vil}~H^1({\cal U},{\cal H}{\it om}_{{\cal O}_Z}(\Om^1_{Z/S},s_*({\cal I})))
=
H^1(Z,{\cal H}{\it om}_{{\cal O}_Z}(\Om^1_{Z/S},s_*({\cal I}))).$$ 
Indeed, if $\wt{s}_i$ is the restriction of a global lift $\wt{s}$ of $s$, 
then $\{\wt{s}{}^*_j-\wt{s}{}_i^*\}_{ij}=0$. 
Conversely, if it is the coboundary, then there exists a class 
$\{t_i\}$ $(t_i\in {\rm Hom}_{{\cal O}_{Z_i}}(\Om^1_{Z_i/S},s_*({\cal I})))$ 
such that $\wt{s}{}^*_j-\wt{s}{}_i^*=t_j-t_i$. Hence 
$\wt{s}{}_i^*-t_i=\wt{s}{}^*_j-t_j$ and $\wt{s}{}_i^*-t_i$'s patch together. 
These sections define a global morphism $T\lo Z$ over $S$. 
It is clear that this morphism is a lift of $s\col T_0\lo Z$ over $S$ since 
${\rm Im}(t_i)\subset s_*({\cal I})$.
We have to prove that the class  
$\{\wt{s}{}^*_j-\wt{s}{}_i^*\}_{ij}$ in 
$H^1(Z,{\cal H}{\it om}_{{\cal O}_Z}(\Om^1_{Z/S},s_*({\cal I})))$ 
is independent of the choice of ${\cal U}$. 
Assume that we are given another covering 
${\cal V}:=\{Z'_{i'}\}_{i'}$ and another local lift 
$\wt{s}{}'_{i'}\col T\lo Z'_{i'}$. Then, by considering the 
refinement ${\cal U}\cap {\cal V}:=\{Z_i\cap Z_{i'}\}_{ii'}$
of ${\cal U}$ and ${\cal V}$,  
$$\{\wt{s}{}^*_{i'}-\wt{s}{}^*_{i}\}_{ii'}
\in Z^1({\cal U}\cap {\cal V},
{\cal H}{\it om}_{{\cal O}_Z}(\Om^1_{Z/S},s_*({\cal I})))$$ 
gives us a 1-coboundary.  
This implies the desired independence. 
\par 
Assume that $\Om^1_{Z/S}$ is a locally free ${\cal O}_Z$-module. 
Then we obtain the equality 
$$H^1(Z,{\cal H}{\it om}_{{\cal O}_Z}(\Om^1_{Z/S},s_*({\cal I})))
={\rm Ext}^1_{Z}(\Om^1_{Z/S},s_*({\cal I}))$$ 
by the following spectral sequence: 
$$E_2^{ij}=H^j(Z,{\cal E}{\it xt}^i_{{\cal O}_Z}({\cal F},{\cal G}))\Lo 
{\rm Ext}^{i+j}_Z({\cal F},{\cal G})\quad (i,j\in {\mab N})$$
for ${\cal O}_Z$-modules ${\cal F}$ and ${\cal G}$.  
\end{proof}

Let $S$ be a fine log scheme. 
Let $S_0\os{\sus}{\lo} S$ be an exact closed immersion of fine log schemes 
defined by a square zero ideal sheaf ${\cal I}$ of ${\cal O}_S$.  
Let $Y$ be a log smooth scheme over $S_0$. 
Recall that $\wt{Y}/S$ is called a log smooth lift 
of $Y/S_0$ 
if $\wt{Y}$ is a log smooth scheme over $S$ such that 
$\wt{Y}\times_SS_0=Y$.

\par

\par 
Let $\wt{Y}/S$ be a log smooth lift of $Y/S_0$. 
As an immediate corollary of (\ref{prop:eq}), we obtain 
the $\del$ in (\ref{coro:lys}) below as an element of 
${\rm Hom}_{{\cal O}_Y}(\Om^1_{Y/S},{\cal I}{\cal O}_{\wt{Y}})$: 

\begin{coro}\label{coro:lys}
Let $\wt{Y}/S$ be a log smooth lift of $Y/S_0$. 
Then the following hold$:$
\par 
$(1)$ Let $g$ be an automorphism of $\wt{Y}/S$ 
such that $g\vert_Y={\rm id}_Y$. 
Express $g^*(a)=a+\del(\ol{a})$ $(a\in {\cal O}_{\wt{Y}})$ with 
$\del(\ol{a})\in {\cal I}{\cal O}_{\wt{Y}}$. 
Here $\ol{a}$ is the image of $a$ in ${\cal O}_Y$.  
Then $\del \col {\cal O}_Y\lo {\cal I}{\cal O}_{\wt{Y}}$ is a derivation over ${\cal O}_S$. 
\par 
$(2)$ 
Express $g^*(m)=m(1+\del(\ol{m}))$ $(m\in M_{\wt{Y}})$ with 
$\del(\ol{m})\in {\cal I}{\cal O}_{\wt{Y}}$. 
Here $\ol{m}$ is the image of $m$ in $M_Y$.  
Then $\del(\ol{m}\ol{m'})=\del(\ol{m})+\del(\ol{m'})$ $(m,m'\in M_{\wt{Y}})$. 
\par 
$(3)$ Let $\al \col M_Y\lo {\cal O}_Y$ 
be the structural morphism. 
Then $\al(m)\del(m)=\del(\al(m))$ $(m\in  M_Y)$. 
\end{coro}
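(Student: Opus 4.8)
The plan is to regard the three assertions as the verification that the additive map $g^{*}-\mathrm{id}$ respects exactly the three defining relations of the log de Rham differentials $\Om^1_{Y/S}$, namely the Leibniz rule for $da$, the additivity of $d\log m$, and the relation $d\al(m)=\al(m)d\log m$; this is the content implicit in the bijection recorded just after (\ref{prop:eq}), and everything reduces to expanding $g^{*}$ and discarding terms lying in ${\cal I}^2=0$. First I would record the two elementary facts on which well-definedness rests. Since $g$ is a morphism over $S$, the map $g^{*}$ fixes every local section of ${\cal O}_S$ pulled back to $\wt Y$; and for $x\in {\cal I}{\cal O}_{\wt Y}$, writing $x=\sum_i c_ib_i$ with $c_i$ a section of ${\cal I}$ and $b_i\in {\cal O}_{\wt Y}$, one has $g^{*}(x)=\sum_i c_i\,g^{*}(b_i)=x+\sum_i c_i(g^{*}(b_i)-b_i)$, where the correction lies in ${\cal I}^2{\cal O}_{\wt Y}=0$; hence $g^{*}$ is the identity on ${\cal I}{\cal O}_{\wt Y}$. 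Consequently, if $\ol a=\ol{a'}$ then $a-a'\in {\cal I}{\cal O}_{\wt Y}$ and $g^{*}(a)-a=g^{*}(a')-a'$, so the $\del$ of part $(1)$ depends only on $\ol a$.

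For part $(1)$, additivity of $\del$ is immediate from that of $g^{*}$. For the Leibniz rule I would compute $g^{*}(ab)=(a+\del(\ol a))(b+\del(\ol b))=ab+a\,\del(\ol b)+b\,\del(\ol a)$, the cross term $\del(\ol a)\del(\ol b)$ vanishing because it lies in ${\cal I}^2=0$; since ${\cal I}{\cal O}_{\wt Y}$ is an ${\cal O}_Y$-module (multiplication by ${\cal O}_{\wt Y}$ factoring through ${\cal O}_Y$), this reads $\del(\ol{ab})=\ol a\,\del(\ol b)+\ol b\,\del(\ol a)$. Finally $\del$ annihilates the image of ${\cal O}_S$ by the first elementary fact, so $\del$ is a derivation over ${\cal O}_S$.

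For part $(2)$ I would use that $g^{*}$ is a homomorphism of sheaves of monoids on $M_{\wt Y}$ inducing the identity on $M_Y$. Because the immersion $Y\os{\sus}{\lo}\wt Y$ is exact, $\ol M_{\wt Y}\os{\sim}{\lo}\ol M_Y$, so $g^{*}(m)$ and $m$ have the same image in $\ol M_{\wt Y}$ and differ by a unit; since moreover they coincide over $Y$, that unit lies in $1+{\cal I}{\cal O}_{\wt Y}$ and is uniquely determined (the uniqueness recorded after (\ref{prop:eq})), giving $g^{*}(m)=m(1+\del(\ol m))$ with $\del(\ol m)$ well-defined on $\ol m$. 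Applying $g^{*}$ to $mm'$ and using multiplicativity,
\begin{align*}
g^{*}(mm')=m(1+\del(\ol m))\,m'(1+\del(\ol{m'}))=mm'\bigl(1+\del(\ol m)+\del(\ol{m'})\bigr),
\end{align*}
the cross term again lying in ${\cal I}^2=0$; comparing with $g^{*}(mm')=mm'(1+\del(\ol{mm'}))$ and invoking the uniqueness of the unit yields $\del(\ol{mm'})=\del(\ol m)+\del(\ol{m'})$. For part $(3)$ I would exploit that $g^{*}$ commutes with the structural morphism $\al\col M_{\wt Y}\lo {\cal O}_{\wt Y}$. Lifting $m\in M_Y$ to $\wt m\in M_{\wt Y}$ and applying $\al$ to $g^{*}(\wt m)=\wt m(1+\del(m))$ gives $\al(g^{*}(\wt m))=\al(\wt m)(1+\del(m))=\al(\wt m)+\al(m)\del(m)$, using that $\al$ is the inclusion on units and that ${\cal I}{\cal O}_{\wt Y}$ is an ${\cal O}_Y$-module; on the other hand $\al(g^{*}(\wt m))=g^{*}(\al(\wt m))=\al(\wt m)+\del(\al(m))$ by part $(1)$. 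Cancelling $\al(\wt m)$ gives $\al(m)\del(m)=\del(\al(m))$.

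The step I expect to be the main obstacle is the well-definedness and uniqueness underlying part $(2)$: that $g^{*}(m)$ and $m$ genuinely differ by a unique unit of the form $1+\del(\ol m)$ depending only on $\ol m\in M_Y$. This is where the exactness of $Y\os{\sus}{\lo}\wt Y$ (yielding $\ol M_{\wt Y}\cong\ol M_Y$) and the uniqueness statement for $u_{h,g}$ following (\ref{prop:eq}) are genuinely needed; once these are in hand, the additive computations in parts $(1)$ and $(3)$ are formal.
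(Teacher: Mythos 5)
Your proof is correct and takes essentially the paper's route: the paper obtains (\ref{coro:lys}) as an immediate corollary of (\ref{prop:eq}), viewing $g$ and ${\rm id}_{\wt{Y}}$ as two morphisms lifting the inclusion $Y\os{\sus}{\lo}\wt{Y}$, so that $g^*-{\rm id}^*$ is an element of ${\rm Hom}_{{\cal O}_Y}(\Om^1_{Y/S_0},{\cal I}{\cal O}_{\wt{Y}})$ and the three identities are just the translation of this through the defining relations of the log differentials. Your direct expansions --- $g^*$ fixing ${\cal O}_S$ and ${\cal I}{\cal O}_{\wt{Y}}$, discarding terms in ${\cal I}^2=0$, and the uniqueness of the unit $1+\del(\ol{m})$ coming from exactness of $Y\os{\sus}{\lo}\wt{Y}$ --- are precisely the verifications that the paper's one-line citation leaves implicit.
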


We also recall the following result due to K.~Kato:

\begin{prop}[{\bf \cite[(3.14) (2), (3)]{klog1}}]\label{prop:npp} 
\par 
$(1)$ Let $\wt{Y}/S$ be a log smooth lift of $Y/S_0$. 
Let ${\rm Aut}_S(\wt{Y},Y)$ be the group of 
automorphisms $g\col \wt{Y}\os{\sim}{\lo}\wt{Y}$ over $S$  
such that $g\vert_Y={\rm id}_Y$. 
Then the morphism 
\begin{align*} 
{\rm Aut}_S(\wt{Y},Y)\owns g \lom \del \in 
{\rm Hom}_{{\cal O}_Y}(\Om^1_{Y/S_0},{\cal I}{\cal O}_{\wt{Y}}). 
\tag{4.4.1}\label{ali:omys}
\end{align*}
obtained by {\rm (\ref{coro:lys})} 
gives the following isomorphism of groups$:$ 
\begin{align*} 
{\rm Aut}_S(\wt{Y},Y)\os{\sim}{\lo}
H^0(Y,{\cal H}{\it om}_{{\cal O}_Y}(\Om^1_{Y/S_0},{\cal I}{\cal O}_{\wt{Y}}))
= 
{\rm Hom}_{{\cal O}_Y}(\Om^1_{Y/S_0},{\cal I}{\cal O}_{\wt{Y}}). 
\tag{4.4.2}\label{ali:oo0mys}
\end{align*}
\par 
$(2)$ 
Let ${\rm Lift}'_{Y/(S_0\subset S)}$ be the following sheaf 
\begin{align*} 
{\rm Lift}'_{Y/(S_0\subset S)}(U):=
\{{\rm isomorphism~classes~of~log~smooth~lifts~of}~U/S_0~{\rm over}~S\}
\end{align*}
for each log open subscheme $U$ of $Y$. 
If $Y/S_0$ has a lift $\wt{Y}/S$, 
then there exists the following $($natural$)$ bijection of sets$:$
\begin{align*} 
{\rm Lift}'_{Y/(S_0\subset S)}(Y)\os{\sim}{\lo}
H^1(Y,{\cal H}{\it om}_{{\cal O}_Y}(\Om^1_{Y/S_0},{\cal I}{\cal O}_{\wt{Y}}))
= 
{\rm Ext}_Y^1(\Om^1_{Y/S_0},{\cal I}{\cal O}_{\wt{Y}}). 
\tag{4.4.3}\label{ali:oom1ys}
\end{align*} 
\end{prop}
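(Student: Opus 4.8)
The plan is to derive both assertions from Proposition~(\ref{prop:eq}) and Corollary~(\ref{coro:lys}) by the standard torsor and \v{C}ech argument, carried out in the logarithmic category. For (1), I would first realize ${\rm Aut}_S(\wt{Y},Y)$ as a special case of the sheaf $P(s)$ of (\ref{prop:eq}). Apply (\ref{prop:eq}) with $Z=\wt{Y}$, base $S$, $T=\wt{Y}$, and $T_0=Y$ via the exact closed immersion $Y\os{\sus}{\lo}\wt{Y}$ coming from $\wt{Y}\times_SS_0=Y$ with ideal ${\cal I}{\cal O}_{\wt{Y}}$, and with $s\col Y\os{\sus}{\lo}\wt{Y}$ the closed immersion. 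Then $P(s)$ is exactly the set of $S$-endomorphisms of $\wt{Y}$ restricting to the identity on $Y$, and (\ref{prop:eq}) shows $P(s)$ is a torsor under ${\cal H}{\it om}_{{\cal O}_Y}(s^*(\Om^1_{\wt{Y}/S}),{\cal I}{\cal O}_{\wt{Y}})$. Since $\wt{Y}\times_SS_0=Y$, base change of log differentials gives $s^*(\Om^1_{\wt{Y}/S})=\Om^1_{Y/S_0}$, and the identity endomorphism is a canonical base point turning this torsor into the group ${\rm Hom}_{{\cal O}_Y}(\Om^1_{Y/S_0},{\cal I}{\cal O}_{\wt{Y}})$. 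The element attached to $g$ is precisely the $\del$ of (\ref{coro:lys}), whose parts (1)--(3) guarantee that $\del$ is an ${\cal O}_S$-derivation compatible with the log structure, hence factors through $\Om^1_{Y/S_0}$.

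Two further points finish (1). First, each such endomorphism $g$ is an automorphism: it is the identity modulo the square-zero ideal ${\cal I}{\cal O}_{\wt{Y}}$, so ${\rm id}-(g^*-{\rm id})$ furnishes an inverse. Second, $g\lom \del_g$ is a group homomorphism: from $(g_1g_2)^*=g_2^*g_1^*$, together with $\del_{g_1}(\ol{a})\in {\cal I}{\cal O}_{\wt{Y}}$ and the fact that $g_2^*$ is the identity on ${\cal I}{\cal O}_{\wt{Y}}$ modulo ${\cal I}^2=0$, one reads off $\del_{g_1g_2}=\del_{g_1}+\del_{g_2}$; the same ${\cal I}^2=0$ computation on $M_{\wt{Y}}$ handles the monoid part. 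This yields the isomorphism (\ref{ali:oo0mys}).

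For (2), I would run the usual gluing argument with the given global lift $\wt{Y}$ as base point. Write ${\cal A}:={\cal H}{\it om}_{{\cal O}_Y}(\Om^1_{Y/S_0},{\cal I}{\cal O}_{\wt{Y}})$, which by (1) is the automorphism sheaf of $\wt{Y}$ relative to $Y$. Given another lift $\wt{Y}'$, local uniqueness of log smooth lifts (the relevant $H^1$ vanishes on small affines since $\Om^1_{Y/S_0}$ is locally free) lets me pick an open covering $\{U_i\}$ of $\os{\circ}{Y}$ and isomorphisms $\vphi_i\col \wt{Y}\vert_{U_i}\os{\sim}{\lo}\wt{Y}'\vert_{U_i}$ lifting the identity; on overlaps $c_{ij}:=\vphi_i^{-1}\vphi_j\in {\cal A}(U_{ij})$ is a $1$-cocycle, defining a class in $H^1(Y,{\cal A})$. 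I would check independence of $\{U_i\}$ and of the $\vphi_i$ (changing $\vphi_i$ by a section of ${\cal A}$ alters $\{c_{ij}\}$ by a coboundary), obtaining a well-defined map ${\rm Lift}'_{Y/(S_0\subset S)}(Y)\lo H^1(Y,{\cal A})$. Injectivity is the statement that the class vanishes precisely when the $\vphi_i$ can be adjusted to glue into a global isomorphism $\wt{Y}\cong\wt{Y}'$; surjectivity comes from gluing copies of $\wt{Y}\vert_{U_i}$ along the $c_{ij}$, the cocycle condition making the gluing of the underlying schemes and of the log structures consistent. Finally $H^1(Y,{\cal A})={\rm Ext}^1_Y(\Om^1_{Y/S_0},{\cal I}{\cal O}_{\wt{Y}})$ follows from the local-to-global ${\rm Ext}$ spectral sequence used in the proof of (\ref{prop:ep}), since $\Om^1_{Y/S_0}$ is locally free and hence ${\cal E}{\it xt}^i_{{\cal O}_Y}(\Om^1_{Y/S_0},-)=0$ for $i>0$.

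The main obstacle is logarithmic bookkeeping rather than homological machinery. In the non-log case this is routine, but here the monoid part must be tracked throughout; the genuinely non-automatic point is that $\del$ descends to an ${\cal O}_Y$-linear map on the log differentials, i.e.\ that it respects $d\log$ of $M_Y$ and the relation $\al(m)d\log m=d\al(m)$. This is exactly what (\ref{coro:lys}) (2),(3) secure, so the log-specific difficulty is already isolated there. The remaining delicate step is surjectivity in (2): gluing log schemes along the $c_{ij}$ demands gluing the log structures, not merely the structure sheaves, which is again controlled by the fact that ${\cal A}$ records automorphisms of the full log scheme $\wt{Y}$ relative to $Y$.
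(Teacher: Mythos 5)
Your proposal is correct and takes essentially the same route as the paper: the paper states this proposition as a recall of Kato's result \cite[(3.14) (2), (3)]{klog1} rather than reproving it, and the construction it sketches immediately afterwards for the map (\ref{ali:oom1ys}) --- local comparison morphisms $g_i$ to the fixed lift $\wt{Y}$ and the difference cocycle $\{g^*_j-g^*_i\}$ --- is exactly your \v{C}ech cocycle $c_{ij}=\vphi_i^{-1}\vphi_j$. Your part (1) (realizing ${\rm Aut}_S(\wt{Y},Y)$ as the torsor of (\ref{prop:eq}) based at the identity, with the square-zero computation showing $g\mapsto \del_g$ is an isomorphism of groups) is likewise the standard argument underlying the cited proof, so there is nothing to correct.
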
 

\par 
Let us recall the map (\ref{ali:oom1ys}). 

\par 
Let $\wt{Z}/S$ be a log smooth lift of $Y/S_0$. 
Let $\{\wt{U}_i\}_{i\in I}$ be a log affine open covering of $\wt{Z}$ 
such that there exists a morphism $g_i\col \wt{U}_i\lo \wt{Y}$ making  
the resulting two triangles  in 
\begin{equation*} 
\begin{CD} 
U_i@>{\subset}>>\wt{U}_i\\
@VVV @VVV \\
\wt{Y}@>>> S
\end{CD} 
\end{equation*} 
commutative. Here $U_i:=\wt{U}_i\cap Y$. 
Set ${\cal U}:=\{U_i\}_{i\in I}$. 
Then we have a section 
$$g_{ij}^*:=g^*_j-g^*_i\in 
{\cal H}{\it om}_{{\cal O}_Y}(\Om^1_{Y/S_0},{\cal I}{\cal O}_{\wt{Y}})(U_{ij}),$$
where $U_{ij}:=U_i\cap U_j$. These sections define an element of 
$\check{H}{}^1({\cal U},{\cal H}{\it om}_{{\cal O}_Y}(\Om^1_{Y/S_0},{\cal I}{\cal O}_{\wt{Y}}))$. 
Consequently we have an element of 
$H^1(Y,{\cal H}{\it om}_{{\cal O}_Y}(\Om^1_{Y/S_0},{\cal I}{\cal O}_{\wt{Y}}))$. 

\begin{rema}\label{rema:ktm}
Let the notations be as in \cite[(3.14) (4)]{klog1}. 
There is a mistake in [loc.~cit.]. 
The statement \cite[(3.14) (4)]{klog1} has no sense since 
a lift $\wt{X}$ of $(X,M,f)$ appears 
in the sufficient condition 
$$H^2(X,{\cal H}{\it om}_{{\cal O}_X}(\om^1_{X/Y},I{\cal O}_{\wt{X}}))=0$$ 
for an existence of a lift $\wt{X}$ of $(X,M,f)$. 
(If one claims that \cite[(3.14) (4)]{klog1} has a sense, one has to prove that 
the sheaf $I{\cal O}_{\wt{X}}$ on $X_{\rm et}$ is independent of the choice of $\wt{X}$.)
\end{rema} 

\par 
To make \cite[(3.14) (3)]{klog1}(=(4.4) (2)) better and correct \cite[(3.14) (4)]{klog1}, 
more generally to define an obstruction class of 
a lift of $Y/S_0$ over $S$, we moreover assume that $Y/S_0$ is {\it integral}.  
In the following we always assume this.  
That is, $Y$ is assumed to be a log smooth integral scheme over $S_0$. 
We say that $\wt{Y}/S$ is a {\it log smooth integral lift} (or simply a {\it lift}) of $Y/S_0$ 
if $\wt{Y}$ is a log smooth integral scheme over $S$ such that 
$\wt{Y}\times_SS_0=Y$. 

\begin{rema}\label{rema:mgd}
The obvious analogues of (\ref{prop:npp}) (1) and (2) hold for 
a log smooth integral scheme $Y/S_0$ by the proof of \cite[(3.14)]{klog1}. 
\end{rema} 

\par 
The following includes an important correction of \cite[(3.14) (4)]{klog1} and 
\cite[(8.6)]{kaf}. 
This is a log version of \cite[III (6.3)]{sga1} and 
a generalization of \cite[(2.2)]{kwn}.

\begin{theo}\label{theo:loc}
Let the notations be as above. 
For a log scheme $Z$ over $S_0$, set 
${\cal T}_{Z/S_0}:={\cal H}{\it om}_{{\cal O}_Z}(\Om^1_{Z/S_0},{\cal O}_Z)$. 
Then the following hold$:$
\par 
$(1)$  Let $U$ be a log open subscheme of $Y$ and 
let $\wt{U}$ be a log smooth integral lift of $U$ over $S$. 
Then 
\begin{align*} 
{\cal H}{\it om}_{{\cal O}_U}
(\Om^1_{U/S_0},{\cal I}{\cal O}_{\wt{U}})
={\cal T}_{U/S_0}\otimes_{{\cal O}_{S_0}}{\cal I}.  
\tag{4.7.1}\label{ali:deuel} 
\end{align*} 
\par
$(2)$ Assume that $\os{\circ}{Y}$ is separated.  
Then, in  
\begin{align*} 
H^2(Y,{\cal T}_{Y/S_0})\otimes_{{\cal O}_{S_0}}{\cal I},
\tag{4.7.2}\label{ali:deuomel} 
\end{align*} 
there exists a canonical obstruction class 
${\rm obs}_{Y/(S_0\subset S)}$ of a lift of $Y/S_0$ over $S$. 
\par 
Let ${\rm Lift}_{Y/(S_0\subset S)}$ be the following sheaf 
\begin{align*} 
{\rm Lift}_{Y/(S_0\subset S)}(U):=
\{{\rm isomorphism~classes~of~log~smooth~integral~lifts~of}~U/S_0~{\rm over}~S\}
\end{align*}
for each log open subscheme $U$ of $Y$. 
If the obstruction class vanishes, 
then there exists the following $($natural$)$ bijection of sets:  
\begin{align*} 
{\rm Lift}_{Y/(S_0\subset S)}(Y)
\os{\sim}{\lo} H^1(Y,{\cal T}_{Y/S_0})\otimes_{{\cal O}_{S_0}}{\cal I}.
\end{align*}  
\par 
$(3)$ Let the assumption and the notations be as in {\rm (2)}. 
If $\os{\circ}{S}$ is an affine scheme, say, ${\rm Spec}(A)$, if we set 
$I:=\Gam(\os{\circ}{S},{\cal I})$ and $A_0:=A/I$ and 
if $I$ is a flat $A_0$-module, 
then the cohomology {\rm (\ref{ali:deuomel})} is equal to 
$H^2(Y,{\cal T}_{Y/S_0})\otimes_{A_0}I
= {\rm Ext}_Y^2(\Om^1_{Y/S_0}, {\cal O}_{Y})\otimes_{A_0}I$.
\end{theo}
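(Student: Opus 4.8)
The plan is to treat the three parts in order, with part $(1)$ carrying the conceptual weight that repairs the defect of (\ref{rema:ktm}). For $(1)$ I would first use that a log smooth integral morphism has flat underlying morphism of schemes (\cite{klog1}), so that $\os{\circ}{\wt{U}}\lo \os{\circ}{S}$ is flat and ${\cal O}_{\wt{U}}$ is ${\cal O}_S$-flat. Since ${\cal I}^2=0$, the ideal ${\cal I}$ is canonically an ${\cal O}_{S_0}$-module, and flatness gives an isomorphism ${\cal I}\otimes_{{\cal O}_S}{\cal O}_{\wt{U}}\os{\sim}{\lo}{\cal I}{\cal O}_{\wt{U}}$. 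Rewriting the source as ${\cal I}\otimes_{{\cal O}_{S_0}}({\cal O}_{S_0}\otimes_{{\cal O}_S}{\cal O}_{\wt{U}})={\cal I}\otimes_{{\cal O}_{S_0}}{\cal O}_U$ produces a canonical identification ${\cal I}{\cal O}_{\wt{U}}={\cal O}_U\otimes_{{\cal O}_{S_0}}{\cal I}$ that is manifestly independent of the lift $\wt{U}$. The equality (\ref{ali:deuel}) then follows because $\Om^1_{U/S_0}$ is locally free of finite rank (\cite{klog1}), so ${\cal H}{\it om}_{{\cal O}_U}(\Om^1_{U/S_0},-)$ commutes with $-\otimes_{{\cal O}_{S_0}}{\cal I}$; one checks the natural comparison map is an isomorphism locally, where $\Om^1_{U/S_0}$ is free. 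This lift-independence is exactly what is missing in (\ref{rema:ktm}): the a priori lift-dependent sheaf is canonically the fixed quasi-coherent sheaf ${\cal H}{\it om}_{{\cal O}_Y}(\Om^1_{Y/S_0},{\cal O}_Y)\otimes_{{\cal O}_{S_0}}{\cal I}$.

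For $(2)$ I would run the standard \v{C}ech obstruction argument of \cite[III (6.3)]{sga1} in the present log integral setting, using (\ref{prop:npp}) and its integral analogue (\ref{rema:mgd}). Log smoothness provides integral lifts locally, so I choose a log affine covering $\{U_i\}$ of $Y$ carrying integral lifts $\wt{U}_i/S$. Since $\os{\circ}{Y}$ is separated, the overlaps $U_{ij}$ and $U_{ijk}$ are affine; by $(1)$ the sheaf ${\cal H}{\it om}_{{\cal O}_Y}(\Om^1_{Y/S_0},{\cal O}_Y)\otimes_{{\cal O}_{S_0}}{\cal I}$ is quasi-coherent, so its $H^1$ vanishes on each affine $U_{ij}$, and the integral version of (\ref{prop:npp}) $(2)$ shows $\wt{U}_i$ and $\wt{U}_j$ are isomorphic over $U_{ij}$; fix such isomorphisms $g_{ij}$. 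On triple overlaps the automorphism $g_{ij}g_{jk}g_{ki}$ of $\wt{U}_i$, which restricts to the identity on $U_{ijk}$, yields via (\ref{prop:npp}) $(1)$ a section of ${\cal H}{\it om}_{{\cal O}_{U_{ijk}}}(\Om^1_{U_{ijk}/S_0},{\cal I}{\cal O}_{\wt{U}_i})$, i.e.\ of the fixed sheaf by $(1)$. The crucial point, and the main obstacle, is that these sections assemble into a genuine $2$-cocycle valued in a single fixed quasi-coherent sheaf: in general this fails (whence (\ref{rema:ktm})), and it is rescued precisely by the canonical lift-independent identification of $(1)$, together with the remark that any lift isomorphism induces the identity on ${\cal O}_U\otimes_{{\cal O}_{S_0}}{\cal I}$ (since ${\cal I}^2=0$ kills the difference between a local section and its reduction after multiplication by ${\cal I}$). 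The class ${\rm obs}_{Y/(S_0\subset S)}$ is then independent of all choices by the usual argument, vanishes if and only if the $g_{ij}$ can be adjusted to a cocycle (equivalently the $\wt{U}_i$ glue to a global lift), and, when it vanishes, the global lifts form a torsor under $H^1$ of the same sheaf, giving the asserted bijection.

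For $(3)$, with $\os{\circ}{S}={\rm Spec}(A)$, $A_0=A/I$ and $I$ flat over $A_0$, I would identify ${\cal I}$ with the quasi-coherent ${\cal O}_{S_0}$-module associated to $I$ and compute through a finite affine covering. Writing ${\cal F}:={\cal H}{\it om}_{{\cal O}_Y}(\Om^1_{Y/S_0},{\cal O}_Y)$, the \v{C}ech complex of ${\cal F}\otimes_{{\cal O}_{S_0}}{\cal I}={\cal F}\otimes_{A_0}I$ for this covering is the \v{C}ech complex of ${\cal F}$ after applying $-\otimes_{A_0}I$; since $I$ is $A_0$-flat, $-\otimes_{A_0}I$ is exact and commutes with the cohomology of this bounded complex of $A_0$-modules, whence $H^2(Y,{\cal F}\otimes_{{\cal O}_{S_0}}{\cal I})=H^2(Y,{\cal F})\otimes_{A_0}I$. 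Here separatedness and quasi-compactness guarantee that \v{C}ech cohomology computes derived functor cohomology and that the complex is finite. Finally, $\Om^1_{Y/S_0}$ being locally free gives ${\cal E}{\it xt}^i_{{\cal O}_Y}(\Om^1_{Y/S_0},{\cal O}_Y)=0$ for $i>0$, so the local-to-global Ext spectral sequence used in the proof of (\ref{prop:ep}) degenerates and $H^2(Y,{\cal F})={\rm Ext}^2_Y(\Om^1_{Y/S_0},{\cal O}_Y)$, which yields the remaining equality.
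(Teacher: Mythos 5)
Your proposal is correct and follows the paper's own strategy in all three parts: the flatness-based identification ${\cal I}{\cal O}_{\wt{U}}={\cal I}\otimes_{{\cal O}_{S_0}}{\cal O}_U$ for (1), a \v{C}ech $2$-cocycle built from triple-overlap automorphisms for (2), and flatness of $I$ plus local freeness of $\Om^1_{Y/S_0}$ for (3). The one real divergence is how you produce the overlap isomorphisms $g_{ij}$: you invoke the torsor property of (\ref{prop:npp}) (2) (in its integral form (\ref{rema:mgd})) together with quasi-coherence of the coefficient sheaf and affineness of $\os{\circ}{U}_{ij}$, i.e.\ the vanishing of $H^1$ on affines — this is the route of \cite[III (6.3)]{sga1} — whereas the paper constructs $g_{ij}$ and a partner $h_{ij}$ directly from the infinitesimal lifting property of the log smooth scheme $\wt{U}_{ji}/S$ and observes that $g_{ij}\circ h_{ij}$ is an automorphism, explicitly remarking that no $H^1$-vanishing is needed for this step; both arguments are valid, and yours is arguably more standard, while the paper's is slightly more economical. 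The second difference is one of completeness rather than route: the paper regards the verification that the ${\mathfrak g}_{ijk}$ form a cocycle and that the class is independent of the choices of $g_{ij}$, of the local lifts $\wt{U}_i$, and of the covering as the substantive content of the theorem (the computations (\ref{ali:dgggel})--(\ref{ali:dgrijkel}) and the conjugation-invariance lemma (\ref{lemm:elin}); see (\ref{rems:lifk}) (1)), and you assert these by "the usual argument." That said, you do isolate the exact fact that makes them work — any isomorphism of lifts restricting to the identity acts as the identity on ${\cal I}{\cal O}_{\wt{U}}\cong{\cal O}_U\otimes_{{\cal O}_{S_0}}{\cal I}$, since ${\cal I}^2=0$ — which is precisely (\ref{lemm:elin}); with this in hand the remaining verifications are mechanical, so your outline closes the gap of (\ref{rema:ktm}) in the same way the paper does, just in compressed form.
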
 
\begin{proof} 
(1): (Because we have to give a comment in (\ref{rems:lifk}) (2) below, 
we have to give the following very easy proof of (1).)
Consider the following obvious exact sequence: 
\begin{align*} 
0\lo {\cal I}\lo {\cal O}_S\lo {\cal O}_{S_0}\lo 0. 
\end{align*} 
Taking the tensorization ${\cal O}_{\wt{U}}\otimes_{{\cal O}_S}$ of this exact sequence 
and noting that $\os{\circ}{\wt{U}}\lo \os{\circ}{S}$ is flat (\cite[(4.5)]{klog1}), 
we see that 
${\cal I}{\cal O}_{\wt{U}}=
{\cal I}\otimes_{{\cal O}_S}{\cal O}_{\wt{U}}=
{\cal I}\otimes_{{\cal O}_{S_0}}{\cal O}_{U}$. 
Hence we obtain the following equalities: 
\begin{align*} 
{\cal H}{\it om}_{{\cal O}_U}
(\Om^1_{U/S_0},{\cal I}{\cal O}_{\wt{U}})&=
{\cal H}{\it om}_{{\cal O}_U}
(\Om^1_{U/S_0},{\cal O}_U\otimes_{{\cal O}_{S_0}}{\cal I})\\
&={\cal H}{\it om}_{{\cal O}_U}(\Om^1_{U/S_0},{\cal O}_U)
\otimes_{{\cal O}_{S_0}}{\cal I} 
\end{align*}  
since $\Om^1_{U/S_0}$  is a finite locally free ${\cal O}_U$-module 
(\cite[(3.10)]{klog1}). 
\par 
(2): We construct the obstruction class 
${\rm obs}_{Y/(S_0\subset S)}$ as in \cite[p.~79]{sga1}. 
Though the statement \cite[III (6.3)]{sga1} is well-known, 
the proof of it is not well-understood at all. (See (\ref{rems:lifk}) (1) below.)
Because we cannot find a detailed proof of (2) using cocycles 
in references, e.~g., 
\cite[III (6.3)]{sga1}, \cite[(2.12)]{ifh}, 
\cite[(2.2)]{kwn} nor \cite[(8.6)]{kaf} unfortunately, 
we have to give the detailed proof of (\ref{theo:loc}) as follows. 
\par 
Let ${\cal U}:=\{U_i\}_{i\in I}$ be a log affine open covering of $Y$ such that 
$U_i$ has a log smooth integral lift $\wt{U}_i$ over $S$. 
Set $\wt{U}_{ij}
:=\wt{U}_i\vert_{U_{ij}}$ 
(Note that we cannot use \cite[(3.14) (1)]{klog1} for {\it any} log affine open subscheme of $Y$ 
because we cannot use \cite[(3.14) (4)]{klog1}. 
However, by the proof of \cite[(3.14)]{klog1} and \cite[(4.1) (ii)]{klog1}, 
we have the $U_i$ over $S_0$ and the $\wt{U}_i$ over $S$.)
Because $\os{\circ}{Y}$ is separated, 
$\os{\circ}{U}_{ij}:=\os{\circ}{U}_i\cap \os{\circ}{U}_j$ is affine. 
Since $\wt{U}_{ji}$ (resp.~$\wt{U}_{ij}$) is log smooth over $S$, 
there exists a morphism 
$g_{ij}\col \wt{U}_{ij}\lo \wt{U}_{ji}$ 
(resp.~$h_{ij}\col \wt{U}_{ij} \lo \wt{U}_{ji}$) over $S$ 
which is an extension of ${\rm id}_{U_{ij}}\col U_{ij}\os{\sim}{\lo} U_{ji}$. 
Since $g_{ij}\circ h_{ij}\in {\rm End}_S(\wt{U}_{ij})$ 
which is an extension of ${\rm id}_{U_{ij}}$, 
$g_{ij}\circ h_{ij}\in {\rm Aut}_S(\wt{U}_{ij},U_{ij})$. 
In particular, $g_{ij}$ is an isomorphism. 
(This convention of the index of $g_{ij}$ is different from that in 
\cite[III (6.3)]{sga1} but the same as that in \cite[p.~113]{ifh}.
For the existence of the isomorphism $g_{ij}$, 
we do not have to use the vanishing of 
$H^1(U_{ij},{\cal H}{\it om}_{{\cal O}_{U_{ij}}}(\Om^1_{U_{ij}/S_0},{\cal O}_{U_{ij}})
\otimes_{{\cal O}_{S_0}}{\cal I})$, 
though it has been used in the proof of \cite[III (6.3)]{sga1}.)
If one wants, one can assume that $g_{ij}=g_{ji}^{-1}$ as in \cite{ns} because 
we can endow $I$ with a total order. 
(We shall use this equality in the proof of (\ref{theo:ts}) (4) below.) 
Set $U_{ijk}:=U_i\cap U_j \cap U_k$ and 
$\wt{U}_{ijk}:=\wt{U}_i\vert_{U_{ijk}}$. 
Set $g_{ijk}:=
g_{ik}^{-1}g_{jk}g_{ij}\in {\rm Aut}_S(\wt{U}_{ijk},U_{ijk})$. 
Consider a section 
\begin{align*}
{\mathfrak g}_{ijk}:=g^*_{ijk}-{\rm id}^*_{\wt{U}_{ijk}}
\in 
{\rm Hom}_{{\cal O}_{U_{ijk}}}
(\Om^1_{U_{ijk}/S_0},{\cal I}{\cal O}_{\wt{U}_{ijk}}). 
\tag{4.7.3}\label{ali:delrel} 
\end{align*} 
By (\ref{ali:deuel}) we obtain the following equality: 
\begin{align*} 
{\cal H}{\it om}_{{\cal O}_{U_{ijk}}}
(\Om^1_{U_{ijk}/S_0},{\cal I}{\cal O}_{\wt{U}_{ijk}})=
{\cal H}{\it om}_{{\cal O}_{U_{ijk}}}
(\Om^1_{U_{ijk}/S_0},{\cal O}_{U_{ijk}})\otimes_{{\cal O}_{S_0}}{\cal I}. 
\end{align*} 
Consequently 
$${\mathfrak g}_{ijk}\in 
\Gam(U_{ijk},{\cal H}{\it om}_{{\cal O}_Y}(\Om^1_{Y/S_0},{\cal O}_{Y})
\otimes_{{\cal O}_{S_0}}{\cal I}).$$ 
In fact, we can check 
$${\mathfrak g}:=({\mathfrak g}_{ijk})\in 
Z^2({\cal U},{\cal H}{\it om}_{{\cal O}_Y}(\Om^1_{Y/S_0},{\cal O}_{Y})
\otimes_{{\cal O}_{S_0}}{\cal I})$$ 
(cf.~\cite[p.~79]{sga1}). 
Indeed, because 
$(\partial ({\mathfrak g}))_{ijkl}=
{\mathfrak g}_{jkl}-{\mathfrak g}_{ikl}+{\mathfrak g}_{ijl}-{\mathfrak g}_{ijk}$, 
it suffices to prove that 
${\mathfrak g}_{jkl}={\mathfrak g}_{ikl}+{\mathfrak g}_{ijk}-{\mathfrak g}_{ijl}$. 
The element of ${\rm Aut}_S(\wt{U}_{jkl}\vert_{U_{ijkl}})$ corresponding to 
the right hand side of the equality above is equal to 
\begin{align*} 
(g_{il}^{-1}g_{kl}g_{ik})(g_{ik}^{-1}g_{jk}g_{ij})(g_{ij}^{-1}g_{jl}^{-1}g_{il})
& =(g_{il}^{-1}g_{kl}g_{jk})(g_{jl}^{-1}g_{il})
\tag{4.7.4}\label{ali:dgggel} \\
&=(g^{-1}_{il}g_{jl})(g_{jl}^{-1}g_{kl}g_{jk})(g^{-1}_{il}g_{jl})^{-1}. \\
\end{align*}  
Hence 
\begin{align*} 
&\{(g_{il}^{-1}g_{kl}g_{ik})(g_{ik}^{-1}g_{jk}g_{ij})(g_{ij}^{-1}g_{jl}^{-1}g_{il})\}^*-{\rm id}_{\wt{U}_{jkli}}^*\tag{4.7.5}\label{ali:dgidrel} \\
&=((g^{-1}_{il}g_{jl})^{-1})^*((g_{jl}^{-1}g_{kl}g_{jk})^*-{\rm id}_{\wt{U}_{jkli}}^*)(g^{-1}_{il}g_{jl})^*
=(g_{jl}^{-1}g_{kl}g_{jk})^*-{\rm id}_{\wt{U}_{jkli}}^*.
\end{align*} 
Here, to obtain the second equality above, 
we have used the lemma (\ref{lemm:elin}) below. 
Now we have the desired element 
$${\rm obs}_{Y/(S_0\subset S)}:={\rm the}~{\rm class}~{\rm of}~{\mathfrak g}$$ 
in 
\begin{align*} 
\check{H}{}^2({\cal U},{\cal H}{\it om}_{{\cal O}_Y}(\Om^1_{Y/S_0},{\cal O}_{Y})
\otimes_{{\cal O}_{S_0}}{\cal I})
=H^2(Y,{\cal H}{\it om}_{{\cal O}_Y}(\Om^1_{Y/S_0},{\cal O}_{Y})
\otimes_{{\cal O}_{S_0}}{\cal I}).
\tag{4.7.6}\label{ali:ooys} 
\end{align*} 
Here we have used the assumption on the separatedness of $\os{\circ}{Y}$ 
to obtain the equality above. 
\par 
We claim that ${\mathfrak g}$ is independent of the choice of $g_{ij}$'s. 
Let $g'_{ij}\col \wt{U}_{ij}\os{\sim}{\lo} \wt{U}_{ji}$ be another isomorphism 
which is a lift of ${\rm id}_{U_{ij}}$. 
Then $g'_{ij}g^{-1}_{ij}$ is an element of ${\rm Aut}_S(\wt{U}_{ji},U_{ji})$. 
Let $\del_{ij}$ be the $\del$ corresponding to $g'_{ij}g^{-1}_{ij}\in  
{\cal H}{\it om}_{{\cal O}_{U_{ji}}}(\Om^1_{U_{ji}/S_0},{\cal O}_{U_{ji}})
\otimes_{{\cal O}_{S_0}}{\cal I}$: 
$g'{}^*_{\! ij}(a)=g_{ij}^*(a+\del_{ij}(\ol{a}))$ $(a\in {\cal O}_{\wt{U}_{ji}})$, 
$g'{}^*_{\! ij}(m)=g_{ij}^*(m(1+\del_{ij}(\ol{m})))$ $(m\in M_{\wt{U}_{ji}})$. 
Since $g_{ij}\vert_{U_{ij}}={\rm id}_{U_{ij}}$ and since $g_{ij}$ is a morphism over $S$, 
$g'{}^*_{\! ij}(a)=g_{ij}^*(a)+\del_{ij}(\ol{a})$ and  
$g'{}^*_{\! ij}(m)=g_{ij}^*(m)(1+\del_{ij}(\ol{m}))$. 
Using these relations and (\ref{lemm:elin}) below 
and making simple calculations, we obtain an equality 
${\mathfrak g}':=(g'_{ijk})={\mathfrak g}+\partial ((\del_{ij}))$. 
Indeed we have the following equations:  
\begin{align*} 
g'{}^*_{\! ij}g'{}^*_{\! jk}g'{}^{*-1}_{\! ik}(a)&=g'{}^*_{\! ij}g'{}^*_{\! jk}(g_{ik}^{*-1}(a)-\del_{ik}(\ol{a}))
=\cdots \tag{4.7.7}\label{ali:delael} \\
&=g^*_{\! ij}g^*_{\! jk}(g_{ik}^{*-1}(a))+\del_{ij}(\ol{a})+\del_{jk}(\ol{a})-\del_{ik}(\ol{a})
\quad (a\in {\cal O}_{\wt{U}_{ijk}}). 
\end{align*} 
Similarly we have the following equation:  
\begin{align*} 
g'{}^*_{\! ij}g'{}^*_{\! jk}g'{}^{*-1}_{\! ik}(m)=
g^*_{\! ij}g^*_{\! jk}(g_{ik}^{*-1}(m))(1+\del_{ij}(\ol{m})+\del_{jk}(\ol{m})-\del_{ik}(\ol{m})) 
\quad (m\in M_{\wt{U}_{ijk}}). 
\tag{4.7.8}\label{ali:delrijkel} 
\end{align*} 
Hence ${\mathfrak g}'={\mathfrak g}+\partial ((\del_{ij}))$. 
This shows that our claim holds. 
\par 
We have to show that ${\mathfrak g}$ 
is independent of the choice of the lift $\wt{U}_i$ of $U_i$ over $S$. 
Let $\wt{V}_i$ be another lift of $U_i$ over $S$. 
Then, as shown in the second paragraph in the proof of (2), 
there exists an isomorphism $g_i\col \wt{U}_i\os{\sim}{\lo} \wt{V}_i$ over $S$ 
such that $g_i\vert_{U_i}={\rm id}_{U_i}$. 
Set $g'_{ij}:=(g_j\vert_{\wt{V}_{ji}})g_{ij}(g_i^{-1}\vert_{\wt{V}_{ij}})
\col \wt{V}_{ij}\os{\sim}{\lo} \wt{V}_{ji}$. 
Then it is easy to check that 
\begin{align*} 
g'_{ijk}=g_ig_{ijk}g_i^{-1}. 
\tag{4.7.9}\label{ali:dgrijkel} 
\end{align*} 
Let ${\mathfrak g}'_{ijk}$ be the analogue of ${\mathfrak g}_{ijk}$ 
for $g'_{ij}$. 
Because 
${\cal I}{\cal O}_{{\wt{U}}_{ijk}}=
{\cal I}\otimes_{{\cal O}_S}{\cal O}_{{\wt{U}}_{ijk}}=
{\cal I}\otimes_{{\cal O}_{S_0}}{\cal O}_{U_{ijk}}=
{\cal I}\otimes_{{\cal O}_S}{\cal O}_{{\wt{V}}_{ijk}}
={\cal I}{\cal O}_{{\wt{V}}_{ijk}}$, 
we have an equality ${\mathfrak g}_{ijk}={\mathfrak g}'_{ijk}$ by 
(\ref{ali:dgrijkel}) and (\ref{lemm:elin}) below. 
This implies that ${\mathfrak g}$ 
is independent of the choice of the lift $\wt{U}_i$.
\par 
Next we claim that the class ${\mathfrak g}$ is independent of the choice of 
the open covering ${\cal U}$. 
Since two log affine open coverings of ${\cal U}$ has a log affine refinement, 
we consider a refinement ${\cal V}:=\{V_{i'}\}$ of ${\cal U}$ with 
a morphism $\tau \col \{i'\}\lo \{i\}$ such that $\os{\circ}{V}_{i'}$ 
is affine and such that $V_{i'}\subset U_{\tau(i')}$. 
Set $\wt{V}_{i'}:=\wt{U}_{\tau(i')}\vert_{V_{i'}}$ over $S$. 
It suffices to prove that 
${\mathfrak g}$ in 
$\check{H}{}^2({\cal U},{\cal H}{\it om}_{{\cal O}_Y}(\Om^1_{Y/S_0},{\cal O}_{Y})
\otimes_{{\cal O}_{S_0}}{\cal I})$ is mapped to 
${\mathfrak g}$ in 
$\check{H}{}^2({\cal V},{\cal H}{\it om}_{{\cal O}_Y}(\Om^1_{Y/S_0},{\cal O}_{Y})
\otimes_{{\cal O}_{S_0}}{\cal I})$.  
This is clear since the isomorphism 
$g_{\tau(i')\tau(j')}\col \wt{U}_{\tau(i')\tau(j')}\os{\sim}{\lo}  
\wt{U}_{\tau(j')\tau(i')}$ induces an isomorphism 
$g_{i'j'}\col \wt{V}_{i'j'}\os{\sim}{\lo} \wt{V}_{j'i'}$. 
\par 
If ${\mathfrak g}$ is coboundary, then there exists an element 
$h_{ij}$ of ${\rm Aut}_S(\wt{U}_{ij},U_{ij})$ 
such that $\{g'_{ij}\}=\{g_{ij}h_{ij}\}$ satisfies the transitivity condition 
$g'_{ik}=g'_{jk}g'_{ij}$ as in \cite[p.~79 (2)]{sga1}. 
Consequently we have a lift $\wt{Y}/S$ of $Y/S_0$. 
\par 
The last statement in (2) follows from (1) and (\ref{prop:npp}) (2). 
\par 
(3): (3) immediately follows from (\ref{ali:ooys}) and 
the assumption of the flatness of $I$ (cf.~\cite[p.~75]{sga1}). 
\end{proof} 

\begin{lemm}\label{lemm:elin}
Let $F\in {\rm Hom}_{{\cal O}_{U_{jkli}}}(\Om^1_{U_{jkli}/S_0},
{\cal O}_{U_{jkli}})\otimes_{{\cal O}_{S_0}}{\cal I}$ 
be the element corresponding to an element 
$g\in {\rm Aut}_S(\wt{U}_{jkli},U_{jkli})$.  
Let $h\col \wt{U}_{jil}\os{\sim}{\lo} \wt{U}_{ijl}$ 
be an isomorphism over $S$ 
such that $h\vert_{U_{jilk}}={\rm id}_{U_{jilk}}$ 
Then 
$(h^{-1}\vert_{\wt{U}_{jilk}})^*F(h\vert_{\wt{U}_{jilk}})^*=F$.  
\end{lemm}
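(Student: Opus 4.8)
The plan is to read the symbol $(h^{-1}\vert_{\wt U_{jilk}})^*F(h\vert_{\wt U_{jilk}})^*$ for what it is, namely the derivation attached to the conjugate automorphism $hgh^{-1}$, and then to observe that conjugation by an isomorphism lifting the identity leaves this derivation unchanged. Writing $F=g^*-{\rm id}^*$ for the element attached to $g\in{\rm Aut}_S(\wt U_{jkli},U_{jkli})$ by (\ref{coro:lys}), and using ${\rm id}^*=(h^{-1})^*\circ h^*$ on $\wt U_{ijlk}$, I would rewrite $(h^{-1})^*F\,h^*=(h^{-1})^*(g^*-{\rm id}^*)h^*=(hgh^{-1})^*-{\rm id}^*$. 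Since $hgh^{-1}$ is again an automorphism over $S$ restricting to the identity on $U_{ijlk}$ (now of the restriction of $\wt U_i$ rather than of $\wt U_j$), this last expression is precisely the element attached to $hgh^{-1}$. By (\ref{ali:deuel}) both this element and $F$ lie in the lift-independent module ${\rm Hom}_{{\cal O}_U}(\Om^1_{U/S_0},{\cal O}_U)\otimes_{{\cal O}_{S_0}}{\cal I}$, so the asserted equality is exactly the statement that $g$ and its conjugate define the same derivation.

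The one fact I would isolate and prove first is that for any isomorphism $\phi\col\wt U\os{\sim}{\lo}\wt V$ over $S$ with $\phi\vert_U={\rm id}_U$, the pullback $\phi^*$ is the identity on ${\cal I}{\cal O}$ under the canonical identifications ${\cal I}{\cal O}_{\wt V}={\cal I}\otimes_{{\cal O}_{S_0}}{\cal O}_U={\cal I}{\cal O}_{\wt U}$ used in the proof of (\ref{ali:deuel}). Indeed, a local section of ${\cal I}{\cal O}_{\wt V}$ is a sum $\sum_k i_kb_k$ with $i_k$ a section of ${\cal I}$ and $b_k$ of ${\cal O}_{\wt V}$; since $\phi$ is a morphism over $S$ one has $\phi^*(i_k)=i_k$, and since $\phi$ lifts the identity one has $\ol{\phi^*(b_k)}=\ol{b_k}$, so $\sum_k i_kb_k$ and $\sum_k i_k\phi^*(b_k)$ have the same image $\sum_k i_k\otimes\ol{b_k}$. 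The hypothesis ${\cal I}^2=0$ is exactly what makes $i_k\otimes\ol{b_k}$ the well-defined invariant here.

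Granting this, the computation splits into the structure-sheaf part and the log-structure part. For $a$ a local section of ${\cal O}_{\wt U_{ijlk}}$ with reduction $\ol a$, the section $h^*(a)$ of ${\cal O}_{\wt U_{jilk}}$ has the same reduction, so by (\ref{coro:lys}) (1) one gets $g^*(h^*a)-h^*a=\del_g(\ol a)$ in ${\cal I}{\cal O}_{\wt U_{jilk}}$; applying $(h^{-1})^*$ and the fact just proved yields $(h^{-1})^*F\,h^*(a)=\del_g(\ol a)=F(a)$. For $m$ a local section of $M_{\wt U_{ijlk}}$ one repeats the argument with the multiplicative normalisation $g^*(m)=m(1+\del_g(\ol m))$ of (\ref{coro:lys}) (2) and the multiplicativity of $\phi^*$ on $M$, obtaining $(h^{-1})^*F\,h^*(m)=\del_g(\ol m)=F(m)$. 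Together these give $(h^{-1})^*F\,h^*=F$.

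I do not expect a real obstacle: the whole content is the triviality of $\phi^*$ on ${\cal I}{\cal O}$, a one-line consequence of ${\cal I}^2=0$ and $\phi$ being an $S$-morphism lifting the identity (the same triviality, applied to $g$ itself, is also what guarantees that $\del_g(\ol a)$ depends only on $\ol a$ and not on its chosen lift). The only care needed is bookkeeping with the indices, so that $g$ is understood as an automorphism of the appropriate restriction of $\wt U_j$ while $hgh^{-1}$ is the corresponding automorphism of the restriction of $\wt U_i$, and keeping the additive and multiplicative normalisations of (\ref{coro:lys}) consistently apart.
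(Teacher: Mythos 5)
Your proof is correct and takes essentially the same route as the paper, whose entire proof of this lemma is the one-line remark that the claim is obvious because $h\vert_{U_{ijl}}={\rm id}_{U_{ijl}}$ and $h$ is a morphism over $S$. You have simply made explicit what that remark suppresses: reading $(h^{-1})^*F\,h^*$ as the element attached to the conjugate automorphism $hgh^{-1}$, and the triviality of $(h^{\pm 1})^*$ on ${\cal I}{\cal O}$ under the identification ${\cal I}{\cal O}_{\wt{U}}={\cal I}\otimes_{{\cal O}_{S_0}}{\cal O}_{U}$ forced by ${\cal I}^2=0$, which is exactly the content the paper leaves to the reader.
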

\begin{proof} 
This is obvious since 
$h\vert_{U_{ijl}}={\rm id}_{U_{ijl}}$ 
and $h$ is a morphism over $S$. 
\end{proof} 

\begin{rema}\label{rems:lifk}
(1) 
It is doubtful whether 
arithmetic or algebraic geometers 
can read the proof of the very well-known result 
\cite[III (6.3)]{sga1} rigorously and 
can
give the detailed proof of it 
because to give the precise proof of it is tiresome and hard 
as shown in the proof of (\ref{theo:loc}) 
and because it needs quite unacceptable patience.  
(We have never seen 
(\ref{ali:dgggel}), (\ref{ali:dgidrel}), (\ref{ali:delael}), (\ref{ali:delrijkel})  
and (\ref{lemm:elin}) in other references.) 
For this reason, there exist the mistakes pointed out in 
(\ref{rema:ktm}) and (2) below and no one except us 
has noticed the mistakes.  
\par 
The statements  
\cite[(2.2) (3)]{kwn} and \cite[(8.6) 3]{kaf} seems obscure because 
Kawamata-Namikawa and F.~Kato have not constructed the obstruction 
class in their article (they have only claimed that 
the construction is the same as that of \cite[III (6.3)]{sga1}) 
and because we cannot understand whether 
the obstruction classes in their article is canonical. 
\par 
(2) 
Let the notations be as in \cite[p.~338]{kaf}. 
We do not understand why there exists an isomorphism 
$I\cdot {\cal O}_{\wt{X}{}'}\os{\sim}{\lo} I\otimes_A {\cal O}_{\wt{X}{}'}$ in [loc.~cit.]. 
Indeed, there exist a lot of counter-examples for this isomorphism. 
For example, log blow ups by Fujiwara-Kato 
(\cite{fk}, \cite{nz}) give us counter-examples: 
the underlying morphisms of log blow ups are not necessarily flat. 
One of the simplest examples 
is as follows. 
\par 
Let $K$ be a field of any characteristic.  
Set $A=K[x_1,x_2]$ and $B=K[x_1,x_2,t]/(x_2-tx_1)=K[x_1,t]$.  
Endow ${\rm Spec}(A)$ (resp.~${\rm Spec}(B)$) 
with a log structure associated to a morphism 
${\mab N}^{\oplus 2}\owns e_i\lom x_i\in A$ 
(resp.~${\mab N}^{\oplus 2}\owns e_1\lom x_1, e_2\lom t\in B$), 
where $e_i$ $(i=1,2)$ is a canonical basis of ${\mab N}^{\oplus 2}$. 
Let $T$ (resp.~$Y$) be the resulting log scheme. 
Let $A\lo B$ be a morphism defined by $x_1\lom x_1$ and $x_2\lom x_2$. 
Let ${\mab N}^{\oplus 2}\lo {\mab N}^{\oplus 2}$ be a morphism defined by 
the following: $e_1\lom e_1$, $e_2\lom e_1+e_2$. 
Then we have a morphism $Y\lo T$. 
This morphism is log \'{e}tale by the criterion of K.~Kato (\cite[(3.5)]{klog1}). 
Let $S$ be an exact closed subscheme of $T$ defined by 
the ideal sheaf $(x_1^2,x_2)$. Set $\wt{X}:=Y\times_TS$. 
Then the projection $\wt{X}\lo S$ is log \'{e}tale since log \'{e}tale morphisms 
are stable under base changes. 
Let $S_0$ be an exact closed subscheme of $S$ defined by the ideal sheaf 
${\cal I}:=(x_1)$. 
Then the global sections of ${\cal I}{\cal O}_{\wt{X}}$ are equal to 
$x_1(K[x_1,t]/(x_1^2,tx_1))=Kx_1$. 
On the other hand, the global sections of 
${\cal I}\otimes_{{\cal O}_S}{\cal O}_{\wt{X}}$ 
are equal to 
\begin{align*} 
&(x_1K[x_1])/(x_1^2)\otimes_{K[x_1]/(x_1^2)}K[x_1,t]/(x_1^2,tx_1)
=Kx_1\otimes_{K[x_1]/(x_1^2)}(K[x_1]/(x_1^2))[t]/(tx_1)\\
&\simeq K\otimes_{K[x_1]/(x_1^2)}(K[x_1]/(x_1^2))[t]/(tx_1)
=(K[x_1]/(x_1^2))[t]/(tx_1,x_1)=K[t].
\end{align*}  
Hence ${\cal I}{\cal O}_{\wt{X}}$ cannot be isomorphic to 
${\cal I}\otimes_{{\cal O}_S}{\cal O}_{\wt{X}}$.  
In particular, the structural morphism $\os{\circ}{\wt{X}}\lo \os{\circ}{S}$ is 
{\it not} flat. This is a counter example of the claim after \cite[(4.1)]{kfl}: 
``underlying morphisms of log smooth liftings are flat''. 
\par 
By virtue of this remark, the title 
``Log smooth deformation theory''
of \cite{kaf} had to be replaced by 
``Log smooth integral deformation theory''.
\par 
(3) Once one proves (\ref{prop:npp}) (1) and (\ref{theo:loc}) (1), (\ref{theo:loc}) (2) 
is a formal consequence obtained by a general theory  
as in \cite[VII (1.2.2)]{gir} without using the assumption of the separatedness 
in  (\ref{theo:loc}) (2). However, in this article, 
we shall use the explicit description of 
the obstruction class in the proof of (\ref{theo:loc}) (2) 
(see the proof of (\ref{theo:ts}) (4)). 
\par 
(4) In \cite[(5.6), (8,36)]{ols} Olsson has already obtained (\ref{theo:loc}) 
by using his theory of log cotanget complexes. 
\end{rema}

As already stated, in the following 
we always assume that the log smooth morphism 
$Y\lo S_0$ is integral. 

\par 
For a log scheme $Z$, let $Z_{\rm red}$ be the log exact closed subscheme 
of $Z$ whose underlying scheme is $\os{\circ}{Z}_{\rm red}$ 
and whose log structure is the inverse image of that of $Z$. 
Assume that $\os{\circ}{Z}$ is of characteristic $p>0$. 
Let $F_Z\col Z\lo Z$ be the $p$-th power Frobenius endomorphism. 
Let $e$ be a fixed positive integer. Set $q:=p^e$. 
Set $Z^{[q]}:=Z\times_{\os{\circ}{Z},\os{\circ}{F}{}^e_{\!Z}}\os{\circ}{Z}$.  
This is different from $Z^{\{q\}}:=Z\times_{Z,F^e_Z}Z=Z$, though 
$(Z^{[q]})^{\circ}=\os{\circ}{Z}=(Z^{\{q\}})^{\circ}$. 
Then we have the following two natural morphisms 
\begin{align*} 
Z\lo Z^{[q]}\quad {\rm and} \quad Z^{[q]}\lo Z. 
\end{align*}
We denote the first morphism by $F^{[e]}_{Z/\os{\circ}{Z}}$.   
Let $W$ be a fine log scheme over $Z$. 
Set $`W:=W\times_ZZ^{[q]}$. 
Then we have the following commutative diagram 
\begin{equation*} 
\begin{CD} 
W@>>> `W@>>> W\\
@VVV @VVV @VVV\\
Z@>{F^{[e]}_{Z/\os{\circ}{Z}}}>>Z^{[q]}@>>> Z. 
\end{CD}
\tag{4.9.1}\label{cd:wzf}
\end{equation*}
Here the upper (resp.~lower) horizontal composite endomorphism is 
the $q$-th power Frobenius endomorphism of $W$ (resp.~$Z$). 
We call $F^{[e]}_{Z/\os{\circ}{Z}}$ the $e$-{\it times iterated} 
{\it abrelative Frobenius morphism of a base log scheme}.   
(The adjective ``abrelative'' is a coined word which implies ``absolute and relative'' or 
``far from being relative''.)
We call the morphism $W\lo `W$ the {\it abrelative Frobenius morphism}   
of $W$ over $Z\lo Z^{[q]}$.  
Set also $W':=W\times_{Z,F_Z^e}Z$. 
If the structural morphism $W\lo Z$ is integral, 
then $\os{\circ}{W}{}'
=\os{\circ}{W}\times_{\os{\circ}{Z},\os{\circ}{F}{}^{e}_Z}\os{\circ}{Z}
=`\os{\circ}{W}$. 

\begin{rema}\label{rema:on}
In \cite[p.~197]{ollc} Ogus has already defined $Z^{[p]}$ (he has denoted it by $Z^{(1)}$). 
\end{rema}

\par 
Now assume that $S_{0,{\rm red}}$ is of characteristic $p>0$. 
Set $S_{00}:=S_{0,{\rm red}}$.    
Assume that there exists a lift $F^{[e]}_S\col S\lo S$ of 
the $q$-th power Frobenius endomorphism $F^e_{S_{00}} \col S_{00}\lo S_{00}$. 
We fix $F^{[e]}_S$ and assume that $F^{[e]}_S$ induces a morphism 
$F^{[e]}_{S_0}\col S_0 \lo S_0$. 
Set $\os{\circ}{F}{}^{[e]}_S:=(F^{[e]}_S)^{\circ}$ and  
$\os{\circ}{F}{}^{[e]}_{S_0}:=(F^{[e]}_{S_0})^{\circ}$. 
Set also $S^{[q]}:=S\times_{\os{\circ}{S},\os{\circ}{F}{}^{[e]}_S}\os{\circ}{S}$ 
and $S^{[q]}_0:=S_0\times_{\os{\circ}{S}_0,\os{\circ}{F}{}^{[e]}_{\!S_0}}\os{\circ}{S}_0$ 
by abuse of notation. (These log scheme may depend on the choice of 
$\os{\circ}{F}{}^{[e]}_S$.)
Because the following diagram 
\begin{equation*} 
\begin{CD} 
S@>{F^{[e]}_S}>>S\\
@VVV @VVV \\
\os{\circ}{S}@>{\os{\circ}{F}{}^{[e]}_S}>>\os{\circ}{S}
\end{CD} 
\end{equation*} 
is commutative, we have a natural morphism 
$F^{[e]}_{S/\os{\circ}{S}}\col S\lo S^{[q]}$.  
Similarly we have a natural morphism 
$F^{[e]}_{S_0/\os{\circ}{S}_0}\col S_0\lo S^{[q]}_0$.  
We also have two projections 
$S^{[q]}\lo S$ and $S^{[q]}_0\lo S_0$.  
Then we have the following commutative diagram: 
\begin{equation*} 
\begin{CD} 
S_{00}@>{F^{e}_{S_{00}/\os{\circ}{S}_{00}}}>> S_{00}^{[q]}\\
@V{\bigcap}VV @VV{\bigcap}V \\
S_0@>{F^{[e]}_{S_0/\os{\circ}{S}_0}}>> S_0^{[q]}\\
@V{\bigcap}VV @VV{\bigcap}V \\
S@>{F^{[e]}_{S/\os{\circ}{S}}}>> S^{[q]}
\end{CD}
\tag{4.10.1}\label{cd:sps}
\end{equation*} 
over 
\begin{equation*} 
\begin{CD} 
\os{\circ}{S}_{00}@= \os{\circ}{S}_{00}\\
@V{\bigcap}VV @VV{\bigcap}V \\
\os{\circ}{S}_{0}@= \os{\circ}{S}_{0}\\
@V{\bigcap}VV @VV{\bigcap}V \\
\os{\circ}{S}@= \os{\circ}{S}. 
\end{CD}
\tag{4.10.2}\label{cd:spss}
\end{equation*}   
If $e=1$, then we denote $F^{[e]}_S$ and $F^{[e]}_{S/\os{\circ}{S}}$ 
by $F_S$ and $F_{S/\os{\circ}{S}}$, respectively.


\par 
Set $Y_0:=Y_{\rm red}$ for simplicity of notation. 
Set $`Y:=Y\times_{S_0}S^{[q]}_0$ and 
$`Y_0:=`Y\times_{S_0}S^{[q]}_{00}$. 
By (\ref{cd:wzf}) we have the following commutative diagram 
\begin{equation*} 
\begin{CD} 
Y_0@>{F_0}>>`Y_0\\
@VVV @VVV \\
S_{00}@>{F^{[e]}_{S_{00}/\os{\circ}{S}_{00}}}>>S^{[q]}_{00}, 
\end{CD}
\end{equation*} 
where $F_0\col Y_0\lo `Y_0$ is the $e$-times iterated 
abrelative Frobenius morphism over $S_{00}\lo S^{[q]}_{00}$. 
We assume that there exists a lift 
$F\col Y\lo `Y$ of $F_0\col Y_0\lo `Y_0$ over $S_{0}\lo S^{[q]}_{0}$. 
That is, we assume that there exists a morphism 
$F\col Y\lo `Y$ fitting into the following commutative diagram 
\begin{equation*} 
\begin{CD} 
Y_0@>{\subset}>> Y\\
@V{F_0}VV @VV{F}V \\
`Y_0@>{\subset}>> `Y  
\end{CD} 
\end{equation*}
over the commutative diagram 
\begin{equation*} 
\begin{CD} 
S_{00}@>{\subset}>> S_0\\
@V{F^{[e]}_{S_{00}/\os{\circ}{S}_{00}}}VV @VV{F^{[e]}_{S_0/\os{\circ}{S}_0}}V \\
S_{00}^{[q]}@>{\subset}>> S^{[q]}_0. 
\end{CD} 
\end{equation*}
We say that $(\wt{Y},\wt{F})/(S\lo S^{[q]})$ is a {\it log smooth integral lift}  
(or simply a {\it lift}) of $(Y,F)/(S_0\lo S^{[q]}_0)$ 
if $\wt{Y}$ is a log smooth integral scheme over $S$ such that 
$\wt{Y}\times_SS_0=Y$ and $\wt{F}$ is a morphism 
$\wt{Y}\lo `\wt{Y}:=\wt{Y}\times_{\os{\circ}{S},\os{\circ}{F}{}^{[e]}_S}\os{\circ}{S}$ 
over $S\lo S^{[q]}$ fitting into 
the following commutative diagram 
\begin{equation*} 
\begin{CD} 
Y@>{\subset}>> \wt{Y}\\
@V{F}VV @VV{\wt{F}}V \\
`Y@>{\subset}>> `\wt{Y}  
\end{CD} 
\end{equation*}
over the commutative diagram 
\begin{equation*} 
\begin{CD} 
S_0@>{\subset}>> S\\
@V{F^{[e]}_{S_0/\os{\circ}{S}_0}}VV @VV{F^{[e]}_{S/\os{\circ}{S}}}V \\
S_0^{[q]}@>{\subset}>> S^{[q]}.   
\end{CD} 
\end{equation*}
\par 
Let ${\rm Lift}_{(Y,F)/(S_0\subset S,F^{[e]}_S)}$ be the following sheaf defined by 
the following equality: 
\begin{align*} 
{\rm Lift}_{(Y,F)/(S_0\subset S,F^{[e]}_S)}(U):=
\{&{\rm isomorphism~classes~of~lifts~of}~(U,F\vert_U)/(S_0\lo S^{[q]}_0)\\
&{\rm over}~(S\lo S^{[q]})\}
\end{align*}
for each log open subscheme $U$ of $Y$. Here 
$`U:=U\times_{\os{\circ}{S}_0,\os{\circ}{F}{}^{[e]}_{\!S_0}}\os{\circ}{S}_0$, 
$F\vert_U\col U\lo `U$ is the restriction of $F$ to $U$,  
and the isomorphism classes of lifts of $(U,F\vert_U)/(S_0\lo S^{[q]}_0)$ 
over $(S\lo S^{[q]})$ are defined in an obvious way. 
\par  
Let the notations be as above. 
Let $`\iota \col `Y \os{\sus}{\lo} `\wt{Y}$ be the closed immersion. 
Let $\wt{G}\col \wt{Y}\lo `\wt{Y}$ be another lift of $F$. 
Take $s$ in (\ref{prop:eq}) as the composite morphism 
$Y\os{F}{\lo} `Y\os{`\iota}{\lo} `\wt{Y}$ over 
the composite morphism $S_0\os{\subset}{\lo} S\lo  S^{[q]}$. 
Then  
$\wt{G}$ defines an element $\wt{G}{}^*-\wt{F}{}^*$ of 
\begin{align*} 
&{\rm Hom}_{{\cal O} _Y}
(F^*`\iota{}^*(\Om^1_{`\wt{Y}{}/S^{[q]}}),{\cal I}{\cal O}_{\wt{Y}})
=
{\rm Hom}_{{\cal O} _Y}(F^*(\Om^1_{`Y/S^{[q]}_0}),{\cal I}{\cal O}_{\wt{Y}})
\tag{4.10.3}\label{ali:fooi}\\
&={\rm Hom}_{{\cal O} _Y}(F^*(\Om^1_{`Y/S^{[q]}_0}),{\cal O}_Y)
\otimes_{{\cal O}_{S_0}}{\cal I}
={\rm Hom}_{{\cal O}_{`Y}}(\Om^1_{`Y/S^{[q]}_0},F_*({\cal O}_Y))
\otimes_{{\cal O}_{S_0}}{\cal I}.
\end{align*} 
This is the log version of a generalization of \cite[p.~208, iii)]{ns}.



The following is the log version of \cite[p.~208, iv)]{ns}. 
This is a key lemma for (\ref{theo:ts}) below.

\begin{lemm}\label{lemm:fl}
Assume that there exists a lift $(\wt{Y},\wt{F})/S$ of $(Y,F)/S_0$. 
Assume that ${\cal I}=\pi^n{\cal O}_S$ for 
a global section $\pi$ of ${\cal O}_S$ 
and that $q\pi^n=0$ in ${\cal O}_S$ for a positive integer $n$. 
Assume also that $S_{00}=S\mod \pi$ and that 
the morphism ${\cal O}_{S_{00}}\owns 1\lom \pi^n\in {\cal I}$ 
is a well-defined isomorphism. 
Let the notations be as in {\rm (\ref{prop:npp}) (1)} and  
denote $\del$ in {\rm (\ref{prop:npp}) (1)} by $\pi^n \del$ 
in this lemma$;$ 
the new $\del$ is an element of 
${\rm Hom}_{{\cal O}_{Y_0}}
(\Om^1_{Y_0/S_{00}},{\cal O}_{Y_0})$
since 
\begin{align*} 
{\rm Hom}_{{\cal O}_Y}(\Om^1_{Y/S_{0}},{\cal I}{\cal O}_{\wt{Y}})
={\rm Hom}_{{\cal O}_Y}(\Om^1_{Y/S_{0}},{\cal O}_Y)
\otimes_{{\cal O}_{S_0}}{\cal I}\os{\sim}{\longleftarrow} 
{\rm Hom}_{{\cal O}_{Y_0}}(\Om^1_{Y_0/S_{00}},{\cal O}_{Y_0}).
\end{align*} 
Denote by $`g\in {\rm Aut}_{S^{[q]}}(`\wt{Y}{},`Y)$ 
the induced automorphism of $`\wt{Y}{}$ by an element 
$g\in {\rm Aut}_S(\wt{Y},Y)$.  
Let $`\del$ be the element of 
${\rm Hom}_{{\cal O}_{`Y_0}}
(\Om^1_{`Y_0/S_{00}^{[q]}},{\cal O}_{`Y_0})$ obtained by 
$`g$. 
Then the following hold$:$
\par 
$(1)$ $`\del(`a)=\sum_{i}\del(a_i)\otimes b_i$ for 
$`a=\sum_ia_i\otimes b_i\in {\cal O}_{`Y_0}
={\cal O}_{Y_0}\otimes_{{\cal O}_{S_{00}, F^{e*}_{S_{00}}}}{\cal O}_{S_{00}}$
$(a_i\in {\cal O}_{Y_0}, b_i\in {\cal O}_{S_{00}})$. 
\par 
$(2)$ $g^*{}^{-1}\wt{F}{}^*`g{}^*(`a)-\wt{F}{}^*(`a)
=\pi^nF_0^*(`\del(`a))$ for $`a=\sum_ia_i\otimes b_i
\in {\cal O}_{`\wt{Y}}$. 
Here we denote the image of 
$`a\in {\cal O}_{`\wt{Y}}$ in ${\cal O}_{`Y_0}$ 
by $`a$ by abuse of notation. 
\par 
$(3)$ 
$1+\pi^n`\del(`m)= [1+\pi^n\del(m),1]$ 
for $`m=[m, u]\in M_{`Y_0}=
M_{Y_0}\oplus_{{\cal O}^*_{S_{00}}, F^{e*}_{S_{00}}}{\cal O}^*_{S_{00}}$ 
$(m\in M_{Y_0},u\in {\cal O}^*_{S_{00}})$. 
\par 
$(4)$ 
$(g^*{}^{-1}\wt{F}{}^*`g{}^*)(`m)(\wt{F}{}^*(`m))^{-1}
=1+\pi^n F_0^*(`\del(`m))$
for $`m\in M_{`\wt{Y}}$.  
Here we denote the image of $`m\in M_{`\wt{Y}}$ in $M_{`Y_0}$ 
by $`m$ by abuse of notation. 
\par 
$(5)$ 
Let $(\wt{Y}_1,\wt{F}_1)/S$ and 
$(\wt{Y}_2,\wt{F}_2)/S$ be two lifts of $(Y,F)/S_0$.  
Then there exists at most one element $h$ of 
${\rm Isom}_S(\wt{Y}_1,\wt{Y}_2)$ such that $h\vert_Y={\rm id}_Y$ and 
$`h\circ \wt{F}_1=\wt{F}_2\circ h$. 
Here the $`h$ on the left hand side of this equality 
is the induced isomorphism $`\wt{Y}{}_1\os{\sim}{\lo}`\wt{Y}{}_2$ by $h$.  
\par 
$(6)$ Let $F$, $(\wt{Y}_1,\wt{F}_1)$ 
and $(\wt{Y}_2,\wt{F}_2)$ be as in {\rm (5)}.  
Let $h$ be an element of 
${\rm Isom}_S(\wt{Y}_1,\wt{Y}_2)$ such that 
$h\vert_Y={\rm id}_Y$. 
Then the image of 
\begin{align*} 
(`h\wt{F}_1 h^{-1})^*-\wt{F}_2^* &\in 
{\rm Hom}_{{\cal O}_{`Y}}(\Om^1_{`Y/S_0},F_*({\cal I}{\cal O}_{\wt{Y}}))
={\rm Hom}_{{\cal O}_{`Y}}(\Om^1_{`Y/S_0},F_*({\cal O}_Y))
\otimes_{{\cal O}_{S_0}}{\cal I}
\\
&\os{\sim}{\longleftarrow}{\rm Hom}_{{\cal O}_{`Y_0}}
(\Om^1_{`Y_0/S_{00}},F_{0*}({\cal O}_{Y_0}))
\end{align*} 
in ${\rm Hom}_{{\cal O}_{`Y_0}}(\Om^1_{`Y_0/S_{00}},
F_{0*}({\cal O}_{Y_0})/{\cal O}_{`Y_0})$ 
is independent of the choice of $h$. 
\end{lemm}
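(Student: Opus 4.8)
The plan is to establish the independence by a direct comparison of two choices of $h$, reducing everything to parts $(2)$ and $(4)$, in which the relevant computation has already been isolated. First I would fix two isomorphisms $h,h'\in {\rm Isom}_S(\wt{Y}_1,\wt{Y}_2)$ with $h\vert_Y=h'\vert_Y={\rm id}_Y$ and set $g:=h'h^{-1}\in {\rm Aut}_S(\wt{Y}_2,Y)$. Writing $\Phi:=`h\wt{F}_1h^{-1}$, which is itself a lift of $F$, viewed as a morphism $\wt{Y}_2\lo `\wt{Y}_2$ over $S\lo S^{[q]}$, I would use the functoriality of the formation $W\mapsto `W=W\times_{\os{\circ}{S},\os{\circ}{F}{}^{[e]}_S}\os{\circ}{S}$ (so that $`(h'h^{-1})=`h'`h^{-1}$) to record the identity $`h'\wt{F}_1h'^{-1}=`g\Phi g^{-1}$. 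Thus it suffices to compare the two lifts $\Phi$ and $`g\Phi g^{-1}$ of $F$.

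Next I would apply $(2)$ and $(4)$ with $\wt{Y}=\wt{Y}_2$ and $\wt{F}=\Phi$. Since $g^*{}^{-1}\Phi^*`g^*=(`g\Phi g^{-1})^*$, these give, for $`a\in {\cal O}_{`\wt{Y}_2}$ and $`m\in M_{`\wt{Y}_2}$, the two equalities $(`g\Phi g^{-1})^*(`a)-\Phi^*(`a)=\pi^nF_0^*(`\del(`a))$ and $(`g\Phi g^{-1})^*(`m)(\Phi^*(`m))^{-1}=1+\pi^nF_0^*(`\del(`m))$, where $`\del$ is the element attached to $g$ as in $(1)$. Feeding these into the torsor description of $(\ref{prop:eq})$ (with $da$ mapping to the difference on functions and $d\log m$ to the ratio on the log structure minus $1$), and passing through the identification $(\ref{ali:fooi})$ together with the trivialization ${\cal I}{\cal O}_{\wt{Y}_2}\simeq {\cal O}_{Y_0}$ given by multiplication by $\pi^n$, I would conclude that the difference $(`h'\wt{F}_1h'^{-1})^*-(`h\wt{F}_1h^{-1})^*$, viewed in ${\rm Hom}_{{\cal O}_{`Y_0}}(\Om^1_{`Y_0/S_{00}},F_{0*}({\cal O}_{Y_0}))$, is precisely the composite $\Om^1_{`Y_0/S_{00}}\os{`\del}{\lo}{\cal O}_{`Y_0}\os{F_0^*}{\lo}F_{0*}({\cal O}_{Y_0})$.

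Finally I would observe that, because $\os{\circ}{Y}_0\lo \os{\circ}{S}_{00}$ is integral (so that $`\os{\circ}{Y}_0=\os{\circ}{Y}_0{}'$ and the underlying morphism of $F_0$ is the relative Frobenius of $\os{\circ}{Y}_0/\os{\circ}{S}_{00}$), the map $F_0^*\col {\cal O}_{`Y_0}\lo F_{0*}({\cal O}_{Y_0})$ is exactly the injection of $(\ref{ali:cis})$, whose image is the subsheaf ${\cal O}_{`Y_0}$. Hence the whole correction term $F_0^*\circ `\del$ lands in ${\cal O}_{`Y_0}\subset F_{0*}({\cal O}_{Y_0})$, so it vanishes in the quotient $F_{0*}({\cal O}_{Y_0})/{\cal O}_{`Y_0}$; therefore $(`h'\wt{F}_1h'^{-1})^*$ and $(`h\wt{F}_1h^{-1})^*$ have the same image in ${\rm Hom}_{{\cal O}_{`Y_0}}(\Om^1_{`Y_0/S_{00}},F_{0*}({\cal O}_{Y_0})/{\cal O}_{`Y_0})$, which is the asserted independence.

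The substantive analytic content has already been packaged into $(1)$–$(4)$, so the only real work here is bookkeeping: verifying $`h'\wt{F}_1h'^{-1}=`g\Phi g^{-1}$ and tracking the two identifications (the torsor difference map and the $\pi^n$-trivialization of ${\cal I}$) so that the additive and multiplicative correction terms of $(2)$ and $(4)$ really assemble into the single morphism $F_0^*\circ `\del$. The one genuine conceptual point, and where a careless argument would fail, is the recognition that this correction lies in the image of the relative Frobenius pullback $F_0^*$ and hence dies modulo ${\cal O}_{`Y_0}$; this is exactly what forces the statement to be phrased in the quotient $F_{0*}({\cal O}_{Y_0})/{\cal O}_{`Y_0}$ rather than in $F_{0*}({\cal O}_{Y_0})$ itself.
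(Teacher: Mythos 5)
The gap here is one of scope rather than of internal logic: the statement you were asked to prove is the whole of (1)--(6), but your proposal proves only (6). You explicitly take (1)--(4) as ``already packaged'' and never address (5) at all, yet those parts are exactly where the lemma's content lies. The paper's proof of (2) is not bookkeeping: it requires the two vanishings $\del(\pi a)=0$ (because $g^*$ is a morphism over $S$ and $\pi^{n+1}=0$) and $\pi^n\del(\wt{F}{}^*(`a))=0$ (because $\wt{F}{}^*(`a)=\sum_i(a_i^qb_i+\pi c_i)$ for $`a=\sum_ia_i\otimes b_i$, $\del$ is a derivation over ${\cal O}_S$, and $q\pi^n=0=\pi^{n+1}$); without these the correction term does not collapse to $\pi^nF_0^*(`\del(`a))$. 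Part (4) requires the corresponding multiplicative computation on $M_{`\wt{Y}}$ using (\ref{coro:lys}) (3), and part (5) requires the injectivity of $F_0^*\col {\cal O}_{`Y_0}\lo F_{0*}({\cal O}_{Y_0})$ and of the pull-back by the projection $`Y_0\lo Y_0$ (both resting on the reducedness of $\os{\circ}{Y}_0$), together with the identification ${\cal O}_{Y_0}\os{\sim}{\lo}\pi^n{\cal O}_{\wt{Y}}$ coming from the flatness of $\os{\circ}{\wt{Y}}$ over $\os{\circ}{S}$. Since your argument for (6) invokes (2) and (4), a complete proof must supply all of this; as written, the parts of the lemma carrying its analytic weight are assumed rather than proved.

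For the part you do treat, the argument is correct and coincides with the paper's own proof of (6): you set $g:=h'h^{-1}\in {\rm Aut}_S(\wt{Y}_2,Y)$, use functoriality of $W\lom {}`W$ to write $`h'\wt{F}_1h'^{-1}=`g\,(`h\wt{F}_1h^{-1})\,g^{-1}$, apply (2) and (4) to the lift $(\wt{Y}_2,{}`h\wt{F}_1h^{-1})$ of $(Y,F)$, and observe that the resulting discrepancy $F_0^*\circ{}`\del$ factors through the image of $F_0^*\col {\cal O}_{`Y_0}\lo F_{0*}({\cal O}_{Y_0})$, hence dies in the quotient $F_{0*}({\cal O}_{Y_0})/{\cal O}_{`Y_0}$. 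This is precisely the paper's reduction (there with $g_1=h'$, $g_2=h$), including the key point that the class is well defined only in the quotient; so once (1)--(5) are supplied, your treatment of (6) stands as is.
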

\begin{proof} 
(1): Since 
\begin{align*} 
`a+\pi^n`\del(`a)=`g{}^*(`a)=`g{}^*(\sum_ia_i\otimes b_i)
=\sum_i(a_i+\pi^n\del(a_i))\otimes b_i
=`a+\sum_i\pi^n\del(a_i)\otimes b_i
\end{align*} 
and ${\cal O}_{S_{00}}\simeq {\cal I}$, 
we obtain (1).  
\par 
(2): Because $S_{00}=S\mod \pi$ and that 
the morphism ${\cal O}_{S_{00}}\owns 1\lom \pi^n\in {\cal I}$ 
is an isomorphism, $\pi^{n+1}=0$ in ${\cal O}_S$. 
Let $\star$ be nothing or $`$. 
Express ${}^{\star}g^*({}^{\star}a)={}^{\star}a+\pi^n{}^{\star}\del({}^{\star}a)$ 
$({}^{\star}a\in {\cal O}_{{}^{\star}\wt{Y}})$ with 
${}^{\star}\del({}^{\star}a)\in {\cal O}_{{}^{\star}Y_0}$. 
Consider a local section 
$`a:=\sum_i a_i\otimes b_i\in 
{\cal O}_{\wt{Y}}\otimes_{{\cal O}_S, F^{[e]*}_S}{\cal O}_S={\cal O}_{`\wt{Y}}$. 
Then $\wt{F}{}^*(`a)=\sum (a^q_ib_i+\pi c_i)$ 
for some section $c_i\in {\cal O}_{\wt{Y}}$. 
Hence $\pi^n \del(\wt{F}{}^*(`a))=
\pi^n \del(\sum (a^q_ib_i+\pi c_i))=0$ 
because $\del$ is a derivation over ${\cal O}_S$ 
and because $q\pi^n=0=\pi^{n+1}$ in ${\cal O}_{\wt{Y}}$.  
Using these vanishings, 
we have the following equalities:  
\begin{align*} 
g^*{}^{-1}\wt{F}{}^*`g{}^*(`a)&=g^*{}^{-1}\wt{F}{}^*(`a+\pi^n`\del(`a))=
g^*{}^{-1}(\wt{F}{}^*(`a)+\pi^n\sum_i\del(a_i)^qb_i)\\
& = \wt{F}{}^*(`a)-\pi^n\del(\wt{F}{}^*(`a))
+\pi^n\sum_i(\del(a_i)^qb_i-\pi^n\del(\pi^n\del(a_i)^qb_i))\\
&=\wt{F}{}^*(`a)+\pi^n\sum_i\del(a_i)^qb_i
=\wt{F}{}^*(`a)+\pi^nF_0^*(`\del(`a)).
\end{align*}   
\par 
(3): Because  
\begin{align*} 
`m(1+\pi^n `\del(`m))&=`g^*(`m)
=[g^*(m),u]=[m(1+\pi^n\del(m)),u]=`m[1+\pi^n\del(m), 1],  
\end{align*} 
we obtain (3).  
\par 
(4): 
Consider a local section 
$`m:=[m,u]\in M_{`\wt{Y}}$ in (3). 
Express $\wt{F}{}^*(`m)=(m^qu)(1+\pi \eta(`m))$ 
$(\eta(`m)\in {\cal O}_Y)$. 
We have the following equalities: 
\begin{align*} 
(g^*{}^{-1}\wt{F}{}^*`g^*)(`m)& 
=g^*{}^{-1}\wt{F}{}^*(`m(1+\pi^n`\del(`m)))
=g^*{}^{-1}(\wt{F}{}^*(`m)\wt{F}{}^*([1+ \pi^n\del(m),1])) \\
& =g^*{}^{-1}(\wt{F}{}^*(`m)(1+\pi^n\del(m)^q))\\
&=g^*{}^{-1}(\wt{F}{}^*(`m))g^*{}^{-1}(1+\pi^n\del(m)^q)\\
&=\wt{F}{}^*(`m)
(1-\pi^n\del(\wt{F}{}^*(`m)))(1+\pi^n\del(m)^q)
(1-\pi^n\del(1+\pi^n\del(m)^q))\\ 
&=\wt{F}{}^*(`m)(1-\pi^n
\del(m^qu(1+\pi \eta(`m))))(1+\pi^n \del(m)^q)\\
&=\wt{F}{}^*(`m)(1-\pi^n(\del(m^qu)+
\{1-\pi \eta(`m)\}
\del(1+\pi \eta(`m))))(1+\pi^n \del(m)^q)\\
&=\wt{F}{}^*(`m)(1+\pi^n \del(m)^q)=
\wt{F}{}^*(`m)(1+\pi^n F_0^*(`\del(`m))). 
\end{align*} 
Here we have used the formula in (\ref{coro:lys}) (3) 
for the sixth and the seventh equalities; we have also used (3) 
for the second and the last equalities. 
\par 
(5):  
Let $g_i \col \wt{Y}_1\os{\sim}{\lo}\wt{Y}_2$ $(i=1,2)$ be 
an isomorphism such that 
$g_i\vert_Y={\rm id}_Y$ 
and $`g_i \circ \wt{F}_1=\wt{F}_2 \circ g_i$ on $\wt{Y}_1$. 
Set $g:=g_1\circ g_2^{-1}\in {\rm Aut}_S(\wt{Y}_2,Y)$. 
Let $\del \in {\rm Hom}_{{\cal O}_{Y_0}}(\Om^1_{Y_0/S_{00}},{\cal O}_{Y_0})$ 
be the morphism corresponding to $g$.  
Then we obtain the following equalities by using (2): 
\begin{align*} 
0&=(`g_1\wt{F}{}_1g_1^{-1})^*(`a)-(`g_1\wt{F}{}_1g_1^{-1})^*(`a)
=(`g_1`g_2^{-1}\wt{F}{}_2g_2g_1^{-1})^*(`a)-(`g_1\wt{F}{}_1g_1^{-1})^*(`a)\\
&=(`g\wt{F}{}_2g^{-1})^*(`a)-(`g_1\wt{F}{}_1g_1^{-1})^*(`a)
=(`g\wt{F}{}_2g^{-1})^*(`a)-\wt{F}{}^*_2(`a)=\pi^n(F^*_0(`\del(`a))). 
\end{align*} 
Because  ${\cal O}_{Y_0}\os{\sim}{\lo} \pi^n{\cal O}_{\wt{Y}}$ 
(since $\os{\circ}{\wt{Y}}$ is flat over $\os{\circ}{S}$), 
$F^*_0(`\del(`a))=0$. 
Because $\os{\circ}{Y}_0$ is reduced, the morphism 
$F_0^*\col {\cal O}_{`Y_0}\lo F_{0*}({\cal O}_{Y_0})$ is injective
and hence $`\del(`a)=0$. 
Because $\os{\circ}{Y}_0$ is reduced, 
the pull-back $`{\rm pr}^*_0\col {\cal O}_{Y_0}\lo  {\rm pr}_*({\cal O}_{`Y_0})$ 
of the projection $`{\rm pr}_0\col `Y_0\lo Y_0$ is also injective. 
Hence 
\begin{align*} 
\del(a)=0 \quad (a\in {\cal O}_{\wt{Y}}).
\tag{4.11.1}\label{ali:aaa}
\end{align*}  
\par 
On the other hand, 
we obtain the following equalities by using (4):
\begin{align*} 
1&=(`g_1\wt{F}{}_1g_1^{-1})^*(`m)\{(`g_1\wt{F}{}_1g_1^{-1})^*(`m)\}^{-1}\\
&=(`g_1`g_2^{-1}\wt{F}{}_2g_2g_1^{-1})^*(`m)
\{(`g_1\wt{F}{}_1g_1^{-1})^*(`m)\}^{-1}\\
&=(`g\wt{F}{}_2g^{-1})^*(`m)\{(`g_1\wt{F}{}_1g_1^{-1})^*(`m)\}^{-1}\\
&=[(`g\wt{F}{}_2g^{-1})^*(`m)\{\wt{F}{}^*_2(`m)\}^{-1}]
=1+\pi^n F_0^*(`\del(`m)). 
\end{align*} 
By the same argument as that in the previous paragraph, we see that 
the following equality holds: 
\begin{align*} 
\del(m)=0 \quad (m\in M_{\wt{Y}}).
\tag{4.11.2}\label{ali:mmm}
\end{align*}   
By (\ref{ali:aaa}) and (\ref{ali:mmm}), we see that 
$g={\rm id}_{\wt{Y}}$ and consequently $g_1=g_2$.  
We have completed the proof of (5). 
\par 
(6) Let $g_i \col \wt{Y}_1\os{\sim}{\lo}\wt{Y}_2$ 
$(i=1,2)$ be an isomorphism such that $g_i\vert_Y={\rm id}_Y$. 
Set $g:=g_1\circ g_2^{-1}\in {\rm Aut}_S(\wt{Y}_2,Y)$. 
Then we obtain the following equalities as in (5): 
\begin{align*} 
(`g_1\wt{F}_1g^{-1}_1)^*(`a)-\wt{F}^*_2(`a)
&= 
(`g_1`g_2^{-1}`g_2\wt{F}_1g_2^{-1}g_2g^{-1}_1)^*(`a)-\wt{F}^*_2(`a)
\\
&= (`g`g_2\wt{F}_1g_2^{-1}g^{-1})^*(`a)-\wt{F}^*_2(`a)\\
&=(`g_2\wt{F}_1g_2^{-1})^*(`a)+\pi^n(`g_2F_0g_2^{-1})^*(`\del(`a))-\wt{F}^*_2(`a)\\
&=(`g_2\wt{F}_1g_2^{-1})^*(`a)-\wt{F}^*_2(`a)+\pi^nF_0^*(`\del(`a))
\end{align*} 
since $g_2\vert_Y={\rm id}_Y$. 
Analogously  we obtain the following equalities as in (5): 
\begin{align*} 
(`g_1\wt{F}_1g^{-1}_1)^*(`m)(\wt{F}^*_2(`m))^{-1}
&= 
(`g_1`g_2^{-1}`g_2\wt{F}_1g_2^{-1}g_2g^{-1}_1)^*(`m)(\wt{F}^*_2(`m))^{-1}
\\
&= (`g`g_2\wt{F}_1g_2^{-1}g^{-1})^*(`m)(\wt{F}^*_2(`m))^{-1}\\
&=\{(`g_2\wt{F}_1g_2^{-1})^*(`m)(1+\pi^n(`g_2F_0g_2^{-1})^*(`\del(`m)))\}(\wt{F}^*_2(`m))^{-1}\\
&=\{(`g_2\wt{F}_1g_2^{-1})^*(`m)\wt{F}^*_2(`m)^{-1}\}(1+\pi^nF_0^*(`\del(`m)))
\end{align*} 
since $g_2\vert_Y={\rm id}_Y$. 
Hence we see that the image of 
$(`h\wt{F}_1h^{-1})^*-\wt{F}^*_2$ in the quotient of the map 
$F_0^*\col {\rm Hom}_{{\cal O}_{`Y_0}}(\Om^1_{`Y_0/S_{00}},{\cal O}_{`Y_0})
\lo {\rm Hom}_{{\cal O}_{`Y_0}}(\Om^1_{`Y_0/S_{00}},F_{0*}({\cal O}_{Y_0}))$ 
is independent of $h$.  This proves (6). 
\end{proof}

The following (2) and (3) are the log versions of \cite[Proposition 1 in p.~205]{ns}; 
the following (4) is the log version of \cite[p.~104 (ii)]{sr}; 
the following (5) is an additional result. 
Roughly speaking, we follow the argument in \cite{ns} for 
the proof of (3). However to give the precise proof of it is very involved 
(cf.~(\ref{rema:tlt})). 
The most important part in (\ref{theo:ts}) is (2). Once one knows (2), 
(3) is a formal consequence of (2); however to prove (2), 
we use the argument in the proof of (3); 
the proof of (\ref{theo:ts}) is logically more complicated than 
those of (\ref{prop:eq}) and (\ref{prop:ep}).

\begin{theo}\label{theo:ts}  
Let ${\cal I}$, $\pi$ and $n$ be as in {\rm (\ref{lemm:fl})}. 
Then the following hold$:$ 
\par 
$(1)$ 
Assume that $(Y,F)/S_0$ has a lift $(\wt{Y},\wt{F})/S$. 
Set ${\rm Aut}_{S,F^{[e]}_S}(\wt{Y},Y)
:=\{g\in {\rm Aut}_{S}(\wt{Y})~\vert~g\vert_Y={\rm id}_Y, 
~\wt{F}\circ g=`g\circ \wt{F}\}$. 
Then ${\rm Aut}_{S,F^{[e]}_S}(\wt{Y},Y)=\{{\rm id}_{\wt{Y}}\}$. 
\par 
$(2)$ 
Let $`{\rm pr}_0\col `Y_0\lo Y_0$ be the projection. 
The sheaf ${\rm Lift}_{(Y,F)/(S_0\subset S,F^{[e]}_S)}$ on 
$\os{\circ}{Y}$ is a torsor under 
$`{\rm pr}_{0*}
({\cal H}{\it om}_{{\cal O}_{`Y_0}}(\Om^1_{`Y_0/S^{[q]}_{00}}, 
F_{0*}({\cal O}_{Y_0})/{\cal O}_{`Y_0}))$.   
\par 
$(3)$ Assume that $\os{\circ}{Y}$ is separated.  
In 
$${\rm Ext}_{`Y_0}^1(\Om^1_{`Y_0/S^{[q]}_{00}},F_{0*}({\cal O}_{Y_0})/{\cal O}_{`Y_0}),$$ 
there exists a canonical obstruction class 
${\rm obs}_{(Y,F)/(S_0\subset S,F^{[e]}_S)}$ 
of a lift of $(Y,F)/(S_0\lo S^{[q]}_0)$ over $S\lo S^{[q]}$. 
\par 
$(4)$ Assume that $\os{\circ}{Y}$ is separated.  Let 
\begin{align*} 
\partial \col {\rm Ext}_{`Y_0}^1(\Om^1_{`Y_0/S^{[q]}_{00}}, 
F_{0*} ({\cal O}_{Y_0})/{\cal O}_{`Y_0})
\lo {\rm Ext}_{`Y_0}^2(\Om^1_{`Y_0/S^{[q]}_{00}},{\cal O}_{`Y_0})
\end{align*} 
be the boundary morphism obtained by 
the following exact sequence {\rm (\ref{prop:ee})}$:$
\begin{align*} 
0\lo {\cal O}_{`Y_0}\lo F_{0*}({\cal O}_{Y_0})
\lo F_{0*} ({\cal O}_{Y_0})/{\cal O}_{`Y_0}\lo 0. 
\tag{4.12.1}\label{ali:oobbs}
\end{align*} 
Then $\partial ({\rm obs}_{(Y,F)/(S_0\subset S,F^{[e]}_S)})=
{\rm obs}_{`Y/(S^{[q]}_0\subset S^{[q]})}$. 
\par 
$(5)$ Assume that $\os{\circ}{Y}$ is separated.  
Assume 
that there exists a lift $\wt{Y}/S$ of $Y/S_0$. 
Let $`\wt{Y}$ be the base change of $\wt{Y}$ by the morphism $S^{[q]}\lo S$. 
Then, in 
$${\rm Ext}^1_{`Y_0}(\Om^1_{`Y_0/S^{[q]}_{00}},F_{0*}({\cal O}_{Y_0})),$$  
there exists a canonical obstruction class ${\rm obs}_{\wt{Y}/S}(F)$ 
of a lift $\wt{F}\col \wt{Y}\lo `\wt{Y}$ of $F\col Y\lo `Y$  
and this is mapped to ${\rm obs}_{(Y,F)/(S_0\subset S,F^{[e]}_S)}$ 
by the following natural morphism 
\begin{align*} 
\partial \col {\rm Ext}_{`Y_0}^1(\Om^1_{`Y_0/S^{[q]}_{00}}, F_{0*} ({\cal O}_{Y_0}))
\lo {\rm Ext}_{`Y_0}^1(\Om^1_{`Y_0/S^{[q]}_{00}}, F_{0*} ({\cal O}_{Y_0})/{\cal O}_{`Y_0}). 
\end{align*} 
\end{theo}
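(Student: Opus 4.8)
The plan is to obtain ${\rm obs}_{\wt{Y}/S}(F)$ as the obstruction class of a \emph{single} morphism-lifting problem attached to the fixed lift $\wt{Y}$, and then to match its \v{C}ech description with that of ${\rm obs}_{(Y,F)/(S_0\subset S,F^{[e]}_S)}$ produced in (3). First I would apply (\ref{prop:ep}) (2) to the exact closed immersion $Y\os{\sus}{\lo}\wt{Y}$ defined by the square zero ideal ${\cal I}{\cal O}_{\wt{Y}}$ and to the morphism $s:=`\iota\circ F\col Y\lo `\wt{Y}$ over the composite $S_0\os{\sus}{\lo}S\lo S^{[q]}$, the lifting problem being the existence of an extension $\wt{F}\col \wt{Y}\lo `\wt{Y}$ over $S^{[q]}$ of $F$. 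By (\ref{prop:ep}) (2) there is then a canonical obstruction class to the existence of such a $\wt{F}$ in $H^1(`\wt{Y},{\cal H}{\it om}_{{\cal O}_{`\wt{Y}}}(\Om^1_{`\wt{Y}/S^{[q]}},s_*({\cal I}{\cal O}_{\wt{Y}})))$, and it vanishes if and only if a global $\wt{F}$ exists.

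Next I would simplify the coefficient sheaf. Since $\os{\circ}{\wt{Y}}\lo \os{\circ}{S}$ is flat, the identifications of (\ref{theo:loc}) (1) and (\ref{lemm:fl}) give ${\cal I}{\cal O}_{\wt{Y}}\simeq {\cal O}_{Y_0}$, whence $s_*({\cal I}{\cal O}_{\wt{Y}})\simeq {}`\iota_*F_{0*}({\cal O}_{Y_0})$ as a sheaf supported on $`Y_0$. Because $`Y_0=`\wt{Y}\times_{S^{[q]}}S^{[q]}_{00}$, base change of log differentials yields $\Om^1_{`\wt{Y}/S^{[q]}}\vert_{`Y_0}=\Om^1_{`Y_0/S^{[q]}_{00}}$; as $\os{\circ}{F}$ is a homeomorphism and the target sheaf is supported on $`Y_0$, the sheaf ${\cal H}{\it om}$ over ${\cal O}_{`\wt{Y}}$ coincides with the one over ${\cal O}_{`Y_0}$ and $H^1(`\wt{Y},-)=H^1(`Y_0,-)$. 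Finally, the local freeness of $\Om^1_{`Y_0/S^{[q]}_{00}}$ (\cite[(3.10)]{klog1}) together with the local-to-global ${\cal E}{\it xt}$ spectral sequence used in (\ref{prop:ep}) (2) identifies this $H^1$ with ${\rm Ext}^1_{`Y_0}(\Om^1_{`Y_0/S^{[q]}_{00}},F_{0*}({\cal O}_{Y_0}))$. I would call the resulting class ${\rm obs}_{\wt{Y}/S}(F)$; by construction it depends only on $\wt{Y}$ and $F$.

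For the comparison statement I would pass to explicit \v{C}ech cocycles on a log affine open covering ${\cal U}=\{U_i\}$ of $\os{\circ}{Y}$, setting $\wt{U}_i:=\wt{Y}\vert_{U_i}$. Local lifts $\wt{F}_i\col \wt{U}_i\lo `\wt{Y}$ of $F\vert_{U_i}$ exist because $`\wt{Y}/S^{[q]}$ is log smooth, so the torsor of (\ref{prop:eq}) is locally nonempty, and, via the identification (\ref{ali:fooi}), the family $\{\wt{F}_j^*-\wt{F}_i^*\}$ is a $1$-cocycle representing ${\rm obs}_{\wt{Y}/S}(F)$. On the other hand, in the construction of ${\rm obs}_{(Y,F)/(S_0\subset S,F^{[e]}_S)}$ in (3) one chooses local lifts $(\wt{Y}_i,\wt{F}_i)$ of the pair, glues the $\wt{Y}_i$ by isomorphisms $g_{ij}$, and reads off the class of $\{(`g_{ij}\wt{F}_i g_{ij}^{-1})^*-\wt{F}_j^*\}$ in the quotient sheaf $F_{0*}({\cal O}_{Y_0})/{\cal O}_{`Y_0}$, this class being independent of the auxiliary choices by (\ref{lemm:fl}) (6). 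Because here $\wt{Y}$ is a \emph{global} lift, I would take $\wt{Y}_i=\wt{U}_i$ and $g_{ij}={\rm id}$, so that the cocycle of (3) becomes precisely the image of $\{\wt{F}_j^*-\wt{F}_i^*\}$ under the projection $F_{0*}({\cal O}_{Y_0})\lo F_{0*}({\cal O}_{Y_0})/{\cal O}_{`Y_0}$ of (\ref{ali:oobbs}). Since the boundary $\partial$ in (5) is exactly the map induced on ${\rm Ext}^1$ by this projection, this gives $\partial({\rm obs}_{\wt{Y}/S}(F))={\rm obs}_{(Y,F)/(S_0\subset S,F^{[e]}_S)}$.

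The main obstacle will be the bookkeeping in this last step: one must check that, with the global $\wt{Y}$, the gluing data for the $Y$-deformation may genuinely be taken trivial, and that the cocycle of (3) — including the index and sign conventions inherited from (\ref{lemm:fl}) and (\ref{ali:fooi}) — coincides on the nose with the projection of $\{\wt{F}_j^*-\wt{F}_i^*\}$ rather than merely up to a coboundary. Here (\ref{lemm:fl}) (6), which guarantees that the quotient-valued cocycle of (3) is independent of the isomorphisms $h$ comparing local lifts of $\wt{Y}$, is exactly what legitimizes the substitution $g_{ij}={\rm id}$ and upgrades the computation to an equality of cohomology classes.
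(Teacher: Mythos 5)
Your proposal addresses only assertion (5) of the theorem, and this is a genuine gap: the statement has five parts, and the hardest ones are precisely those you do not prove. Nothing in your text establishes (1) (rigidity of automorphisms commuting with the Frobenius lifts, which in the paper follows from (\ref{lemm:fl}) (5)), (2) (the torsor property of ${\rm Lift}_{(Y,F)/(S_0\subset S,F^{[e]}_S)}$ under $`{\rm pr}_{0*}({\cal H}{\it om}_{{\cal O}_{`Y_0}}(\Om^1_{`Y_0/S^{[q]}_{00}},F_{0*}({\cal O}_{Y_0})/{\cal O}_{`Y_0}))$, which the paper singles out as the most important part of the theorem), (3) (the construction and well-definedness of the canonical class ${\rm obs}_{(Y,F)/(S_0\subset S,F^{[e]}_S)}$), or (4) (the identity $\partial({\rm obs}_{(Y,F)/(S_0\subset S,F^{[e]}_S)})={\rm obs}_{`Y/(S^{[q]}_0\subset S^{[q]})}$). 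Worse, your argument for (5) presupposes them: you invoke ``the construction of ${\rm obs}_{(Y,F)/(S_0\subset S,F^{[e]}_S)}$ in (3)'' --- local lifts $(\wt{Y}_i,\wt{F}_i)$, gluing isomorphisms $g_{ij}$, the quotient-valued cocycle $\{\om_{ij}\}$, and its independence of all auxiliary choices via (\ref{lemm:fl}) (6) --- but this is exactly the content whose proof occupies the bulk of the paper's argument: the cocycle identity $\om_{ik}=\om_{ij}+\om_{jk}$, independence of the choice of $\wt{F}_i$, of $g_{ij}$, and of the covering, and, for the converse direction (vanishing of the class implies a global lift of the pair), the calculations establishing (\ref{ali:gfij}), which require the auxiliary Lemma (\ref{lemm:ges}) and the rigidity statement (\ref{lemm:fl}) (5) to force the transitivity $g'_{ik}=g'_{jk}g'_{ij}$. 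A blind proof of the theorem cannot take all of that as given.

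For part (5) taken in isolation, your route is sound and is in fact the paper's: the paper notes that (5) follows from (\ref{prop:eq}) and the argument in the proof of (4) by taking $g_{ij}={\rm id}_{\wt{U}_{ij}}$, whereupon the cocycle $\wt{\om}_{ij}=\wt{F}_j^*-\wt{F}_i^*$ of (3) is visibly the image, under the projection $F_{0*}({\cal O}_{Y_0})\lo F_{0*}({\cal O}_{Y_0})/{\cal O}_{`Y_0}$ of (\ref{ali:oobbs}), of the cocycle representing the obstruction to lifting $F$ to $\wt{F}\col \wt{Y}\lo `\wt{Y}$; your identification of the coefficient group via ${\cal I}{\cal O}_{\wt{Y}}\simeq {\cal O}_{Y_0}$ and restriction of $\Om^1_{`\wt{Y}/S^{[q]}}$ to $`Y_0$ agrees with (\ref{ali:fooi}). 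So the part you did write matches the paper; what is missing is the theorem.
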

\begin{proof} 
If there exists a lift $(\wt{Y},\wt{F})/(S\lo S^{[q]})$ of 
$(Y,F)/(S_0\lo S^{[q]}_0)$, 
identify 
${\cal H}{\it om}_{{\cal O}_{`Y}}(\Om^1_{`Y/S_0^{[q]}},\wt{F}_*({\cal I}{\cal O}_{\wt{Y}}))$ 
with 
${\cal H}{\it om}_{{\cal O}_{`Y_0}}(\Om^1_{`Y_0/S_{00}^{[q]}},F_{0*}({\cal O}_{Y_0}))$.  
\par 
(1): (1) immediately follows from (\ref{lemm:fl}) (5). 
\par 
(2): Assume that there exists a lift 
 $(\wt{Y},\wt{F})/S$ of $(Y,F)$ over $S$. 
Let $(\wt{Z},\wt{G})/S$ be another lift of $(Y,F)$ over $S$. 
Let $\wt{\cal U}:=\{\wt{U}_i\}_{i\in I}$ and 
(resp.~$\wt{\cal V}:=\{\wt{V}_i\}_{i\in I}$) be an open covering of $\wt{Y}$ 
(resp.~an open covering of $\wt{Z}$) 
such that there exists an isomorphism 
$g_i \col \wt{U}_i\os{\sim}{\lo} \wt{V}_i$ such that $g_i\vert_{U_i}={\rm id}_{U_i}$. 
Here $U_i:=\wt{U}_i\vert_Y$ and $\wt{V}_i$ is an open log subscheme of $\wt{Z}$ 
such that $\wt{V}_i\vert_Y=U_i$. Set $\wt{F}_i:=\wt{F}\vert_{\wt{U}_i}\col 
\wt{U}_i\lo `\wt{U}_i$ and 
$\wt{G}_i:=\wt{G}\vert_{\wt{V}_i}\col \wt{V}_i\lo `\wt{V}_i$. 
Then we have a section 
$(`g_i^{-1}\wt{G}_ig_i)^*-\wt{F}_i^*\in 
{\rm Hom}_{{\cal O}_{`Y_0}}(\Om^1_{`Y_0/S_{00}^{[q]}},F_{0*}({\cal O}_{Y_0}))(`U_i)$. 
If we change $g_i$, then this section may change. 
However the image of this section in 
${\rm Hom}_{{\cal O}_{`Y_0}}
(\Om^1_{`Y_0/S_{00}^{[q]}},F_{0*}({\cal O}_{Y_0})/{\cal O}_{`Y_0})(`U_i)$ 
does not change by (\ref{lemm:fl}) (6) 
and it is a well-defined section.
This well-definedness also tells us that these local sections patch together. 
Consequently we have an element of 
${\rm Hom}_{{\cal O}_{`Y_0}}(\Om^1_{`Y_0/S_{00}^{[q]}},
F_{0*}({\cal O}_{Y_0})/{\cal O}_{`Y_0})$. 
\par 
Conversely assume that we are given a global section of 
${\rm Hom}_{{\cal O}_{`Y_0}}
(\Om^1_{`Y_0/S_{00}^{[q]}},F_{0*}({\cal O}_{Y_0})/{\cal O}_{`Y_0})$. 
Take a local lift in 
${\rm Hom}_{{\cal O}_{`Y_0}}(\Om^1_{`Y_0/S_{00}^{[q]}},F_{0*}({\cal O}_{Y_0}))$ 
of this global section. 
There exists a lift $\wt{U}_i/S$ of $U_i/S_0$ 
if $U_i$ is a small log affine open subscheme of $Y$. 
Set $`\wt{U}_i:=\wt{U}_i\times_SS^{[q]}$. 
Let $\wt{F}_i\col \wt{U}_i\lo `\wt{U}_i$ be a lift 
of $F_i:=F\vert_{U_i}\col U_i\lo `U_i$. 
(This lift exists.)
By (\ref{prop:eq}) the local section of 
${\cal H}{\it om}_{{\cal O}_{`Y_0}}
(\Om^1_{`Y_0/S_{00}^{[q]}},F_{0*}({\cal O}_{Y_0}))$ 
 corresponds to a local lift 
$(\wt{U}_i,\wt{F}{}'_i)$ of $(U_i,F_i)$. 
Since this is obtained by the global section of 
${\cal H}{\it om}_{{\cal O}_{`Y_0}}(\Om^1_{`Y_0/S_{00}^{[q]}},
F_{0*}({\cal O}_{Y_0})/{\cal O}_{`Y_0})$, 
they patch together by the proof of (3) below: 
we have only to change 
$g_{ij}\col \wt{U}_{ij}\os{\sim}{\lo} \wt{U}_{ji}$ in the proof of (\ref{theo:loc}) 
by $g'_{ij}\col \wt{U}_{ij}\os{\sim}{\lo} \wt{U}_{ji}$, 
where $g'_{ij}$ is the isomorphism in the proof of (3) below. 
\par 
(3): 
Let the notations be as in the proof of (\ref{theo:loc}). 
On $\wt{U}_i$ there exists a lift 
$\wt{F}_i\col \wt{U}_i\lo `\wt{U}_i$ of $F_i\col U_i\lo `U_i$. 
This morphism  defines a morphism 
$\wt{F}_{j}\vert_{\wt{U}_{ji}} \col \wt{U}_{ji}\lo `\wt{U}{}_{ji}$.  
Then $\wt{F}_j\vert_{\wt{U}_{ji}}$ and 
$`g_{ij}(\wt{F}_i\vert_{\wt{U}_{ij}})g^{-1}_{ij}$ are two lifts of 
$F_{ij}:=F\vert_{U_{ij}}\col U_{ij}=U_{ji}\lo `U_{ji}=`U_{ij}$. 
Hence we have an element 
\begin{align*} 
\wt{\om}_{ij}:=
(\wt{F}_j\vert_{\wt{U}_{ji}})^*-(`g_{ij}(\wt{F}_i\vert_{\wt{U}_{ji}})g^{-1}_{ij})^* & \in ~
{\rm Hom}_{{\cal O}_{`U_{ij}}}(\Om^1_{`U_{ij}/S_{0}^{[q]}},F_{*}({\cal I}{\cal O}_{\wt{U}_{ji}}))\\
&\os{\sim}{\longleftarrow}
{\rm Hom}_{{\cal O}_{`U_{ij,0}}}(\Om^1_{`U_{ij,0}/S_{00}^{[q]}},F_{0*}({\cal O}_{U_{ij,0}})).
\end{align*}  
Let $\om_{ij}$ be the image of this element in  
${\rm Hom}_{{\cal O}_{`U_{ij,0}}}(\Om^1_{`U_{ij,0}/S_{00}^{[q]}},
F_{0*}({\cal O}_{U_{ij,0}})/{\cal O}_{`U_{ij,0}})$. 
Then, by (\ref{lemm:fl}) (6), $\om_{ij}$ is independent of the choice of 
$g_{ij}$. 
We claim that 
the following equality holds: 
\begin{align*} 
\om_{ik}=\om_{ij}+\om_{jk}. 
\end{align*}  
Indeed, by (\ref{lemm:elin}), 
\begin{align*} 
\wt{\om}_{ij}+\wt{\om}_{jk}&=
g^*{}^{-1}_{\! \! \! jk}
\{(\wt{F}_j\vert_{\wt{U}_{jik}})^*-(`g_{ij}(\wt{F}_i\vert_{\wt{U}_{ijk}})g^{-1}_{ij})^*\}`g^*_{jk}
+(\wt{F}_k\vert_{\wt{U}_{kji}})^*-(`g_{jk}(\wt{F}_j\vert_{\wt{U}_{jki}})g^{-1}_{jk})^*\\
&=(\wt{F}_k\vert_{\wt{U}_{kji}})^*-
g^*{}^{-1}_{\! \!jk}(`g_{ij}(\wt{F}_i\vert_{\wt{U}_{jik}})g^{-1}_{ij})^*`g^*_{jk}
=(\wt{F}_k\vert_{\wt{U}_{kji}})^*-
(`g_{jk}`g_{ij}(\wt{F}_i\vert_{\wt{U}_{ijk}})(g_{jk}g_{ij})^{-1})^*. 
\end{align*} 
By (\ref{lemm:fl}) (6) again, the image of the last term in 
${\rm Hom}_{{\cal O}_{`U_{ijk,0}}}(\Om^1_{`U_{ijk,0}/S_{00}^{[q]}},
F_{0*}({\cal O}_{U_{ijk,0}})/{\cal O}_{`U_{ijk,0}})$
is equal to $\om_{ik}$. 
Set ${\cal U}_0:=\{U_{i,0}\}_{i\in I}$ and $`{\cal U}_0:=\{`U_{i,0}\}_{i\in I}$. 
As a result, we obtain the class $\{\om_{ij}\}$ 
in $\check{H}{}^1(`{\cal U}_0,{\cal H}{\it om}_{{\cal O}_{`Y_0}}
(\Om^1_{`Y_0/S_{00}^{[q]}},F_{0*}({\cal O}_{Y_0})/{\cal O}_{`Y_0}))$. 
\par 
We claim that the class $\{\om_{ij}\}$,  more strongly 
the class $\{\wt{\om}_{ij}\}$ is independent of the choice of 
the lift $\wt{F}_i$. 
Indeed, let us take another lift $\wt{F}{}'_i$. 
Then $\{\wt{F}{}'_i-\wt{F}_i\}_i$ defines an element of 
$$\prod_i
{\rm Hom}_{{\cal O}_{`U_{i,0}}}(\Om^1_{`U_{i,0}/S_{00}^{[q]}},
F_{0*}({\cal O}_{U_{i,0}})).$$ 
Hence the class of $\{\wt{\om}_{ij}\}_{ij}$ is independent of 
the choice of the lift $\wt{F}_i$ by 
the following equalities obtained by (\ref{lemm:elin}): 
\begin{align*} 
&(\wt{F}{}'_j\vert_{\wt{U}_{jik}})^*-(`g_{ij}(\wt{F}{}'_i\vert_{\wt{U}_{ijk}})g^{-1}_{ij})^*
-\{(\wt{F}_j\vert_{\wt{U}_{jik}})^*-(`g_{ij}(\wt{F}_i\vert_{\wt{U}_{ijk}})g^{-1}_{ij})^*\}\\
&=(\wt{F}{}'_j\vert_{\wt{U}_{jik}})^*-(\wt{F}_j\vert_{\wt{U}_{jik}})^*
-\{(`g_{ij}(\wt{F}{}'_i\vert_{\wt{U}_{ijk}})g^{-1}_{ij})^*
-(`g_{ij}(\wt{F}_i\vert_{\wt{U}_{ijk}})g^{-1}_{ij})^*\}\\
&=
(\wt{F}{}'_j\vert_{\wt{U}_{jik}})^*-(\wt{F}_j\vert_{\wt{U}_{jik}})^*
-\{(\wt{F}{}'_i\vert_{\wt{U}_{ijk}})^*-(\wt{F}_i\vert_{\wt{U}_{ijk}})^*\}
=(\partial \{\wt{F}{}'_i-\wt{F}_i\}_i)_{ij}. 
\end{align*} 
(The calculation above is missing in \cite{ns}.)
\par 
Next we claim that the class of $\{\om_{ij}\}_{ij}$ is independent of the choice of 
the open covering ${\cal U}_0$. 
This is clear since any two open coverings have a refinement 
and $\wt{F}_i\vert_V$ is a lift of $F\vert_{V}$ for any log open subscheme 
$V$ of $U_i$. 
\par 
As in \cite{ns}, we claim that the class $\{\om_{ij}\}$ is 
the obstruction class of a lift of $(Y,F)/(S_0\lo S^{[q]}_0)$ 
over $(S\lo S^{[q]})$. 
Indeed, if there exists a lift $(\wt{Y},\wt{F})/(S\lo S^{[q]})$ of 
$(Y,F)/(S_0\lo S^{[q]}_0)$, then we can take $g_{ij}$ (resp.~$\wt{F}_i$) 
as the identity ${\rm id}_{\wt{U}_{ij}}$ (resp.~$\wt{F}\vert_{\wt{U}_i}$) 
and hence $\om_{ij}=0$. 
\par 
Conversely assume that $\{\om_{ij}\}=0$ 
in $\check{H}{}^1(`{\cal U}_0,{\cal H}{\it om}_{{\cal O}_{`Y_0}}
(\Om^1_{`Y_0/S_{00}^{[q]}},F_{0*}({\cal O}_{Y_0})/{\cal O}_{`Y_0}))$. 
Then there exists a section 
$\om_i\in {\cal H}{\it om}_{{\cal O}_{`Y_0}}(\Om^1_{`Y_0/S_{00}^{[q]}},
F_{0*}({\cal O}_{Y_0})/{\cal O}_{`Y_0})(`U_{i,0})$ 
such that $\om_{ij}=\om_j-\om_i$. 
Assume that the image of $\os{\circ}{U}_{i,0}$ in $\os{\circ}{S}_0$ is contained in 
an affine open subscheme of $\os{\circ}{S}_0$.  
Since $\os{\circ}{U}_{i,0}$ is affine, so is $`\os{\circ}{U}_{i,0}$. 
Hence the following sequence 
\begin{align*} 
0\lo \Gam(`U_{i,0},{\cal O}_{`Y_0})\os{F_0^*}{\lo} \Gam(`U_{i,0},F_{0*}({\cal O}_{Y_0}))\lo 
\Gam(`U_{i,0},F_{0*}({\cal O}_{Y_0})/{\cal O}_{`Y_0})\lo 0
\tag{4.12.2}\label{ali:wflom} 
\end{align*} 
is exact. 
Because $\os{\circ}{Y}$ is separated, $\os{\circ}{U}_{ij,0}$ is affine and then 
we see that $`\os{\circ}{U}_{ij,0}$ is affine. 
Because $`\os{\circ}{U}_{ij,0}$ is affine, 
the following sequence 
\begin{align*} 
0\lo \Gam(`U_{ij,0},{\cal O}_{`Y_0})\os{F_0^*}{\lo} \Gam(`U_{ij,0},F_{0*}({\cal O}_{Y_0}))\lo 
\Gam(`U_{ij,0},F_{0*}({\cal O}_{Y_0})/{\cal O}_{`Y_0})\lo 0
\tag{4.12.3}\label{ali:wlom} 
\end{align*} 
is exact. 
By using this exact sequence, we see that 
\begin{align*} 
\wt{\om}_{ij}=\wt{\om}_j-\wt{\om}_i+F_0^*(\eta_{ij})
\tag{4.12.4}\label{ali:wom} 
\end{align*}  
in ${\cal H}{\it om}_{{\cal O}_{`Y_0}}(\Om^1_{`Y_0/S_{00}^{[q]}},
F_{0*}({\cal O}_{Y_0}))(`U_{i,0})$ 
for a section $\eta_{ij}\in 
{\cal H}{\it om}_{{\cal O}_{`Y_0}}
(\Om^1_{`Y_0/S_{00}^{[q]}},{\cal O}_{`Y_0})(`U_{ij,0})$.  
Here $\wt{\om}_i\in 
{\cal H}{\it om}_{{\cal O}_{`Y_0}}(\Om^1_{`Y_0/S_{00}^{[q]}},
F_{0*}({\cal O}_{Y_0}))(`U_{i,0})$ is a lift of $\om_i$. 
Change $\wt{F}_i$ by $\wt{F}{}'_i$ such that 
$\wt{F}{}'{}^*_{\! \!i}-\wt{F}{}^*_i=-\wt{\om}_i$ 
((\ref{prop:eq}))
and change $g_{ij}$ by $g'_{ij}$ such that  
$`g'{}^*_{\! \!ij}-`g^*_{ij}=\eta_{ij}$ ((\ref{prop:eq}), (\ref{coro:lys})).   
In the following we denote $\wt{F}_j\vert_{\wt{U}_{ij}}$ by 
$\wt{F}_j$ for simplicity of notation. 
Then the equality (\ref{ali:wom}) is equivalent to 
the following equality: 
\begin{align*} 
\wt{F}_j^*-(`g_{ij}\wt{F}_ig^{-1}_{ij})^*
=-\wt{F}{}'{}^*_{\! \!j}+\wt{F}{}^*_j
+\wt{F}{}'{}^*_{\! \!i}-\wt{F}{}^*_i+
F^*_0(`g'{}^*_{\! \! ij}-`g^*_{ij}). 
\end{align*} 
Hence  
\begin{align*} 
\wt{F}{}'{}^*_{\!\!j}-(`(g'_{ij})\wt{F}{}'_ig'^{-1}_{ij})^*
&=
(`g_{ij}\wt{F}_ig^{-1}_{ij})^*-(`(g'_{ij})\wt{F}{}'_ig'^{-1}_{ij})^*
+\wt{F}{}'{}^*_{\! \!i}-\wt{F}{}^*_i+
F^*_0(`g'{}^*_{\! \! ij}-`g^*_{ij}). 
\tag{4.12.5}\label{ali:gfg}
\end{align*}   
We claim that the right hand side of (\ref{ali:gfg}) vanishes. 
To prove this vanishing, we have to 
make quite strange calculations (at least at first glance) as follows.
(These calculations are missing in \cite{ns}.) 
\par 
Let $a$ be a local section of ${\cal O}_{`\wt{U}_{ji}}$. 
Let $b\in {\cal O}_{`\wt{U}_{ij}}$ 
be a lift of the image of $a$ 
in ${\cal O}_{`U_{ji}}={\cal O}_{`U_{ij}}$. 
By (\ref{lemm:ges}) (1) below, 
we have the following equalities: 
\begin{align*} 
&(`g_{ij}\wt{F}_ig^{-1}_{ij})^*(a)-(`(g'_{ij})\wt{F}{}'_ig'^{-1}_{ij})^*(a)
\\
& 
=(g^{-1}_{ij})^*(\wt{F}_i^*(b)-\wt{F}{}'{}^*_{\! \!i}(b))+
\pi^n(F^*_0(\del_{`g_{ij}}(a,b))-F^*_0(\del_{`(g'_{ij})}(a,b)))\\
&=\wt{F}_i^*(b)-\wt{F}{}'{}^*_{\! \!i}(b)+\pi^nF^*_0(\del_{`g_{ij}}(a,b)))-
\pi^nF^*_0(\del_{`(g'_{ij})}(a,b))).
\end{align*}  
Hence 
\begin{align*} 
&(`g_{ij}\wt{F}_ig^{-1}_{ij})^*(a)-(`(g'_{ij})\wt{F}{}'_ig'^{-1}_{ij})^*(a)
+\wt{F}{}'{}^*_{\! \!i}(b)-\wt{F}{}^*_i(b)\\
&=\pi^n(F^*_0(\del_{`g_{ij}}(a,b)))-F^*_0(\del_{`(g'_{ij})}(a,b)))). 
\end{align*} 
The last term is equal to 
$F^*_0(\del_{`g_{ij}}^*(a,b)-\del_{`(g'_{ij})}^*(a,b))$ 
via the identification $\pi^n{\cal O}_{\wt{Y}_1}\simeq {\cal O}_{Y_0}$. 
This is equal to $F^*_0((`g^*_{ij}-`g'{}^*_{\! \! ij})(a))$ by 
(\ref{lemm:ges}) (2) below. 
Consequently the value of the right hand side of (\ref{ali:gfg}) for 
any $a\in {\cal O}_{`\wt{U}_{ji}}$ and any lift $b\in {\cal O}_{`\wt{U}_{ij}}$ of 
the image of $a$ in ${\cal O}_{`U_{ji}}$ 
is equal to $0$. 
Similarly, by using (\ref{lemm:ges}) (3) and (4) below, 
the value of the right hand side of (\ref{ali:gfg}) for 
any $m\in M_{`\wt{U}_{ji}}$ and any lift $l\in M_{`\wt{U}_{ij}}$ of 
the image of $m$ in $M_{`U_{ji}}$ is equal to $0$. 
In conclusion, the right hand side of (\ref{ali:gfg}) is $0$. 
Hence 
\begin{align*}
\wt{F}{}'_j=
(`g'_{ij}\wt{F}{}'_ig'^{-1}_{ij})^*.
\tag{4.12.6}\label{ali:gfij}
\end{align*} 
Consequently 
\begin{align*}
(`(g'_{jk})`(g'_{ij})\wt{F}{}'_i(g'_{jk}g'_{ij})^{-1})^*=
\wt{F}{}'_k=(`(g'_{ik})\wt{F}{}'_ig'{}^{-1}_{\! \!ki})^*.
\tag{4.12.7}\label{ali:gffj}
\end{align*} 
By (\ref{lemm:fl}) (5), $g'{}^*_{\! \! ki}=(g'_{jk}g'_{ij})^*$. 
Obviously this implies that $g'_{ik}=g'_{jk}g'_{ij}$. 
In this way, we see that $\wt{U}_i$ and $\wt{F}{}'_i$ patch together. 
\par 
(4): Because $\om_{ik}=\om_{ij}+\om_{jk}$ in 
${\rm Hom}_{{\cal O}_{`U_{ij,0}}}(\Om^1_{`U_{ij,0}/S_{00}^{[q]}},
F_{0*}({\cal O}_{U_{ij,0}})/{\cal O}_{`U_{ij,0}})$, 
there exists an element 
$\om_{ijk}\in 
{\rm Hom}_{{\cal O}_{`U_{ijk,0}}}(\Om^1_{`U_{ijk,0}/S_{00}^{[q]}},{\cal O}_{`U_{ijk,0}})$ 
such that $\wt{\om}_{ij}+\wt{\om}_{jk}-\wt{\om}_{ik}=F_0^*(\om_{ijk})$. 
By the definition of $\partial$, 
$\{\om_{ijk}\}\in \wh{H}{}^2(`{\cal U}_0,
{\rm Hom}_{{\cal O}_{`Y_0}}(\Om^1_{`Y_0/S_{00}^{[q]}},{\cal O}_{`Y_0}))$ 
is the element 
$\partial({\rm obs}_{(Y,F)/(S_0\subset S,F^{[e]}_S)})$.  
On the other hand, by the definition of $\wt{\om}_{ij}$, 
$$\wt{F}_j^*-
(g^{-1}_{ij})^*(\wt{F}_i)^*`g_{ij}^*=
\wt{F}_j^*-(`g_{ij}(\wt{F}_i)g^{-1}_{ij})^*=\wt{\om}_{ij}.$$ 
By this equality, we also have the following equality by (\ref{lemm:elin}): 
$$g_{ij}^*\wt{F}_j^*\vert_{\wt{U}_{ij}}
(`g^{-1}_{ij})^*-\wt{F}{}^*_i\vert_{\wt{U}_{ij}}=\wt{\om} _{ij}.$$ 
Hence 
\begin{align*}
((`g_{ik}^{-1}`g_{jk}`g_{ij})^{-1}
\wt{F}_i(g_{ik}^{-1}g_{jk}g_{ij}))^*-\wt{F}{}^*_i
&=g_{ij}^*g_{jk}^*(g^{-1}_{ik})^*\wt{F}_i^*`g_{ik}^*
(`g^{-1}_{jk})^*(`g^{-1}_{ij})^*-\wt{F}{}^*_i\\
&=g_{ij}^*g_{jk}^*(\wt{F}_k^*-\wt{\om}_{ik})(`g^{-1}_{jk})^*(`g^{-1}_{ij})^*-\wt{F}{}^*_i\\
&=g_{ij}^*(g_{jk}^*\wt{F}_k^*(`g^{-1}_{jk})^*-\wt{\om}_{ik})(`g^{-1}_{ij})^*-\wt{F}{}^*_i\\
&=-\wt{\om}_{ik}+\wt{\om}_{jk}+\wt{\om}_{ij}=F_0^*(\om_{ijk}).
\end{align*} 
Let $\del_{ijk}$ be the $\del$ for $g_{ijk}=g_{ik}^{-1}g_{jk}g_{ij}$. 
Then 
\begin{align*}
((`g_{ik}^{-1}`g_{jk}`g_{ij})^{-1}
\wt{F}_i(g_{ik}^{-1}g_{jk}g_{ij}))^*-\wt{F}{}^*_i\
=(`g_{ijk}\wt{F}_kg_{ijk}^{-1})^*-\wt{F}{}^*_i
=F_0^*(\del_{ijk})
\end{align*} 
by (\ref{lemm:fl}) (2) and (4). 
Hence $F_0^*(\del_{ijk})=F_0^*(\om_{ijk})$. 
Since $F_0^*$ is injective,
$\del_{ijk}=\om_{ijk}$. This implies the desired equality 
${\rm obs}_{`Y/(S^{[q]}_0\subset S^{[q]})}
=\partial ({\rm obs}_{(Y,F)/(S_0\subset S,F^{[e]}_S)})$. 
\par 
(5): (5) follows from (\ref{prop:eq}) and the argument in the proof of (4). 
Indeed, we have only to set $g_{ij}={\rm id}_{\wt{U}_{ij}}$, 
where $\wt{U}_{ij}$ is an open log subscheme of $\wt{Y}$ corresponding to 
$U_{ij}$ in $Y$.  
\end{proof} 
 
We have to give analogues of (\ref{lemm:fl}) (2) and (4) 
for the proof of (\ref{theo:ts}) (3). This is a non-trivial lemma:

\begin{lemm}\label{lemm:ges}
Let the notations be before {\rm (\ref{lemm:fl})}. 
Let $\wt{Y}_i$ $(i=1,2)$ be a lift of $Y$ over $S$. 
Assume that there exists an isomorphism 
$g\col \wt{Y}_1\os{\sim}{\lo} \wt{Y}_2$ over $S$ 
such that $g\vert_Y={\rm id}_Y$. 
Let $`g\col `\wt{Y}_1\os{\sim}{\lo} `\wt{Y}_2$ 
be the induced isomorphism by $g$. 
Let $(\wt{Y}_1,\wt{F})$ be a lift of $(Y,F)$ over $S$. 
Then the following hold$:$ 
\par 
$(1)$ For a local section $a$ of ${\cal O}_{`\wt{Y}_2}$,  
let $\ol{a}$ be the image of $a$ in ${\cal O}_{`Y}$ 
and let $b$ be a lift of $\ol{a}$ in ${\cal O}_{`\wt{Y}_1}$.
Let $\del_{`g}(a,b)$ be 
a unique local section of ${\cal O}_{`Y_0}$ 
such that $\pi^n\del_{`g}(a,b)=`g^*(a)-b$. 
Here we have considered $\pi^n\del_{`g}(a,b)$ as a local section of 
${\cal O}_{`\wt{Y}_1}$. 
Then 
\begin{align*} 
(g^{-1})^*\wt{F}{}^*(`g)^*(a)=
(g^{-1})^*\wt{F}{}^*(b)+\pi^nF_0^*(\del_{`g}(a,b)).
\tag{4.13.1}\label{ali:gfa} 
\end{align*}  
\par 
$(2)$ Let the notations be as in {\rm (1)}. 
Let $h$ be another isomorphism 
$h\col \wt{Y}_1\os{\sim}{\lo} \wt{Y}_2$ over $S$ 
such that $h\vert_Y={\rm id}_Y$. 
Then 
$\pi^n(\del_{`h}(a,b)-\del_{`g}(a,b))=`h^*(a)-`g^*(a)$. 
In particular this is independent of the choice of $b$. 
\par 
$(3)$ For a local section $m$ of $M_{`\wt{Y}_2}$,  
let $\ol{m}$ be the image of $m$ in $M_{`Y}$ 
and let $l$ be a lift of $\ol{m}$ in  $M_{{}^{\star}\wt{Y}_1}$.
Let $\del_{`g}(m,l)$ be a unique local section of ${\cal O}_{`Y_0}$ 
such that 
$`g^*(m)=l(1+\pi^n\del_{`g}(m,l))$. 
Then 
\begin{align*}  
(g^{-1})^*\wt{F}{}^*(`g)^*(m)=
(g^{-1})^*(\wt{F}{}^*(l))(1+\pi^nF_0^*(\del_{`g}(m,l))).
\tag{4.13.2}\label{ali:gfmm}  
\end{align*}  
\par 
$(4)$ Let the notations be as in {\rm (2)} and {\rm (4)}. 
Then 
$\del_{`h}(m,l)-\del_{`g}(m,l)=`h^*(m)(`g^*(m))^{-1}$. 
In particular this is independent of the choice of $l$. 
\end{lemm}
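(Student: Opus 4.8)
The plan is to deduce all four parts from the defining relations of the various $\del$'s together with a single semilinearity fact about $\wt{F}{}^*$ on the square-zero ideal, exactly in the spirit of the proof of (\ref{lemm:fl}). I first record the arithmetic consequences of the hypotheses: since ${\cal I}^2=0$ we have $\pi^{2n}=0$, hence $\pi^{n+1}=0$ (as $n\geq 1$); moreover $q\pi^n=0$, and ${\cal O}_{S_{00}}\os{\sim}{\lo}{\cal I}$, $1\lom \pi^n$, identifies ${\cal I}{\cal O}_{`\wt{Y}_1}$ with ${\cal O}_{`Y_0}$ and ${\cal I}{\cal O}_{\wt{Y}_1}$ with ${\cal O}_{Y_0}$. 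The fact I isolate is: for $c\in {\cal O}_{`Y_0}$, viewed as $\pi^nc\in {\cal I}{\cal O}_{`\wt{Y}_1}$, one has $\wt{F}{}^*(\pi^nc)=\pi^nF_0^*(c)$ in ${\cal I}{\cal O}_{\wt{Y}_1}={\cal O}_{Y_0}$. This is precisely the computation in the last display of the proof of (\ref{lemm:fl}) (2): writing a local lift $\tilde{c}=\sum_ix_i\otimes y_i$ in ${\cal O}_{`\wt{Y}_1}={\cal O}_{\wt{Y}_1}\otimes_{{\cal O}_S,F^{[e]*}_S}{\cal O}_S$ with $\wt{F}{}^*(x_i\otimes y_i)=x_i^qy_i+\pi(\cdots)$, multiplication by $\pi^n$ annihilates the error term via $\pi^{n+1}=0$, leaving $\pi^n\sum_ix_i^qy_i$, which equals $\pi^nF_0^*(c)$ since $F_0^*$ raises the first tensor factor to the $q$-th power.

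I also record that, because $g\vert_Y={\rm id}_Y$, the isomorphism $(g^{-1})^*$ carries ${\cal I}{\cal O}_{\wt{Y}_1}$ to ${\cal I}{\cal O}_{\wt{Y}_2}$ compatibly with both identifications with ${\cal O}_{Y_0}$, so it acts as the identity on the ideal term $\pi^nF_0^*(c)$. Granting these two facts, (1) is immediate: applying $\wt{F}{}^*$ to the defining equation $`g^*(a)=b+\pi^n\del_{`g}(a,b)$ and using the isolated fact gives $\wt{F}{}^*(`g^*(a))=\wt{F}{}^*(b)+\pi^nF_0^*(\del_{`g}(a,b))$; now applying $(g^{-1})^*$, which fixes the last summand, yields the asserted formula (\ref{ali:gfa}).

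For (3) the argument is identical, with the additive relation replaced by the multiplicative one $`g^*(m)=l(1+\pi^n\del_{`g}(m,l))$. Applying the ring homomorphism $\wt{F}{}^*$ and expanding $\wt{F}{}^*(1+\pi^n\del_{`g}(m,l))=1+\pi^nF_0^*(\del_{`g}(m,l))$ via the isolated fact gives $\wt{F}{}^*(`g^*(m))=\wt{F}{}^*(l)(1+\pi^nF_0^*(\del_{`g}(m,l)))$; applying $(g^{-1})^*$, which fixes the unit factor $1+\pi^nF_0^*(\del_{`g}(m,l))$, produces (\ref{ali:gfmm}).

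Parts (2) and (4) are then formal. For (2), subtracting $\pi^n\del_{`g}(a,b)=`g^*(a)-b$ from $\pi^n\del_{`h}(a,b)=`h^*(a)-b$ gives $\pi^n(\del_{`h}(a,b)-\del_{`g}(a,b))=`h^*(a)-`g^*(a)$, and independence of $b$ is visible since $b$ cancels. For (4), from the two multiplicative relations I compute $`h^*(m)(`g^*(m))^{-1}=(1+\pi^n\del_{`h})(1+\pi^n\del_{`g})^{-1}$; expanding $(1+\pi^n\del_{`g})^{-1}=1-\pi^n\del_{`g}$ by $\pi^{2n}=0$ leaves $1+\pi^n(\del_{`h}(m,l)-\del_{`g}(m,l))$, which under $1+\pi^n(\,\cdot\,)\leftrightarrow(\,\cdot\,)$ is the claimed equality, again independent of $l$. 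The one delicate point throughout---and the only place real work is needed---is the isolated semilinearity identity $\wt{F}{}^*(\pi^nc)=\pi^nF_0^*(c)$, where the interplay of $\pi^{n+1}=0$, $q\pi^n=0$ and the Frobenius structure of $\wt{F}$ must be combined; everything else is bookkeeping, and the well-definedness of each $\del$ follows since $`g\vert_{`Y}={\rm id}_{`Y}$ forces the relevant differences to lie in ${\cal I}{\cal O}_{`\wt{Y}_1}$, where $\pi^n(\,\cdot\,)$ is an isomorphism from ${\cal O}_{`Y_0}$.
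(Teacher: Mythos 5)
Your proof is correct and is essentially the paper's own argument: for (1) and (3) you substitute the defining relation of $\del_{`g}$, use additivity (resp.~multiplicativity) of $\wt{F}{}^*$, the semilinearity $\wt{F}{}^*(\pi^nc)=\pi^nF_0^*(c)$ on the square-zero ideal, and the fact that $(g^{-1})^*$ acts as the identity on that ideal because $g\vert_Y={\rm id}_Y$; the paper's proof consists of exactly these two displayed computations, with (2) and (4) declared obvious, just as your formal cancellations show. One small repair to your preliminary remarks: $\pi^{n+1}=0$ does \emph{not} follow from $\pi^{2n}=0$ when $n\geq 2$; it follows instead from the hypothesis of (\ref{lemm:fl}) that ${\cal O}_{S_{00}}\owns 1\lom \pi^n\in {\cal I}$ is a well-defined morphism out of ${\cal O}_{S_{00}}={\cal O}_S/\pi{\cal O}_S$, i.e., $\pi\cdot \pi^n=0$ — this is the fact you actually need to kill the error terms $\pi(\cdots)$, and it is available by assumption rather than by your deduction.
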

\begin{proof} 
(1): We have the following equalities since $g\vert_Y={\rm id}_Y$: 
\begin{align*} 
(g^{-1})^*\wt{F}{}^*(`g)^*(a)&=
(g^{-1})^*\wt{F}{}^*(b+\pi^n\del_{`g}(a,b))\\
&=(g^{-1})^*(\wt{F}{}^*(b)+\wt{F}{}^*(\pi^n\del_{`g}(a,b)))\\
&=(g^{-1})^*\wt{F}{}^*(b)+\pi^n F^*_0(\del_{`g}(a,b)).\\
\end{align*} 
\par 
(2): Obvious. 
\par 
(3): We have the following equalities since $g\vert_Y={\rm id}_Y$: 
\begin{align*} 
(g^{-1})^*\wt{F}{}^*(`g)^*(m)&=
(g^{-1})^*\wt{F}{}^*(l(1+\pi^n\del_{`g}(m,l)))\\
&=(g^{-1})^*(\wt{F}{}^*(l)(1+\wt{F}{}^*(\pi^n\del_{`g}(m,l))))\\
& = (g^{-1})^*(\wt{F}{}^*(l))(1+\pi^nF^*_0(\del_{`g}(m,l))). 
\end{align*} 
\par 
(4): Obvious. 
\end{proof}

\begin{rema}\label{rema:tlt} 
The lemma (\ref{lemm:ges}) in the trivial logarithmic case is missing in \cite{ns}. 
The strange calculation to prove the equality (\ref{ali:gfij})  
in the trivial logarithmic case is also missing in [loc.~cit.]. 
These are indispensable in [loc.~cit.] for the proof of the equality 
(\ref{ali:gfij}); the complicatedness for the proof 
arises because we have to make calculations for local sections in 
$F_{0*}({\cal O}_{Y_0})$ not in 
$F_{0*}({\cal O}_{Y_0})/{\cal O}_{`Y_0}$.  
\end{rema}  
 
\begin{coro}\label{coro:nc}
Assume that $e=1$. 
Assume also that $Y_0/S_{00}$ is of Cartier type. 
Then the following hold$:$ 
\par 
$(1)$ The sheaf ${\rm Lift}_{(Y,F)/(S_0\subset S,F_S)}$ on 
$\os{\circ}{Y}$ is a torsor under 
$`{\rm pr}_{0*}({\cal H}{\it om}_{{\cal O}_{`Y_0}}(\Om^1_{`Y_0/S^{[p]}_{00}},
F_{0*}(B\Om^1_{Y_0/S_{00}})))$. 
\par 
$(2)$ 
Assume that $\os{\circ}{Y}$ is separated.  
In 
$${\rm Ext}_{`Y_0}^1(\Om^1_{`Y_0/S^{[p]}_{00}},
F_{0*}(B\Om^1_{Y_0/S_{00}})),$$
there exists a canonical obstruction class 
${\rm obs}_{(Y,F)/(S_0\subset S,F_S)}$ 
of a lift of $(Y,F)/(S_0\lo S^{[p]}_0)$ over $S\lo S^{[p]}$. 
\par 
$(3)$ 
Assume that $\os{\circ}{Y}$ is separated. Let 
\begin{align*} 
\partial \col {\rm Ext}_{`Y_0}^1(\Om^1_{`Y_0/S^{[p]}_{00}}, F_{0*}(B\Om^1_{Y_0/S_{00}}))
\lo {\rm Ext}_{`Y_0}^2(\Om^1_{`Y_0/S^{[p]}_{00}},{\cal O}_{`Y_0})
\end{align*} 
be the boundary morphism obtained by 
the following exact sequence {\rm (\ref{prop:ee})}$:$
\begin{align*} 
0\lo {\cal O}_{`Y_0}\lo F_{0*}({\cal O}_{Y_0})\os{F_{0*}(d)}\lo 
F_{0*}(B\Om^1_{Y_0/S_{00}})\lo 0. 
\tag{4.15.1}\label{ali:obobbs}
\end{align*} 
Then $\partial ({\rm obs}_{(Y,F)/(S_0\subset S,F_S)})=
{\rm obs}_{`Y/(S^{[p]}_0\subset S^{[p]})}$. 
\end{coro}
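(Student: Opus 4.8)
The plan is to deduce this corollary directly from (\ref{theo:ts}) by specializing to the case $e=1$ and then inserting a single sheaf identification coming from (\ref{prop:ee}). Since $e=1$ forces $q=p^e=p$, all the objects $S^{[q]}_{\star}$ and $`Y_{\star}$ appearing in (\ref{theo:ts}) become the $S^{[p]}_{\star}$ and $`Y_{\star}$ of the present statement, and the $e$-times iterated abrelative Frobenius morphism $F_0\col Y_0\lo `Y_0$ over $S_{00}\lo S^{[p]}_{00}$ is precisely the relative Frobenius morphism of $Y_0/S_{00}$.

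First I would record the key identification. Because $Y_0/S_{00}$ is assumed to be of Cartier type and $F_0$ is the relative Frobenius morphism, (\ref{prop:ee}) applies verbatim to $Y_0/S_{00}$ and yields the short exact sequence of ${\cal O}_{`Y_0}$-modules
$$0\lo {\cal O}_{`Y_0}\lo F_{0*}({\cal O}_{Y_0})\os{F_{0*}(d)}{\lo} F_{0*}(B\Om^1_{Y_0/S_{00}})\lo 0,$$
which is exactly (\ref{ali:obobbs}). In particular it furnishes a canonical isomorphism $F_{0*}({\cal O}_{Y_0})/{\cal O}_{`Y_0}\os{\sim}{\lo} F_{0*}(B\Om^1_{Y_0/S_{00}})$ of ${\cal O}_{`Y_0}$-modules. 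Here $\os{\circ}{Y}_0$ is reduced because $Y_0=Y_{\rm red}$, so the left-exactness in (\ref{prop:ee}), that is, the injectivity of $F_0^*\col {\cal O}_{`Y_0}\lo F_{0*}({\cal O}_{Y_0})$, is available.

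Next I would substitute this isomorphism into the three assertions of (\ref{theo:ts}). Replacing $F_{0*}({\cal O}_{Y_0})/{\cal O}_{`Y_0}$ by $F_{0*}(B\Om^1_{Y_0/S_{00}})$ in (\ref{theo:ts}) (2) turns the torsor group $`{\rm pr}_{0*}({\cal H}{\it om}_{{\cal O}_{`Y_0}}(\Om^1_{`Y_0/S^{[p]}_{00}},F_{0*}({\cal O}_{Y_0})/{\cal O}_{`Y_0}))$ into the group in part (1), proving (1). Likewise the $\mathrm{Ext}$-group in (\ref{theo:ts}) (3) becomes ${\rm Ext}_{`Y_0}^1(\Om^1_{`Y_0/S^{[p]}_{00}},F_{0*}(B\Om^1_{Y_0/S_{00}}))$, and the obstruction class ${\rm obs}_{(Y,F)/(S_0\subset S,F_S)}$ of (\ref{theo:ts}) (3) is its image under this isomorphism, which gives (2). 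Finally, under the identification the short exact sequence (\ref{ali:oobbs}) used to define $\partial$ in (\ref{theo:ts}) (4) becomes exactly (\ref{ali:obobbs}), so the boundary map $\partial$ of (\ref{theo:ts}) (4) coincides with the one in part (3); hence (\ref{theo:ts}) (4) reads $\partial({\rm obs}_{(Y,F)/(S_0\subset S,F_S)})={\rm obs}_{`Y/(S^{[p]}_0\subset S^{[p]})}$, which is (3).

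The only point requiring genuine care, and hence the main obstacle, is the bookkeeping needed to confirm that, when $e=1$, the abrelative Frobenius morphism $F_0\col Y_0\lo `Y_0$ over $S_{00}\lo S^{[p]}_{00}$ really is the relative Frobenius morphism of $Y_0/S_{00}$ to which (\ref{prop:ee}) refers, so that the Cartier-type hypothesis on $Y_0/S_{00}$ is exactly what makes (\ref{prop:ee}) applicable. Once the base-change and Frobenius-twist notations are matched up, every remaining step is a mechanical substitution into (\ref{theo:ts}).
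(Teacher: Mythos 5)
Your proof is correct and takes essentially the same route as the paper: the paper likewise applies (\ref{prop:ee}) (i.e.\ the exact sequence (\ref{ali:cifes})) to $Y_0/S_{00}$ to identify the quotient $F_{0*}({\cal O}_{Y_0})/{\cal O}_{Y'_0}$ with $F_{0*}(B\Om^1_{Y_0/S_{00}})$ and then simply invokes (\ref{theo:ts}). The one point to phrase more carefully is your claim that $F_0\col Y_0\lo `Y_0$ \emph{is} the relative Frobenius: as morphisms of log schemes they differ (the targets $`Y_0$ and $Y'_0$ carry different log structures), but since $Y_0/S_{00}$ is integral one has $\os{\circ}{Y}{}'_0=`\os{\circ}{Y}_0$ and the two Frobenii share the same underlying scheme morphism, so the identification ${\cal O}_{`Y_0}={\cal O}_{Y'_0}$ and hence $F_{0*}({\cal O}_{Y_0})/{\cal O}_{`Y_0}=F_{0*}(B\Om^1_{Y_0/S_{00}})$ that your argument actually needs does hold — this is exactly what the paper records as (\ref{ali:yy00}).
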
 
\begin{proof}
Recall that $Y'_0:=Y_0\times_{S_{00},F_{S_{00}}}S_{00}$ 
and that $\os{\circ}{Y}{}'_0=`\os{\circ}{Y}_0$. 
By (\ref{ali:cifes}) 
\begin{align*} 
F_{0*}({\cal O}_{Y_0})/{\cal O}_{Y'_0}=
F_{0*}(B\Om^1_{Y_0/S_{00}}).
\tag{4.15.2}\label{ali:yy00}
\end{align*}  
Now (\ref{coro:nc}) follows from (\ref{theo:ts}) and (\ref{ali:yy00}). 
\end{proof}

\par 
Next we develop a log deformation theory 
with (standard) relative Frobenius morphisms. 
Because the proof of the main result (\ref{theo:trs}) below 
are very similar to that of (\ref{theo:ts}), we omit it.  
The log deformation theory with abrelative Frobenius morphisms 
and the theory with relative Frobenius morphisms turns out 
equivalent theories by (\ref{coro:nofs}) below.  
To obtain this equivalence, we can also use  
W.~Zheng's proof in \cite{ss}. 
See (\ref{rema:ss}) below for this. 
\par 
Let the notations be as before.  
Set 
$Y':=Y\times_{S_0,F^{[e]}_{S_0}}S_0$ 
and $Y'_0:=Y\times_{S_{00},F^{e}_{S_{00}}}S_{00}$. 
\par 
Let $F_0 \col Y_0\lo Y'_0$ be the $e$-iterated relative Frobenius morphism of $Y_0/S_{00}$. 
Assume that there exists a lift $F\col Y\lo Y'$ 
of  $F_0\col Y_0\lo Y_0'$ over $S$. 
We say that $(\wt{Y},\wt{F})/S$ 
is a {\it log smooth integral lift} (or simply a {\it lift}) of $(Y,F)/S_0$ 
if $\wt{Y}$ is a log smooth integral scheme over $S$ such that 
$\wt{Y}\times_SS_0=Y$ and $\wt{F}$ is a morphism 
$\wt{Y}\lo \wt{Y}{}':=\wt{Y}\times_{S,F^{[e]}_S}S$ over $S$ fitting into 
the following commutative diagram 
\begin{equation*} 
\begin{CD} 
Y@>{\subset}>> \wt{Y}\\
@V{F}VV @VV{\wt{F}}V \\
Y'@>{\subset}>> \wt{Y}{}'  
\end{CD} 
\end{equation*}
over the morphism $S_0\os{\subset}{\lo}S$. 
\par 
Let ${\rm Lift}_{(Y,F)/(S_0\subset S,F^{[e]}_S)}$ be the following sheaf 
\begin{align*} 
{\rm Lift}_{(Y,F)/(S_0\subset S)}(U):=
\{{\rm isomorphism~classes~of~lifts~of}~(U,F\vert_U)/S_0~{\rm over}~S\}
\end{align*}
for each log open subscheme $U$ of $Y$. 
The isomorphism class of a lift of $(U,F\vert_U)/S_0$ over $S$ is 
defined in an obvious way. 
Let $\iota' \col Y' \os{\sus}{\lo} \wt{Y}{}'$ be the closed immersion. 
Let $\wt{G}\col \wt{Y}\lo \wt{Y}{}'$ be another lift of $F$. 
Take $s$ in (\ref{prop:eq}) as the composite morphism 
$Y\os{F}{\lo} Y'\os{\iota'}{\lo} \wt{Y}{}'$ over 
the composite morphism $S_0\subset S$. 
Then  
$\wt{G}$ defines an element $\wt{G}{}^*-\wt{F}{}^*$ of 
\begin{align*} 
{\rm Hom}_{{\cal O} _Y}
(F^*\iota'{}^*(\Om^1_{\wt{Y}{}'/S}),{\cal I}{\cal O}_{\wt{Y}})
=
{\rm Hom}_{{\cal O} _Y}(F^*(\Om^1_{Y'/S_0}),{\cal I}{\cal O}_{\wt{Y}})
={\rm Hom}_{{\cal O}_{Y'}}(\Om^1_{Y'/S_0},F_*({\cal I}{\cal O}_{\wt{Y}})).
\tag{4.15.3}\label{ali:forroi}
\end{align*}

\begin{theo}\label{theo:trs}  
Let ${\cal I}$, $\pi$ and $n$ be as in {\rm (\ref{lemm:fl})}. 
Assume that $\os{\circ}{Y}_0$ is reduced.   
Then the following hold$:$ 
\par 
$(1)$ 
Assume that $(Y,F)/S_0$ has a lift 
$(\wt{Y},\wt{F})/S$. 
Set ${\rm Aut}_{S,F^{[e]}_S}(\wt{Y},Y)
:=\{g\in {\rm Aut}_{S}(\wt{Y})~\vert~g\vert_Y={\rm id}_Y, 
\wt{F}\circ g=g'\circ \wt{F}\}$. 
Then ${\rm Aut}_{S,F^{[e]}_S}(\wt{Y},Y)=\{{\rm id}_Y\}$. 
\par 
$(2)$ Let ${\rm pr}_0\col Y'_0\lo Y_0$ be the projection. 
Then the sheaf ${\rm Lift}_{(Y,F)/(S_0\subset S,F^{[e]}_S)}$ on 
$\os{\circ}{Y}$ is a torsor under 
${\rm pr}_{0*}({\cal H}{\it om}_{{\cal O}_{Y'_0}}(\Om^1_{Y'_0/S_{00}}, 
F_{0*}({\cal O}_{Y_0})/{\cal O}_{Y'_0}))$. 
\par 
$(3)$ 
Assume that $\os{\circ}{Y}$ is separated.  
In 
$${\rm Ext}_{Y'_0}^1(\Om^1_{Y'_0/S_{00}},F_{0*}({\cal O}_{Y_0})/{\cal O}_{Y'_0}),$$ 
there exists a canonical obstruction class 
${\rm obs}_{(Y,F)/(S_0\subset S,F^{[e]}_S)}$ of a lift of $(Y,F)/S_0$ over $S$. 
\par 
$(4)$ Assume that $\os{\circ}{Y}$ is separated. 
Let 
\begin{align*} 
\partial \col {\rm Ext}_{Y'_0}^1(\Om^1_{Y'_0/S_{00}},
F_{0*}({\cal O}_{Y_0})/{\cal O}_{Y'_0})
\lo {\rm Ext}_{Y'_0}^2(\Om^1_{Y'_0/S_{00}},{\cal O}_{Y'_0})
\end{align*} 
be the boundary morphism obtained by 
the following exact sequence {\rm (\ref{prop:ee})}$:$
\begin{align*} 
0\lo {\cal O}_{Y'_0}\lo F_{0*}({\cal O}_{Y_0})\lo 
F_{0*}({\cal O}_{Y_0})/{\cal O}_{Y'_0}\lo 0. 
\tag{4.16.1}\label{ali:orobrbs}
\end{align*} 
Then $\partial ({\rm obs}_{(Y,F)/(S_0\subset S,F^{[e]}_S)})=
{\rm obs}_{Y'/(S_0\subset S)}$. 
\par 
\par 
$(5)$ Assume that $\os{\circ}{Y}$ is separated. 
Assume that there exists a lift $\wt{Y}/S$ of $Y/S_0$. 
Let $\wt{Y}{}'$ be the base change of $\wt{Y}$ by 
the morphism $F^{[e]}_S\col S\lo S$. 
Then the obstruction class ${\rm obs}_{F}(\wt{Y})$ 
of a lift $\wt{F}\col \wt{Y}\lo \wt{Y}{}'$ of $F\col Y\lo Y'$  
is an element of ${\rm Ext}^1_{Y'_0}(\Om^1_{Y'_0/S_{00}},F_{*}({\cal O}_{Y_0}))$ 
and this is mapped to ${\rm obs}_{(Y,F)/(S_{0}\subset S)}$ 
by the natural morphism 
\begin{align*} 
\partial \col {\rm Ext}_{Y'_0}^1(\Om^1_{Y'_0/S_{00}}, F_{0*} ({\cal O}_{Y_0}))
\lo {\rm Ext}_{Y'_0}^1(\Om^1_{Y'_0/S_{00}}, F_{0*} ({\cal O}_{Y_0})/{\cal O}_{Y'_0}). 
\end{align*} 
\end{theo}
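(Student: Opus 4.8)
The plan is to mimic the proof of (\ref{theo:ts}) line by line, with the abrelative Frobenius morphism replaced everywhere by the standard relative Frobenius morphism $F\col Y\lo Y'$, and with (\ref{lemm:frl}) playing the role that (\ref{lemm:fl}) played there. The opening move is the same identification: once a lift $(\wt{Y},\wt{F})/S$ exists, one uses the flatness of $\os{\circ}{\wt{Y}}$ over $\os{\circ}{S}$ (\cite[(4.5)]{klog1}), the hypothesis ${\cal I}=\pi^n{\cal O}_S$ and the isomorphism ${\cal O}_{S_{00}}\os{\sim}{\lo}{\cal I}$ to identify ${\cal H}{\it om}_{{\cal O}_{Y'}}(\Om^1_{Y'/S_0},\wt{F}_*({\cal I}{\cal O}_{\wt{Y}}))$ with ${\cal H}{\it om}_{{\cal O}_{Y'_0}}(\Om^1_{Y'_0/S_{00}},F_{0*}({\cal O}_{Y_0}))$, exactly as in (\ref{theo:loc}) (1). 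Because $Y/S_0$ is integral, $\os{\circ}{Y}{}'_0$ agrees with the underlying scheme of the abrelative twist of $Y_0$, so all the \v{C}ech bookkeeping of the previous section transports without change.

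First I would dispose of (1): if $g\in{\rm Aut}_{S,F^{[e]}_S}(\wt{Y},Y)$, then applying (\ref{lemm:frl}) (5) to the pair of lifts $(\wt{Y}_1,\wt{F}_1)=(\wt{Y}_2,\wt{F}_2)=(\wt{Y},\wt{F})$ with $g_1=g$ and $g_2={\rm id}$ forces $g={\rm id}_{\wt{Y}}$. For (2) I would, given two lifts $(\wt{Y},\wt{F})$ and $(\wt{Z},\wt{G})$, choose local isomorphisms $g_i\col \wt{U}_i\os{\sim}{\lo}\wt{V}_i$ of the underlying log smooth lifts (which exist locally by (\ref{theo:loc})) with $g_i\vert_{U_i}={\rm id}$, form the local sections $((g'_i)^{-1}\wt{G}_ig_i)^*-\wt{F}_i^*$, and observe via (\ref{lemm:frl}) (6) that their images in the quotient ${\cal H}{\it om}_{{\cal O}_{Y'_0}}(\Om^1_{Y'_0/S_{00}},F_{0*}({\cal O}_{Y_0})/{\cal O}_{Y'_0})$ are independent of all choices and hence glue to a global section; conversely a global section produces a lift by gluing local lifts. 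The reducedness of $\os{\circ}{Y}_0$ is used precisely here, to guarantee that $F_0^*\col {\cal O}_{Y'_0}\lo F_{0*}({\cal O}_{Y_0})$ is injective, so that a section of the quotient determines its lift up to the relevant ambiguity.

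For (3) I would take a log affine covering ${\cal U}=\{U_i\}$ with local lifts $(\wt{U}_i,\wt{F}_i)$, set $\wt{\om}_{ij}:=(\wt{F}_j\vert_{\wt{U}_{ji}})^*-(g'_{ij}(\wt{F}_i\vert_{\wt{U}_{ij}})g_{ij}^{-1})^*$, let $\om_{ij}$ be its image in the quotient sheaf, and then verify the cocycle relation $\om_{ik}=\om_{ij}+\om_{jk}$ using (\ref{lemm:elin}) together with (\ref{lemm:frl}) (6), followed by independence of $\om_{ij}$ of the chosen $\wt{F}_i$ and of ${\cal U}$. The resulting class $\{\om_{ij}\}$ is the obstruction: it vanishes when a global lift exists, and conversely if it is a coboundary one corrects $\wt{F}_i$ and $g_{ij}$ and glues. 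Parts (4) and (5) are then formal: (4) compares $\om_{ijk}$ (the defect trivializing $\{\om_{ij}\}$ after pushing forward to ${\cal O}_{Y'_0}$) with the obstruction cocycle $\del_{ijk}$ of $Y'/(S_0\sus S)$ from the proof of (\ref{theo:loc}), using (\ref{lemm:frl}) (2), (4) and the injectivity of $F_0^*$ to conclude $\del_{ijk}=\om_{ijk}$, whence $\partial({\rm obs})={\rm obs}_{Y'/(S_0\sus S)}$; (5) follows from (\ref{prop:eq}) by taking $g_{ij}={\rm id}_{\wt{U}_{ij}}$ in the computation of (4), the exact sequence being (\ref{prop:ee}).

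The main obstacle is the gluing direction of (3): showing that when $\{\om_{ij}\}$ is a coboundary the corrected data $(\wt{U}_i,\wt{F}{}'_i)$ actually satisfy the cocycle condition $g'_{ik}=g'_{jk}g'_{ij}$. This is where the ``strange calculations'' reappear, and it requires the relative-Frobenius analogue of (\ref{lemm:ges}) — identities carried out in $F_{0*}({\cal O}_{Y_0})$ rather than in the quotient $F_{0*}({\cal O}_{Y_0})/{\cal O}_{Y'_0}$, keeping careful track of the $\pi$-linearity of the various pullbacks and of $q\pi^n=\pi^{n+1}=0$. Once that analogue is in place the argument is a transcription of (\ref{theo:ts}); alternatively, invoking the equivalence of the two deformation theories via the cited result of W.~Zheng (\cite{ss}) one could instead deduce (\ref{theo:trs}) directly from (\ref{theo:ts}), since under the standing integrality hypothesis $\os{\circ}{Y}{}'_0$ and the abrelative twist share the same underlying scheme and the same torsor sheaf.
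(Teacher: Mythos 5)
Your proposal is correct and follows essentially the same route as the paper: the paper's own proof of (\ref{theo:trs}) consists precisely of the remark that it is identical to the proof of (\ref{theo:ts}) with (\ref{lemm:frl}) replacing (\ref{lemm:fl}), which is exactly the transcription you carry out, including the need for a relative-Frobenius analogue of (\ref{lemm:ges}) at the gluing step of (3). Your closing alternative via W.~Zheng's result is also the paper's own observation (cf.~(\ref{coro:nofs}) and (\ref{rema:ss})), so nothing in your argument diverges from the source.
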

\begin{proof} 
We omit the proof because it is the same as that of (\ref{theo:ts}). 
\end{proof} 
 
\begin{coro}\label{coro:nofs}
Let $\bet_0 \col Y'_0\lo `Y_0$ and 
$\bet \col Y'\lo `Y$ be the natural morphisms.  
Set $`F:=\bet \circ F$.  Then the following hold$:$
\par 
$(1)$ 
Let 
\begin{align*} 
{\rm Lift}_{(Y,`F)/(S_0\subset S,F^{[e]}_S)}
\lo
{\rm Lift}_{(Y,F)/(S_0\subset S,F^{[e]}_S)} 
\tag{4.17.1}\label{ali:bcfss} 
\end{align*}  
be the natural morphism of sheaves in $Y_{\rm zar}$ 
obtained by the base changes of the lifts of the open log subschemes of 
$`Y$ by the morphism $F^{[e]}_{S/\os{\circ}{S}}\col S\lo S^{[q]}$. 
Assume that ${\rm Lift}_{(Y,`F)/(S_0\subset S,F^{[e]}_S)}(Y)$ is not empty. 
Then the following diagram is commutative$:$
\begin{equation*} 
\begin{CD}
{\rm Lift}_{(Y,F)/(S_0\subset S,F^{[e]}_S)}(Y)@>{\sim}>>
{\rm Hom}_{{\cal O}_{Y'_0}}(\Om^1_{Y'_0/S_{00}},
F_{0*}({\cal O}_{Y_0})/{\cal O}_{Y'_0})\\  
@AAA @A{\simeq}A{\bet^*_0}A \\
{\rm Lift}_{(Y,`F)/(S_0\subset S,F^{[e]}_S)}(Y)@>{\sim}>>
{\rm Hom}_{{\cal O}_{`Y_0}}
(\Om^1_{`Y_0/S^{[p]}_{00}},`F_{0*}({\cal O}_{Y_0})/{\cal O}_{`Y_0})
\end{CD} 
\tag{4.17.2}\label{ali:bcfcdss} 
\end{equation*} 
\par 
$(2)$ Assume that $\os{\circ}{Y}$ is separated.  
Let 
\begin{align*} 
\bet^*_0\col 
{\rm Ext}_{`Y_0}^1(\Om^1_{`Y_0/S^{[p]}_{00}},
F_{0*}({\cal O}_{Y_0})/{\cal O}_{`Y_0})\os{\sim}{\lo} 
{\rm Ext}_{Y'_0}^1(\Om^1_{Y'_0/S_{00}},
F_{0*}({\cal O}_{Y_0})/{\cal O}_{Y'_0})
\end{align*}
be the natural isomorphism. 
Then 
\begin{align*} 
\bet^*_0({\rm obs}_{(Y,`F)/(S_0\subset S,F_S^{[e]})})=
{\rm obs}_{(Y,F)/(S_0\subset S,F_S^{[e]})}.
\end{align*}  
\par 
$(3)$ The following diagram is commutative for $q\in {\mab Z}:$
\begin{equation*} 
\begin{CD} 
{\rm Ext}_{Y'_0}^q(\Om^1_{Y'_0/S_{00}}, F_{0*}({\cal O}_{Y_0})/{\cal O}_{Y'_0})
@>{\partial}>> {\rm Ext}_{Y'_0}^{q+1}(\Om^1_{Y'_0/S_{00}},{\cal O}_{Y'_0})\\
@A{\bet^*_0}A{\simeq}A @A{\simeq}A{\bet^*_0}A \\
{\rm Ext}_{`Y_0}^q(\Om^1_{`Y_0/S^{[p]}_{00}},
`F_{0*}({\cal O}_{Y_0})/{\cal O}_{`Y_0})
@>{\partial}>> {\rm Ext}_{`Y_0}^{q+1}
(\Om^1_{`Y_0/S^{[p]}_{00}},{\cal O}_{`Y_0}). 
\end{CD} 
\tag{4.17.3}\label{ali:bcf0oss} 
\end{equation*}  
\end{coro}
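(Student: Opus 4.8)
The plan is to reduce all three assertions to a single structural input: since $Y/S_0$ is integral, the underlying morphism $\os{\circ}{\bet}_0\col \os{\circ}{Y}{}'_0\lo `\os{\circ}{Y}_0$ is an isomorphism, both sides being $\os{\circ}{Y}_0\times_{\os{\circ}{S}_{00},\os{\circ}{F}{}^{e}_{S_{00}}}\os{\circ}{S}_{00}$ as recorded at the end of \S\ref{sec:latv} and reused in the proof of (\ref{coro:nac}). First I would record the companion identifications furnished by $\bet_0$: on this common underlying scheme one has $\bet^*_0(\Om^1_{`Y_0/S^{[q]}_{00}})=\Om^1_{Y'_0/S_{00}}$ and $\bet^*_0(`F_{0*}({\cal O}_{Y_0}))=F_{0*}({\cal O}_{Y_0})$, the latter because the abrelative and relative Frobenius morphisms of $Y_0$ share the same underlying (homeomorphic) scheme morphism. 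Granting these, $\bet_0$ carries the relative exact sequence (\ref{ali:orobrbs}) onto the abrelative one (\ref{ali:oobbs}), and $\bet^*_0$ becomes an isomorphism on every ${\rm Hom}$- and ${\rm Ext}$-group occurring in the statement; this is precisely what justifies the symbols $\simeq$ attached to $\bet^*_0$ in (\ref{ali:bcfcdss}) and (\ref{ali:bcf0oss}).

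For (1) I would unwind the two torsor descriptions, which were produced by the same recipe in the proofs of (\ref{theo:trs}) (2) and (\ref{theo:ts}) (2). The map (\ref{ali:bcfss}) sends an abrelative lift $(\wt{Y},\wt{G})$, with $\wt{G}\col \wt{Y}\lo `\wt{Y}$ lifting $`F$, to its relative avatar $(\wt{Y},\wt{F})$ obtained by factoring $\wt{G}=\wt{\bet}\circ\wt{F}$ through the natural morphism $\wt{\bet}\col \wt{Y}{}'\lo `\wt{Y}$ (equivalently, by base change along $F^{[e]}_{S/\os{\circ}{S}}$). The torsor identifications compare two local lifts through the difference sections of (\ref{lemm:frl}) (6) and (\ref{lemm:fl}) (6); what must be verified is that $\bet^*_0$ sends the relative difference section to the abrelative one. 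This is immediate from the relation $\wt{G}=\wt{\bet}\circ\wt{F}$ combined with (\ref{lemm:frl}) (1) and (\ref{lemm:fl}) (1), which express both $\del'$ and $`\del$ in terms of the same $\del$. Hence the square (\ref{ali:bcfcdss}) commutes.

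Part (2) is the \v{C}ech-cocycle counterpart of (1): in the proofs of (\ref{theo:trs}) (3) and (\ref{theo:ts}) (3) both obstruction classes are the classes of cocycles $\{\om_{ij}\}$ assembled from the same local lifts $(\wt{U}_i,\wt{F}_i)$ and transition isomorphisms $g_{ij}$. Since $\bet^*_0$ carries each local datum of the relative theory to the corresponding datum of the abrelative theory, it carries the abrelative cocycle to the relative one, giving $\bet^*_0({\rm obs}_{(Y,`F)/(S_0\subset S,F_S^{[e]})})={\rm obs}_{(Y,F)/(S_0\subset S,F_S^{[e]})}$. Part (3) is then pure functoriality: the two maps $\partial$ in (\ref{ali:bcf0oss}) are the connecting homomorphisms of the short exact sequences (\ref{ali:orobrbs}) and (\ref{ali:oobbs}), which the first step identifies via $\bet_0$; naturality of the long exact ${\rm Ext}$-sequence under this isomorphism yields the commutativity of (\ref{ali:bcf0oss}).

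The step I expect to be the genuine obstacle is the first one — checking that $\bet_0$ really induces compatible isomorphisms on $\Om^1$, on $F_{0*}({\cal O}_{Y_0})$, and on the quotient sheaves, so that the two short exact sequences, and thereby the two deformation theories, coincide under $\bet^*_0$. The identification of underlying schemes is the integrality fact $\os{\circ}{Y}{}'_0=`\os{\circ}{Y}_0$, but the log cotangent sheaves are taken over different bases ($S_{00}$ versus $S^{[q]}_{00}$), so one must show that $\bet$ is log \'{e}tale and an isomorphism on underlying schemes in order to match $\Om^1_{Y'_0/S_{00}}$ with $\bet^*_0\Om^1_{`Y_0/S^{[q]}_{00}}$; alternatively one can invoke W.~Zheng's comparison in \cite{ss}. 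Once this identification is secured, assertions (1)--(3) reduce to bookkeeping over the common local constructions of \S\ref{sec:latv} and \S\ref{ldtv}.
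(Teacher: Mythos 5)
Your proposal is correct and takes essentially the same route as the paper: reduce everything to the fact that, by integrality, $\os{\circ}{Y}{}'_0=`\os{\circ}{Y}_0$ and the two deformation theories literally coincide under $\bet^*_0$, after which (1)--(3) are bookkeeping. The one point where you hedge — proving $\bet$ is log \'etale, or invoking Zheng — is resolved more directly in the paper: the equality $\Om^1_{`Y_0/S^{[q]}_{00}}=\Om^1_{Y'_0/S_{00}}$ follows at once from the base-change isomorphism for log differentials (the isomorphism before (1.8) in Kato's paper), since both sheaves are pullbacks of $\Om^1_{Y_0/S_{00}}$ along morphisms with the same underlying scheme morphism, so $\bet^*_0$ is the identity and no further argument is needed.
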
 
\begin{proof} 
(1), (2), (3): 
If $f\col V\lo W$ is a morphism of fine log schemes over $S^{[q]}$, 
then we obtain the base change 
$V\times_{S^{[q]},F_{S/\os{\circ}{S}}^{[e]}}S\lo W\times_{S,F_{S/\os{\circ}{S}}^{[e]}}S$ of $f$ over $S$. 
This base change defines the left vertical morphism in 
(\ref{ali:bcfcdss}).  
Recall that $Y'_0:=Y_0\times_{S_{00},F^e_{S_{00}}}S_{00}$. 
Because $Y_0/S_{00}$ is integral,  
$\os{\circ}{Y}{}'_0=`\os{\circ}{Y}_0$. 
By the isomorphism before \cite[(1.8)]{klog1}, 
we also obtain the equality 
$\Om^1_{`Y_0/S^{[p]}_{00}}=\Om^1_{Y'_0/S_{00}}$; 
$\bet^*_0$ is nothing but the identity.  
Hence we obtain (1), (2) and (3).  
\end{proof} 

\begin{coro}\label{coro:nrc}
Assume that $e=1$. 
Assume that $Y_0/S_{00}$ is of Cartier type. 
Then the following hold$:$ 
\par 
$(1)$ 
Let ${\rm pr}_0\col Y'_0\lo Y_0$ be the projection. 
Then the sheaf ${\rm Lift}_{(Y,F)/(S_0\subset S,F^{[e]}_S)}$ on 
$\os{\circ}{Y}$ is a torsor under 
${\rm pr}_{0*}({\cal H}{\it om}_{{\cal O}_{Y'_0}}
(\Om^1_{Y'_0/S_{00}},F_{0*}(B\Om^1_{Y_0/S_{00}}))$. 
\par 
$(2)$  Assume that $\os{\circ}{Y}$ is separated.
In 
$${\rm Ext}_{Y'_0}^1(\Om^1_{Y'_0/S_{00}},
F_{0*}(B\Om^1_{Y_0/S_{00}})),$$
there exists a canonical obstruction class 
${\rm obs}_{(Y,F)/(S_0\subset S,F^{[e]}_S)}$ of a lift of $(Y,F)/S_0$ over $S$. 
\par 
$(3)$ Assume that $\os{\circ}{Y}$ is separated. Let 
\begin{align*} 
\partial \col {\rm Ext}_{Y'_0}^1
(\Om^1_{Y'_0/S_{00}}, F_{0*}(B\Om^1_{Y'_0/S_{00}}))
\lo {\rm Ext}_{Y'_0}^2(\Om^1_{Y'_0/S_{00}},{\cal O}_{Y'_0})
\end{align*} 
be the boundary morphism obtained by 
the following exact sequence {\rm (\ref{prop:ee})}$:$
\begin{align*} 
0\lo {\cal O}_{Y'_0}\lo F_{0*}({\cal O}_{Y_0})\os{F_{0*}(d)}\lo 
F_{0*}(B\Om^1_{Y_0/S_{00}})\lo 0. 
\tag{4.18.1}\label{ali:orbobbs}
\end{align*} 
Then $\partial ({\rm obs}_{(Y,F)/(S_0\subset S,F^{[e]}_S)})=
{\rm obs}_{Y'/(S_0\subset S)}$. 
\end{coro}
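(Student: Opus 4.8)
The plan is to deduce (\ref{coro:nrc}) formally from (\ref{theo:trs}), exactly as (\ref{coro:nc}) was deduced from (\ref{theo:ts}); the only extra ingredient is the log Serre exact sequence in level $1$, which identifies the coefficient sheaf. First I would check that (\ref{theo:trs}) applies. Since $e=1$, the relevant morphism is the relative Frobenius $F_0\col Y_0\lo Y'_0$ with $Y'_0=Y_0\times_{S_{00},F_{S_{00}}}S_{00}$. As $\os{\circ}{S}_{00}=\os{\circ}{S}_{0,{\rm red}}$ is reduced and $Y_0/S_{00}$ is log smooth of Cartier type, (\ref{rema:rds}) gives that $\os{\circ}{Y}_0$ is reduced, so the reducedness hypothesis of (\ref{theo:trs}) is satisfied.

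The key step is to apply (\ref{prop:ee}) with $Y_0/S_{00}$ in place of $Y/S_0$; this is permissible precisely because $Y_0/S_{00}$ is of Cartier type, and it produces the short exact sequence (\ref{ali:orbobbs}), namely
\begin{align*}
0\lo {\cal O}_{Y'_0}\os{F_0^*}{\lo} F_{0*}({\cal O}_{Y_0})\os{F_{0*}(d)}{\lo} F_{0*}(B\Om^1_{Y_0/S_{00}})\lo 0
\end{align*}
of ${\cal O}_{Y'_0}$-modules. In particular $F_{0*}(d)$ induces a canonical isomorphism $F_{0*}({\cal O}_{Y_0})/{\cal O}_{Y'_0}\os{\sim}{\lo}F_{0*}(B\Om^1_{Y_0/S_{00}})$, and this sequence is exactly the image, under $F_{0*}(d)$, of the sequence (\ref{ali:orobrbs}) that defines $\partial$ in (\ref{theo:trs}) (4).

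I would then substitute this identification throughout. It carries the structure group sheaf of the torsor in (\ref{theo:trs}) (2) to ${\rm pr}_{0*}({\cal H}{\it om}_{{\cal O}_{Y'_0}}(\Om^1_{Y'_0/S_{00}},F_{0*}(B\Om^1_{Y_0/S_{00}})))$, giving (1), and the Ext group of (\ref{theo:trs}) (3) to ${\rm Ext}_{Y'_0}^1(\Om^1_{Y'_0/S_{00}},F_{0*}(B\Om^1_{Y_0/S_{00}}))$, giving (2). For (3), since (\ref{ali:orobrbs}) is carried to (\ref{ali:orbobbs}) by $F_{0*}(d)$, the two boundary morphisms $\partial$ agree under the identification, so the equality $\partial({\rm obs}_{(Y,F)/(S_0\subset S,F^{[e]}_S)})={\rm obs}_{Y'/(S_0\subset S)}$ of (3) is simply a restatement of (\ref{theo:trs}) (4).

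I expect no genuine obstacle: all the content sits in (\ref{theo:trs}) and (\ref{prop:ee}), which are already established. The only point to verify is that the isomorphism $F_{0*}({\cal O}_{Y_0})/{\cal O}_{Y'_0}\cong F_{0*}(B\Om^1_{Y_0/S_{00}})$ is canonical and compatible with the formation of both the obstruction class and of $\partial$; this is immediate, since the identification is induced by the single functorial morphism $F_{0*}(d)$ of ${\cal O}_{Y'_0}$-modules.
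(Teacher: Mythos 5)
Your proposal is correct and is in essence the paper's own proof: the entire content is the identification $F_{0*}({\cal O}_{Y_0})/{\cal O}_{Y'_0}=F_{0*}(B\Om^1_{Y_0/S_{00}})$ supplied by (\ref{ali:cifes}) (valid precisely because $Y_0/S_{00}$ is of Cartier type), after which the torsor structure, the obstruction class and the compatibility of boundary maps are read off from the already-established relative-Frobenius deformation theory. The only cosmetic difference is that you cite (\ref{theo:trs}) directly (with the reducedness hypothesis checked via (\ref{rema:rds})), whereas the paper cites (\ref{coro:nofs}), which identifies the relative theory with the abrelative one of \S\ref{sec:latv}; the two deductions are the same.
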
 
\begin{proof}
By (\ref{ali:cifes}) 
\begin{align*} 
F_{0*}({\cal O}_{Y_0})/{\cal O}_{Y'_0}=
F_{0*}(B\Om^1_{Y_0/S_{00}}).
\tag{4.18.2}\label{ali:yzy00}
\end{align*}  
Now (\ref{coro:nrc}) immediately follows from (\ref{coro:nofs}) and (\ref{ali:yzy00}). 
\end{proof}

\begin{rema}\label{rema:ss} 
Let the notations be as in (\ref{coro:nofs}). 
More directly, we also obtain the following isomorphism of sheaves on $\os{\circ}{Y}$ 
by W.~Zheng's proof in \cite[(2.5)]{ss}: 
\begin{align*} 
{\rm Lift}_{(Y,`F)/(S_0\subset S,F^{[e]}_S)}\os{\sim}{\lo} 
{\rm Lift}_{(Y,F)/(S_0\subset S,F^{[e]}_S)}. 
\tag{4.19.1}\label{ali:bcdrfss} 
\end{align*} 
Indeed, we have only to construct the inverse of the natural morphism 
(\ref{ali:bcfss}). 
Assume that we are given a representable of an element 
$(\wt{U},\wt{U}{}',\wt{F})$ of ${\rm Lift}_{(Y,F)/(S_0\subset S,F^{[e]}_S)}$. 
Then, following [loc.~cit.], consider 
the sum $\os{\circ}{\wt{U}{}'}\coprod_{\os{\circ}{U'}}\os{\circ}{`U}$ of schemes, 
where $\os{\circ}{U'}\os{\sus}{\lo} \os{\circ}{\wt{U}{}'}$ 
is the natural exact closed immersion and 
the morphism $\os{\circ}{U'}\lo `\os{\circ}{U}$ is the identity. 
Hence this sum of the schemes is isomorphic to 
$\os{\circ}{\wt{U}{}'}$. 
Endow this scheme with the log structure 
$$M_{\wt{U}{}'}\times_{M_{U'}}M_{`U}
=M_{\wt{U}{}'}\times_{M_{`U}\oplus_{M_{S^{[q]}}}M_S}M_{`U}$$ 
with natural composite structural morphism 
$M_{\wt{U}{}'}\times_{M_{U'}}M_{`U}\subset M_{\wt{U}{}'}\lo {\cal O}_{\wt{U}{}'}$. 
Let $`\wt{U}$ be the resulting log scheme. 
Then $\wt{F}\col \wt{U}\lo \wt{U}{}'$ induces a morphism 
$`\wt{F}\col \wt{U}\lo `\wt{U}{}$. 
The triple $(\wt{U},`\wt{U}{},`\wt{F})$ is the desired object of 
${\rm Lift}_{(Y,`F)/(S_0\subset S,F^{[e]}_S)}$ 
since $(M_{\wt{U}{}'}\times_{M_{`U}\oplus_{M_{S^{[q]}}}M_S}M_{`U})
\oplus_{M_{`\wt{U}}}M_{\wt{U}{}'}=M_{\wt{U}{}'}$. 
\end{rema}



Assume that $S_0$ is of characteristic $p>0$ and 
that $Y/S_{0}$ is of Cartier type.  
Let ${\rm Sec}_C$ be the following sheaf 
\begin{align*} 
{\rm Sec}_{C}(U):=\{\Gam(U',{\cal O}_{U'}){\textrm -}{\rm linear~sections~of}~
C\col F_*(Z\Om^1_{U/S_{0}})\lo \Om^1_{U'/S_{0}}\}
\end{align*}
for each log open subscheme $U$ of $Y_0$. 
Here recall the following exact sequence 
\begin{align*} 
0\lo F_*(B\Om^1_{U/S_{0}})\lo F_*(Z\Om^1_{U/S_{0}})\os{C}{\lo} 
\Om^1_{U'/S_{0}}\lo 0. 
\end{align*} 

\par

The following is the log version of 
a generalization of \cite[(2.2) {\bf 1}]{y} (cf.~\cite[Theorem 3.5]{di}). 

\begin{theo}\label{theo:uryn}
Let the assumptions be as in {\rm (\ref{theo:trs})}. 
Assume that $e=1$ and $n=1$ and that $Y=Y_0$ and $S_0=S_{00}$.   
Assume also that $\pi=p$ and that 
$\os{\circ}{S}$ is flat over ${\rm Spec}({\mab Z}/p^2)$. 
Then there exists the following canonical isomorphism of sheaves on $\os{\circ}{Y}:$
\begin{align*} 
{\rm Lift}_{(Y,F)/(S_0\subset S,F_S)}\os{\sim}{\lo} 
{\rm Sec}_{C}. 
\tag{4.20.1}\label{ali:erapb} 
\end{align*} 
\end{theo}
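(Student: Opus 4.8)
The plan is to follow Deligne--Illusie's construction (\cite[Theorem 3.5]{di}), in the form used in \cite[(2.2) {\bf 1}]{y}, lifted to the log setting and recast as an isomorphism of torsors. I will show that both sheaves in (\ref{ali:erapb}) are torsors under the same sheaf of groups, and that the map sending a Frobenius lift to the splitting it induces is a canonical equivariant morphism between them; any equivariant morphism of torsors under one group is automatically an isomorphism.

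First I would record the structure-group identification on each side. By (\ref{coro:nrc}) (1) (with $e=1$, $Y=Y_0$, $S_0=S_{00}$) the sheaf ${\rm Lift}_{(Y,F)/(S_0\subset S,F_S)}$ on $\os{\circ}{Y}$ is a torsor under ${\rm pr}_{0*}({\cal H}{\it om}_{{\cal O}_{Y'}}(\Om^1_{Y'/S_0},F_*(B\Om^1_{Y/S_0})))$, where I use the identification $F_*({\cal O}_Y)/{\cal O}_{Y'}=F_*(B\Om^1_{Y/S_0})$ from (\ref{ali:cifes}). On the other side, ${\rm Sec}_C$ consists of the ${\cal O}$-linear sections of
\begin{align*}
0\lo F_*(B\Om^1_{Y/S_0})\lo F_*(Z\Om^1_{Y/S_0})\os{C}{\lo}\Om^1_{Y'/S_0}\lo 0,
\end{align*}
the $i=1$, $n=1$ case of the Cartier sequence attached to (\ref{ali:oxfws}). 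Since the difference of two sections of $C$ takes values in ${\rm Ker}(C)=F_*(B\Om^1_{Y/S_0})$, the sheaf ${\rm Sec}_C$ is, where nonempty, a torsor under ${\cal H}{\it om}_{{\cal O}_{Y'}}(\Om^1_{Y'/S_0},F_*(B\Om^1_{Y/S_0}))$; and it is nonempty locally because $\Om^1_{Y'/S_0}$, $B\Om^1_{Y/S_0}$, $Z\Om^1_{Y/S_0}$ are locally free by (\ref{lemm:locf}), so the sequence splits locally. Hence both sheaves are torsors under the same group (the ${\rm pr}_{0*}$ being harmless as $\os{\circ}{F}$ is a homeomorphism), and it suffices to produce a canonical equivariant morphism between them.

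Next I would construct the morphism locally. As $\pi=p$, $n=1$ and $\os{\circ}{S}$ is flat over ${\rm Spec}({\mab Z}/p^2)$, the hypotheses of (\ref{lemm:fl}) hold with ${\cal I}=p{\cal O}_S$ and an isomorphism ${\cal O}_{S_0}\os{\sim}{\lo}{\cal I}$; via flatness of $\os{\circ}{\wt{Y}}/\os{\circ}{S}$ this identifies $p{\cal O}_{\wt{Y}}={\cal I}{\cal O}_{\wt{Y}}$ with ${\cal O}_Y$, so ``division by $p$'' is well defined and injective. Given a local lift $(\wt{Y},\wt{F})$ with $\wt{F}\col \wt{Y}\lo \wt{Y}{}'$ and a local section $\om'$ of $\Om^1_{Y'/S_0}$, I lift $\om'$ to $\wt{\om}'\in \Om^1_{\wt{Y}{}'/S}$ and set $s(\om'):=p^{-1}\wt{F}{}^*(\wt{\om}')\bmod p$. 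For $\wt{a}'$, $\wt{m}'$ lifting local sections of ${\cal O}_{Y'}$, $M_{Y'}$ pulled back from $Y$, the defining property of a Frobenius lift gives $\wt{F}{}^*(\wt{a}')\equiv \wt{a}^{\,p}\pmod p$ and $\wt{F}{}^*(\wt{m}')\equiv \wt{m}^{\,p}\pmod p$, whence $\wt{F}{}^*(d\wt{a}')$ and $\wt{F}{}^*(d\log \wt{m}')$ are divisible by $p$; their reductions $\ol{a}^{\,p-1}d\ol{a}+d\ol{b}$ and $d\log \ol{m}+\cdots$ are closed and have $C$-image $da'$, $d\log m'$ by the defining property of the log inverse Cartier isomorphism (\ref{ali:oxs}). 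Thus $s$ lands in $F_*(Z\Om^1_{Y/S_0})$ and satisfies $C\circ s={\rm id}$. Independence of the lift $\wt{\om}'$ holds because changing $\wt{\om}'$ by $p\eta'$ changes $s(\om')$ by $\wt{F}{}^*(\eta')\bmod p=F^*(\eta')=0$, the relative Frobenius acting as $0$ on $\Om^1$; the same computation gives the required ${\cal O}_{Y'}$-linearity through $F^*$.

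The step I expect to be the main obstacle is verifying equivariance: that changing $(\wt{Y},\wt{F})$ by a class via (\ref{coro:nrc}) (1) changes $s$ by the same class in ${\cal H}{\it om}(\Om^1_{Y'/S_0},F_*(B\Om^1_{Y/S_0}))$. For this I would compare two lifts $\wt{F},\wt{G}$ on a fixed $\wt{Y}$. By (\ref{prop:eq}) and (\ref{theo:trs}), $\wt{G}{}^*-\wt{F}{}^*$ is the element of ${\rm Hom}_{{\cal O}_{Y'}}(\Om^1_{Y'/S_0},F_*({\cal I}{\cal O}_{\wt{Y}}))$ whose class modulo ${\cal O}_{Y'}$, transported by $F_*(d)$ of (\ref{ali:cifes}), is the ${\rm Lift}$-torsor difference. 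Writing $(\wt{G}{}^*-\wt{F}{}^*)(da')=p\,\psi(da')$ with $\psi(da')\in F_*({\cal O}_Y)$, I compute
\begin{align*}
s_{\wt{G}}(da')-s_{\wt{F}}(da')=p^{-1}d\bigl((\wt{G}{}^*-\wt{F}{}^*)(da')\bigr)\bmod p=d\bigl(\ol{\psi(da')}\bigr),
\end{align*}
which is precisely the image of the ${\rm Lift}$-difference under $F_*(d)$, and likewise on generators $d\log m'$. Carrying this out carefully for both the ${\cal O}$-part and the $M$-part, and tracking the identifications ${\cal I}{\cal O}_{\wt{Y}}\simeq {\cal O}_Y$ and $F_*({\cal O}_Y)/{\cal O}_{Y'}\simeq F_*(B\Om^1_{Y/S_0})$, establishes equivariance; combined with the torsor statements above, the isomorphism (\ref{ali:erapb}) follows. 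The delicate point throughout is that, exactly as in \cite{ns} and \cite{sr}, the intermediate computations must be done at the level of $F_*({\cal O}_Y)$ rather than its quotient, since $p^{-1}\wt{F}{}^*(-)$ and the connecting map are only defined before passing to $F_*(B\Om^1_{Y/S_0})$.
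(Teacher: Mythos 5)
Your proposal follows essentially the same route as the paper's proof: construct the section $\wt{F}_{\Om^1}=p^{-1}\wt{F}{}^*$ of $C$ from a local lift $(\wt{U},\wt{F})$ using the divisibility by $p$ of $\wt{F}{}^*$ on $\Om^1$ (your reductions are exactly the paper's formulas (5.6.4)--(5.6.5)), and then conclude by comparing the two torsor structures under ${\cal H}{\it om}_{{\cal O}_{Y'}}(\Om^1_{Y'/S_0},F_*(B\Om^1_{Y/S_0}))$. The only difference is that you spell out the equivariance verification (comparing $s_{\wt{G}}-s_{\wt{F}}$ with the ${\rm Lift}$-torsor difference), which the paper leaves implicit in its terse final sentence; this is a correct elaboration of the same argument rather than a different approach.
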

\begin{proof} 
Let $(\wt{U},\wt{F})$ be a representative of an element of 
${\rm Lift}_{(Y,F)/(S_0\subset S,F_S)}(U)$. 
We have to construct a morphism 
${\rm Lift}_{(Y,F)/(S_0\subset S,F_S)}(U)\lo 
{\rm Sec}_{C}(U)$ of sets. 
Let $t \col {\cal F}\lo {\cal G}$ be a morphism of abelian sheaves on 
$\os{\circ}{\wt{U}}$. If ${\cal G}$ is a sheaf of flat ${\mab Z}/p^2$-modules 
in $\wt{U}_{\rm zar}$  
and if ${\rm Im}(t)\subset p{\cal G}$, then 
we can define a unique morphism 
$p^{-1}t\col {\cal F}/p\lo {\cal G}/p$ fitting into the following commutative diagram:  
\begin{equation*} 
\begin{CD} 
{\cal F}@>{t}>>{\cal G}\\
@V{{\rm proj}.}VV @AA{p\times}A \\
{\cal F}/p@>{p^{-1}t}>> {\cal G}/p. 
\end{CD}
\end{equation*} 
Since $\wt{F}$ is a lift of 
the relative Frobenius morphism of $X\lo X'$ over $S$, 
the image of the pull-back morphism 
$\wt{F}{}^*\col \Om^1_{\os{\circ}{\wt{U}}{}'/\os{\circ}{S}}
\lo \wt{F}_*(\Om^1_{\os{\circ}{\wt{U}}/\os{\circ}{S}})$ 
is contained in $p\wt{F}_*(\Om^1_{\os{\circ}{\wt{U}}/\os{\circ}{S}})$. 
Similarly, because of the expression $\wt{F}{}^*(m')=\prod_im_i^pn_i
(1+p \eta (m'))$ for $m'=\prod_i[m_i,n_i]\in M_{\wt{U}{}'}$ 
$(m_i\in M_{\wt{U}}, n_i\in {\cal O}_{M_S})$ with $\eta(m')\in {\cal O}_U$, 
we see that the image of the morphism 
$d\log \wt{F}^* \col M_{\wt{U}{}'}\lo \wt{F}_*(\Om^1_{\wt{U}/S})$
is contained in $p\wt{F}_*(\Om^1_{\wt{U}/S})$. 
The morphism 
$\wt{F}{}^* \col \Om^1_{\wt{U}{}'/S} \lo \wt{F}_*(\Om^1_{\wt{U}/S})$ 
induces the following morphism:  
$\wt{F}{}^* \col \Om^1_{\wt{U}{}'/S} \lo p\wt{F}_*(\Om^1_{\wt{U}/S})$. 
This morphism induces the following morphism 
\begin{align*}  
\wt{F}_{\Om^1}:=p^{-1}\wt{F}{}^* \col \Om^1_{U'/S_0} \lo F_*(\Om^1_{U/S_0}). 
\tag{4.20.2}\label{ali:utsttu}
\end{align*} 
In fact, this morphism induces the following morphism 
\begin{align*}  
\wt{F}_{\Om^1} \col \Om^1_{U'/S_0} \lo F_*(Z\Om^1_{U/S_0})
\tag{4.20.3}\label{ali:utsfu}
\end{align*}  
(cf.~the formulas (\ref{ali:ulssfu}) and (\ref{ali:ulssgfu}) below). 
Express $\wt{F}{}^*(a')=\sum_i a_i^pb_i+p\eta(a')$ for $a'\in {\cal O}_{U'}$ 
with $a'=\sum_ia_i\otimes b_i$ $(a_i\in {\cal O}_{\wt{U}}$, $b_i\in {\cal O}_S)$ 
and $\eta(a')\in {\cal O}_{\wt{U}}$.  
We can easily check that the following equalities hold: 
\begin{align*}
\wt{F}_{\Om^1}(d(a\otimes b))= b(a^{p-1}da)+d\eta(a\otimes b), \quad 
(a\in {\cal O}_U, b\in {\cal O}_S)
\tag{4.20.4}\label{ali:ulssfu}
\end{align*} 
and 
\begin{align*} 
\wt{F}_{\Om^1}(d\log ([m,n]))=d\log m+d\eta([m,n])\quad (m\in M_U, n\in M_S)
\tag{4.20.5}\label{ali:ulssgfu}
\end{align*} 
(cf.~\cite[p.~106]{sr}).  
The morphism $\wt{F}_{\Om^1}$ is compatible with 
the restrictions of log open subschemes of $Y$. 
Hence the morphism (\ref{ali:utsfu}) 
is a section of $C\col F_*(Z\Om^1_{U/S_{0}})\lo \Om^1_{U'/S_{0}}$. 
Because 
${\rm Lift}_{(Y,F)/(S_0\subset S,F_S)}$ and ${\rm Sec}_{C}$ is a torsor under 
${\rm pr}_{0*}{\cal H}{\it om}_{{\cal O}_{Y'}}
(\Om^1_{Y'/S_{0}},F_{*}(B\Om^1_{Y/S_{0}}))$ 
on $Y'$, these are isomorphic. 
\end{proof} 

The following statement is the log version of \cite[p.~103 (i)]{sr}): 

\begin{theo}\label{theo:uaryn}
Let the assumptions be as in 
{\rm (\ref{theo:uryn})}. 
Assume that $\os{\circ}{Y}$ is separated.
The obstruction class ${\rm obs}_{(Y,F)/(S_0\subset S)}$ in 
${\rm Ext}_{Y'}^1(\Om^1_{Y'/S_0},F_{*}(B\Om^1_{Y/S_0}))$ is equal to 
the extension class of the following exact sequence 
\begin{align*}
0\lo F_{*}(B\Om^1_{Y/S_{0}})
\lo F_{*}(Z\Om^1_{Y/S_{0}})\os{C}{\lo} \Om^1_{Y'/S_{0}}\lo 0. 
\tag{4.21.1}\label{ali:orys}
\end{align*} 
\end{theo}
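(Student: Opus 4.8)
Set $G:={\rm pr}_{0*}{\cal H}{\it om}_{{\cal O}_{Y'}}(\Om^1_{Y'/S_0},F_{*}(B\Om^1_{Y/S_0}))$. The plan is to deduce the statement from the torsor comparison of (\ref{theo:uryn}) together with the standard fact that the class of the torsor of local splittings of a surjection coincides with the Yoneda class of the associated extension. I work throughout under the assumptions inherited from (\ref{theo:uryn}) (in particular $e=n=1$, $Y=Y_0$, $S_0=S_{00}$, $\pi=p$, and $\os{\circ}{S}$ flat over ${\rm Spec}({\mab Z}/p^2)$), so that $Y'=Y'_0$ and the group appearing here is exactly the one of (\ref{coro:nrc}).

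First I would observe that, by its construction in the proof of (\ref{theo:trs}) (3) (which proceeds through the \v{C}ech $1$-cocycle $\{\om_{ij}\}$, as in the proof of (\ref{theo:ts}) (3)) and by (\ref{coro:nrc}), the obstruction class ${\rm obs}_{(Y,F)/(S_0\subset S)}$ is precisely the image, under the local-to-global map
\begin{align*}
H^1(\os{\circ}{Y},G)\lo
{\rm Ext}^1_{Y'}(\Om^1_{Y'/S_0},F_{*}(B\Om^1_{Y/S_0})),
\end{align*}
of the class of the torsor ${\rm Lift}_{(Y,F)/(S_0\subset S,F_S)}$. This map is in fact bijective because $\Om^1_{Y'/S_0}$ is locally free (\cite[(3.10)]{klog1}), whence ${\cal E}{\it xt}^i_{{\cal O}_{Y'}}(\Om^1_{Y'/S_0},-)=0$ for $i>0$. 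Hence it suffices to compute the class of this torsor.

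Next I would transport this class along the isomorphism ${\rm Lift}_{(Y,F)/(S_0\subset S,F_S)}\os{\sim}{\lo}{\rm Sec}_C$ of (\ref{theo:uryn}). Since both sheaves are torsors under the same group $G$ and the comparison $(\wt U,\wt F)\lom \wt F_{\Om^1}=p^{-1}\wt F{}^*$ is $G$-equivariant, they define the same class in $H^1(\os{\circ}{Y},G)$. The equivariance is the one point that genuinely requires verification: one checks that modifying $\wt F$ by an element $\vphi\in G$ (in the sense of (\ref{prop:eq})) changes $\wt F_{\Om^1}$ by the corresponding translation of the section of $C$, which amounts to reading off the additive and multiplicative contributions recorded in the formulas (\ref{ali:ulssfu}) and (\ref{ali:ulssgfu}). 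Granting this, the class of ${\rm Lift}$ equals the class of ${\rm Sec}_C$.

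Finally I would identify the class of the torsor ${\rm Sec}_C$ with the extension class of (\ref{ali:orys}). By the log Cartier theory of \cite{klog1} the sequence (\ref{ali:orys}) is exact, so ${\rm Sec}_C$ is exactly the sheaf of local splittings of the surjection $C\col F_{*}(Z\Om^1_{Y/S_0})\lo \Om^1_{Y'/S_0}$, whose kernel is $F_{*}(B\Om^1_{Y/S_0})$. Choosing a log affine covering $\{U_i\}$ of $Y'$ over which $C$ admits sections $\sigma_i$, the cocycle $\{\sigma_j-\sigma_i\}\in Z^1(\{U_i\},G)$ computes simultaneously the class of the torsor ${\rm Sec}_C$ and the image of ${\rm id}_{\Om^1_{Y'/S_0}}$ under the connecting homomorphism
\begin{align*}
\partial\col
{\rm Hom}_{{\cal O}_{Y'}}(\Om^1_{Y'/S_0},\Om^1_{Y'/S_0})
\lo
{\rm Ext}^1_{Y'}(\Om^1_{Y'/S_0},F_{*}(B\Om^1_{Y/S_0}))
\end{align*}
attached to (\ref{ali:orys}), i.e. with the extension class of (\ref{ali:orys}). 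Combining the three identifications yields the theorem. I expect the main obstacle to be exactly the equivariance verification in the middle step: since $\wt F_{\Om^1}$ is defined by dividing the Frobenius pullback by $p$, one must track the translation both on functions via (\ref{ali:ulssfu}) and on log structures via (\ref{ali:ulssgfu}), the latter requiring the same multiplicative care as in (\ref{lemm:fl}); once equivariance is in hand, the two remaining identifications are formal.
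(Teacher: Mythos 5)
Your proposal is correct, but it is organized genuinely differently from the paper's proof. The paper proves (\ref{theo:uaryn}) by a direct \v{C}ech computation: with the covering $\{\wt{U}_i\}$, the gluing isomorphisms $g_{ij}$ and the Frobenius lifts $\wt{F}_i$ from the proofs of (\ref{theo:ts}) and (\ref{theo:uryn}), it evaluates the obstruction cocycle $\wt{\om}_{ij}=(\wt{F}_j)^*-(g'_{ij}\wt{F}_ig^{-1}_{ij})^*$ on lifted sections $\wt{a}{}'_{ij}$, $\wt{m}{}'_{ij}$, uses (\ref{lemm:ges}) to strip away the $g_{ij}$'s, and identifies the result, via $d\col F_*({\cal O}_Y)/{\cal O}_{Y'}\os{\sim}{\lo}F_*(B\Om^1_{Y/S_0})$, with $d((\eta_j-\eta_i)(\cdot))$, i.e.\ with the difference of the local splittings $\wt{F}_{i,\Om^1}$ of $C$ --- which is the \v{C}ech representative of the extension class of (\ref{ali:orys}). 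You instead argue at the level of torsor classes: the obstruction class is the class of the torsor ${\rm Lift}_{(Y,F)/(S_0\subset S,F_S)}$; the comparison map of (\ref{theo:uryn}) is a $G$-equivariant map of $G$-torsors, hence identifies the classes; and the class of ${\rm Sec}_C$ is the extension class by the standard splitting-torsor fact. This buys you a proof in which the gluing computations (the $g_{ij}$-manipulations of (\ref{lemm:ges})) never reappear --- they are absorbed once and for all into the torsor structure already established in (\ref{theo:trs}) (2) --- and the only remaining computation is the equivariance of $(\wt{U},\wt{F})\lom p^{-1}\wt{F}{}^*$ on a fixed chart, which your sketch via (\ref{ali:ulssfu}) and (\ref{ali:ulssgfu}) handles correctly (if $\wt{G}{}^*-\wt{F}{}^*=p\vphi$ then $\wt{G}_{\Om^1}-\wt{F}_{\Om^1}=d\circ\vphi$, matching the identification $d$ of the acting groups). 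It is worth noting that this equivariance is exactly what the paper's proof of (\ref{theo:uryn}) uses tacitly when it concludes that ${\rm Lift}$ and ${\rm Sec}_C$ ``are isomorphic'' merely because both are torsors under the same group; so your route makes explicit a point the paper leaves implicit, while the paper's explicit cocycle calculation in (\ref{theo:uaryn}) is what substitutes for that verification. The mathematical core is the same, but your argument is shorter and more structural, whereas the paper's is self-contained at the level of cocycles.
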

\begin{proof} 
Let the notations be as in the proof of (\ref{theo:ts}). 
Let $\wt{F}_i\col \wt{U}_i\lo \wt{U}{}'_i$ 
be a lift of $F_i\col U_i\lo U'_i$ over $S$. 
Let $\eta_i$ be the $\eta$ in the proof of (\ref{theo:uryn}) for $\wt{F}_i$. 
Let $\wt{m}{}'_{ij}=[\wt{m}_{ij},\wt{n}_{ij}]\in M_{\wt{U}{}'_{ij}}$ and 
$\wt{a}{}'_{ij}=\wt{a}_{ij}\otimes \wt{b}_{ij}\in {\cal O}_{\wt{U}{}'_{ij}}$ 
are lifts of local sections 
$m'_{ij}=[m_{ij},n_{ij}]\in M_{U'_{ij}}$ and 
$a'_{ij}=a_{ij}\otimes b_{ij}\in {\cal O}_{U'_{ij}}$, respectively. 
Set $\wt{m}{}''_{ij}:=(g_{ij}')^*(\wt{m}{}'_{ij})=[g_{ij}^*(m_{ij}),n_{ij}]
\in M_{\wt{U}{}'_{ji}}$ and 
$\wt{a}{}''_{ji}:=(g_{ij}')^*(\wt{a}{}''_{ij})=
g_{ij}^*(\wt{a}_{ij})\otimes \wt{b}_{ij}\in {\cal O}_{\wt{U}{}'_{ji}}$. 
Let $ d\log m_{ij}+d(\eta_i\vert_{U_{ij}})(m'_{ij})$ 
and $b_{ij}a^{p-1}_{ij}da_{ij}+d(\eta_i\vert_{U_{ij}})(a'_{ij})$) 
be elements of $\Gam(U_{ij},Z\Om^1_{Y/S_{0}})$.
Then we have an element  
$$\{d\log m_{ij}+d(\eta_j\vert_{U_{ij}})(m'_{ij})\}- 
\{(d\log m_{ij}+d(\eta_i\vert_{U_{ij}})(m'_{ij}))\}=
d((\eta_j\vert_{U_{ij}})(m'_{ij})-(\eta_i\vert_{U_{ij}})(m'_{ij}))$$ 
of 
$\Gam(U'_{ij},F_{*}(B\Om^1_{Y/S_{0}}))$. 
We also have an element 
$d((\eta_j\vert_{U_{ij}})(a'_{ij})-(\eta_i\vert_{U_{ij}})(a'_{ij}))$
of $\Gam(U'_{ij},F_{*}(B\Om^1_{Y/S_{0}}))$. 
Hence we have an element of 
$\Gam(U'_{ij},{\cal H}{\it om}_{{\cal O}_{Y'}}(\Om^1_{Y'/S_{0}},
F_{*}(B\Om^1_{Y/S_{0}})))$.
Via the identification 
$(F_*({\cal O}_Y)/{\cal O}_{Y'})(U'_{ij})
\os{d,~\simeq}{\lo}F_{*}(B\Om^1_{Y/S_{0}})(U'_{ij})$,  
this is nothing but a 1-cocycle arising from 
\begin{align*} 
&(g'_{ij}\wt{F}_j\vert_{\wt{U}_{ij}}g^{-1}_{ij})^*(\wt{m}{}'_{ij})
((\wt{F}_i\vert_{\wt{U}_{ij}})^*(\wt{m}{}'_{ij}))^{-1}\\
&=
(g^{-1}_{ij})^*(\wt{F}_j\vert_{U_{ij}})^*(\wt{m}{}''_{ij})
\vert_{U_{ij}}(1+p F^*(\del_{`g_{ij}}(\wt{m}{}'_{ij},\wt{m}{}''_{ij})))
((\wt{F}_i\vert_{U_{ij}})^*(\wt{m}{}'_{ij})){}^{-1}\\
&\equiv
(g^{-1}_{ij})^*(g_{ij}^*(\wt{m}{}^p_{ij})\wt{n}_{ij})
(1+p (\eta_j\vert_{U_{ij}})(m'_{ij}))(\wt{m}{}^p_{ij}\wt{n}_{ij}
(1+p (\eta_i\vert_{U_{ij}})(m'_{ij})))^{-1} \\
&=1+p ((\eta_j\vert_{U_{ij}})(m'_{ij})-p (\eta_i\vert_{U_{ij}})(m'_{ij}))
\end{align*} 
and 
\begin{align*} 
& (g'_{ij}\wt{F}_j\vert_{\wt{U}_{ij}}g^{-1}_{ij})^*(\wt{a}{}'_{ij})-
(\wt{F}_i\vert_{\wt{U}_{ij}})^*(\wt{a}{}'_{ij})
=(g^{-1}_{ij})^*(\wt{F}_j\vert_{\wt{U}_{ij}})^*(\wt{a}{}''_{ij})+pF^*
(\del_{`g_{ij}}(\wt{a}{}'_{ij},\wt{a}{}''_{ij}))
-(\wt{F}_i\vert_{\wt{U}_{ij}})^*(\wt{a}{}'_{ij})\\
&\equiv
(g^{-1}_{ij})^*(g_{ij})^*(\wt{a}{}_{ij}^p)\wt{b}_{ij}
+p(\eta_j\vert_{U_{ij}})(a'_{ij})-
(\wt{a}{}_{ij}^p\wt{b}_{ij}+p(\eta_i\vert_{U_{ij}})(a'_{ij}))\\
&=p(\eta_j\vert_{U_{ij}})(a'_{ij})-(\eta_i\vert_{U_{ij}})(a'_{ij}).
\end{align*}   
Here $\equiv$ means the equality in the quotient 
$(F_*({\cal O}_Y)/{\cal O}_{Y'})(U'_{ij})$ 
and we have used (\ref{lemm:ges}). 
\end{proof} 

\begin{rema}
In \cite{sr} there is no proof of the trivial log version of 
(\ref{theo:uaryn}). In particular, 
(\ref{lemm:ges}) is missing in [loc.~cit.]. 
\end{rema}
 
\par
In the rest of this section, we consider the log deformation theory 
with {\it absolute} Frobenius endomorphism when $\os{\circ}{S}$ is 
perfect. 
\par 
Let $F_0\col Y_0\lo Y_0$ be the $e$-times iterated 
absolute Frobenius endomorphism over 
$F^e_{S_{00}}\col S_{00}\lo S_{00}$. 
We assume that there exists a lift 
$F\col Y\lo Y$ of $F_0\col Y_0\lo Y_0$ over $F^{[e]}_{S_{0}}$. 
We say that $(\wt{Y},\wt{F})/F_S^{[e]}$ is a {\it log smooth integral lift}  
(or simply a {\it lift}) of $(Y,F)/F^{[e]}_{S_0}$ 
if $\wt{Y}$ is a log smooth integral scheme over $S$ such that 
$\wt{Y}\times_SS_0=Y$ and $\wt{F}$ is a morphism 
$\wt{Y}\lo \wt{Y}$ 
over $F^{[e]}_{S}$ fitting into 
the following commutative diagram 
\begin{equation*} 
\begin{CD} 
Y@>{\subset}>> \wt{Y}\\
@V{F}VV @VV{\wt{F}}V \\
Y@>{\subset}>> \wt{Y}  
\end{CD} 
\end{equation*}
over the commutative diagram 
\begin{equation*} 
\begin{CD} 
S_0@>{\subset}>> S\\
@V{F^{[e]}_{S_0}}VV @VV{F^{[e]}_{S}}V \\
S_0@>{\subset}>> S.   
\end{CD} 
\end{equation*}
\par 
Let ${\rm Lift}_{(Y,F)/(S_0\subset S,F^{[e]}_S)}$ be the following sheaf defined by 
the following equality: 
\begin{align*} 
{\rm Lift}_{(Y,F)/(S_0\subset S,F^{[e]}_S)}(U):=
\{&{\rm isomorphism~classes~of~lifts~of}~(U,F\vert_U)/F^{[e]}_{S_0}\\
&{\rm over}~F^{[e]}_{S}\}
\end{align*}
for each log open subscheme $U$ of $Y$. 
Here the isomorphism classes of lifts of $(U,F\vert_U)/F^{[e]}_{S_0}$ 
over $F^{[e]}_{S}$ are defined in an obvious way. 
\par  
Then the following hold by the same proof as that of (\ref{theo:ts}): 

\begin{theo}\label{theo:taas}  
Let ${\cal I}$, $\pi$ and $n$ be as in {\rm (\ref{lemm:fl})}. 
Then the following hold$:$ 
\par 
$(1)$ 
Assume that $(Y,F)/S_0$ has a lift $(\wt{Y},\wt{F})/S$. 
Set ${\rm Aut}_{S,F^{[e]}_S}(\wt{Y},Y)
:=\{g\in {\rm Aut}_{S}(\wt{Y})~\vert~g\vert_Y={\rm id}_Y, 
\wt{F}\circ g=g\circ \wt{F}\}$. 
Then ${\rm Aut}_{S,F^{[e]}_S}(\wt{Y},Y)=\{{\rm id}_Y\}$. 
\par 
$(2)$ 
The sheaf ${\rm Lift}_{(Y,F)/(S_0\subset S,F^{[e]}_S)}$ on 
$\os{\circ}{Y}$ is a torsor under 
${\cal H}{\it om}_{{\cal O}_{Y_0}}(\Om^1_{Y_0/S_{00}}, 
F_{0*}({\cal O}_{Y_0})/{\cal O}_{Y_0})$.   
\par 
$(3)$ 
Assume that $\os{\circ}{Y}$ is separated.
In 
$${\rm Ext}_{Y_0}^1(\Om^1_{Y_0/S_{00}},
F_{0*}({\cal O}_{Y_0})/{\cal O}_{Y_0}),$$ 
there exists a canonical obstruction class 
${\rm obs}_{(Y,F)/(S_0\subset S,F^{[e]}_S)}$ 
of a lift of $(Y,F)/F^{[e]}_{S_0}$ over $F^{[e]}_S$. 
\par 
$(4)$ Assume that $\os{\circ}{Y}$ is separated. 
Let 
\begin{align*} 
\partial \col {\rm Ext}_{Y_0}^1(\Om^1_{Y_0/S_{00}},
F_{0*} ({\cal O}_{Y_0})/{\cal O}_{Y_0})
\lo {\rm Ext}_{Y_0}^2(\Om^1_{Y_0/S_{00}},{\cal O}_{Y_0})
\end{align*} 
be the boundary morphism obtained by 
the following exact sequence {\rm (\ref{prop:ee})}$:$
\begin{align*} 
0\lo {\cal O}_{Y_0}\lo F_{0*}({\cal O}_{Y_0})
\lo F_{0*} ({\cal O}_{Y_0})/{\cal O}_{Y_0}\lo 0. 
\tag{4.23.1}\label{ali:ooabs}
\end{align*} 
Then $\partial ({\rm obs}_{(Y,F)/(S_0\subset S,F^{[e]}_S)})=
{\rm obs}_{Y/(S_0\subset S)}$. 
\par 
$(5)$ Assume that $\os{\circ}{Y}$ is separated. Assume 
that there exists a lift $\wt{Y}/S$ of $Y/S_0$. 
Then, in 
$${\rm Ext}^1_{Y_0}(\Om^1_{Y_0/S_{00}},F_{0*}({\cal O}_{Y_0})),$$  
there exists a canonical obstruction class ${\rm obs}_{\wt{Y}/S}(F)$ 
of a lift $\wt{F}\col \wt{Y}\lo \wt{Y}$ of $F\col Y\lo Y$  
and this is mapped to ${\rm obs}_{(Y,F)/(S_0\subset S,F^{[e]}_S)}$ 
by the following natural morphism 
\begin{align*} 
\partial \col {\rm Ext}_{Y_0}^1(\Om^1_{Y_0/S_{00}}, F_{0*} ({\cal O}_{Y_0}))
\lo {\rm Ext}_{Y_0}^1(\Om^1_{Y_0/S_{00}}, F_{0*}({\cal O}_{Y_0})/{\cal O}_{Y_0}). 
\end{align*} 
\end{theo}

\begin{coro}\label{coro:nac}
Assume that $e=1$. 
Assume also that $Y_0/S_{00}$ is of Cartier type 
and that $\os{\circ}{S}_{00}$ is perfect.  
Then the following hold$:$ 
\par 
$(1)$ The sheaf ${\rm Lift}_{(Y,F)/(S_0\subset S,F_S)}$ on 
$\os{\circ}{Y}$ is a torsor under 
${\cal H}{\it om}_{{\cal O}_{Y_0}}(\Om^1_{Y_0/S_{00}},
F_{0*}(B\Om^1_{Y_0/S_{00}}))$. 
\par 
$(2)$ Assume that $\os{\circ}{Y}$ is separated. 
In 
$${\rm Ext}_{Y_0}^1(\Om^1_{Y_0/S_{00}},
F_{0*}(B\Om^1_{Y_0/S_{00}})),$$
there exists a canonical obstruction class 
${\rm obs}_{(Y,F)/(S_0\subset S,F_S)}$ 
of a lift of $(Y,F)/F_{S_0}$ over $F_S$. 
\par 
$(3)$ Assume that $\os{\circ}{Y}$ is separated. 
Let 
\begin{align*} 
\partial \col {\rm Ext}_{Y_0}^1(\Om^1_{Y_0/S_{00}},
F_{0*}(B\Om^1_{Y_0/S_{00}}))
\lo {\rm Ext}_{Y_0}^2(\Om^1_{Y_0/S_{00}},{\cal O}_{Y_0})
\end{align*} 
be the boundary morphism obtained by 
the following exact sequence {\rm (\ref{prop:ee})}$:$
\begin{align*} 
0\lo {\cal O}_{Y_0}\lo F_{0*}({\cal O}_{Y_0})\os{F_{0*}(d)}\lo 
F_{0*}(B\Om^1_{Y_0/S_{00}})\lo 0. 
\tag{4.24.1}\label{ali:obaabs}
\end{align*} 
Then $\partial ({\rm obs}_{(Y,F)/(S_0\subset S,F_S)})=
{\rm obs}_{Y/(S_0\subset S)}$. 
\end{coro} 
\begin{proof}
Recall that $Y'_0:=Y_0\times_{S_{00},F_{S_{00}}}S_{00}$.  
Let $F^{\rm rel}_0\col Y_0\lo Y'_0$ be the relative Frobenius morphism. 
Because $\os{\circ}{S}_{00}$ is perfect,  
the projection $\os{\circ}{Y}{}'_0\lo \os{\circ}{Y}_0$ is an isomorphism. 
By (\ref{ali:yy00}) we obtain the following composite isomorphism 
\begin{align*} 
F_{0*}({\cal O}_{Y_0})/{\cal O}_{Y_0}
\os{\sim}{\lo}
F_{0*}({\cal O}_{Y_0})/{\cal O}_{Y'_0}=
F^{\rm rel}_{0*}(B\Om^1_{Y_0/S_{00}})\os{\sim}{\longleftarrow}
F_{0*}(B\Om^1_{Y_0/S_{00}}).
\tag{4.24.2}\label{ali:yrf00}
\end{align*}  
Now (\ref{coro:nac}) follows from (\ref{theo:taas}) and (\ref{ali:yrf00}). 
\end{proof}


\section{Applications of log deformation theory with 
relative Frobenius morphisms}
\label{sec:app} 
In this section we give another short proof of Kato's theorem  
on the $E_1$-degeneration of the log Hodge 
de Rham spectral sequence (\cite{klog1})
by following the method of Srinivas (\cite{sr}). 
We also give the log versions of vanishing theorems of 
Kodaira-Akizuki-Nakano in characteristic $p$ and $0$ 
following the method of Raynaud (\cite{di}). 
\par 
In \cite[p.~104--105]{sr} Srinivas has given another short  
proof of the $E_1$-degeneration of the Hodge de Rham spectral sequence due to 
Deligne and Illusie (\cite{di}) by using the deformation theory in \cite{ns}. 
(Strictly speaking, he has proved this only in the case 
where the base scheme is the spectrum of a perfect field of characteristic $p>0$.)  
By using the theory in \S\ref{sec:latv} and his idea, 
we can also give another short proof of 
the degeneration at $E_1$ of 
the log Hodge de Rham spectral sequence 
due to Kato in \cite[(4.12) (3)]{klog1} in the case 
where there exists a lift of the Frobenius endomorphism of the base log scheme: 

\begin{theo}[{\bf A special case of 
\cite[(4.12) (2)]{klog1}}]\label{theo:kdg} 
Let the notations and the assumptions be as in {\rm (\ref{theo:uryn})}. 
Assume that $\os{\circ}{S}$ is flat over ${\rm Spec}({\mab Z}/p^2)$. 
Set $S_0:=S\mod p$. 
Let $Y$ be a log smooth separated scheme of Cartier type over $S_0$. 
Let $F\col Y\lo Y'$ be the relative Frobenius morphism over $S$. 
If $Y'$ has a log smooth integral lift ${\cal Z}$ over a fine log scheme $S$,  
then there exists an isomorphism 
\begin{align*} 
\bigoplus_{i< p}\Om^i_{Y'/S_0}[-i] \os{\sim}{\lo} \tau_{<p}F_*(\Om^{\bul}_{Y/S_0})
\tag{5.1.1}\label{ali:oeomxs} 
\end{align*}
in the derived category $D^+(Y'_{\rm zar})$ of bounded above complexes of 
${\cal O}_{Y'}$-modules. 
\end{theo} 
\begin{proof} 
By (\ref{ali:cifes}) we have the following exact sequence 
\begin{align*} 
{\rm Ext}_{Y'}^1(\Om^1_{Y'/S_0},F_*({\cal O}_Y))
\lo 
{\rm Ext}_{Y'}^1(\Om^1_{Y'/S_0},F_*(B\Om^1_{Y/S_0}))
\os{\partial}{\lo}  
{\rm Ext}_{Y'}^2(\Om^1_{Y'/S_0},{\cal O}_{Y'}).  
\tag{5.1.2}\label{ali:oorexs}
\end{align*} 
By (\ref{theo:trs}) (4) and the assumption, 
there exists the extension class 
of the following exact sequence
\begin{align*} 
0\lo F_*({\cal O}_Y)\lo {\cal V}\lo \Om^1_{Y'/S_0}\lo 0
\end{align*}  
whose image in 
${\rm Ext}_{Y'}^1(\Om^1_{Y'/S_0},F_*(B\Om^1_{Y/S_0}))$
is equal to 
${\rm obs}_{(Y,F)/(S_0\subset S,F_S)}$. 
By (\ref{theo:uaryn}) 
this exact sequence fits into the following commutative diagram: 
\begin{equation*} 
\begin{CD} 
0@>>> F_*({\cal O}_Y)@>>>  {\cal V}@>>> \Om^1_{Y'/S_0}@>>> 0\\
@. @V{F_*(d)}VV @VVV @| \\
0@>>> F_*(B\Om^1_{Y/S_0})@>>> F_*(Z\Om^1_{Y/S_0})@>{C}>> 
\Om^1_{Y'/S_0}@>>> 0. 
\end{CD}
\tag{5.1.3}\label{cd:oro}
\end{equation*}  
Set ${\cal C}_1:=(F_*({\cal O}_Y)\lo  {\cal V})$. 
This is quasi-isomorphic to $\Om^1_{Y'/S_0}[-1]$.   
The diagram (\ref{cd:oro}) induces the following morphism 
\begin{equation*} 
\begin{CD} 
0@>>> F_*({\cal O}_Y)@>>>  {\cal V}@>>> 0
@>>>\cdots \\
@. @| @VVV @VVV @. \\
0@>>>F_*({\cal O}_Y)@>{F_*(d)}>> 
F_*(\Om^1_{Y/S_0}) @>{F_*(d)}>> F_*(\Om^2_{Y/S_0})
@>>>\cdots 
\end{CD}
\tag{5.1.4}\label{cd:oio}
\end{equation*}  
of complexes since $F_*(Z\Om^1_{Y/S_0})\subset F_*(\Om^1_{Y/S_0})$. 
We denote this morphism by 
$\varphi_1 \col 
{\cal C}_1\lo F_*(\Om^{\bul}_{Y/S_0})$.  
Because the following diagram 
\begin{equation*} 
\begin{CD} 
{\cal H}^1({\cal C}_1) @>{{\cal H}^1(\varphi_1)}>> 
{\cal H}^1(F_*(\Om^{\bul}_{Y/S_0}))\\
@| @A{\simeq}A{C^{-1}}A\\
\Om^1_{Y'/S_0} @=\Om^1_{Y'/S_0}
\end{CD}
\tag{5.1.5}\label{cd:yys}
\end{equation*} 
is commutative, ${\cal H}^1(\varphi_1)$ 
is an isomorphism. 
Let $\varphi^1$ be the following morphism in the derived category $D(X')$: 
\begin{align*} 
\varphi^1\col \Om^1_{Y'/S_0}[-1]
\os{\sim}{\longleftarrow} {\cal C}_1\os{\varphi_1}{\lo} 
F_*(\Om^{\bul}_{Y/S_0}).
\end{align*} 
Then ${\cal H}^1(\varphi^1)=C^{-1}\col \Om^1_{Y'/S_0}
\os{\sim}{\lo} {\cal H}^1(F_*(\Om^{\bul}_{Y/S_0}))$. 
\par 
The rest of the proof is the same as that in 
the first step of the proof of \cite[(2.1)]{di}. 
For the completeness of this article, we recall it here. 
\par 
Let $\varphi^0\col {\cal O}_{Y'}\lo F_*(\Om^{\bul}_{Y/S_0})$ be 
the following composite morphism 
\begin{align*}
{\cal O}_{Y'}\os{C^{-1}}{\lo} {\cal H}^0(F_*(\Om^{\bul}_{Y/S_0}))\os{\subset}{\lo} 
F_*(\Om^{\bul}_{Y/S_0}). 
\end{align*} 
\par 
Let $i$ be a positive integer less than $p$. 
Consider the following splitting  
$$\Om^i_{Y'/S_0}\lo (\Om^1_{Y'/S_0})^{\otimes i}$$ 
of a natural surjection 
$(\Om^1_{Y'/S_0})^{\otimes i}\lo \Om^i_{Y'/S_0}$ defined by the morphism 
$$\om_1\wedge \cdots \wedge \om_i \lom 
(i!)^{-1}\sum_{\sig \in {\mathfrak S}_i}{\rm sgn}(\sig)
\om_{\sig(1)}\otimes \cdots \otimes \om_{\sig(i)}$$
as in \cite[p.~251]{di}. 
Then we have the following composite morphism 
\begin{align*} 
\varphi^i \col \Om^i_{Y'/S_0}[-i]\lo 
(\Om^1_{Y'/S_0})^{\otimes i}[-i]
\os{(\varphi^1)^{\otimes i}}{\lo}  
(F_*(\Om^{\bul}_{Y/S_0}))^{\otimes i}
\os{{\rm product}}{\lo} F_*(\Om^{\bul}_{Y/S_0}). 
\end{align*} 
By the multiplicative property of $C^{-1}$, 
${\cal H}^i(\varphi^i)$ is equal to the Cartier isomorphism 
$C^{-1}\col \Om^i_{Y'/S_0}\os{\sim}{\lo} 
{\cal H}^i(F_*(\Om^{\bul}_{Y/S_0}))$. 
Hence $\sum_{i=0}^{p-1}\varphi^i$ is 
the desired isomorphism (\ref{ali:oeomxs}).  
\end{proof}

\begin{rema}
Assume that there exists a lift $\wt{Y}/S$ of $Y/S_0$. 
Then, by (\ref{theo:trs}) (4), 
we can take the element ${\rm obs}_{\wt{Y}/S}(F)$  
as an element in ${\rm Ext}_{Y'}^1(\Om^1_{Y'/S_0},F_*({\cal O}_{Y}))$
in the proof of (\ref{theo:kdg}). 
(In \cite{sr} this has not been mentioned.)
\end{rema}

\begin{coro}[{\bf A special case of \cite[(4.12) (1)]{klog1}}]\label{coro:cfd}
Let the notations and the assumptions be as in {\rm (\ref{theo:kdg})}. 
Then the following hold$:$
\par 
$(1)$ Let $f\col Y\lo S_0$ and $f'\col Y'\lo S_0$ be the structural morphisms. 
Then there exists the following decomposition 
\begin{align*} 
R^qf_*(\Om^{\bul}_{Y/S_0})=\bigoplus_{i+j=q}R^jf'_*(\Om^i_{Y'/S_0})
\tag{5.3.1}\label{ali:obybxs} 
\end{align*}
for $q<p$.  
Moreover, if $Y/S_0$ is proper, 
then $E_1^{ij}=E_{\infty}^{ij}$ for $i+j<p$, 
where $E^{ij}_{\star}$ $(\star=1, \infty)$ is 
the $E^{ij}_{\star}$-term of 
the following spectral sequence 
\begin{align*} 
E_1^{ij}=R^jf_*(\Om^i_{Y/S_0})\Lo R^{i+j}f_*(\Om^{\bul}_{Y/S_0}). 
\tag{5.3.2}\label{ali:oedegrs} 
\end{align*} 
Furthermore, in this case, 
$R^qf_*(\Om^{\bul}_{Y/S_0})$ $(0\leq q<p)$ is locally free and 
commutes with any base change of fine log schemes. 
\par 
$(2)$ Assume that the structural morphism 
$\os{\circ}{Y}\lo \os{\circ}{S}$ of schemes is flat,  that 
$\dim (\os{\circ}{Y}/\os{\circ}{S})\leq p$,  that 
$F_*({\cal O}_Y)$ is a locally free ${\cal O}_{Y'}$-modules $($of finite rank$)$ and that 
$$H^{p+1}(Y',
{\cal H}{\it om}_{{\cal O}_{Y'}}(\Om^p_{Y'/S},{\cal O}_{Y'}))=0.$$ 
Then there exists a decomposition 
\begin{align*} 
\bigoplus_{i\leq p}\Om^i_{Y'/S_0}[-i] \os{\sim}{\lo} F_*(\Om^{\bul}_{Y/S_0})
\tag{5.3.3}\label{ali:oebxs} 
\end{align*}
in the derived category $D^+(Y_{\rm zar})$. 
Consequently there exists a decomposition 
\begin{align*} 
R^qf_*(\Om^{\bul}_{Y/S_0})=\bigoplus_{i+j=q}R^jf'_*(\Om^i_{Y'/S_0}) 
\quad (q\in {\mab Z})
\tag{5.3.4}\label{ali:oybxs} 
\end{align*}
and the following spectral sequence 
\begin{align*} 
E_1^{ij}=R^jf_*(\Om^i_{Y/S_0})\Lo R^{i+j}f_*(\Om^{\bul}_{Y/S_0})
\tag{5.3.5}\label{ali:oedrxs} 
\end{align*}  
degenerates at $E_1$.   
Furthermore, 
$R^qf_*(\Om^{\bul}_{Y/S_0})$ $(0\leq q\leq p)$ is locally free and 
commutes with any base change of fine log schemes. 
\end{coro}
\begin{proof} 
(1): This immediately follows from (\ref{ali:oeomxs}) and 
the log version of the argument of \cite[(4.1.2), (4.1.4)]{di}. 
\par 
(2) (The proof is the same as that of \cite[(2.3)]{di}.) 
In the case $\dim (\os{\circ}{Y}/\os{\circ}{S})<p$, 
(\ref{ali:oebxs}) follows from (\ref{ali:oeomxs}). 
Consider the case $\dim (\os{\circ}{Y}/\os{\circ}{S})=p$. 
We may assume that $\os{\circ}{Y}$ is connected. 
Then the wedge product 
\begin{align*} 
\Om^i_{Y/S_0}\times \Om^{p-i}_{Y/S_0}\lo \Om^p_{Y/S_0}
\end{align*} 
is a perfect pairing. 
Because $F_*({\cal O}_Y)$ is a locally free ${\cal O}_{Y'}$-modules, 
we can check that 
\begin{align*} 
F_*(\Om^i_{Y/S_0})\times F_*(\Om^{p-i}_{Y/S_0})\lo 
F_*(\Om^p_{Y/S_0})\os{\rm proj}{\lo} 
{\cal H}^p(F_*(\Om^{\bul}_{Y/S_0}))\os{C,~\simeq}{\lo} \Om^p_{Y'/S_0}
\end{align*} 
is also a perfect pairing of locally free ${\cal O}_{Y'}$-modules of finite rank. 
The rest of the proof of (\ref{ali:oebxs}) 
is completely the same as that of \cite[(2.3), (3.7)]{di}. 
\par 
Now (\ref{ali:oybxs}) follows from the equality 
$R^qf'_*(F_*(\Om^{\bul}_{Y/S_0}))=R^qf_*(\Om^{\bul}_{Y/S_0})$ 
(since $\os{\circ}{F}$ is finite).  
\end{proof}

The following has not been stated in literatures: 

\begin{coro}\label{coro:e1dg} 
Let the notations be as in {\rm (\ref{coro:cfd}) (1)}. 
Assume that $S_0$ is the log point $s$ of a perfect field of characteristic $p>0$ 
and that $Y/s$ is of vertical type. 
Assume that $\os{\circ}{Y}$ is of pure dimension $d$. 
Then $E_1^{ij}=E_{\infty}^{ij}$ for $i+j>2d-p$. 
\end{coro}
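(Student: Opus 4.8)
The plan is to obtain the degeneration in the range $i+j>2d-p$ from the degeneration in the complementary range $i+j<p$, which is already granted by \ref{coro:cfd} (1), by dualizing the whole log Hodge de Rham spectral sequence (\ref{ali:oedegrs}) via Tsuji's log Serre duality. The point is that for a position $(i,j)$ with $i+j>2d-p$ the reflected position $(d-i,d-j)$ satisfies $(d-i)+(d-j)=2d-(i+j)<p$, so it lies in the range where $E_1=E_\infty$ is known.

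First I would assemble the duality datum. Since $Y/s$ is log smooth of Cartier type, $\Om^1_{Y/s}$ is locally free of rank $d$ by \cite[(3.10)]{klog1}, hence each $\Om^i_{Y/s}=\bigwedge^i\Om^1_{Y/s}$ is locally free and $\Om^d_{Y/s}$ is invertible, and the wedge product gives perfect pairings $\Om^i_{Y/s}\otimes_{{\cal O}_Y}\Om^{d-i}_{Y/s}\lo \Om^d_{Y/s}$, i.e. ${\cal H}{\it om}_{{\cal O}_Y}(\Om^i_{Y/s},\Om^d_{Y/s})\os{\sim}{\lo}\Om^{d-i}_{Y/s}$. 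By \ref{prop:ktj} (4) the morphism $g\colon Y\lo s$ is saturated, so Tsuji's log Serre duality \ref{theo:ico} identifies the dualizing complex $g^{!}({\cal O}_s)={\cal I}_{Y/s}\Om^d_{Y/s}[d]$; because $Y/s$ is of vertical type we have ${\cal I}_{Y/s}{\cal O}_Y={\cal O}_Y$ (\ref{defi:vt}, \ref{exem:vt}), and hence $g^{!}({\cal O}_s)=\Om^d_{Y/s}[d]$.

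Next I would upgrade these termwise dualities to a filtered self-duality of the log de Rham complex. Equipping $\Om^{\bul}_{Y/s}$ with the stupid filtration $\sig_{\geq \bul}$, the computation above together with the (anti)commutation of the de Rham differential with the wedge pairing shows that the transpose of $d\colon\Om^i_{Y/s}\lo\Om^{i+1}_{Y/s}$ is, under ${\cal H}{\it om}_{{\cal O}_Y}(-,\Om^d_{Y/s})\os{\sim}{\lo}\Om^{d-\bul}_{Y/s}$, the differential $\pm d\colon\Om^{d-i-1}_{Y/s}\lo\Om^{d-i}_{Y/s}$. Thus $R{\cal H}{\it om}_{{\cal O}_Y}(\Om^{\bul}_{Y/s},\Om^d_{Y/s}[d])$ is, as a filtered object, isomorphic to $\Om^{\bul}_{Y/s}$ with the filtration reflected by $i\mapsto d-i$. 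Applying $Rg_*$ and Grothendieck--Serre duality (valid since $\os{\circ}{g}$ is proper, which is part of the standing assumptions inherited from \ref{coro:cfd} (1)) then produces, for every page $r$ and every $(i,j)$, a perfect pairing $E_r^{ij}\times E_r^{d-i,d-j}\lo\kappa$ under which $d_r$ leaving $(i,j)$ is the transpose of $d_r$ entering $(d-i,d-j)$, and $d_r$ entering $(i,j)$ is the transpose of $d_r$ leaving $(d-i,d-j)$; in particular $E_r^{ij}\cong(E_r^{d-i,d-j})^{\vee}$ for all $r$.

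Finally I would conclude. Fix $(i,j)$ with $i+j>2d-p$; the terms with $i<0$, $j<0$, $i>d$ or $j>d$ vanish, so we may assume $0\leq i,j\leq d$, whence $0\leq d-i,d-j$ and $(d-i)+(d-j)<p$. By \ref{coro:cfd} (1) all differentials into and out of $(d-i,d-j)$ vanish at every page, i.e. $E_1^{d-i,d-j}=E_{\infty}^{d-i,d-j}$. Transposing along the perfect pairings of the previous step, all differentials into and out of $(i,j)$ vanish as well, so $E_1^{ij}=E_{\infty}^{ij}$, as desired. I expect the genuine work to lie in the filtered self-duality of the \emph{log} de Rham complex of Cartier type: rigorously checking that the de Rham differential is self-dual under the wedge pairing, and that the vertical type hypothesis is exactly what makes the dualizing sheaf $\Om^d_{Y/s}$ rather than ${\cal I}_{Y/s}\Om^d_{Y/s}$ so that this pairing is the correct duality. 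Once this self-duality and the compatibility of Grothendieck--Serre duality with $\sig_{\geq \bul}$ are in place, the index bookkeeping and the reflection argument are routine.
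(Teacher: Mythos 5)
Your reflection $(i,j)\mapsto(d-i,d-j)$ is exactly the idea behind the paper's proof, and your argument does reach the stated conclusion, but the implementation is genuinely different and noticeably heavier than what the paper does. The paper's proof is a pure dimension count: by Tsuji's log Serre duality (with vertical type giving ${\cal I}_{Y/s}{\cal O}_Y={\cal O}_Y$, so the dualizing sheaf is $\Om^d_{Y/s}$) one has $\sum_{i+j=n}\dim_{\kap}E_1^{ij}=\sum_{a+b=2d-n}\dim_{\kap}E_1^{ab}$, by Tsuji's Poincar\'e duality for log de Rham cohomology one has $\dim_{\kap}H^n_{\rm dR}(Y/s)=\dim_{\kap}H^{2d-n}_{\rm dR}(Y/s)$, and since $2d-n<p$ the degeneration of (\ref{coro:cfd}) (1) gives $\dim_{\kap}H^{2d-n}_{\rm dR}=\sum_{a+b=2d-n}\dim_{\kap}E_1^{ab}$; chaining these forces $\sum_{i+j=n}\dim E_\infty^{ij}=\sum_{i+j=n}\dim E_1^{ij}$, hence termwise equality. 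This needs no statement whatsoever about differentials being adjoint to one another. You instead build a self-duality of the whole spectral sequence (perfect pairings on every page, differentials transpose to each other), which buys you more — in effect you reprove de Rham Poincar\'e duality rather than quote it — but the cost is precisely the "genuine work" you flag: the $r=1$ adjointness is the nontrivial fact that the trace map $H^d(Y,\Om^d_{Y/s})\lo\kap$ kills the image of $H^d(d)\col H^d(Y,\Om^{d-1}_{Y/s})\lo H^d(Y,\Om^d_{Y/s})$, and propagating perfectness to higher pages needs the multiplicative structure of the spectral sequence.

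Two wrinkles you should repair if you carry this out. First, the phrase $R{\cal H}{\it om}_{{\cal O}_Y}(\Om^{\bul}_{Y/s},\Om^d_{Y/s}[d])$ does not make literal sense: the de Rham differential is not ${\cal O}_Y$-linear, so $\Om^{\bul}_{Y/s}$ is not an object of the derived category of ${\cal O}_Y$-modules and Grothendieck--Serre duality cannot be applied to it directly; the correct formulation is via the wedge-product pairing of complexes (a pairing of the filtered differential graded algebra $\Om^{\bul}_{Y/s}$ with itself into $\Om^{\bul}_{Y/s}$) composed with the trace, which induces the pairings on each page. Second, your appeal to (\ref{prop:ktj}) (4) only yields that $Y\lo s$ is $p$-saturated (that is precisely Cartier type), whereas (\ref{theo:ico}) requires saturatedness for all primes; the correct route is that Cartier type gives integrality, $\os{\circ}{Y}$ is reduced by (\ref{rema:rds}) since the base is reduced, hence every fiber of $\os{\circ}{g}$ is reduced and (\ref{prop:ktj}) (5) gives saturatedness.
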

\begin{proof} 
The equality in the statement follows from (\ref{coro:cfd}) (1) 
and Tsuji's duality for log de Rham cohomologies and 
his log Serre duality (see (\ref{theo:ico}) below).
\end{proof} 


\begin{rema}\label{rema:de1}
(1) 
In (\ref{coro:e1dg}) it is not necessary to assume that 
$\Gam(s,{\cal O}_s)$ is perfect.  In fact, one has only to take the perfection 
of $\Gam(s,{\cal O}_s)$. 
\par 
(2) Let $K$ be a field of characteristic $0$. 
Let $T$ be an fs log scheme whose underlying scheme is ${\rm Spec}(K)$. 
Let $g\col Z\lo T$ be a proper log smooth integral morphism of fs log schemes. 
Assume that $g$ is saturated. 
Then, in \cite[p.~37]{ikn}, by using \cite[(4.12) (1)]{klog1}, 
Illusie, Kato and Nakayama have proved that 
the following spectral sequence 
\begin{align*} 
E_1^{ij}=
R^jg_*(\Om^i_{Z/T})\Lo R^{i+j}g_*(\Om^{\bul}_{Z/T})
\tag{5.5.1}\label{ali:oedr0xs} 
\end{align*}  
degenerates at $E_1$. 
\end{rema} 
More strongly, in \cite[(7.2)]{ikn}, 
they have proved the $E_1$-degeneration of 
(\ref{ali:oedr0xs}) if $g$ is proper log smooth and exact. 
They have also proved that 
$E_1^{ij}$ is locally free if any stalk of $M_Y/{\cal O}_Y^*$ is 
a free monoid. 
See also \cite[(9.15)]{nlpi} and \cite{illlel} 
for the log Hodge symmetry. 

Next we give the log version of Raynaud's result in \cite[(2.8)]{di}. 
To give it, we need to recall Tsuji's ideal sheaf. 
\par 
Let $g\col Y\lo Z$ be a morphism of fs log schemes. 
Secondly let us recall Tsuji's ideal sheaf 
${\cal I}_{Y/Z}$ of the log structure $M_Y$ 
denoted by $I_g$ in \cite{tsp}
for the review of Tsuji's log Serre duality.  
\par 
For a commutative monoid $P$ with unit element, 
an ideal is, by definition, a subset $I$ of $P$ such that $PI\subset I$. 
An ideal ${\mathfrak p}$ of $P$ is called a prime ideal if 
$P\setminus {\mathfrak p}$ is a submonoid of $P$ (\cite[(5.1)]{klog2}). 
For a prime ideal ${\mathfrak p}$ of $P$, the height ${\rm ht}({\mathfrak p})$ 
is the maximal length of sequence's ${\mathfrak p}\supsetneq  
{\mathfrak p}_1\supsetneq \cdots \supsetneq {\mathfrak p}_r$ of prime ideals of $P$. 
Let $h\col Q\lo P$ be a morphism of monoids.  
A prime ideal ${\mathfrak p}$ of $P$ is said to be horizontal with respect to $h$ 
if $h(Q)\subset P\setminus {\mathfrak p}$ (\cite[(2.4)]{tsp}). 
\par 
Let $Y\lo Z$ be a morphism of fs log schemes.  
Let $h\col Q\lo P$ be a local chart of $g$ such that $P$ and $Q$ are saturated. 
Set 
$$I:=\{a\in P~\vert~a\in {\mathfrak p}~\text{for any 
horizontal prime ideal of $P$ of height 1 with respect to}~h\}.$$  
Let ${\cal I}_{Y/Z}$ be the ideal sheaf of $M_Y$ generated by ${\rm Im}(I\lo M_Y)$. 
In  \cite[(2.6)]{tsp} 
Tsuji has proved that ${\cal I}_{Y/Z}$ is independent of the choice of the local chart $h$. 
Let ${\cal I}_{Y/Z}{\cal O}_Y$ be the ideal sheaf of ${\cal O}_Y$ 
generated by the image of ${\cal I}_{Y/Z}$. 
For a quasi-coherent sheaf ${\cal F}$ of ${\cal O}_Y$-modules, 
denote $({\cal I}_{Y/Z}{\cal O}_Y){\cal F}$ by ${\cal I}_{Y/Z}{\cal F}$. 

\begin{theo}[{\bf \cite[(2.21)]{tsp}}]\label{theo:ico} 
Let $A$ be a discrete valuation ring with uniformizer $\pi$. 
Let $Z$ be an fs log scheme whose underlying scheme 
is ${\rm Spec}(A/\pi^m)$ for some $m\geq 1$ 
and whose log structure is associated to 
the morphism ${\mab N}\owns 1\lom a\in A/\pi^m$ 
for some $a\in A/\pi^m$. 
Let $g\col Y\lo Z$ be a saturated morphism of fs log schemes such that $\os{\circ}{g}$ is of finite type.  
Assume that $\Om^1_{Y/Z}$ is a locally free ${\cal O}_Y$-modules of constant rank $d$. 
Then $g^{!}({\cal O}_Z)={\cal I}_{Y/Z}\Om^d_{Y/Z}[d]$. 
\end{theo}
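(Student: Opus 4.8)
The plan is to deduce the formula from ordinary Grothendieck duality for the underlying morphism $\os{\circ}{g}$ together with an explicit, combinatorial identification of the relative dualizing sheaf, reducing everything to a local toric model. First I would observe that the Artinian local ring $A/\pi^m$ is Gorenstein (it is a hypersurface quotient of the DVR $A$), so $\mathcal{O}_Z$ is, up to shift, a dualizing complex on $\os{\circ}{Z}$; consequently $g^!(\mathcal{O}_Z)$ is a relative dualizing complex for $\os{\circ}{g}$ and the whole assertion is local on $\os{\circ}{Y}$ for the étale topology. Both sides are compatible with strict étale localization: the left by the standard localization and flat base-change properties of the exceptional inverse image $g^!$, and the right because $\mathcal{I}_{Y/Z}$ and $\Omega^d_{Y/Z}$ are defined étale-locally by charts and commute with such localization. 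Hence I may pass freely to a chart.

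Next I would invoke Kato's structure theorem for log smooth morphisms (\cite[(3.5)]{klog1}): étale-locally on $\os{\circ}{Y}$ there is a chart $Q\to P$ of monoids with $Q\to P$ injective and \emph{saturated} (the saturatedness being exactly \ref{prop:ktj} (4), since $g$ is of Cartier type, refined by (5)), such that $\os{\circ}{Y}\to \os{\circ}{Z}\times_{{\rm Spec}\,\mab{Z}[Q]}{\rm Spec}\,\mab{Z}[P]$ is strict and étale. Because $g$ is integral, $\os{\circ}{g}$ is flat (\cite[(4.5)]{klog1}); because $g$ is saturated, its geometric fibres are reduced (\ref{prop:ktj} (5)), and being normal affine semigroup (toric) schemes they are Cohen--Macaulay of pure dimension $d$ by Hochster's theorem. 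A flat finite-type morphism with Cohen--Macaulay fibres of pure dimension $d$ is Cohen--Macaulay of relative dimension $d$, so Grothendieck duality yields $g^!(\mathcal{O}_Z)=\omega_{\os{\circ}{Y}/\os{\circ}{Z}}[d]$ with $\omega_{\os{\circ}{Y}/\os{\circ}{Z}}$ the honest relative dualizing sheaf. This reduces the theorem to the sheaf-level identity $\omega_{\os{\circ}{Y}/\os{\circ}{Z}}=\mathcal{I}_{Y/Z}\Omega^d_{Y/Z}$.

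The heart of the matter, and the step I expect to be the main obstacle, is this last identity on the affine model ${\rm Spec}\,\mab{Z}[P]\to{\rm Spec}\,\mab{Z}[Q]$ (after base change to $Z$). Here $\Omega^d_{Y/Z}$ is the free rank-one module generated by the log volume form $\omega_0:=d\log x_1\wedge\cdots\wedge d\log x_d$, whereas the relative dualizing module of a saturated affine toric morphism is computed combinatorially as the span of the monomials $x^a$ with $a$ in the relative interior with respect to the horizontal facets of the cone attached to $P$. I would match these: a monomial $x^a$ lies in that interior precisely when $a$ belongs to every horizontal prime ideal of $P$ of height one relative to $h\col Q\to P$, which is exactly the defining condition of the ideal $I$ generating $\mathcal{I}_{Y/Z}$ in \S\ref{sec:lst}. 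Thus $\omega_{\os{\circ}{Y}/\os{\circ}{Z}}$ is generated by the sections $x^a\,\omega_0$ with $a\in I$, giving $\omega_{\os{\circ}{Y}/\os{\circ}{Z}}=\mathcal{I}_{Y/Z}\Omega^d_{Y/Z}$ on the model. The residue/trace normalization identifying this generator with the dualizing form is the delicate point, as one must track the horizontal versus vertical faces carefully.

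Finally I would dispatch the two routine points deferred above. The non-reduced thickening ${\rm Spec}(A/\pi^m)$ causes no trouble: flatness of $\os{\circ}{g}$ lets the relative dualizing sheaf, and both $\mathcal{I}_{Y/Z}$ and $\Omega^d_{Y/Z}$, be obtained by base change from the reduced situation, so the model identity persists over the thickening. And the strict étale morphism of the structure theorem carries $\omega$, $\mathcal{I}_{Y/Z}$, and $\Omega^d_{Y/Z}$ compatibly (each is strict-étale-local), so the local identity descends and glues to the asserted global equality $g^!(\mathcal{O}_Z)=\mathcal{I}_{Y/Z}\Omega^d_{Y/Z}[d]$.
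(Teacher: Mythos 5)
First, a point about the comparison you asked for: the paper itself contains \emph{no} proof of this statement. Section \ref{sec:lst} is expressly a recall section, and (\ref{theo:ico}) is quoted from Tsuji \cite[(2.21)]{tsp}; so your proposal can only be measured against Tsuji's original argument. In outline your route --- strict \'etale localization to a toric chart, Cohen--Macaulayness plus Grothendieck duality to convert $g^{!}({\cal O}_Z)$ into an honest relative dualizing sheaf, then a combinatorial identification of that sheaf with the ideal of ``horizontal-interior'' monomials times $\Om^d_{Y/Z}$ --- is indeed the shape of the argument in \cite{tsp}, which rests on the classical interior-ideal description of dualizing modules of semigroup rings. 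So the strategy is the right one.

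There are, however, genuine gaps. The most serious: your proof immediately invokes Kato's structure theorem for \emph{log smooth} morphisms (and, in passing, Cartier type), but neither log smoothness nor Cartier type is among the hypotheses of the statement as quoted --- only ``saturated, $\os{\circ}{g}$ of finite type, $\Om^1_{Y/Z}$ locally free of constant rank $d$''. These hypotheses do not imply log smoothness, and without it the statement is false: take $k$ of characteristic $2$, $Z={\rm Spec}(k)$ and $Y={\rm Spec}(k[x]/(x^2))$ with trivial log structures (allowed here, since one may take $a$ a unit). This morphism is strict, hence saturated, of finite type, and $\Om^1_{Y/Z}$ is free of rank $1$ because $d(x^2)=2x\,dx=0$; moreover ${\cal I}_{Y/Z}{\cal O}_Y={\cal O}_Y$ since the trivial chart has no horizontal height-one primes. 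Yet $g^{!}({\cal O}_Z)$ is concentrated in degree $0$ (as $g$ is finite flat), while ${\cal I}_{Y/Z}\Om^1_{Y/Z}[1]\simeq{\cal O}_Y[1]$ sits in degree $-1$. Since Tsuji's theorem is correct, the quotation in the paper must be suppressing the log smoothness hypothesis (every application in the paper is to log smooth $Y/Z$); your proof needs it too, and you should have flagged that you were adding it rather than deriving it from ``Cartier type'', which is likewise not given. Second, your justification of Cohen--Macaulayness of the fibres is wrong as stated: the fibres of a saturated log smooth morphism are reduced but are \emph{not} normal toric varieties in general (for semistable reduction the closed fibre is $xy=0$), so Hochster's theorem does not apply to them; the correct argument applies Hochster to the total space ${\rm Spec}\,{\mab Z}[P]$ and the base ${\rm Spec}\,{\mab Z}[Q]$ and then uses flatness and the depth formula to conclude the fibres are CM. Finally, the step you yourself call the heart --- that the relative dualizing module is spanned by the monomials lying in every horizontal height-one prime, with the correct trace normalization --- is asserted rather than proved; that computation is precisely the content of Tsuji's Section 2, so as it stands the proposal defers the essential difficulty.
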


\begin{defi}\label{defi:vt} 
We say that $Y/Z$ is {\it of vertical type} 
if ${\cal I}_{Y/Z}{\cal O}_Y={\cal O}_Y$.  
\end{defi}

\begin{exem}\label{exem:vt}
If $X/s$ is an SNCL scheme (\cite{nlk3}, \cite{nlw}), then 
$X/s$ is of vertical type. 
\end{exem}

\begin{coro}[{\bf The log version of the 
vanishing theorem of Kodaira-Akizuki-Nakano in 
characteristic $p$}]\label{coro:lkan}
Let $\kap$ be a perfect field of characteristic $p>0$. 
Let $s$ be the log point of $\kap$ or $({\rm Spec}(\kap),\kap^*)$. 
Let $Y\lo s$ be 
a projective log smooth morphism of Cartier type of fs log schemes 
which has a log smooth integral lift over ${\cal W}_2(s)$. 
Assume that $\os{\circ}{Y}$ is of pure dimension $d$. 
Let ${\cal I}_{Y/s}$ be Tsuji's ideal sheaf of $M_Y$. 
Let ${\cal L}$ be an ample invertible ${\cal O}_Y$-module. 
Then the following hold$:$
\par 
$(1)$ $H^j(Y,\Om^i_{Y/s}\otimes{\cal L}^{-1})=0$ for $i+j<\min \{d,p\}$.  
\par 
$(2)$ $H^j(Y,{\cal I}_{Y/s}\Om^i_{Y/s}\otimes{\cal L})=0$ for $i+j>\max \{d,2d-p\}$.  
\end{coro}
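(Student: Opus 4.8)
The plan is to prove (1) by the method of Raynaud, using the decomposition theorem (\ref{theo:kdg}) together with an iterated Frobenius-twist argument, and then to deduce (2) as the Serre dual of (1) via Tsuji's log Serre duality (\ref{theo:ico}). The two parts are genuinely dual: since $\Om^i_{Y/s}$ is locally free (\cite[(3.10)]{klog1}) and the dualizing sheaf of $Y/s$ is ${\cal I}_{Y/s}\Om^d_{Y/s}[d]$, the wedge pairing gives $(\Om^i_{Y/s})^{\vee}\otimes {\cal I}_{Y/s}\Om^d_{Y/s}={\cal I}_{Y/s}\Om^{d-i}_{Y/s}$, whence
\begin{align*}
H^j(Y,\Om^i_{Y/s}\otimes {\cal L}^{-1})^{\vee}\simeq H^{d-j}(Y,{\cal I}_{Y/s}\Om^{d-i}_{Y/s}\otimes {\cal L}).
\end{align*}
This turns the range $i+j<\min\{d,p\}$ of (1) into $i'+j'>\max\{d,2d-p\}$ with $i'=d-i$, $j'=d-j$, which is exactly (2). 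So it suffices to establish (1).

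First I would reduce (1) to a hypercohomology vanishing for a twisted log de Rham complex. Let $F\col Y\lo Y'$ be the relative Frobenius over $s$ and $W\col Y'\lo Y$ the projection, so that $F^*(W^*({\cal L}))={\cal L}^{\otimes p}$; write ${\cal L}':=W^*({\cal L})$. Since $Y$ lifts over ${\cal W}_2(s)$ and ${\cal W}_2(s)$ carries the Frobenius lift $F_{{\cal W}_2(s)}$, base-changing the lift of $Y$ by $F_{{\cal W}_2(s)}$ produces a log smooth integral lift of $Y'$ over ${\cal W}_2(s)$; hence (\ref{theo:kdg}) applies and yields $\tau_{<p}F_*(\Om^{\bul}_{Y/s})\simeq \bigoplus_{i<p}\Om^i_{Y'/s}[-i]$. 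Twisting this single isomorphism by the locally free sheaf $({\cal L}')^{-p^{m-1}}$ on $Y'$, using the projection formula with $F^*(({\cal L}')^{-p^{m-1}})={\cal L}^{-p^m}$, and using that $\os{\circ}{F}$ is finite, I get for each $m\geq 1$ and each $n<p$
\begin{align*}
{\mab H}^n(Y,\Om^{\bul}_{Y/s}\otimes {\cal L}^{-p^m})=\bigoplus_{i+j=n}H^j(Y',\Om^i_{Y'/s}\otimes ({\cal L}')^{-p^{m-1}}),
\end{align*}
where $\Om^{\bul}_{Y/s}\otimes {\cal L}^{-p^m}$ carries the differential $d\otimes {\rm id}$ (legitimate because ${\cal L}^{-p^m}$ is pulled back along $F$).

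Next I would run the numerical core. Because $\kap$ is perfect, base change along the Frobenius of $\os{\circ}{s}$ gives $\dim_{\kap}H^j(Y',\Om^i_{Y'/s}\otimes ({\cal L}')^{-p^{m-1}})=\dim_{\kap}H^j(Y,\Om^i_{Y/s}\otimes {\cal L}^{-p^{m-1}})$, so setting $a_m:=\sum_{i+j=n}\dim_{\kap}H^j(Y,\Om^i_{Y/s}\otimes {\cal L}^{-p^m})$ the displayed equality reads $\dim_{\kap}{\mab H}^n(Y,\Om^{\bul}_{Y/s}\otimes {\cal L}^{-p^m})=a_{m-1}$ for $n<p$. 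On the other hand the Hodge spectral sequence of the twisted complex gives $\dim_{\kap}{\mab H}^n(Y,\Om^{\bul}_{Y/s}\otimes {\cal L}^{-p^m})\leq a_m$. Combining these, $a_{m-1}\leq a_m$ for every $m\geq 1$ and every $n<p$. Finally, for $n<d$, Tsuji's log Serre duality together with Serre's vanishing for the coherent sheaves ${\cal I}_{Y/s}\Om^{d-i}_{Y/s}$ twisted by the high powers ${\cal L}^{\otimes p^m}$ shows $a_m=0$ for $m\gg 0$. A non-decreasing sequence of non-negative integers that is eventually $0$ is identically $0$, so $a_0=\sum_{i+j=n}\dim_{\kap}H^j(Y,\Om^i_{Y/s}\otimes {\cal L}^{-1})=0$ for every $n<\min\{d,p\}$; this forces each summand to vanish, which is (1).

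The main obstacle is organizing this argument correctly in the log setting rather than any single estimate. One must verify that $Y'$ itself admits a lift over ${\cal W}_2(s)$ so that (\ref{theo:kdg}) is available, that the projection-formula and finiteness manipulations identifying ${\mab H}^n(Y,\Om^{\bul}_{Y/s}\otimes {\cal L}^{-p^m})$ with the Hodge pieces on $Y'$ remain valid for log differentials, and above all that Tsuji's log Serre duality applies ($Y/s$ of Cartier type, $\os{\circ}{Y}$ of pure dimension $d$ so $\Om^1_{Y/s}$ has constant rank $d$, with dualizing sheaf ${\cal I}_{Y/s}\Om^d_{Y/s}[d]$). It is precisely this last input that inserts the ideal sheaf ${\cal I}_{Y/s}$ exactly where it must appear to make (1) and (2) dual; once these log-geometric facts are secured, the monotonicity-plus-eventual-vanishing step is formal.
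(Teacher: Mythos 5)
Your proposal is correct and follows essentially the same route as the paper's proof: part (1) is Raynaud's Frobenius-twist argument built on the decomposition of (\ref{theo:kdg}) (the paper phrases your monotonicity $a_{m-1}\leq a_m$ plus eventual vanishing as a descending induction on the twist $p^n$, but the content --- twisted decomposition over $Y'$, ${\cal O}_{Y'}$-linearity of $F_*(d)$, projection formula, Serre vanishing combined with Tsuji's log Serre duality --- is identical), and part (2) is deduced from (1) by Tsuji's log Serre duality exactly as in the paper. The only point you make explicit that the paper leaves implicit is that the required lift of $Y'$ over ${\cal W}_2(s)$ comes from base-changing the given lift of $Y$ along $F_{{\cal W}_2(s)}$.
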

\begin{proof} 
(1): The proof is completely the same as that of \cite[(2.8), (2.9)]{di} by using 
Tsuji's log Serre duality (\ref{theo:ico}). 
\par 
Indeed, set ${\cal F}(m):={\cal F}\otimes_{{\cal O}_Y}{\cal L}^{\otimes m}$ 
$(m\in {\mab Z})$ for a coherent ${\cal O}_Y$-module ${\cal F}$ 
and ${\cal M}:={\cal L}^{-1}$ and $b:={\rm min}\{d,p\}$. 
If $m$ is large enough, then 
$H^q(Y,{\cal I}_{Y/s}\Om^{d-i}_{Y/s}(m))=0$ for any $i$ and any $q>0$ 
by Serre's theorem \cite[(2.2.1)]{ega31}. 
By Tsuji's log Serre duality, $H^j(Y,\Om^{i}_{Y/s}(-m))=0$ for 
any $i\in {\mab N}$ and $j<d$. 
In particular, $H^j(Y,\Om^{i}_{Y/s}(-m))=0$ for 
any $i+j<d$ $(i,j\in {\mab N})$ and hence 
$H^j(Y,\Om^{i}_{Y/s}(-m))=0$ for 
any $i+j<b$ $(i,j\in {\mab N})$. 
Assume that $H^j(Y,\Om^{i}_{Y/s}(-p^n))=0$ 
for all $i+j< b$ and  a positive integer $n$. 
Then we claim that $H^j(Y,\Om^{i}_{Y/s}(-p^{n-1}))=0$. 
Indeed, let $W\col Y'\lo Y$ be the projection.  
Because the differential 
$d\col F_*(\Om^i_{Y/s})\lo F_*(\Om^{i+1}_{Y/s})$ is ${\cal O}_{Y'}$-linear, 
we can consider the complex 
$W^*({\cal M}^{\otimes p^{n-1}})
\otimes_{{\cal O}_{Y'}}F_*(\Om^{\bul}_{Y/s})$. 
Take the tensorization with $W^*({\cal M}^{\otimes p^{n-1}})$ 
for the isomorphism 
$\bigoplus_{i<b}\Om^i_{Y'/s}[-i]\os{\sim}{\lo} F_*(\Om^{\bul}_{Y/s})$ 
in $D^+(Y'_{\rm zar})$: 
\begin{equation*} 
\begin{CD} 
\bigoplus_{i<b}
W^*({\cal M}^{\otimes p^{n-1}})
\otimes^L_{{\cal O}_{Y'}}\Om^i_{Y'/s}[-i]
@>{\sim}>>W^*({\cal M}^{\otimes p^{n-1}})
\otimes^L_{{\cal O}_{Y'}}F_*(\Om^{\bul}_{Y/s})\\
@| @|\\
\bigoplus_{i<b}W^*({\cal M}^{\otimes p^{n-1}})
\otimes_{{\cal O}_{Y'}}\Om^i_{Y'/s}[-i]
@.W^*({\cal M}^{\otimes p^{n-1}})
\otimes_{{\cal O}_{Y'}}F_*(\Om^{\bul}_{Y/s}).  
\end{CD}
\tag{5.9.1}\label{ali:ys}
\end{equation*} 
(Note that $W^*({\cal M}^{\otimes p^{n-1}})$ 
is a flat ${\cal O}_{Y'}$-module.)
We have the following spectral sequence:
\begin{align*} 
E_1^{ij}=H^j(Y',W^*({\cal M}^{\otimes p^{n-1}})\otimes_{{\cal O}_{Y'}}
F_*(\Om^i_{Y/s}))\Lo 
H^{i+j}(Y',W^*({\cal M}^{\otimes p^{n-1}})\otimes_{{\cal O}_{Y'}}
F_*(\Om^{\bul}_{Y/s})). 
\end{align*} 
By the projection formula and the assumption, $E_1^{ij}=
R^jf'_*F_*(F^*W^*({\cal M}^{\otimes p^{n-1}})\otimes_{{\cal O}_{Y}}\Om^i_{Y/s})
=H^j(Y,{\cal M}^{\otimes p^n}\otimes_{{\cal O}_{Y}}\Om^i_{Y/s})=H^j(Y,\Om^i_{Y/s}(-p^n))=0$. 
Hence $H^{i+j}(Y',W^*({\cal M}^{\otimes p^{n-1}})\otimes_{{\cal O}_{Y'}}
F_*(\Om^{\bul}_{Y/s}))=0$ for $i+j<b$. 
By (\ref{ali:ys}),  
$H^j(Y',W^*({\cal M}^{\otimes p^{n-1}})
\otimes_{{\cal O}_{Y'}}\Om^i_{Y'/s})=0$. 
Since $Y/s$ is integral and $\os{\circ}{s}$ is perfect, 
$\os{\circ}{Y}{}'=`\os{\circ}{Y}\simeq \os{\circ}{Y}$ and 
$\Om^i_{Y'/s}=\Om^i_{`Y/s^{[p]}}\simeq \Om^i_{Y/s}$. 
Hence $H^j(Y,\Om^i_{Y/s}(-p^{n-1}))=
H^j(Y',W^*({\cal M}^{\otimes p^{n-1}})
\otimes_{{\cal O}_{Y'}}\Om^i_{Y'/s})=0$. 
\par 
(2): (2) follows from (1) and Tsuji's log Serre duality. 
\end{proof}

The following is a generalization of Norimatsu's vanishing theorem 
(\cite[Theorem 1]{no}). 
The following vanishing theorem is not a special case of 
Ambro-Fujino's vanishing theorem 
(\cite[Theorem 3.2]{amq}, \cite[Theorem 5.7]{fuv}) 
and Fujino's vanishing theorem \cite[Theorem 1.1]{fuk}.

\begin{coro}[{\bf A log version of 
vanishing theorem of Kodaira-Akizuki-Nakano in 
characteristic $0$}]\label{coro:0lkan}
Let $K$, $T$ and $Z$ be as in {\rm (\ref{rema:de1}) (2)}. 
Assume that the log structure of $T$ is associated to a morphism 
${\mab N}\owns 1\lom a\in K$ for some $a\in K$. 
Assume also that $\os{\circ}{Z}$ is projective over $K$.  
Let ${\cal L}$ be an ample invertible ${\cal O}_Z$-module. 
Then the following hold$:$
\par 
$(1)$ $H^j(Z,\Om^i_{Z/T}\otimes{\cal L}^{-1})=0$ for $i+j<d$.  
\par 
$(2)$ $H^j(Z,{\cal I}_{Z/T}\Om^i_{Z/T}\otimes{\cal L})=0$ for $i+j>d$.  
\end{coro}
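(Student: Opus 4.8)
The plan is to reduce the characteristic-$0$ statement to its characteristic-$p$ counterpart (\ref{coro:lkan}) by a spreading-out and reduction-modulo-$p$ argument in the spirit of Deligne--Illusie \cite[(2.8)]{di}, and then to deduce (2) from (1) by Tsuji's log Serre duality exactly as in the proof of (\ref{coro:lkan}).

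First I would spread out. Since all the data are of finite presentation, there exist a finitely generated ${\mab Z}$-subalgebra $R\sus K$, an fs log scheme $T_R$ with $\os{\circ}{T}_R={\rm Spec}(R)$ whose log structure is associated to ${\mab N}\owns 1\lom a_R$, a proper log smooth integral saturated morphism $g_R\col Z_R\lo T_R$ with $\os{\circ}{Z}_R$ projective over $R$, and a relatively ample invertible sheaf ${\cal L}_R$ on $\os{\circ}{Z}_R$, all recovering the given data after $\otimes_RK$. I would take $a_R=0$ when $a=0$ and $a_R\in R^*$ (after inverting $a$) when $a\not=0$, so that the closed reductions are of the two types allowed in (\ref{coro:lkan}). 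By generic flatness and generic smoothness I then shrink ${\rm Spec}(R)$ so that $\os{\circ}{Z}_R$ is flat over $R$, so that $R$ is smooth over ${\mab Z}[1/N]$ for some $N$, and so that the coherent $R$-modules $M^{ij}:=R^jg_{R*}(\Om^i_{Z_R/T_R}\otimes{\cal L}_R^{-1})$ are locally free and commute with arbitrary base change; here I use that $\Om^i_{Z_R/T_R}$ is locally free and commutes with base change.

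The key step is reduction modulo $p$. Let ${\mathfrak m}$ be a closed point of ${\rm Spec}(R)$ whose residue field $\kap:=\kap({\mathfrak m})$ has characteristic $p>d$ with $p\nmid N$; such a point exists because ${\rm Spec}(R)\lo {\rm Spec}({\mab Z})$ is dominant and every nonempty fibre, being of finite type over ${\mab F}_p$, has a closed point with finite residue field. By (\ref{prop:ktj}) (2) and (4), the fibre $Z_{\mathfrak m}\lo T_{\mathfrak m}$ is again saturated, hence $p$-saturated, hence of Cartier type. Since $\wh{R}_{\mathfrak m}$ is formally smooth over ${\mab Z}_p$ with residue field $\kap$, Cohen structure theory gives $\wh{R}_{\mathfrak m}\simeq {\cal W}(\kap)[[t_1,\ldots,t_r]]$, and sending $t_i\lom 0$ followed by ${\cal W}(\kap)\lo {\cal W}_2(\kap)$ yields a homomorphism $\phi\col R\lo {\cal W}_2(\kap)$ lifting $R\lo \kap$. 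Base-changing $g_R$ along $\phi$ produces a proper log smooth integral lift ${\cal Z}$ of $Z_{\mathfrak m}$ over ${\cal W}_2(s_{\mathfrak m})$, where the choice of $a_R$ guarantees that the base is ${\cal W}_2(s_{\mathfrak m})$ with $s_{\mathfrak m}$ the log point of $\kap$ (if $a=0$) or $({\rm Spec}(\kap),\kap^*)$ (if $a\not=0$). Hence (\ref{coro:lkan}) (1) applies with ${\rm min}\{d,p\}=d$ and gives $M^{ij}\otimes_R\kap({\mathfrak m})=H^j(Z_{\mathfrak m},\Om^i_{Z_{\mathfrak m}/T_{\mathfrak m}}\otimes{\cal L}_{\mathfrak m}^{-1})=0$ for $i+j<d$.

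Finally, since $M^{ij}$ is locally free of constant rank over the integral scheme ${\rm Spec}(R)$ and one of its fibres vanishes, $M^{ij}=0$; therefore $H^j(Z,\Om^i_{Z/T}\otimes{\cal L}^{-1})=M^{ij}\otimes_RK=0$ for $i+j<d$, which is (1). For (2) I would argue as in the proof of (\ref{coro:lkan}): Tsuji's log Serre duality (\ref{theo:ico}), applied with a discrete valuation ring $A$ of residue field $K$ (so $m=1$), together with the perfect wedge pairing $\Om^i_{Z/T}\times \Om^{d-i}_{Z/T}\lo \Om^d_{Z/T}$, identifies $H^j(Z,{\cal I}_{Z/T}\Om^i_{Z/T}\otimes{\cal L})$ with the dual of $H^{d-j}(Z,\Om^{d-i}_{Z/T}\otimes{\cal L}^{-1})$, and the latter vanishes by (1) exactly when $(d-i)+(d-j)<d$, that is $i+j>d$. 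The main obstacle I anticipate is the base-matching in the reduction step---ensuring that the ${\cal W}_2(\kap)$-base obtained from $\phi$ is genuinely ${\cal W}_2(s_{\mathfrak m})$ in the precise sense of (\ref{theo:ny1}) rather than an arbitrary hollow lift of the log point---which is exactly what forces the careful choice of $a_R$ and is the log-geometric input absent from the classical Deligne--Illusie argument; checking that saturatedness, the Cartier-type property and the identification $\Om^i_{Z_{\mathfrak m}/T_{\mathfrak m}}=\Om^i_{Z_R/T_R}\otimes\kap$ all survive spreading out and reduction in the fs log category is routine but must be verified.
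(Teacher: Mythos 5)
Your proposal is correct and takes essentially the same route as the paper: the paper's proof is a one-line reference to the reduction-mod-$p$ method of Illusie--Kato--Nakayama combined with Kato--Tsuji's result (\ref{prop:ktj}), which is precisely the spreading-out, saturated-to-Cartier-type, ${\cal W}_2$-lift and semicontinuity argument you spell out, feeding into the characteristic-$p$ vanishing (\ref{coro:lkan}). Your deduction of (2) from (1) via Tsuji's log Serre duality (\ref{theo:ico}) also mirrors how the paper handles the dual statement in the characteristic-$p$ case.
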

\begin{proof} 
The proof is the same as that of \cite[(7.1.2)]{ikn} by using 
Kato-Tsuji's result (\ref{prop:ktj}). 
\end{proof}

\section{Log weak Lefschetz conjecture}\label{sec:lw} 
In this section we give the precise definition of 
the horizontal divisor appearing in the log weak Lefschetz conjecture 
 (\ref{conj:lwl}) 
and we prove the log weak Lefschetz conjecture in characteristic $0$ 
and we prove this conjecture in characteristic $p>0$ in certain cases. 
\par 
First we give the following definitions: 

\begin{defi}\label{defi:hdd} 
(1) Let $S_0$ be a family of log points 
(\cite[(1.1)]{nlw}) and let $X/S_0$ be an SNCL scheme ([loc.~cit., (1.1.16)]). 
Let ${\mab A}_{S_0}(a,d+e)$ $(a\leq d)$ 
be a log scheme whose underlying scheme is 
$\ul{\rm Spec}_{S_0}({\cal O}_{S_0}
[x_0,\ldots, x_d,y_1, \ldots, y_e]/(x_0\cdots x_a))$ and whose log structure is 
associated to the morphism 
$${\mab N}^{\oplus a+1}\owns e_i\lom x_{i-1}\in {\cal O}_{S_0}
[x_0,\ldots, x_d,y_1, \ldots, y_e]/(x_0\cdots x_a).$$ 
\par 
Let $\os{\circ}{D}$ be an effective Cartier divisor on $\os{\circ}{X}/\os{\circ}{S}_0$. 
Endow $\os{\circ}{D}$ with the inverse image of the log structure of $X$ and 
let $D$ be the resulting log scheme. 
We call $D$ a {\it relative simple normal crossing divisor 
$(=:$relative SNCD$)$} on $X/S_0$ if there exists a family 
$\Del:=\{\os{\circ}{D}_{\lam}\}_{\lam \in \Lam}$ of 
non-zero effective Cartier divisors on $X/S_0$ 
of locally finite intersection which are 
SNC(=simple normal crossing) schemes over $S_0$ (\cite[(1.1.9)]{nlw}) such that 
\begin{equation*}  
\os{\circ}{D} = \sum_{\lam \in \Lam}\os{\circ}{D}_{\lam}
\quad \text{in} \quad {\rm Div}(\os{\circ}{X}/S_0)_{\geq 0} 
\tag{6.1.1}\label{eqn:dcsncd}
\end{equation*} 
and,  
for any point $z$ of $\os{\circ}{D}$, there exist 
a Zariski open neighborhood $\os{\circ}{V}$ of $z$ in $\os{\circ}{X}$ and 
the following cartesian diagram
\begin{equation*}
\begin{CD}
D\vert_V @>>> (y_1\cdots y_b=0) \\ 
@V{\bigcap}VV  @VVV \\
V @>{g}>> {\mab A}_{T_0}(a,d+e) \\ 
@VVV  @VVV \\
T_0@=T_0 
\end{CD}
\tag{6.1.2}\label{cd:1b}
\end{equation*}
for some positive integers $a$, $b$, $d$ and $e$ 
such that $a\leq d$ and $b\leq e$. 
Here $T_0$ is an open log subscheme of $S_0$ 
whose log structure is associated to 
the morphism ${\mab N}\owns 1\lom 0\in {\cal O}_{T_0}$, 
$(y_1\cdots y_b=0)$ is an exact closed log subscheme of 
${\mab A}_{T_0}(a,d+e)$ defined by an ideal sheaf $(y_1\cdots y_b)$,  
$g$ is strictly \'{e}tale and ${\mab A}_{T_0}(a,d+e)\lo T_0$ is obtained by 
the diagonal embedding ${\mab N} \os{\subset}{\lo} {\mab N}^{\oplus a+1}$. 
Endow $\os{\circ}{D}_{\lam}$ with the inverse image of 
the log structure of $X$ and let $D_{\lam}$ be the resulting log scheme. 
We call $D_{\lam}$ an {\it SNCL component} of $D$ and 
the equality (\ref{eqn:dcsncd}) a {\it decomposition} of $D$ 
by SNCL components of $D$. 
\par 
(2) Let the notations be as in (1). 
Let $E$ be another SNCD on $X/S_0$. 
Let $D\cup E$ be a log scheme 
whose underlying scheme is $\os{\circ}{D}\cup \os{\circ}{E}$ 
and whose log structure is the inverse image of the log structure of $X$. 
Then we say that $D\cup E$ is an SNCD on $X/S_0$ 
if, in the diagram (\ref{cd:1b}) for any point $z\in \os{\circ}{D}\cup \os{\circ}{E}$, 
$(D\cup E)\vert_V=(y_1\cdots  y_c=0)$ for some 
$b\leq c\leq e$. In this case, we denote $D\cup E$ by $D+E$. 
\end{defi} 

\par
The following construction of $M(D)$ is 
the log version of the construction in \cite[p.~61]{nh2}. 
\par 
Let ${\rm Div}_D(\os{\circ}{X}/\os{\circ}{S}_0)_{\geq 0}$ be a submonoid of 
${\rm Div}(\os{\circ}{X}/\os{\circ}{S}_0)_{\geq 0}$ consisting 
of effective Cartier divisors $E$'s on $\os{\circ}{X}/\os{\circ}{S}_0$
such that there exists an open covering 
$X = \bigcup_{i \in I}V_i$ (depending on $E$) 
of $X$ such that $E \vert_{V_i}$ is contained 
in the submonoid of ${\rm Div}(\os{\circ}{V}_i/\os{\circ}{S}_0)_{\geq 0}$ 
generated by $\os{\circ}{D}_{\lam}\vert_{\os{\circ}{V}_i}$ $(\lam \in \Lam)$. 
By \cite[A.0.1]{nh2} the definition of ${\rm Div}_{\os{\circ}{D}}(\os{\circ}{X}/\os{\circ}{S}_0)_{\geq 0}$ 
is independent of the choice of $\Del$. 
(We have only to set  
$S:={\rm Spec}_{\os{\circ}{T}_0}({\cal O}_{\os{\circ}{T}_0}
[x_0,\ldots,x_d]/(x_0\cdots x_a))$ 
in [loc.~cit.] and to consider the projection 
$X_{\os{\circ}{T}_0}\times_{\os{\circ}{T}_0}S
\lo X_{\os{\circ}{T}_0}$.)
\par   
The pair $(X,D)$ gives the following fs log structure $M(D)$ 
in the zariski topos $\os{\circ}{X}_{\rm zar}$ as in \cite[p.~61]{nh2}. 
\par 
Let $M(D)'$ be a presheaf of monoids in $\os{\circ}{X}_{\rm zar}$ defined as follows: 
for an open subscheme $\os{\circ}{V}$ of $\os{\circ}{X}$, 
\begin{align*}
\Gamma(\os{\circ}{V}, M(D)'):=
\{(E,a)\in & {\rm Div}_{\os{\circ}{D}\vert_{\os{\circ}{V}}}(\os{\circ}{V}/\os{\circ}{S}_0)_{\geq 0} 
\times \Gamma(\os{\circ}{V}, {\cal O}_X)\vert ~a\text{ is a generator of }
 \Gamma(\os{\circ}{V}, {\cal O}_{\os{\circ}{X}}(-E))\} 
\end{align*} 
with a monoid structure defined by an equation 
$(E,a) \cdot (E',a') := (E + E', aa')$. 
The  natural  morphism 
$M(D)' \lo {\cal O}_X$ defined by 
the second projection 
$(E,a) \mapsto a$
induces a morphism 
$M(D)' \lo ({\cal O}_X,*)$
of presheaves of monoids in 
$X_{\rm zar}$.
The log structure $M(D)$ is, 
by definition, the associated log 
structure to the sheafification of $M(D)'$. 
Because ${\rm Div}_{D\vert_V}(V/S_0)_{\geq 0}$ 
is independent of the choice of the decomposition 
of $D\vert_V$ by smooth components, 
$M(D)$ is independent of 
the choice of the decomposition of $D$ 
by SNCL components of $D$. 
\par

\begin{prop}\label{prop:logst}
Let the notations be as above. 
Let $z$ be a point of $D$ and let 
$V$ be an open neighborhood of $z$ in $X$ 
which admits the diagram {\rm (\ref{cd:1b})}. 
Assume that $z \in \bigcap_{i=1}^b\{y_i=0\}$. 
If $V$ is small, then the log structure 
$M(D) \vert_V \lo {\cal O}_V$ is 
isomorphic to ${\cal O}_V^*y_1^{{\mab N}} 
\cdots y_b^{{\mab N}} 
\os{\subset}{\lo} {\cal O}_V$. 
Consequently $M(D) \vert_V$ is associated to 
the homomorphism 
${\mab N}^b_V \owns e_i \lom  y_i \in M(D) \vert_V$ 
$(1 \leq i \leq b)$ of sheaves of monoids on $V$, 
where $\{e_i\}_{i=1}^b$ is the canonical 
basis of ${\mab N}^b$.  
In particular, $M(D)$ is fs. 
\end{prop}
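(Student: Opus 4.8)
The plan is to work \'etale-locally using the explicit chart (\ref{cd:1b}) and to show that the abstractly-defined log structure $M(D)$ coincides with the concrete divisorial log structure attached to the coordinate hyperplanes $y_1, \ldots, y_b$ in that chart. First I would reduce to the local computation: since $M(D)$ is defined as the log structure associated to the sheafification of the presheaf $M(D)'$, and since being fs is an \'etale-local property, it suffices to identify $M(D)\vert_V$ on a sufficiently small open neighborhood $V$ admitting the diagram (\ref{cd:1b}). I would shrink $V$ so that $z$ lies on exactly the components $\{y_1=0\},\ldots,\{y_b=0\}$ among the $y_i$'s and on no other branch of $\os{\circ}{D}$ (and so that each $\os{\circ}{X}(-\{y_i=0\})$ is freely generated by $y_i$); the point $z\in \bigcap_{i=1}^b\{y_i=0\}$ together with the hypothesis that these are the local equations of $D$ guarantees exactly $s=b$ relevant branches.

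The key step is to compute $\Gamma(\os{\circ}{V},M(D)')$ explicitly. On such a small $V$, the monoid ${\rm Div}_{\os{\circ}{D}\vert_{\os{\circ}{V}}}(\os{\circ}{V}/\os{\circ}{S}_0)_{\geq 0}$ is the free monoid on the local branches $\{y_i=0\}$ $(1\le i\le b)$, i.e.\ it is $\bigoplus_{i=1}^b {\mab N}\cdot \{y_i=0\}$, because an effective Cartier divisor supported on $\os{\circ}{D}\vert_V$ and lying in the submonoid generated by the SNCL components is a nonnegative integral combination $\sum_i n_i\{y_i=0\}$, and these components are independent Cartier divisors on the small $V$. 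For such a divisor $E=\sum_i n_i\{y_i=0\}$, the invertible sheaf ${\cal O}_{\os{\circ}{V}}(-E)$ is freely generated by the monomial $y_1^{n_1}\cdots y_b^{n_b}$, so a generator $a$ differs from this monomial by a unit. Thus a pair $(E,a)\in \Gamma(\os{\circ}{V},M(D)')$ is precisely a unit times a monomial $y_1^{n_1}\cdots y_b^{n_b}$, and the second-projection map $(E,a)\mapsto a$ identifies (the sheafification of) $M(D)'\vert_V$ with the submonoid ${\cal O}_V^*\, y_1^{{\mab N}}\cdots y_b^{{\mab N}}$ of ${\cal O}_V$. One checks this map is injective on sections and that the monoid operation $(E,a)\cdot(E',a')=(E+E',aa')$ matches multiplication of such monomials, giving the claimed isomorphism $M(D)\vert_V\os{\sim}{\lo}{\cal O}_V^*y_1^{{\mab N}}\cdots y_s^{{\mab N}}\os{\sus}{\lo}{\cal O}_V$ with $s=b$.

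From this local description the remaining assertions follow formally. The divisorial log structure ${\cal O}_V^*y_1^{{\mab N}}\cdots y_b^{{\mab N}}$ is by construction the log structure associated to the chart ${\mab N}^b_V\owns e_i\lom y_i$, which yields the stated chart and hence shows $M(D)\vert_V$ is associated to a homomorphism from the finitely generated free (in particular fine and saturated) monoid ${\mab N}^b$; since fineness and saturatedness are checked on charts \'etale-locally, $M(D)$ is fs globally. I expect the main obstacle to be the careful verification that on the small $V$ the monoid ${\rm Div}_{\os{\circ}{D}\vert_V}(\os{\circ}{V}/\os{\circ}{S}_0)_{\geq 0}$ really is free on exactly the branches $\{y_1=0\},\ldots,\{y_b=0\}$ and that the local defining equations $y_i$ of $D$ coincide with honest generators of the ideal sheaves of these branches; this is where the relative SNCD structure (the chart (\ref{cd:1b}) with $(D\cup\cdots)\vert_V=(y_1\cdots y_c=0)$) and the independence of ${\rm Div}_{\os{\circ}{D}}$ of the chosen decomposition (via \cite[A.0.1]{nh2}) must be used, and where one must make sure that base components of $X/S_0$ coming from the SNCL structure do not contribute spurious generators after shrinking $V$.
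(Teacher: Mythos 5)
Your proposal is correct and follows essentially the same route as the paper: the paper's proof consists precisely of the claim that, after shrinking $V$, each ${\rm div}(y_i)$ is the restriction of a unique SNCL component $D_{\lam_i}$ (proved by invoking \cite[Proposition A.0.1]{nh2}, exactly the point you flag as the main obstacle), after which it refers to \cite[(2.1.9)]{nh2} for the monoid computation that you spell out explicitly (free divisor monoid on the branches, sections of $M(D)'$ being units times monomials $y_1^{n_1}\cdots y_b^{n_b}$, identification via the second projection). In effect you have reconstructed the content of the two citations that constitute the paper's proof.
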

\begin{proof}
We claim that, by shrinking $V$ in (\ref{cd:1b}), 
for any $1 \leq i \leq b$, 
there exists a unique element 
$\lam_i \in \Lam$ satisfying  
\begin{align*}
\os{\circ}{D}_{\lam_i}\vert_{\os{\circ}{V}} 
= {\rm div}(y_i) \quad {\rm in} \quad {\rm Div}(\os{\circ}{V}/\os{\circ}{S}_0)_{\geq 0}.
\tag{6.2.1}\label{ali:idv}
\end{align*}
This follows from \cite[Proposition A.~0.1]{nh2} 
by setting $S:=({\mab A}_{T_0}(a,d))^{\circ}$, 
$X:=\os{\circ}{V}$ and $D:=\os{\circ}{D}$ in [loc.~cit]. 
The rest of the proof is the same as that of \cite[(2.1.9)]{nh2}. 
\end{proof} 

\parno 
Set 
\begin{align*} 
X(D):=(X,M_X\oplus_{{\cal O}^*_X}M(D)\lo {\cal O}_X). 
\end{align*} 
Then $X(D)/S_0$ is log smooth, integral and saturated by (\ref{prop:ktj}) (4).

\begin{rema}\label{rema:ldc} 
As in the classical case (e.~g., \cite{dh2}), 
we can consider the log de Rham complex 
$\Om^{\bul}_{X/S_0}(\log D)$ with logarithmic poles along $D$.  
It is clear that 
the complex $\Om^{\bul}_{X/S_0}(\log D)$ is equal to the log de Rham complex 
$\Om^{\bul}_{X(D)/S_0}$. 
Set $\Om^i_{X/S_0}(\log D)(-D):=
{\cal O}_{\os{\circ}{X}}(-\os{\circ}{D})\otimes_{{\cal O}_X}\Om^i_{X/S_0}(\log D)$ 
$(i\in {\mab N})$. 
It is easy to check that 
the family $\{\Om^i_{X/S_0}(\log D)(-D)\}_{i\in \mab N}$ gives a complex 
$\Om^{\bul}_{X/S_0}(\log D)(-D)$: 
$d\Om^i_{X/S_0}(\log D)(-D)\subset \Om^{i+1}_{X/S_0}(\log D)(-D)$. 
\end{rema}

\par
Set 
\begin{equation*}
D_{\{\lam_1, \lam_2,\ldots \lam_k\}} 
:=D_{\lam_1}\cap D_{\lam_2} \cap 
\cdots \cap D_{\lam_k} \quad 
(\lam_i \not= \lam_j~{\rm if}~i\not= j) 
\label{eqn:parlm}
\end{equation*}
for a positive integer $k$, and set
\begin{equation*}
D^{(k)} = 
\begin{cases} \quad \quad  \quad \quad X & (k=0), \\
\us{\{\lam_1, \ldots,  \lam_{k}~\vert~\lam_i 
\not= \lam_j~(i\not=j)\}}{\coprod}
D_{\{\lam_1, \lam_2, \ldots, \lam_k\}} & (k\geq 1)
\end{cases}
\label{eqn:kfdintd}
\end{equation*}
for a nonnegative integer $k$.

\begin{prop}
\label{prop:dkwdef}
$D^{(k)}$ is independent of the choice of 
the decomposition of $D$ by 
smooth components of $D$.
\end{prop}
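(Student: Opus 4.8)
The plan is to reduce the statement to a local assertion on $X$ and then glue, the essential input being the rigidity of the smooth components supplied by (\ref{prop:logst}). First I would record the precise consequence of the key claim (\ref{ali:idv}) inside the proof of (\ref{prop:logst}): near any point $z\in \os{\circ}{D}$ equipped with a chart as in (\ref{cd:1b}), so that $D\vert_V=(y_1\cdots y_b=0)$, each local branch $(y_i=0)$ with $1\le i\le b$ equals $\os{\circ}{D}_{\lam_i}\vert_{\os{\circ}{V}}$ for a unique $\lam_i\in \Lam$, and conversely every $\os{\circ}{D}_{\lam}$ meeting $\os{\circ}{V}$ arises this way. Since $\os{\circ}{D}$ is reduced (its local equation $y_1\cdots y_b$ is a product of distinct coordinates), two distinct components meeting $\os{\circ}{V}$ must restrict to distinct branches; hence $\lam\mapsto (y_i=0)$ is a bijection between the smooth components through $z$ and the local branches of $D$ through $z$, and this bijection depends only on $D\vert_V$, not on the chosen decomposition $\Del$.

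The heart of the argument is then that, over such a $V$, the object $\os{\circ}{D}{}^{(k)}\times_{\os{\circ}{X}}\os{\circ}{V}$ admits a description free of $\Del$. Indeed, a $k$-fold intersection $\os{\circ}{D}_{\lam_1}\cap\cdots\cap \os{\circ}{D}_{\lam_k}$ meets $\os{\circ}{V}$ only when each $\os{\circ}{D}_{\lam_j}$ does, i.e. only when each $\lam_j$ corresponds to a branch $(y_{i_j}=0)$, in which case it restricts to $\bigcap_{j}(y_{i_j}=0)$ with $i_1,\dots,i_k$ pairwise distinct. Thus
\begin{align*}
\os{\circ}{D}{}^{(k)}\times_{\os{\circ}{X}}\os{\circ}{V}
\os{\sim}{\lo}
\coprod_{\substack{S\sus \{1,\dots,b\}\\ \vert S\vert =k}}\bigcap_{i\in S}(y_i=0),
\end{align*}
and the identical description, carrying the inverse image log structure from $X$, holds for the object built from any second decomposition $\Del'=\{\os{\circ}{D}{}'_{\mu}\}_{\mu}$. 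This produces a canonical isomorphism $D^{(k)}_{\Del}\vert_V\os{\sim}{\lo} D^{(k)}_{\Del'}\vert_V$ over $V$, compatible with the projection to $X$ and with the log structures.

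Finally I would glue these local isomorphisms into a global one. Because the right-hand side above is expressed purely through the branches of $D\vert_V$, which are intrinsic to $D$, the two canonical identifications agree on any overlap $V\cap V'$: there the branches of $D\vert_V$ and of $D\vert_{V'}$ coincide, and (\ref{ali:idv}) guarantees that the assignment of a branch to a smooth component is consistent from chart to chart. The local isomorphisms therefore descend to a global isomorphism $D^{(k)}_{\Del}\os{\sim}{\lo}D^{(k)}_{\Del'}$ of log schemes over $X$, proving the asserted independence.

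The main obstacle I expect is the bookkeeping in this gluing step: one must verify that the bijection between smooth components and local branches is genuinely compatible across overlapping charts (so that a component $\os{\circ}{D}_{\lam}$ contributing as $(y_i=0)$ on $V$ and as $(y'_j=0)$ on $V'$ is matched consistently) and that distinct components never collapse onto one branch. Both points rest on the uniqueness in (\ref{ali:idv}) together with the reducedness of $\os{\circ}{D}$, so the genuine work is organizing these facts into a coherent cocycle-type verification rather than introducing any new geometric input.
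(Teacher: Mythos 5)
Your proof is correct and coincides with the paper's intended argument: the paper's proof of (\ref{prop:dkwdef}) is a one-line citation of \cite[(2.2.14), (2.2.15)]{nh2}, and the argument of those results is precisely what you reconstruct, namely that the uniqueness statement (\ref{ali:idv}) makes the assignment of a local branch of $D$ to a smooth component bijective and chart-independent, so that the chart-local descriptions of $D^{(k)}$ as disjoint unions of $k$-fold intersections of branches are intrinsic and glue. There is no gap: your gluing step, resting on the uniqueness in (\ref{ali:idv}) together with the effectivity of the divisors (which forces every component meeting a chart to restrict to exactly one branch), is exactly where the cited proof also puts the weight.
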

\begin{proof}
The proof is the same as those of \cite[(2.2.14), (2.2.15)]{nh2}. 
\end{proof}

\parno 
The following is the log version of a generalization of \cite[(2.12)]{di}. 
\begin{coro}\label{coro:lwl}
Let $X$ be a projective SNCL scheme over the log point $s$ of 
a perfect field of characteristic $p>0$. 
Let $D$ be a $($relative$)$ SNCD on $X/s$. 
Let $E$ be a $($relative$)$ SNCD on $X/s$ such that 
$D+E$ is also a $($relative$)$ SNCD on $X/s$. 
Assume that 
${\cal O}_{\os{\circ}{X}}(\os{\circ}{E})$ is an ample invertible ${\cal O}_X$-module. 
Assume that $X(D)/s$ and $E(D\cap E)/s$ lift to ${\cal W}_2(s)$. 
For simplicity of notation, denote $E(D\cap E)$ and 
$E^{(k)}(D\cap E^{(k)})$ by $E(D)$ and 
$E^{(k)}(D)$, respectively. 
Let $a\col E^{(1)}(D)\lo E^{(2)}(D)$ be the natural morphism. 
Set 
$K(E(D))^{\bul}:={\rm Ker}(\Om^{\bul}_{E^{(1)}(D)/s}\lo a_*(\Om^{\bul}_{E^{(2)}(D)/s}))$. 
Then the following hold$:$
\par 
$(1)$ The restriction morphism 
\begin{align*} 
H^q_{\rm dR}(X(D)/s)\lo  H^q(E^{(1)},K(E(D))^{\bul})
\end{align*} 
is an isomorphism for $q< \min \{d,p\}-1$ and injective for $q= \min \{d,p\}-1$. 
\par 
$(2)$ The restriction morphism 
\begin{align*} 
H^j(X(D),\Om^i_{X(D)/s})\lo  H^j(E^{(1)},K(E(D))^i)
\end{align*} 
is an isomorphism for $i+j< \min \{d,p\}-1$ and injective for $i+j= \min \{d,p\}-1$. 
\end{coro}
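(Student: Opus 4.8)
The plan is to realize the restriction morphism as the map induced by a short exact sequence of sheaves, and of complexes, on $\os{\circ}{X}$, and then to kill the resulting kernel cohomology by the log Kodaira--Akizuki--Nakano vanishing theorem (\ref{coro:lkan}). Writing ${\cal L}:={\cal O}_{\os{\circ}{X}}(\os{\circ}{E})$ and noting that $X(D+E)=X(D)(E)$, I first observe $\Om^i_{X(D)/s}(\log E)(-E)=\Om^i_{X(D+E)/s}\otimes_{{\cal O}_{\os{\circ}{X}}}{\cal L}^{-1}$ and that, by the computation recorded in (\ref{rema:ldc}), these form a subcomplex $\Om^{\bul}_{X(D+E)/s}\otimes {\cal L}^{-1}$ of $\Om^{\bul}_{X(D)/s}$. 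Next I would establish, by a local computation on the charts (\ref{cd:1b}), the exact sequence of ${\cal O}_{\os{\circ}{X}}$-modules
\begin{align*}
0\lo \Om^i_{X(D+E)/s}\otimes {\cal L}^{-1}\lo \Om^i_{X(D)/s}\lo K(E(D))^i\lo 0,
\end{align*}
whose right-hand map is restriction to the components $E^{(1)}$ of $E$; its image lands in $K(E(D))^i=\ker(\Om^i_{E^{(1)}(D)/s}\lo a_*\Om^i_{E^{(2)}(D)/s})$ because sections restricted from $\os{\circ}{X}$ agree on the double locus. The computation reduces componentwise to the smooth case: for $E_{\lam}=\{t_{\lam}=0\}$, a local section lies in the kernel of the restriction to $E^{(1)}$ exactly when each $dt_{\lam}$-coefficient is divisible by $\prod_{\mu\not=\lam}t_{\mu}$ and every remaining coefficient by $\prod_{\mu}t_{\mu}$, which is precisely the submodule $\Om^i_{X(D+E)/s}\otimes {\cal L}^{-1}$. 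These assemble into a short exact sequence of complexes with middle term $\Om^{\bul}_{X(D)/s}$ and quotient $K(E(D))^{\bul}$.

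The analytic heart of the argument is the vanishing
\begin{align*}
H^j(\os{\circ}{X},\Om^i_{X(D+E)/s}\otimes {\cal L}^{-1})=0 \quad (i+j<\min\{d,p\}).
\end{align*}
Since $\os{\circ}{X}$ is projective and $X(D+E)/s$ is log smooth, integral and saturated, hence of Cartier type by (\ref{prop:ktj}) (4), of dimension $d=\dim\os{\circ}{X}$, this is exactly (\ref{coro:lkan}) (1) applied to $X(D+E)$ and the ample sheaf ${\cal L}$ --- provided $X(D+E)$ admits a log smooth integral lift over ${\cal W}_2(s)$. Producing such a lift is the main obstacle, because the hypotheses only furnish lifts of $X(D)$ and of $E(D\cap E)$. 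The plan here is to lift the pair: starting from a lift $\wt{X(D)}$ of $X(D)$, lift the relative SNCD $\os{\circ}{E}$ to a relative SNCD inside $\wt{X(D)}$ (equivalently, enlarge the log structure by $M(\wt{E})$), the required compatible lift of $E$ with its induced log structure being supplied by the given lift of $E(D\cap E)$; this step is governed by the log deformation theory of \S\ref{sec:latv} and \S\ref{ldtv}, and is where both lifting hypotheses are consumed.

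Granting the lift, the conclusion follows by feeding the vanishing into the long exact sequences attached to the short exact sequences above. For (2), the sheaf-level sequence gives, for each $i$, the exact fragment $H^j(\os{\circ}{X},\Om^i_{X(D+E)/s}\otimes {\cal L}^{-1})\lo H^j(X(D),\Om^i_{X(D)/s})\lo H^j(E^{(1)},K(E(D))^i)\lo H^{j+1}(\os{\circ}{X},\Om^i_{X(D+E)/s}\otimes {\cal L}^{-1})$; the left flank vanishes once $i+j<\min\{d,p\}$ and the right flank once $i+j+1<\min\{d,p\}$, so the restriction is an isomorphism for $i+j<\min\{d,p\}-1$ and, the left flank still vanishing at $i+j=\min\{d,p\}-1$, injective there. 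For (1), I would run the Hodge--de Rham spectral sequence $E_1^{ij}=H^j(\os{\circ}{X},\Om^i_{X(D+E)/s}\otimes {\cal L}^{-1})\Lo H^{i+j}(\os{\circ}{X},\Om^{\bul}_{X(D+E)/s}\otimes {\cal L}^{-1})$; since every $E_1^{ij}$ with $i+j\leq \min\{d,p\}-1$ vanishes, the abutment vanishes in those total degrees, and the hypercohomology long exact sequence of the short exact sequence of complexes shows that $H^q_{\rm dR}(X(D)/s)\lo H^q(E^{(1)},K(E(D))^{\bul})$ is an isomorphism for $q<\min\{d,p\}-1$ and injective for $q=\min\{d,p\}-1$. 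Note that no $E_1$-degeneration of the kernel complex is needed here, only the vanishing of its relevant $E_1$-terms.
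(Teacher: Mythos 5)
Your proposal follows the paper's own route in every substantive step: the same short exact sequence of complexes
\begin{align*}
0\lo \Om^{\bul}_{X(D+E)/s}(-E)\lo \Om^{\bul}_{X(D)/s}\lo K(E(D))^{\bul}\lo 0
\end{align*}
(the paper obtains the exactness from the longer sequence as in \cite[(4.2.2) (c)]{di} rather than by your direct chart computation, but this is the same content), the same reduction via the spectral sequence (\ref{ali:spv}) to the sheaf-level vanishing $H^j(X,\Om^i_{X(D+E)/s}(-E))=0$ for $i+j<\min\{d,p\}$, the same appeal to the log Kodaira--Akizuki--Nakano vanishing (\ref{coro:lkan}) (1) applied to $X(D+E)$ with ${\cal L}={\cal O}_{\os{\circ}{X}}(\os{\circ}{E})$, and the same bookkeeping with the long exact sequences to extract both (1) and (2).

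The one place you depart from the paper is the lifting hypothesis, and it is worth being precise about what happens there. You correctly observe that (\ref{coro:lkan}) (1) demands a log smooth integral lift of $X(D+E)$ over ${\cal W}_2(s)$, whereas the corollary only assumes lifts of $X(D)$ and of $E(D\cap E)$; you then propose to manufacture the missing lift by deforming $E$ inside a chosen lift of $X(D)$, guided by the given lift of $E(D\cap E)$. As sketched, that step is not a proof: lifting the divisor $E$ inside a \emph{fixed} lift of $X(D)$ is an obstructed deformation problem, and an abstract lift of the log scheme $E(D\cap E)$ neither kills that obstruction nor singles out an embedded lift. The paper, for its part, does not address this point at all --- it declares the needed vanishing to be ``a special case of (\ref{coro:lkan}) (1)'' without constructing any lift of $X(D+E)$, i.e., it implicitly reads its hypotheses as furnishing a compatible lift of the pair $(X(D),E)$, from which a lift of $X(D+E)=X(D)(E)$ is immediate. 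So your extra step does not close the argument, but it isolates exactly the subtlety that the paper's proof glosses over; once the hypotheses are read as a pair lifting (which is how they are actually consumed), your argument and the paper's coincide.
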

\begin{proof} 
(1): (The following proof includes a correction of the proof of \cite[(2.12)]{di} 
(see (\ref{rema:mdi}) below).) 
As in \cite[(4.2.2) (c)]{di}, the following sequence 
\begin{align*} 
0\lo \Om^{\bul}_{X(D+E)/s}(-E)\lo \Om^{\bul}_{X(D)/s}\lo \Om^{\bul}_{E^{(1)}(D)/s}\lo 
\Om^{\bul}_{E^{(2)}(D)/s}\lo \cdots
\end{align*}  
is exact. 
Hence we have the following exact sequence 
\begin{align*} 
0\lo \Om^{\bul}_{X(D+E)/s}(-E)\lo \Om^{\bul}_{X(D)/s}\lo K(E(D))^{\bul}\lo 0. 
\end{align*} 
It suffices to prove that 
$H^q(X,\Om^{\bul}_{X(D+E)/s}(-E))=0$ for $q< \min \{d,p\}$. 
By the following spectral sequence 
\begin{align*} 
E^{ij}_1=H^j(X,\Om^i_{X(D+E)/s}(-E))\Lo 
H^{i+j}(X,\Om^{\bul}_{X(D+E)/s}(-E)), 
\tag{6.5.1}\label{ali:spv} 
\end{align*} 
it suffices to prove that 
$H^j(X,\Om^i_{X(D+E)/s}(-E))=0$ for $i+j< \min \{d,p\}$. 
This is a special case of (\ref{coro:lkan}) (1). 
\par 
$(2)$ As in the proof of (1), it suffices to prove that 
$H^j(X,\Om^i_{X(D+E)/s}(-E))=0$ for $i+j< \min \{d,p\}$. 
We have already proved this in the proof of (1). 
\end{proof}

\begin{rema}\label{rema:mdi}
(cf.~the proof of \cite[Theorem 1]{no})
(1) Let the notations be as in \cite[(2.12)]{di}. 
There is an elementary error in the proof of [loc.~cit.]
because there does not exist 
complexes $\Om^{\bul}_X(-D)$ and $\Om^{\bul}_D(-D)$ in [loc.~cit.]. 
Consequently we do not have an exact sequence 
\begin{align*} 
0\lo \Om^{\bul}_{X}(-D)\lo \Om^{\bul}_{X}(\log D)(-D)
\lo \Om^{\bul-1}_{D}(-D)\lo 0. 
\tag{6.6.1}\label{ali:exbxd} 
\end{align*} 
of complexes in [loc.~cit.]. 
\par 
The correction of the proof is easy.   
We have only to use the following spectral sequence (\ref{ali:spdv})  
and the following exact sequence (\ref{ali:exdxd}) and the 
following vanishing (\ref{ali:exfrxd}) (which follows from \cite[(2.8)]{di}):   
\begin{align*} 
E^{ij}_1=
H^j(X,\Om^i_{X/\kap}(\log D)(-D)))\Lo H^{i+j}(X,\Om^{\bul}_{X/\kap}(\log D)(-D)), 
\tag{6.6.2}\label{ali:spdv} 
\end{align*} 
\begin{align*} 
0\lo \Om^i_{X/\kap}(-D)\lo \Om^i_{X/\kap}(\log D)(-D)
\os{{\rm Res}\otimes {\rm id}_{{\cal O}_X(-D)}}{\lo} 
\Om^{i-1}_{D/\kap}(-D\vert_{D})\lo 0. 
\tag{6.6.3}\label{ali:exdxd} 
\end{align*} 
\begin{align*} 
H^j(X,\Om^i_{X/\kap}(-D))=0=
H^j(D,\Om^{i-1}_{D/\kap}(-D\vert_{D}))=0\quad {\rm for}  \quad
i+j< \min \{d,p\}.
\tag{6.6.4}\label{ali:exfrxd} 
\end{align*}  
\par 
(2) As in the proof of (\ref{coro:lwl}), to prove \cite[(2.8)]{di}, one can also use 
theory for log de Rham complex in \cite[4.2]{di}.  
\end{rema}

\par 
The following is a generalization of 
Norimatsu's results \cite[Theorem 2, Corollary]{no}.

\begin{coro}\label{coro:0lrkan}
Let the notations be as in {\rm (\ref{coro:0lkan})}. 
Assume that $a$ in {\rm (\ref{coro:0lkan})} is equal to $0$ 
and denote $T$ by $s$.  
Let $Z/s$ be a projective SNCL scheme.  
Let $D$ and $E$ be SNCD's on $Z/s$ such 
that $D+E$ is also an SNCD on $Z/s$. 
Assume that ${\cal O}_{\os{\circ}{Z}}(\os{\circ}{E})$ 
is an ample invertible ${\cal O}_Z$-module. 
Let $a\col E^{(1)}(D)\lo E^{(2)}(D)$ be the natural morphism. 
Let $K(E(D))^{\bul}:={\rm Ker}(\Om^{\bul}_{E^{(1)}(D)/s}\lo 
a_*(\Om^{\bul}_{E^{(2)}(D)/s}))$ be a complex defined similarly as in {\rm (\ref{coro:lwl})}. 
Then the following hold$:$ 
\par 
$(1)$ The restriction morphism 
\begin{align*} 
H^q_{\rm dR}(Z(D)/s)\lo  H^q(E^{(1)},K(E(D))^{\bul})
\end{align*} 
is an isomorphism for $q< d-1$ and injective for $q= d-1$. 
\par 
$(2)$ 
The restriction morphism 
\begin{align*} 
H^j(Z(D),\Om^i_{Z(D)/s})\lo  H^j(E^{(1)},K(E(D))^i)
\end{align*} 
is an isomorphism for $i+j< d-1$ and injective for $i+j= d-1$. 
\end{coro}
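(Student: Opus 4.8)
The plan is to imitate \emph{verbatim} the proof of the characteristic-$p$ statement \ref{coro:lwl}, substituting the characteristic-$0$ vanishing theorem \ref{coro:0lkan}~(1) for the characteristic-$p$ vanishing theorem \ref{coro:lkan}~(1). Since \ref{coro:0lkan}~(1) carries no constraint involving $p$, the bound $\min\{d,p\}$ occurring throughout the proof of \ref{coro:lwl} is everywhere replaced by $d$. Recall that, by the construction following \ref{prop:logst} together with \ref{prop:ktj}~(4), both $Z(D)/s$ and $Z(D+E)/s$ are projective log smooth integral saturated schemes over $s$, so that \ref{coro:0lkan} applies to them; here $Z(D+E)$ plays the role of the ambient log scheme and $\os{\circ}{E}$ the role of the ample divisor, with ${\cal O}_{\os{\circ}{Z}}(\os{\circ}{E})$ ample by hypothesis.

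First I would reproduce the log residue exact sequence of complexes on $\os{\circ}{Z}$, exactly as in \cite[(4.2.2) (c)]{di} and as already used in the proof of \ref{coro:lwl}:
\begin{align*}
0\lo \Om^{\bul}_{Z(D+E)/s}(-E)\lo \Om^{\bul}_{Z(D)/s}\lo
\Om^{\bul}_{E^{(1)}(D)/s}\lo a_*(\Om^{\bul}_{E^{(2)}(D)/s})\lo \cdots,
\end{align*}
where $\Om^i_{Z(D+E)/s}(-E):={\cal O}_{\os{\circ}{Z}}(-\os{\circ}{E})\otimes_{{\cal O}_{Z(D+E)}}\Om^i_{Z(D+E)/s}$ as in \ref{rema:ldc}. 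Cutting this sequence at the term $\Om^{\bul}_{E^{(1)}(D)/s}$ and recalling the definition $K(E(D))^{\bul}=\mathrm{Ker}(\Om^{\bul}_{E^{(1)}(D)/s}\lo a_*(\Om^{\bul}_{E^{(2)}(D)/s}))$ yields the short exact sequence of complexes
\begin{align*}
0\lo \Om^{\bul}_{Z(D+E)/s}(-E)\lo \Om^{\bul}_{Z(D)/s}\lo K(E(D))^{\bul}\lo 0.
\end{align*}

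Next, for part (1) I would take the long exact sequence of hypercohomology associated to this short exact sequence; the restriction morphism $H^q_{\rm dR}(Z(D)/s)\lo H^q(E^{(1)},K(E(D))^{\bul})$ sits in it between the terms $H^q(Z,\Om^{\bul}_{Z(D+E)/s}(-E))$ and $H^{q+1}(Z,\Om^{\bul}_{Z(D+E)/s}(-E))$. Hence the restriction is an isomorphism once these vanish in the two degrees $q$ and $q+1$, and is injective once the degree-$q$ term alone vanishes; so it suffices to prove
\begin{align*}
H^q(Z,\Om^{\bul}_{Z(D+E)/s}(-E))=0\quad (q<d),
\end{align*}
which produces an isomorphism for $q<d-1$ and injectivity for $q=d-1$, exactly as asserted. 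For part (2) I would instead use, for each fixed $i$, the degreewise short exact sequence of ${\cal O}_{\os{\circ}{Z}}$-modules $0\lo \Om^i_{Z(D+E)/s}(-E)\lo \Om^i_{Z(D)/s}\lo K(E(D))^i\lo 0$ and its long exact cohomology sequence; the same vanishing $H^j(Z,\Om^i_{Z(D+E)/s}(-E))=0$ for $i+j<d$ then gives the isomorphism for $i+j<d-1$ and injectivity for $i+j=d-1$.

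It therefore remains to establish the single vanishing statement. By the Hodge--de Rham spectral sequence
\begin{align*}
E_1^{ij}=H^j(Z,\Om^i_{Z(D+E)/s}(-E))\Lo H^{i+j}(Z,\Om^{\bul}_{Z(D+E)/s}(-E)),
\end{align*}
the claim $H^q=0$ for $q<d$ is reduced to $H^j(Z,\Om^i_{Z(D+E)/s}(-E))=0$ for $i+j<d$. Writing ${\cal L}:={\cal O}_{\os{\circ}{Z}}(\os{\circ}{E})$ and using $\Om^i_{Z(D+E)/s}(-E)=\Om^i_{Z(D+E)/s}\otimes {\cal L}^{-1}$, this is precisely a special case of \ref{coro:0lkan}~(1) applied to the projective log smooth integral saturated scheme $Z(D+E)/s$. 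I expect no serious obstacle: the residue exact sequence above is purely local and has already been verified, in the identical SNCL setting, in the proof of \ref{coro:lwl}, while the genuine analytic input is concentrated in \ref{coro:0lkan}~(1), which itself rests on Kato--Tsuji's result \ref{prop:ktj} and the Illusie--Kato--Nakayama degeneration recalled in \ref{rema:de1}~(2). The only points requiring care are bookkeeping ones: confirming that the range computations in the two long exact sequences are carried out with the correct shift (degree $q$ versus $q+1$, resp.\ $j$ versus $j+1$), and confirming that the ampleness and saturatedness hypotheses of \ref{coro:0lkan} are indeed met by $Z(D+E)/s$.
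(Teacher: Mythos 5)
Your proof is correct: the residue exact sequence from \cite[(4.2.2) (c)]{di} is characteristic-free, your long-exact-sequence bookkeeping is exactly right (vanishing of $H^q(Z,\Om^{\bul}_{Z(D+E)/s}(-E))$ for all $q<d$ gives an isomorphism for $q<d-1$ and injectivity for $q=d-1$, and likewise degreewise for part (2)), and the spectral-sequence reduction to $H^j(Z,\Om^i_{Z(D+E)/s}(-E))=0$ for $i+j<d$ is indeed a special case of (\ref{coro:0lkan}) (1) applied to $Z(D+E)/s$ with ${\cal L}={\cal O}_{\os{\circ}{Z}}(\os{\circ}{E})$. Your route is, however, organized differently from the paper's: the paper disposes of (\ref{coro:0lrkan}) in one line by declaring its proof an analogue of that of \cite[(7.2.1)]{ikn}, i.e.\ by re-running the spreading-out/reduction-to-characteristic-$p$ argument for the weak Lefschetz statement itself, whereas you stay entirely in characteristic $0$ and transplant the proof of the characteristic-$p$ statement (\ref{coro:lwl}), substituting (\ref{coro:0lkan}) (1) for (\ref{coro:lkan}) (1). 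Your organization is more modular: the reduction modulo $p$ is invoked exactly once, encapsulated in (\ref{coro:0lkan}), and you avoid redoing the spreading-out bookkeeping (preservation of the SNCL/SNCD structure, ampleness, and liftability after reduction); the paper's version buys brevity by citation but leaves that bookkeeping implicit. One correction to your verification of the hypotheses of (\ref{coro:0lkan}): the saturatedness of $Z(D+E)/s$ cannot be deduced from (\ref{prop:ktj}) (4), which is a characteristic-$p$ criterion ($p$-saturated $\Leftrightarrow$ Cartier type); in characteristic $0$ you should instead invoke (\ref{prop:ktj}) (5) — the morphism $Z(D+E)\lo s$ is log smooth and integral and its underlying scheme is an SNC scheme over a field, hence reduced, so the morphism is saturated.
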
 
\begin{proof} 
The proof is an analogue of the proof of \cite[(7.1.2)]{ikn}. 
\end{proof} 

\begin{coro}[{\bf Log weak Lefschetz theorem in characteristic $0$}]\label{coro:lwlc} 
Let the notations be as in {\rm (\ref{coro:0lrkan})}.  
Assume that $D=\emptyset$ and $E^{(2)}=\emptyset$. 
Then the following hold$:$
\par 
$(1)$ 
The following pull-back morphism by the inclusion 
$\iota \col E\os{\sus}{\lo} Z$ 
\begin{align*} 
\iota^*\col H^q_{\rm dR}(Z/s)\lo H^q_{\rm dR}(E/s)
\end{align*} 
is an isomorphism for $q< d-1$ and injective for $q= d-1$. 
\par 
$(2)$ 
Assume furthermore that $K={\mab C}$. 
Let $Z^{\log}$ be the Kato-Nakayama space of $Z$ 
with natural morphism $Z\lo {\mab S}^1$ 
{\rm (\cite[(1.2)]{ktn})}.  
Let ${\mab R}\owns x\lom 
\exp(2\pi\sqrt{-1}x) \in {\mab S}^1$ be 
the universal cover of 
${\mab S}^{1}$ and set $Z_{\infty}:=
Z^{\log}\times_{{\mab S}^1}{\mab R}$ {\rm (\cite{us})}. 
The following pull-back morphism by the inclusion 
$\iota \col E\os{\sus}{\lo} Z$ 
\begin{align*} 
\iota^*\col H^q(Z_{\infty},{\mab Q})\lo H^q(E_{\infty},{\mab Q})
\end{align*} 
is an isomorphism of mixed Hodge structures for $q< d-1$ 
and a strictly injective morphism of mixed Hodge structures for $q= d-1$. 
\end{coro}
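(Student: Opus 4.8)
The plan is to obtain part (1) as a direct specialization of the already-proven Corollary (\ref{coro:0lrkan}) (1), and then to upgrade the de Rham statement to the assertion about mixed Hodge structures in part (2) by invoking the comparison theorem between log de Rham cohomology and the Betti cohomology of the Kato-Nakayama space, together with the general strictness of morphisms of mixed Hodge structures. The honest content is concentrated in part (1); part (2) is then formal.

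For part (1), first I would record that the hypotheses $D=\emptyset$ and $E^{(2)}=\emptyset$ collapse the complex $K(E(D))^{\bul}$ of (\ref{coro:0lrkan}) to the log de Rham complex of $E$ itself. Indeed, with $D=\emptyset$ we have $E(D\cap E)=E$ and $E^{(1)}(D)=E^{(1)}=E$, while $E^{(2)}(D)=E^{(2)}=\emptyset$, so
$$K(E(D))^{\bul}={\rm Ker}(\Om^{\bul}_{E^{(1)}/s}\lo a_*(\Om^{\bul}_{E^{(2)}/s}))=\Om^{\bul}_{E/s}.$$
Hence $H^q(E^{(1)},K(E(D))^{\bul})=H^q_{\rm dR}(E/s)$, and the restriction morphism of (\ref{coro:0lrkan}) (1) is precisely the pull-back $\iota^*$. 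Part (1) is then exactly (\ref{coro:0lrkan}) (1). The same specialization of (\ref{coro:0lrkan}) (2) gives that $\iota^*$ is strictly compatible with the Hodge (stupid) filtration in the relevant range, which I would keep in reserve for part (2).

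For part (2), with $K=\mab C$, I would proceed in three steps. First, by the comparison theorem of Kato-Nakayama and Illusie-Kato-Nakayama (\cite{ktn}, \cite{ikn}; see also \cite{us}, \cite{nlpi}), there is a canonical isomorphism $H^q(Z_{\infty},\mab C)\os{\sim}{\lo}H^q_{\rm dR}(Z/s)$, and likewise for $E$, compatible with $\iota^*$. Combining this with part (1) shows that $\iota^*\col H^q(Z_{\infty},\mab C)\lo H^q(E_{\infty},\mab C)$ is an isomorphism for $q<d-1$ and injective for $q=d-1$; since $\iota^*$ is defined over $\mab Q$ and $H^q(-,\mab Q)\os{\sus}{\lo}H^q(-,\mab C)$, the same holds with $\mab Q$-coefficients. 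Second, by the theory of the limiting (canonical-fiber) mixed Hodge structure on $H^q(Z_{\infty},\mab Q)$ developed in \cite{us} and \cite{nlpi} (cf.~\cite{ikn}), the pull-back $\iota^*$ is a morphism of mixed Hodge structures. Third, since every morphism of mixed Hodge structures is strictly compatible with both the weight and Hodge filtrations, an injective such morphism is strictly injective and a bijective one is an isomorphism in the category of mixed Hodge structures. This yields part (2).

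The main obstacle is not an estimate but a compatibility check: verifying that the comparison isomorphism and the mixed Hodge structure of \cite{us} and \cite{nlpi} are genuinely compatible with the closed immersion $\iota \col E\os{\sus}{\lo}Z$, so that $\iota^*$ is literally a morphism of mixed Hodge structures, and that the rational structure is preserved under the transfer from de Rham to Betti cohomology. The de Rham input itself rests only on the vanishing theorem (\ref{coro:0lkan}) through the spectral-sequence argument behind (\ref{coro:0lrkan}). Once the compatibility above is granted, strictness is automatic from the abelian-category formalism of mixed Hodge structures, and no further computation is required.
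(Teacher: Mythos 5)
Your proposal is correct and follows essentially the same route as the paper: part (1) is obtained as the specialization $D=\emptyset$, $E^{(2)}=\emptyset$ of (\ref{coro:0lrkan}) (under which $K(E(D))^{\bul}$ collapses to $\Om^{\bul}_{E/s}$), and part (2) follows by knowing that $\iota^*$ is a morphism of mixed Hodge structures (the paper cites \cite{fn} for this, where you cite \cite{us}, \cite{nlpi}, \cite{ikn}) combined with part (1) and the strictness formalism of Deligne's theory \cite{dh2}. The only difference is the choice of reference for the morphism-of-mixed-Hodge-structures property, which does not change the argument.
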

\begin{proof} 
(1): (1) is a special case of (\ref{coro:0lrkan}). 
\par 
(2): By \cite{fn} the morphism 
$\iota^*$ is a morphism of mixed Hodge structures. 
Hence (2) follows (1) and theory of mixed Hodge structures in \cite{dh2}. 
\end{proof}

\begin{rema}
(1) In \cite[(9.14)]{nlpi} we have proved the 
log hard Lefschetz theorem over ${\mab C}$. 
The result is as follows. 
\par 
Assume that $\os{\circ}{s}={\rm Spec}({\mab C})$. 
Let $Z/s$ be a projective SNCL variety. 
Let $\lam_{\infty}:=c_{1,\infty}({\cal L})\in H^2(Z_{\infty},{\mab Q})$ 
be the log cohomology class of 
an ample invertible ${\cal O}_Z$-module ${\cal L}$.
Then the left cup product of $\lam^j_{\infty}$ $(j\geq 0)$ 
\begin{equation*}
\lam^j_{\infty}  
\col H^{d-j}(Z_{\infty},{\mab Q}) 
\lo H^{d+j}(Z_{\infty},{\mab Q})(j) 
\tag{6.9.1}\label{eqn:dmlhl}
\end{equation*}
\parno
is an isomorphism of mixed Hodge structures. 
We have proved this theorem by using a result of M.~Saito (\cite[(4.2.2)]{sm}). 
Let 
\begin{equation*}
\del_E \col H^0_{\rm dR}(E/s) \lo H^2_{\rm dR}(X/s)(1).
\tag{6.9.2}\label{eqn:deldhlm}
\end{equation*}  
be the morphism defined in \cite[(10.1.2)]{nlpi}. 
This morphism induces the following morphism 
\begin{equation*}
\iota_* \col H^q_{\rm dR}(E/s) \lo H^{q+2}_{\rm dR}(X/s)(1).
\tag{6.9.3}\label{eqn:itota}
\end{equation*}  
By \cite[(10.1.3)]{nlpi}, the composite morphism 
\begin{equation*}
-\iota_*\iota^* \col H^q_{\rm dR}(X/s) \lo 
H^q_{\rm dR}(E/s) \lo H^{q+2}_{\rm dR}(X/s)(1).
\tag{6.9.4}\label{eqn:itocta}
\end{equation*} 
is the cup product with $\lam_{\infty}\cup (?)$. 
Hence we obtain (\ref{coro:lwlc}) (2) and (1) by 
the hard Lefschetz theorem above 
as in \cite[II Corollary]{kme}. 
\par 
(2) We would like to lay emphasis on the algebraic nature of the proof of 
(\ref{coro:lwlc}) as in \cite{di}.  
\end{rema}


\par 
Let us go back to the case ${\rm ch}(\kap)=p>0$.  
Let $E$ be an SNCD on $X/s$ such that $E^{(2)}=\emptyset$. 
Let $q$ be a nonnegative integer. 
For a proper log smooth scheme $Y/s$, 
let $H^q_{\rm crys}(Y/{\cal W}(s))$ 
be the log crystalline cohomology of $Y/{\cal W}(s)$ (\cite{klog1}). 
By the works in \cite{msemi}, \cite{ndw} and \cite{nlw}, 
$H^q_{\rm crys}(X/{\cal W}(s))$ and 
$H^q_{\rm crys}(E/{\cal W}(s))$ have the weight filtrations $P$'s.
Set $K_0:={\rm Frac}({\cal W})$. 
For a module $M$ over ${\cal W}$, 
set $M_{K_0}:=M\otimes_{\cal W}K_0$. 
Let $\iota \col E\os{\sus}{\lo} X$ be the closed immersion. 
By a general theorem about the strict compatibility of 
the pull-back of a morphism of proper SNCL schemes 
in \cite[(5.4.7)]{nlw} (see (\ref{theo:imp}) below for the statement), 
the pull-back  of $\iota$  
\begin{align*} 
\iota^*_{\rm crys}  \col H^q_{\rm crys}(X/{\cal W}(s))_{K_0}\lo 
H^q_{\rm crys}(E/{\cal W}(s))_{K_0} \quad (q\in {\mab Z})
\tag{6.9.5}\label{ali:crxsw} 
\end{align*} 
is a strict filtered morphism with respect to the $P$'s. 
\par
As to the log weak Lefschetz conjecture (\ref{conj:lwl}),  
we prove the following stimulated by 
the work of Berthelot (\cite{bwl}) in this article: 

\begin{theo}[{\bf Log weak Lefschetz theorem 
in log crystalline cohomologies}]\label{theo:qfls} 
Let the notations be as in {\rm (\ref{coro:lwl})}. 
Assume that $E^{(2)}=\emptyset$. 
Let $\iota \col E(D)\os{\sus}{\lo} X(D)$ be the closed immersion. 
Then the following hold$:$ 
\par 
$(1)$ The pull-back 
\begin{align*} 
\iota^*_{\rm crys}  \col H^q_{\rm crys}(X(D)/{\cal W}(s))\lo 
H^q_{\rm crys}(E(D)/{\cal W}(s)) \quad (q\in {\mab Z})
\tag{6.10.1}\label{ali:crxw} 
\end{align*} 
is an isomorphism if $q< \min \{d,p\}-1$ 
and injective for $q= \min \{d,p\}-1$ with torsion free cokernel. 
\par 
$(2)$ Assume that $D=\emptyset$. 
Then the morphism {\rm (\ref{ali:crxw})} modulo torsion 
is a filtered isomorphism for $q< \min \{d,p\}-1$ 
and strictly injective for $q= \min \{d,p\}-1$. 
\par 
$(3)$ Assume that $\os{\circ}{E}$ is a hypersurface section of 
$\os{\circ}{X}$ with respect to a closed immersion 
$\os{\circ}{X}\os{\sus}{\lo} {\mab P}^n_{\kap}$
into a projective space over $\kap$ and that 
the degree of $\os{\circ}{E}$ is sufficiently large. 
Then the morphism 
{\rm (\ref{ali:crxw})} is an isomorphism for $q< d-1$ 
and injective for $q= d-1$. 
\par 
$(4)$ Let the assumption be as in {\rm (3)}. 
Assume that $D=\emptyset$.  
Then the morphism {\rm (\ref{ali:crxw})} modulo torsion 
is a filtered isomorphism for $q< d-1$ 
and strictly injective for $q= d-1$. 
\end{theo}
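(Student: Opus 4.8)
The plan is to reduce the crystalline assertions $(1)$ and $(2)$ to the log de Rham weak Lefschetz theorem (\ref{coro:lwl}) by a mod $p$ dévissage, and to obtain the sharper range of $(3)$ and $(4)$ by the rigid-cohomology method of \S\ref{sec:wlt}. Set $a:=\min\{d,p\}-1$ and let ${\cal C}$ be the mapping cone of
\begin{align*}
\iota^*_{\rm crys}\col R\Gamma_{\rm crys}(X(D)/{\cal W}(s))\lo
R\Gamma_{\rm crys}(E(D)/{\cal W}(s)).
\end{align*}
Because $E^{(2)}=\emptyset$, the complex $K(E(D))^{\bul}$ occurring in (\ref{coro:lwl}) is simply $\Om^{\bul}_{E(D)/s}$, so (\ref{coro:lwl}) $(1)$ says exactly that the log de Rham pull-back $\iota^*\col H^q_{\rm dR}(X(D)/s)\lo H^q_{\rm dR}(E(D)/s)$ is an isomorphism for $q<a$ and injective for $q=a$; equivalently, the de Rham mapping cone ${\cal C}_{\rm dR}$ satisfies $H^q({\cal C}_{\rm dR})=0$ for $q<a$. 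By the log crystalline comparison theorem (\cite{klog1}, \cite{hk}), which gives a functorial isomorphism $R\Gamma_{\rm crys}(?/{\cal W}(s))\otimes^L_{\cal W}\kap\simeq R\Gamma_{\rm dR}(?/s)$ compatible with $\iota$, we have ${\cal C}\otimes^L_{\cal W}\kap\simeq {\cal C}_{\rm dR}$, so $H^q({\cal C}\otimes^L_{\cal W}\kap)=0$ for $q<a$.

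Next I would feed this into the universal coefficient sequence
\begin{align*}
0\lo H^q({\cal C})/p\lo H^q({\cal C}\otimes^L_{\cal W}\kap)\lo {}_pH^{q+1}({\cal C})\lo 0,
\end{align*}
which is available because the log crystalline cohomologies involved are finitely generated ${\cal W}$-modules. For $q<a$ this forces $H^q({\cal C})/p=0$, whence $H^q({\cal C})=0$ by Nakayama's lemma, and it forces ${}_pH^{q+1}({\cal C})=0$; taking $q=a-1$ shows in particular that ${}_pH^a({\cal C})=0$, i.e.~$H^a({\cal C})$ is torsion free. Reading off the long exact sequence of the cone, the vanishing $H^q({\cal C})=0$ for $q<a$ means that $\iota^*_{\rm crys}$ is an isomorphism for $q<a$ and injective for $q=a$, while the cokernel of $\iota^*_{\rm crys}$ in degree $a$, being a ${\cal W}$-submodule of the torsion-free module $H^a({\cal C})$, is torsion free. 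This is precisely $(1)$.

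Part $(2)$ is then essentially formal. By the general strict-compatibility theorem for pull-backs of proper SNCL schemes (\ref{theo:imp}, i.e.~\cite[(5.4.7)]{nlw}), the morphism (\ref{ali:crxsw}) is strictly compatible with the weight filtrations $P$; when $D=\emptyset$ part $(1)$ shows this morphism becomes bijective after $\otimes_{\cal W}K_0$ for $q<a$ and injective for $q=a$. A strictly compatible bijection is automatically a filtered isomorphism and a strictly compatible injection is strictly injective, which yields $(2)$. For $(3)$ and $(4)$, where the auxiliary bound $p$ is removed under the hypothesis that $\os{\circ}{E}$ is a hypersurface section of large degree, I would instead follow Berthelot's weak Lefschetz argument along high-degree hypersurface sections in the corrected rigid-cohomology form developed in \S\ref{sec:wlt}: the complement $\os{\circ}{X}\setminus\os{\circ}{E}$ becomes affine, and the vanishing of the relative (iso)crystalline cohomology in degrees $<d-1$ follows from Berthelot's finiteness and vanishing theorems for rigid cohomology, the SNCL case being reduced to the smooth-component case through the simplicial scheme $E^{(\bul)}$ and its weight spectral sequence. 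This supplies the isocrystalline range $q<d-1$; the integral statement $(3)$ is then obtained from Berthelot's integral crystalline weak Lefschetz as corrected in \S\ref{sec:wlt}, together with the torsion-free cone argument of the previous paragraph where needed, and $(4)$ follows from strict compatibility as in $(2)$.

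I expect the main obstacle to lie in $(3)$ and $(4)$ rather than in $(1)$ and $(2)$. The dévissage for $(1)$ is clean once one has the comparison ${\cal C}\otimes^L_{\cal W}\kap\simeq {\cal C}_{\rm dR}$ together with the finiteness of log crystalline cohomology, and the delicate boundary degree $q=a$ is handled automatically by the torsion-freeness that is forced by the injectivity at $q=a$ in (\ref{coro:lwl}). By contrast, deleting the bound $p$ in $(3)$ and $(4)$ genuinely requires the rigid-cohomology input, and here one must repair the gap in Berthelot's proof in \cite{bwl} and carefully transport his affine-complement vanishing to the logarithmic SNCL situation; keeping track of the weight filtration while passing among the crystalline, de Rham--Witt and rigid realizations, and checking that every comparison isomorphism is compatible with $\iota$, is where the real work will be.
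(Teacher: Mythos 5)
Your treatment of (1) and (2) is correct and coincides with the paper's proof: the paper forms the same mapping cone $K^{\bul}$, invokes the universal coefficient sequence, identifies $K^{\bul}\otimes^L_{\cal W}\kap$ with the de Rham mapping cone via Kato's log base change theorem \cite[(6.10)]{klog1}, deduces the vanishing and the torsion-freeness from (\ref{coro:lwl}) (1), and obtains (2) from (1) together with the strictness theorem (\ref{theo:imp}).

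For (3) and (4), however, your route has a genuine gap. First, the rigid-cohomology method of \S\ref{sec:wlt} produces only \emph{rational} (isocrystalline) statements: the appendix proves (\ref{theo:be}) and its corollary for $H^q_{\rm crys}(\,\cdot\,/{\cal W})_{K_0}$, and there is no ``integral crystalline weak Lefschetz as corrected in \S\ref{sec:wlt}'' to appeal to. Since (3) asserts an isomorphism and injectivity of the \emph{integral} modules $H^q_{\rm crys}(X(D)/{\cal W}(s))\lo H^q_{\rm crys}(E(D)/{\cal W}(s))$, no amount of isocrystalline input plus ``the torsion-free cone argument'' recovers it; indeed the rigid argument of \S\ref{sec:wlt} needs no largeness of $\deg\os{\circ}{E}$ at all, whereas the large degree is exactly what the integral statement requires, which signals that the two statements are not interchangeable. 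Second, the appendix works with classical crystalline/rigid cohomology of schemes over $\kap$ with \emph{smooth} open complement (smoothness is needed for Berthelot's duality and the Monsky--Washnitzer comparison), while here $\os{\circ}{X}$ is an SNC variety (not smooth) and the cohomology is log crystalline over ${\cal W}(s)$, of Hyodo--Kato type; none of the comparison theorems quoted in \S\ref{sec:wlt} applies, and reducing to smooth strata through $E^{(\bul)}$ and weight spectral sequences, compatibly with $\iota^*$ and with the filtration $P$, is precisely the heavy machinery of \cite{nlw} and is nowhere carried out in your sketch.

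What the paper actually does for (3) is much more elementary and stays entirely within coherent cohomology: by the \emph{same} mod $p$ d\'evissage as in (1), it suffices to prove the de Rham vanishing $H^q(X,\Om^{\bul}_{X/s}(\log E)(-E))=0$ for $q<d$, since ${\rm Ker}(\Om^{\bul}_{X/s}\lo \Om^{\bul}_{E/s})=\Om^{\bul}_{X/s}(\log E)(-E)$. This vanishing is then proved for $\deg\os{\circ}{E}$ large by combining the Hodge spectral sequence, the residue exact sequence, Tsuji's log Serre duality (\ref{theo:ico}) and Serre's vanishing theorem; the only delicate point is $H^j(E,\Om^i_{E/s}(-E))=0$ for $i+j<d-1$, because $\Om^i_{E/s}$ itself depends on $E$, and this is handled with the second fundamental exact sequence $0\lo {\cal O}_E(-E)\lo \Om^1_{X/s}\otimes_{{\cal O}_X}{\cal O}_E\lo \Om^1_{E/s}\lo 0$, which reduces everything to twists of $\Om^i_{X/s}$. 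Then (4) again follows from (3) and (\ref{theo:imp}). If you replace your rigid-cohomology paragraph by this argument, your proof becomes complete.
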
 
\begin{proof} 
The proof of (3) is slightly simpler than that in \cite{bwl}; 
the proof of (3) corrects the proof in \cite{bwl}. 
\par 
(1): Set $m:=\min \{d,p\}-1$. 
Let $K^{\bul}$ be the mapping cone of the following morphism 
\begin{align*} 
\iota^*_{\rm crys}  \col 
R\Gam_{\rm crys}(X(D)/{\cal W}(s))\lo 
R\Gam_{\rm crys}(E(D)/{\cal W}(s))). 
\end{align*}
Then it suffices to prove that $H^q(K^{\bul})=0$ for $q<m$ and 
that $H^m(K^{\bul})$ has no nontrivial torsion.  
By the universal coefficient theorem 
\begin{align*} 
0\lo H^q(K^{\bul})\otimes_{\cal W}\kap \lo 
H^q(K^{\bul}\otimes^L_{\cal W}\kap)\lo 
{\rm Tor}_1^{\cal W}(H^{q+1}(K^{\bul}),\kap)\lo 0, 
\end{align*}
it suffices to prove that $H^q(K^{\bul}\otimes^L_{\cal W}\kap)=0$ 
for $q<m$. 
Since $R\Gam((X(D)/{\cal W}(s))_{\rm crys})\otimes^L_{\cal W}\kap=
R\Gam_{\rm dR}(X(D)/\kap)$ and 
$R\Gam_{\rm crys}(E(D)/{\cal W}(s))\otimes^L_{\cal W}\kap=
R\Gam_{\rm dR}(E(D)/\kap)$ by the log base change theorem 
of K.~Kato (\cite[(6.10)]{klog1}),  
$K^{\bul}\otimes^L_{\cal W}\kap$ is the mapping cone of 
the following morphism 
\begin{align*} 
\iota^*_{\rm crys}  \col 
R\Gam_{\rm dR}(X(D)/\kap)\lo 
R\Gam_{\rm dR}(E(D)/\kap). 
\end{align*}
Hence (1) follows from the following exact sequence 
\begin{align*} 
\cdots \lo H^q_{\rm dR}(X(D)/\kap) \os{\iota^*_{\rm crys}}{\lo} 
H^q_{\rm dR}(E(D)/\kap)
\lo H^q_{\rm dR}(K^{\bul}\otimes^L_{\cal W}\kap)\lo \cdots 
\end{align*} 
and 
(\ref{coro:lwl}) (1). 
\par 
(2): (2) follows from (1) and (\ref{theo:imp}) below. 
\par 
(3): Set $e:=\deg \os{\circ}{E}$. 
Since 
${\rm Ker}(\Om^{\bul}_{X/s}\lo \Om^{\bul}_{E/s})=
\Om^{\bul}_{X/s}(\log E)(-E)$,  
it suffices to prove that 
$H^q(X,\Om^{\bul}_{X/s}(\log E)(-E))=0$ for $q<d$ by the proof of (1). 
Let $i$ and $j$ be nonnegative integers. 
By (\ref{ali:spv}) 
it suffices to prove that 
$H^j(X,\Om^i_{X/s}(\log E)(-E))=0$ for $i+j< d$. 
By the following sequence 
\begin{align*} 
0\lo \Om^i_{X/s}(-E)\lo \Om^i_{X/s}(\log E)(-E)
\os{{\rm Res}}{\lo} 
\Om^{i-1}_{E/s}(-E)\lo 0, 
\end{align*} 
it suffices to prove that 
$H^j(X,\Om^i_{X/s}(-E))=0$ for $i+j< d$ 
and $H^j(E,\Om^i_{E/s}(-E))=0$ for $i+j< d-1$. 
By Serre's theorem \cite[(2.2.1)]{ega31}, 
$H^{d-j}(X,\Om^{d-i}_{X/s}(E))=0$ for $i+j< d$ if $e$ is large enough. 
Hence $H^j(X,\Om^i_{X/s}(-E))=0$ for $i+j< d$ 
by the log Serre duality of Tsuji. 
The rest is to prove that  $H^j(E,\Om^i_{E/s}(-E))=0$ for $i+j< d-1$ 
if $e$ is large enough.  
Though this is the dual of 
$H^{d-1-j}(E,\Om^{d-1-i}_{E/s}(E))=0$ for $i+j< d-1$,  
the vanishing of this cohomology is nontrivial since 
$\Om^{d-1-i}_{E/s}$ depends on $E/s$.
\par 
Set ${\cal J}:={\cal O}_X(-E)$. 
Because $E/s$ is log smooth,  
the following second fundamental exact sequence in \cite[(2.1.3)]{nh2}
\begin{align*} 
{\cal J}/{\cal J}^2\lo \Om^1_{X/s}\otimes_{{\cal O}_X}{\cal O}_E
\lo \Om^1_{E/s}\lo 0
\end{align*} 
becomes the following exact sequence 
\begin{align*} 
0\lo ({\cal J}/{\cal J}^2)\lo \Om^1_{X/s}\otimes_{{\cal O}_X}{\cal O}_E
\lo \Om^1_{E/s}\lo 0. 
\end{align*} 
Because ${\cal J}/{\cal J}^2={\cal O}_E(-E)$, 
this sequence is equal to the following exact sequence 
\begin{align*} 
0\lo {\cal O}_E(-E)\lo \Om^1_{X/s}\otimes_{{\cal O}_X}{\cal O}_E
\lo \Om^1_{E/s}\lo 0. 
\end{align*} 
Hence we have the following exact sequences 
\begin{align*} 
0\lo \Om^{i-1}_{X/s}\otimes_{{\cal O}_X}
{\cal O}_E(-E)\lo 
\Om^i_{X/s}\otimes_{{\cal O}_X}{\cal O}_E
\lo \Om^i_{E/s}\lo 0. 
\end{align*} 
and 
\begin{align*} 
0\lo \Om^{i-1}_{X/s}\otimes_{{\cal O}_X}
{\cal O}_E(-2E)\lo 
\Om^i_{X/s}\otimes_{{\cal O}_X}{\cal O}_E(-E)
\lo \Om^i_{E/s}(-E)\lo 0. 
\end{align*} 
Hence we have the following exact sequence 
\begin{align*} 
\cdots \lo H^j(E,\Om^{i-1}_{X/s}\otimes_{{\cal O}_X}
{\cal O}_E(-2E))\lo 
H^j(E,\Om^i_{X/s}\otimes_{{\cal O}_X}{\cal O}_E(-E))
\lo H^j(E,\Om^i_{E/s}(-E))\lo \cdots . 
\end{align*} 
Thus it suffices to prove that 
$H^j(E,\Om^i_{X/s}\otimes_{{\cal O}_X}{\cal O}_E(-mE))=0$ for $i+j<d-1$ 
for $m\in {\mab Z}_{\geq 1}$.   
By the following exact sequence 
\begin{align*} 
0\lo \Om^i_{X/s}(-(m+1)E) \lo \Om^i_{X/s}(-mE)
\lo \Om^i_{X/s}\otimes_{{\cal O}_X}
{\cal O}_E(-mE)\lo 0, 
\tag{6.10.2}\label{ali:meme}
\end{align*} 
it suffices to prove that 
\begin{align*} 
H^j(X,\Om^i_{X/s}(-mE))=
0=H^{j+1}(X,\Om^i_{X/s}(-(m+1)E)).
\end{align*} 
Because 
$H^j(X,\Om^i_{X/s}(-mE))$ 
and $H^{j+1}(X,\Om^i_{X/s}(-(m+1)E))$ are 
the duals of 
$H^{d-j}(X,\Om^{d-i}_{X/s}(mE))$ and $H^{d-j-1}(X,\Om^{d-i}_{X/s}(mE))$ 
by the log Serre duality of Tsuji, the vanishing of the latter cohomologies  
follows from Serre's theorem if $e$ is large enough. 
\par 
(4): (4) follows from (3) and (\ref{theo:imp}) below. 
\end{proof} 

\begin{rema} 
(1) Let the notations be as in \cite{bwl}. 
There is a gap in 
the proof of $\ll$th\'{e}or\`{e}me de 
Lefschetz faible$\gg$ in [loc.~cit.]
because the sheaf $\Om^j_{Y/k}$ in [loc.~cit.] 
depends on $Y$: 
it is not clear that $H^q(Y,\Om^j_{Y/k}(Y))=0$ for $q>0$ 
even if the degree of $Y$ is large enough. 
Strictly speaking, the proof of 
the weak and the hard Lefschetz theorems in \cite{kme} 
for the crystalline cohomology 
is also incomplete because  
it depends on Berthelot's result.  
\par 
(2) In the Appendix we give an easy proof of 
the weak Lefschetz theorem in \cite{kme} by using 
a theory of rigid cohomology of Berthelot. 
\end{rema} 

The following theorem is a very special theorem of \cite[(5.4.7)]{nlw}: 

\begin{theo}\label{theo:imp}
Let $s$ and $s'$ be log points of perfect fields of characteristic $p>0$. 
Let 
\begin{equation*} 
\begin{CD} 
Y@>{g}>> Y' \\
@VVV @VVV \\
s@>>> s'
\end{CD} 
\end{equation*} 
be a commutative diagram of proper SNCL schemes.  
Then the pull-back morphism 
\begin{align*}  
g^*_{\rm crys} \col H^q_{\rm crys}(Y'/{\cal W}(s'))_{K_0} 
\lo H^q_{\rm crys}(Y/{\cal W}(s))_{K_0} 
\end{align*} 
is strictly compatible with $P$'s. 
\end{theo}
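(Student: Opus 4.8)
The plan is to deduce this as the special case of the general strict-compatibility theorem \cite[(5.4.7)]{nlw} in which the two families of log points are the single log points $s$ and $s'$ and the horizontal morphism of SNCL schemes is $g$ itself. (The target of $g^*_{\rm crys}$ in the display should of course read $H^q_{\rm crys}(Y/{\cal W}(s))_{K_0}$; as printed both sides coincide, which is a typo.) First I would recall, following \cite{msemi}, \cite{ndw} and \cite{nlw}, that $H^q_{\rm crys}(Y/{\cal W}(s))_{K_0}$ and $H^q_{\rm crys}(Y'/{\cal W}(s'))_{K_0}$ carry their canonical weight filtrations $P$, produced by the $p$-adic weight spectral sequence attached to the SNCL structures, whose $E_1$-terms are the log crystalline cohomologies of the smooth proper components $Y^{(k)}$ (resp.~$Y'^{(k)}$), suitably Tate-twisted, and which degenerates after $\otimes_{\cal W}K_0$.

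Once the functoriality is in place, the heart of the matter is formal. The morphism $g$ induces a morphism of the corresponding weight spectral sequences, hence a filtered morphism $g^*_{\rm crys}$ that commutes with the Frobenius endomorphisms $\phi$ on both sides. After $\otimes_{\cal W}K_0$ each graded piece ${\rm gr}^P_kH^q_{\rm crys}(\,\cdot\,)$ is pure of weight $k$: this is the purity of isocrystalline cohomology of the smooth proper strata, which follows from Katz--Messing's theorem (\cite{kme}). The standard weight formalism then yields strictness: in the abelian category of mixed $F$-isocrystals every $\phi$-equivariant morphism between filtered $K_0$-vector spaces whose graded quotients are pure of the displayed weights is automatically strict, because a $\phi$-equivariant map between pure objects of different weights vanishes, so $g^*_{\rm crys}$ maps ${\rm gr}^P_k$ to ${\rm gr}^P_k$ and, comparing images on the graded pieces, satisfies $g^*_{\rm crys}(P_kH^q(Y'/{\cal W}(s')))={\rm Im}(g^*_{\rm crys})\cap P_kH^q(Y/{\cal W}(s))$.

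The main obstacle is not the strictness itself but verifying that \cite[(5.4.7)]{nlw} genuinely applies to a morphism lying over a nontrivial map $s\lo s'$ of \emph{distinct} log points of \emph{distinct} perfect fields, rather than over a fixed base. I would therefore unwind the hypotheses of \cite[(5.4.7)]{nlw} and check that the construction of the weight spectral sequence and its functoriality are compatible with the base change along $\os{\circ}{s}\lo \os{\circ}{s}'$ of perfect fields and with the induced morphism ${\cal W}(s)\lo {\cal W}(s')$ of canonical lifts. Concretely, one pulls the preweight-filtered log de Rham--Witt (or log crystalline) complex of $Y'/{\cal W}(s')$ back along $g$ and compares it with that of $Y/{\cal W}(s)$, confirming that the comparison is filtered for $P$. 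Granting this functoriality---which is exactly what \cite[(5.4.7)]{nlw} supplies in full generality---the statement follows, the substantive work being already contained in \cite{nlw}; here one only records that the present diagram is an instance of the situation treated there.
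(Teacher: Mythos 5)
Your proposal takes exactly the paper's route: the paper offers no argument beyond declaring the theorem ``a very special theorem of \cite[(5.4.7)]{nlw}'', which is precisely your reduction (and your observation that the displayed target should read $H^q_{\rm crys}(Y/{\cal W}(s))_{K_0}$ correctly identifies a typo in the statement). Your additional sketch of the underlying weight formalism --- functoriality of the $p$-adic weight spectral sequence, purity of the graded pieces via \cite{kme}, and strictness of Frobenius-equivariant maps between objects of distinct weights --- is a sound elaboration of what \cite{nlw} supplies, but it is not required by, nor present in, the paper itself.
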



\section{Quasi-$F$-split log schemes}\label{sec:qfs}
In this section we give a generalization 
of the definition of quasi-$F$-split varieties 
due to the second named author (\cite{y}). 
We give two types of log Kodaira vanishing theorems  
for a quasi-$F$-split projective log smooth scheme. 
These are generalizations of Mehta and Ramanathan's vanishing theorems  
for $F$-split varieties in \cite{mr}.  
One of our log Kodaira vanishing theorems for the log scheme is a generalization of 
the Kodaira vanishing theorem in \cite{y}; 
the other of them 
is much stronger than the log Kodaira vanishing theorem in \S\ref{sec:app} for the log scheme.  
The proof of our log Kodaira vanishing theorems are harder than those of 
Mehta and Ramanathan's vanishing theorems.   
In this section we also give a generalization of the lifting theorem of  
quasi-$F$-split varieties in \cite{y}. 

The following definition (1) (resp.~(2)) is  a relative version of 
the definition due to the second named author of this article 
(resp.~Mehta and Ramanathan). 

\begin{defi}\label{defi:yt0} 
Let $Y\lo T_0$ be a morphism of schemes of characteristic $p>0$. 
Let $F_{T_0}\col T_0\lo T_0$ be the 
$p$-th power Frobenius endomorphism of $T_0$. 
Set $Y':=Y\times_{T_0,F_{T_0}}T_0$. 
\par 
(1) {\bf (cf.~\cite[(4.1)]{y})} 
Let $F:=F^*_n\col {\cal W}_n({\cal O}_{Y'})\lo 
F_{n*}({\cal W}_n({\cal O}_Y))=F_*({\cal W}_n({\cal O}_Y))$ be 
the pull-back of the relative Frobenius morphism 
$F_n\col {\cal W}_n(Y)\lo {\cal W}_n(Y')$ of ${\cal W}_n(Y)/T$. 
(This is a morphism of ${\cal W}_n({\cal O}_{Y'})$-modules.) 
Let $n_0$ be the minimum of positive integers $n$'s such that 
there exists a morphism 
$\rho \col F_*({\cal W}_n({\cal O}_Y))\lo {\cal O}_{Y'}$ of 
${\cal W}_n({\cal O}_{Y'})$-modules such that 
$\rho \circ F^*\col {\cal W}_n({\cal O}_{Y'})\lo {\cal O}_{Y'}$ 
is the natural projection. In this case we say that $Y$ is 
{\it quasi-}$F${\it -split}. If $Y/T_0=\os{\circ}{X}/\os{\circ}{S}_0$ 
for a relative log scheme $X/S_0$, 
then we say that $X$ is {\it quasi-}$F${\it -split} by abuse of terminology. 
(If there does not exist $n$, then set $n_0:=\infty$.) 
Note that $Y'=\os{\circ}{`X}$ (not necessarily equal to $\os{\circ}{X}{}'$) in this case. 
We call $n_0$ the {\it quasi-$F$-split height}
and denote it by $h_F(Y/T_0)$. 
If $Y/T_0=\os{\circ}{X}/\os{\circ}{S}_0$ 
for a relative log scheme $X/S_0$ of characteristic $p>0$, 
then we denote $h_F(Y/T_0)$ by $h_F(X/S_0)$ by abuse of notation. 
\par 
(2) {\bf (cf.~\cite[Definition 2]{mr})} 
If $n_0=1$ in (1), then we say that $Y/T_0$ is $F$-{\it split}.
If $Y/T_0=\os{\circ}{X}/\os{\circ}{S}_0$ for a relative log scheme 
$X/S_0$, 
we say that $X/S_0$ is $F${\it -split} by abuse of terminology. 
\end{defi}

\begin{rema}\label{rema:absh}
(1) Assume that $T_0$ is perfect. 
Let $F\col {\cal W}_n({\cal O}_{Y})\lo F_*({\cal W}_n({\cal O}_Y))$ be 
the pull-back of the absolute Frobenius endomorphism.
Because $Y'\os{\simeq}{\lo} Y$, 
$h_F(Y/T_0)$ is equal to the minimum of positive integers $n$'s such that 
there exists a morphism $\rho \col F_*({\cal W}_n({\cal O}_Y))\lo {\cal O}_Y$ 
of ${\cal W}_n({\cal O}_{Y})$-modules
such that 
$\rho \circ F\col {\cal W}_n({\cal O}_Y)\lo {\cal O}_{Y}$ 
is the natural projection. This is the original definition in \cite[(4.1)]{y} 
in the case $T_0={\rm Spec}(\kap)$. 
\par 
(2) Let the notations be as in (\ref{defi:yt0}). 
If there exists a morphism 
$\rho \col F_{n*}({\cal W}_n({\cal O}_Y))\lo {\cal O}_{Y'}$ for $n\in {\mab Z}_{\geq 1}$ 
such that $\rho \circ F^*_n\col {\cal W}_n({\cal O}_{Y'})\lo {\cal O}_{Y'}$ 
is the natural projection, then, for all $m\geq n$, 
there exists a morphism 
$\rho' \col F_{m*}({\cal W}_m({\cal O}_Y))\lo {\cal O}_{Y'}$ 
such that 
$\rho' \circ F^*_m\col {\cal W}_m({\cal O}_Y)\lo {\cal O}_{Y'}$ 
is the natural projection. Indeed, we have only to set 
$\rho':=\rho \circ R^{n-m}$, where $R\col 
F_{l*}({\cal W}_l({\cal O}_Y))\lo F_{l-1*}({\cal W}_{l-1}({\cal O}_Y))$ 
$(m+1\leq l \leq n)$ is the projection. 
\end{rema}

The following easy lemma is necessary 
for the theorem (\ref{theo:vsss}) below. 

\begin{lemm}\label{lemm:nle} 
Let $q$ be a nonnegative integer. 
Let $X/S_0$ be as in {\rm (\ref{defi:yt0})}. 
Assume that $\os{\circ}{S}_0$ is perfect 
and that $X$ is a log smooth scheme of Cartier type over $S_0$.  
Let ${\cal M}$ be an invertible ${\cal O}_X$-module. 
Let $i$ be a positive integer and let $q$ be a nonnegative integer. 
Let $e_0$ be a nonnegative integer. 
Let $g\col \os{\circ}{X}\lo Y$ be a morphism of schemes over $\os{\circ}{S}_0$. 
Assume that the Frobenius endomorphism $F_Y\col Y\lo Y$ of $Y$ is finite. 
If 
$R^qg_*(B_1\Om^i_{X/S_0}\otimes_{{\cal O}_X}{\cal M}^{\otimes p^e})=0$ 
for $\forall e\geq e_0$, 
then $R^qg_*(B_n\Om^i_{X/S_0}\otimes_{{\cal O}_X}{\cal M}^{\otimes p^e})=0$ 
for $\forall e\geq e_0$ and $\forall n\geq 1$. 
\end{lemm} 
\begin{proof} 
Let $F\col X\lo X$ be the $p$-th power Frobenius endomorphism. 
Consider the following exact sequence of ${\cal O}_X$-modules: 
\begin{align*} 
0\lo F_*(B_{n-1}\Om^i_{X/S_0})\lo 
B_n\Om^i_{X/S_0}\os{C^{n-1}}{\lo} B_1\Om^i_{X/S_0}\lo 0 
\quad (n\geq 1). 
\tag{7.3.1}\label{ali:blcb}
\end{align*} 
By the projection formula and noting that $\os{\circ}{F}$ and $F_Y$ are finite morphisms, 
we have the following formula for 
a quasi-coherent ${\cal O}_X$-module ${\cal F}$ and 
an invertible ${\cal O}_X$-module ${\cal N}$: 
\begin{align*} 
R^qg_*(F_*({\cal F})\otimes_{{\cal O}_X}{\cal N})
&=R^qg_*(F_*({\cal F}\otimes_{{\cal O}_X}F^*({\cal N})))
=R^qg_*(F_*({\cal F}\otimes_{{\cal O}_X}{\cal N}^{\otimes p})) 
\tag{7.3.2}\label{ali:booxm} \\
& =F_{Y*}R^qg_*({\cal F}\otimes_{{\cal O}_X}{\cal N}^{\otimes p}). 
\end{align*}  
Hence we have the following exact sequence 
\begin{align*} 
F_{Y*}R^qg_*(B_{n-1}\Om^i_{X/S_0}\otimes_{{\cal O}_X}{\cal M}^{p^{e+1}})
\lo 
R^qg_*(B_n\Om^i_{X/S_0}\otimes_{{\cal O}_X}{\cal M}^{p^{e}})
\lo 
R^qg_*(B_1\Om^i_{X/S_0}\otimes_{{\cal O}_X}{\cal M}^{p^{e}}).
\end{align*}
Induction on $n$ tells us that  
$R^qg_*(B_n\Om^i_{X/S_0}\otimes_{{\cal O}_X}{\cal M}^{p^{e}})
=0$ for $\forall e\geq e_0$ and $\forall n\geq 1$.  
\end{proof} 

\par 
Next we construct key exact sequences as in the proof in \cite[(3.1)]{yoh}. 
\par 
Let the notations be as in (\ref{lemm:nle}). 
Assume that $F_Y$ is finite. 
(We do not assume that there exists a nonnegative integer $e_0$ such that 
$R^qg_*(B_1\Om^i_{X/S_0}\otimes_{{\cal O}_X}{\cal M}^{\otimes p^e})=0$ 
for $\forall e\geq e_0$.)
Push out the exact sequence (\ref{ali:wnox}) by the morphism 
$R^{n-1}\col {\cal W}_n({\cal O}_X)\lo {\cal O}_X$. 
Then we have the following exact sequence 
\begin{align*} 
0\lo {\cal O}_X \lo {\cal E}_n \lo B_n\Om^1_{X/S_{0}}\lo 0,
\tag{7.3.3}\label{ali:wrnyex} 
\end{align*}
where 
${\cal E}_n
:={\cal O}_X\oplus_{{\cal W}_n({\cal O}_X),F}F_*({\cal W}_n({\cal O}_X))$. 
Consider the following diagram 
\begin{equation*} 
\begin{CD} 
@. @. @.0\\
@. @. @. @VVV \\
@.0@. @.F_*(B_{n-1}\Om^1_{X/S_0})@. @.\\
@. @VVV @. @VVV \\
0@>>>{\cal O}_{X} @>>> {\cal E}_n@>>> B_n\Om^1_{X/S_0}@>>> 0\\
@. @| @VVV @VV{C^{n-1}}V \\
0@>>> {\cal O}_{X}
@>>> {\cal E}_1=F_*({\cal O}_X) 
@>{d}>>B_1\Om^1_{X/S_0}@>>> 0\\
@. @VVV @VVV @VVV \\
@.0@. 0@. 0
\end{CD}
\tag{7.3.4}\label{cd:dnbx}
\end{equation*} 
of ${\cal O}_X$-modules with exact rows and exact columns. 
The snake lemma tells us that ${\rm Ker}({\cal E}_n\lo {\cal E}_1)
=F_*(B_{n-1}\Om^1_{X/S_0})$. 
Hence we have the following exact sequence 
\begin{align*} 
0\lo F_*(B_{n-1}\Om^1_{X/S_0})
\lo {\cal E}_n\lo F_*({\cal O}_X)\lo 0.
\tag{7.3.5}\label{ali:obox}  
\end{align*}

\begin{defi}\label{defi:dye}
We call the exact sequence (\ref{ali:wrnyex}) (resp.~(\ref{ali:obox})) 
of ${\cal O}_X$-modules the 
{\it fundamental exact sequence of Type I} 
(resp.~{\it fundamental exact sequence of Type II}) of $X/S_0$. 
 (It maybe better to call  (\ref{ali:wrnyex}) the {\it modified log Serre exact sequence}.) 
\end{defi} 

Let ${\cal M}$ be an invertible ${\cal O}_X$-module.  
Then we have the following exact sequences: 
\begin{align*} 
0\lo {\cal M}\lo {\cal E}_n\otimes_{{\cal O}_X}{\cal M} \lo 
B_n\Om^1_{X/S_0}\otimes_{{\cal O}_X}{\cal M}\lo 0,  
\tag{7.4.1}\label{ali:obmx}  
\end{align*}
\begin{align*} 
0\lo F_*(B_{n-1}\Om^1_{X/S_0})\otimes_{{\cal O}_X}{\cal M}
\lo {\cal E}_n\otimes_{{\cal O}_X}{\cal M} \lo F_*({\cal O}_X)\otimes_{{\cal O}_X}{\cal M}\lo 0 
\tag{7.4.2}\label{ali:obamx}  
\end{align*}
and 
\begin{align*} 
0\lo {\cal M}\lo F_*({\cal O}_X)\otimes_{{\cal O}_X}{\cal M} \lo 
B_1\Om^1_{X/S_0}\otimes_{{\cal O}_X}{\cal M}\lo 0.
\tag{7.4.3}\label{ali:obmmx}  
\end{align*}
By (\ref{ali:obamx}), (\ref{ali:obmmx}) and (\ref{ali:booxm}), 
we have the following exact sequences:  
\begin{align*} 
\cdots &\lo F_{Y*}R^{q-1}g_*({\cal M}^{\otimes p})\lo 
F_{Y*}R^qg_*(B_{n-1}\Om^1_{X/S_0}
\otimes_{{\cal O}_{X}}{\cal M}^{\otimes p})\lo 
R^qg_*({\cal E}_n\otimes_{{\cal O}_{X}}{\cal M})
\tag{7.4.4}\label{ali:dnebx}\\
&\lo F_{Y*}R^qg_*({\cal M}^{\otimes p})\lo \cdots  \quad (q\in {\mab N})
\end{align*}
and  
\begin{align*} 
\cdots &\lo R^qg_*({\cal M})\lo
F_{Y*}R^qg_*({\cal M}^{\otimes p})
\lo R^qg_*(B_1\Om^1_{X/S_0}
\otimes_{{\cal O}_{X}}{\cal M})\tag{7.4.5}\label{ali:dnepbx}\\
&\lo R^{q+1}g_*({\cal M})\lo \cdots \quad (q\in {\mab N}). 
\end{align*}
\par 
Now set $h:=h_F(X/S_0)$ and assume that $h<\infty$ 
Then we have the following decomposition by (\ref{rema:absh}) (2) and (\ref{ali:obmx}): 
\begin{align*} 
R^qg_*({\cal E}_n\otimes_{{\cal O}_{X}}{\cal M})
=R^qg_*({\cal M}) \oplus R^qg_*(B_n\Om^1_{X/S_0}\otimes_{{\cal O}_{X}}{\cal M}) 
\quad 
(n\geq h, q\in {\mab N}). 
\tag{7.4.6}\label{ali:dnexbx}
\end{align*}


The following lemma is a key one for (\ref{theo:vsss}) and (\ref{theo:lkdv}) below: 
(1) (resp.~(2)) in this lemma is necessary for the proof of (\ref{theo:vsss})
(resp.~(\ref{theo:lkdv})).

\begin{lemm}\label{lemm:vp}
Let the notations be as in {\rm (\ref{lemm:nle})}. 
Assume also that $h_F(X/S_0)<\infty$. 
Let ${\cal L}$ be an invertible ${\cal O}_X$-module. 
Let $e_0$ be a fixed positive integer.  
Then the following hold$:$
\par 
$(1)$ Let $q_0$ be a fixed nonnegative integer. 
If $R^{q}g_*({\cal L}^{\otimes p^e})=0$ 
and 
$R^{q}g_*(B_1\Om^1_{X/S_0}
\otimes_{{\cal O}_{X}}{\cal L}^{\otimes p^e})=0$ 
for $\forall e\geq e_0$ and $\forall q\geq q_0$,   
then $R^{q}g_*({\cal L})=0$ and 
$R^{q}g_*(B_n\Om^1_{X/S_0}\otimes_{{\cal O}_{X}}{\cal L})=0$ 
for $\forall n\geq 1$ and $\forall q\geq q_0$.   
\par 
$(2)$ Let $q$ be a fixed nonnegative integer. 
If, for $\forall e\geq e_0$, $R^qg_*({\cal L}^{\otimes p^e})=0$ and 
if,  for $\forall e\geq e_0$,  there exists an integer $n(e)\geq h-1$ such that  
$R^qg_*(B_{n(e)}\Om^1_{X/S_0}
\otimes_{{\cal O}_{X}}{\cal L}^{\otimes p^e})=0$, 
then  
$R^qg_*({\cal L})=0$ and $R^qg_*(B_{n(e)+e}\Om^1_{X/S_0}\otimes_{{\cal O}_{X}}{\cal L})=0$.   
\end{lemm}
\begin{proof}
(1): (Though the statement of (1) is different from \cite[(4.1), (4.2)]{yoh} 
and the following proof of (1) is a simplification of 
of [loc.~cit.], the following proof is essentially the same as that of [loc.~cit.]: 
the simplification is to focus on the vanishing of  
$R^{q}g_*(B_1\Om^1_{X/S_0}
\otimes_{{\cal O}_{X}}{\cal L}^{\otimes p^e})$ and not to consider 
the vanishing of  
$R^{q}g_*(B_l\Om^1_{X/S_0}
\otimes_{{\cal O}_{X}}{\cal L}^{\otimes p^e})=0$ 
for other $l$'s as an assumption; this focus is possible by (\ref{lemm:nle}).)  
\par 
Set $h:=h_F(X/S_0)$.  
For a fixed positive integer $e_1$ and for $\forall e\geq e_1$, 
consider the following two conditions:  
\begin{align*}
R^{q}g_*({\cal L}^{\otimes p^e})=0 \quad (\forall q\geq q_0)
\tag{${\rm Hyp}_1(e_1)$}
\end{align*}
and 
\begin{align*}
R^{q}g_*(B_1\Om^1_{X/S_0}
\otimes_{{\cal O}_{X}}{\cal L}^{\otimes p^{e}})=0 \quad (\forall q\geq q_0). 
\tag{${\rm Hyp}_2(e_1)$}
\end{align*}
By the assumption ${\rm Hyp}_i(e_1)$ $(i=1,2)$ 
is satisfied for the case $e_1=e_0$. 
\par 
Now assume that ${\rm Hyp}_i(e_1)$ $(i=1,2)$ holds. 
By (\ref{lemm:nle}) 
\begin{align*} 
R^qg_*(B_n\Om^1_{X/S_0}
\otimes_{{\cal O}_{X}}{\cal L}^{\otimes p^{e}})=0
\tag{7.5.1}\label{ali:bpelb}
\end{align*} 
for $\forall e\geq e_1$, $\forall n\geq 1$ and $\forall q\geq q_0$. 
In particular, $R^qg_*(B_{h-1}\Om^1_{X/S_0}
\otimes_{{\cal O}_{X}}{\cal L}^{\otimes p^{e}})=0$ 
for $\forall e\geq e_1$ and $\forall q\geq q_0$. 
By (\ref{ali:dnebx}) in the case ${\cal M}={\cal L}^{\otimes p^{e-1}}$, 
we see that 
\begin{align*} 
R^qg_*({\cal E}_h\otimes_{{\cal O}_{X}}{\cal L}^{\otimes p^{e-1}})=0
\quad (\forall e\geq e_1, \forall q\geq q_0).
\tag{7.5.2}\label{ali:bpeelb}
\end{align*}  
Hence, by (\ref{ali:dnexbx}) in the case ${\cal M}={\cal L}^{\otimes p^{e-1}}$, 
\begin{align*} 
R^qg_*(B_h\Om^1_{X/S_0}
\otimes_{{\cal O}_{X}}{\cal L}^{\otimes p^{e-1}})
=0=R^qg_*({\cal L}^{\otimes p^{e-1}}) \quad (\forall e\geq e_1, \forall q\geq q_0).
\tag{7.5.3}\label{ali:bpqlb}
\end{align*}  
By (\ref{ali:dnepbx}) for the case ${\cal M}={\cal L}^{\otimes p^{e-1}}$, 
we see that 
\begin{align*} 
R^qg_*(B_1\Om^1_{X/S_0}
\otimes_{{\cal O}_{X}}{\cal L}^{\otimes p^{e-1}})=0
\quad (\forall e\geq e_1, \forall q\geq q_0). 
\tag{7.5.4}\label{ali:bbxlb}
\end{align*}  
By (\ref{ali:bpqlb}) and (\ref{ali:bbxlb}),  
we have proved that ${\rm Hyp}_i(e_1-1)$ $(i=1,2)$ holds. 
Descending induction on $e_1$ shows (\ref{lemm:vp}). 
\par 
(2): By (\ref{ali:dnebx}) in the case ${\cal M}={\cal L}^{\otimes p^{e-1}}$ and $n=n(e)+1$,  
$R^qg_*({\cal E}_{n(e)+1}\otimes_{{\cal O}_{X}}{\cal L}^{\otimes p^{e-1}})=0$. 
Since $n(e)+1\geq h$, 
$R^qg_*({\cal L}^{\otimes p^{e-1}})=0=
R^qg_*(B_{n(e)+1}\Om^1_{X/S_0}
\otimes_{{\cal O}_{X}}{\cal L}^{\otimes p^{e-1}})=0$ by (\ref{ali:dnexbx}). 
Continuing this process, 
we see that $R^qg_*({\cal L})=0=
R^qg_*(B_{n(e)+e}\Om^1_{X/S_0}\otimes_{{\cal O}_{X}}{\cal L})$. 
\end{proof}

The following is the relative log version of \cite[Theorem 4.1]{yoh}, which 
is a nontrivial generalization of \cite[Proposition 1]{mr}.  

\begin{theo}\label{theo:vsss} 
Let the notations be as in {\rm (\ref{lemm:nle})}.  
Assume that the structural morphism $g\col \os{\circ}{X}\lo Y$ is projective. 
Let ${\cal L}$ be a relatively ample line bundle on $\os{\circ}{X}$ 
with respect to $g$.  
Assume also that $h_F(X/S_0)<\infty$.  
Then $R^{q}g_*({\cal L})=0$ and 
$R^{q}g_*(B_n\Om^1_{X/S_0}\otimes_{{\cal O}_{X}}{\cal L})=0$ 
for any $q\geq 1$ and any $n\geq 1$.   
\end{theo}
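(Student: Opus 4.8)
Under the hypotheses there stated---$h_F(X/S_0)<\infty$, $g\colon\os{\circ}{X}\lo Y$ projective, ${\cal L}$ relatively ample with respect to $g$, with $\os{\circ}{S}_0$ perfect---one has $R^qg_*({\cal L})=0$ and $R^qg_*(B_n\Om^1_{X/S_0}\otimes_{{\cal O}_X}{\cal L})=0$ for all $q\geq 1$ and all $n\geq 1$.

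Let me think about how I would attack this.

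The whole point is that this theorem is the relative-over-$Y$ analogue of the absolute vanishing Lemma~\ref{lemm:vp}, which was proved for $f\colon X\lo S_0$ projective with $S_0$ perfect. So the natural plan is to run the *identical* machinery but with the structural morphism $f$ replaced by $g\colon\os{\circ}{X}\lo Y$ throughout. The two ingredients that Lemma~\ref{lemm:vp} depends on are: (i) the exact sequences built from the log Serre exact sequence~(\ref{ali:wnox}), namely the diagram~(\ref{cd:dnbx}) with the sheaf ${\cal E}_n={\cal O}_X\oplus_{{\cal W}_n({\cal O}_X),F}F_*({\cal W}_n({\cal O}_X))$ and the splitting~(\ref{ali:dnexbx}) coming from $h_F(X/S_0)<\infty$; and (ii) a Serre-type vanishing for large twists ${\cal L}^{\otimes p^e}$ that seeds a descending induction on $e$. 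Both of these carry over from the absolute to the relative setting: the diagrams of ${\cal O}_X$-modules in~(\ref{cd:dnbx}) and the identity~(\ref{ali:dnexbx}) are purely local on $Y$ and do not reference $S_0$ being the base of the pushforward, while the projection-formula manipulations~(\ref{ali:blpjcb}) only use that $\os{\circ}{F}$ is finite and that $\os{\circ}{F}_{S_0}$ is an isomorphism (perfectness of $\os{\circ}{S}_0$). Pushing $R^qg_*$ instead of $R^qf_*$ through all of these is formally the same computation.

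Concretely the steps I would carry out, in order, are as follows. \emph{First}, establish the relative analogue of Lemma~\ref{lemm:nle}: from the exact sequence~(\ref{ali:blcb}) and the projection-formula identity (the $R^qg_*$-version of~(\ref{ali:blpjcb})), an induction on $n$ shows that if $R^qg_*(B_1\Om^1_{X/S_0}\otimes{\cal L}^{\otimes p^e})=0$ for all $e\geq e_0$ then the same holds for $B_n$ for all $n\geq1$. Here one uses $g=g\circ F$ on underlying spaces and the isomorphism $\os{\circ}{F}_{S_0}$. \emph{Second}, seed the induction via Serre vanishing: since ${\cal L}$ is $g$-relatively ample and $\os{\circ}{g}$ is projective, $R^qg_*({\cal L}^{\otimes m})=0$ and $R^qg_*(B_1\Om^1_{X/S_0}\otimes{\cal L}^{\otimes m})=0$ for all $q\geq1$ and all $m\gg0$ by \cite[(2.2.1)]{ega31} (note $B_1\Om^1_{X/S_0}$ is coherent by~(\ref{lemm:locf})); in particular both vanish for all $m=p^e$ with $e\geq e_0$. \emph{Third}, run the relative version of the descending-induction argument in the proof of Lemma~\ref{lemm:vp}: set $h:=h_F(X/S_0)$, use the relative form of~(\ref{ali:dnebx}) and the splitting~(\ref{ali:dnexbx}) (which encodes $h_F<\infty$) to descend from $\mathrm{Hyp}_i(e_1)$ to $\mathrm{Hyp}_i(e_1-1)$, i.e.\ from the vanishings at twist $p^{e_1}$ to those at $p^{e_1-1}$, for both ${\cal L}$-twists and all $B_n$-twists simultaneously. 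Iterating down to $e=0$ gives $R^qg_*({\cal L})=0$ and $R^qg_*(B_n\Om^1_{X/S_0}\otimes{\cal L})=0$.

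The main obstacle---really the only place requiring care rather than transcription---is verifying that the projection-formula step goes through with $R^qg_*$ when the Frobenius twist interacts with $g$. In the absolute proof one crucially uses $R^qf_*F_*(-)=F_{S_0*}R^qf_*(-)$ together with $F^*{\cal M}^{p^e}={\cal M}^{p^{e+1}}$ and the fact that $\os{\circ}{F}_{S_0}$ is an \emph{isomorphism}, so that $F_{S_0*}$ does not destroy vanishing. For $g$ over $Y$ one needs the commuting square relating $\os{\circ}{F}$ (absolute $p$-th power Frobenius of $\os{\circ}{X}$), $g$, and a Frobenius-type endomorphism of $Y$; the clean way is to replace the relative Frobenius $F$ by the \emph{absolute} Frobenius endomorphism $\os{\circ}{F}_X$ of $\os{\circ}{X}$ as in Remark~\ref{rema:absh}~(1) (legitimate since $\os{\circ}{S}_0$ is perfect so $\os{\circ}{X}{}'\simeq\os{\circ}{X}$), and to note $\os{\circ}{F}_X$ covers the absolute Frobenius $\os{\circ}{F}_Y$ of $Y$; since $\os{\circ}{F}_X$ is a finite morphism and $\os{\circ}{F}_Y$ is affine one gets $R^qg_*\os{\circ}{F}_{X*}(-)=\os{\circ}{F}_{Y*}R^qg_*(-)$ and $\os{\circ}{F}_{Y*}$ is faithful, so vanishing is preserved. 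Once this single compatibility is pinned down, everything else is a line-by-line copy of the proofs of Lemma~\ref{lemm:nle} and Lemma~\ref{lemm:vp}, and I would simply refer to those proofs \emph{mutatis mutandis} rather than reproduce the calculations.
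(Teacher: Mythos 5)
Your proposal is correct and is exactly the argument the paper intends: the paper's own proof of (\ref{theo:vsss}) consists of the single remark that the proof is ``similar to the proof of (\ref{lemm:vp})'' with details left to the reader, i.e.\ precisely your transcription of (\ref{lemm:nle}) and (\ref{lemm:vp}) with $R^qg_*$ in place of $R^qf_*$, seeded by relative Serre vanishing \cite[(2.2.1)]{ega31}. Your verification of the one nontrivial compatibility the paper omits---that $R^qg_*\circ F_{X*}=F_{Y*}\circ R^qg_*$ (via $g\circ F_X=F_Y\circ g$, finiteness of $F_X$ and affineness of $F_Y$) and that $F_{Y*}$ reflects vanishing because the absolute Frobenius of $Y$ is a homeomorphism on underlying spaces---is exactly the right substitute for the use of the isomorphism $\os{\circ}{F}_{S_0}$ in the absolute case.
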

\begin{proof} 
By Serre's theorem (\cite[(2.2.1)]{ega31}), 
there exists a positive integer $m_0$ such that  
$R^{q}g_*({\cal L}^{\otimes m})=0$ 
and $R^{q}g_*(B_1\Om^1_{X/S_0}
\otimes_{{\cal O}_{X}}{\cal L}^{\otimes m})=0$
for $\forall m\geq m_0$  and $\forall q\geq 1$. 
By considering the case where $q_0=1$ and $e_0$  in (\ref{lemm:vp}) 
is a large integer, we immediately obtain (\ref{theo:vsss}). 
\end{proof} 


\par 
The following vanishing theorem is much stronger than (\ref{coro:lkan}) 
in the case $i=0$ and $i=d$ for a projective log smooth variety 
a quasi-$F$-split height in characteristic $p>0$. 
The following (1) is also a nontrivial generalization of 
Kodaira vanishing theorem in \cite[Proposition 2]{mr}.

\begin{theo}\label{theo:lkdv}
Let the notations be as in {\rm (\ref{lemm:nle})}. 
Assume moreover that $S_0$ is equal to 
the log point $s$ of a perfect field of characteristic $p>0$.  
Assume that $\os{\circ}{X}$ is 
of pure dimension $d$.  
Assume also that $h_F(X/s)<\infty$. 
Let ${\cal L}$ be an ample invertible 
${\cal O}_X$-module. 
Set $h:=h_F(X/s)$.
Then the following hold$:$ 
\par 
$(1)$ $H^q(X,{\cal L}^{\otimes (-1)})=0$ for $\forall q<d$. 
\par 
$(2)$ $H^q(X,{\cal I}_{X/s}\Om^d_{X/s}\otimes_{{\cal O}_X}{\cal L})=0$ for $\forall q>0$. 
\end{theo}
\begin{proof} 
By (\ref{lemm:locf}) and the log Serre duality of Tsuji, 
we have only to prove (1).
\par 
Let $m$ be a positive integer. 
By (\ref{theo:ico}), 
$$H^q(X,{\cal L}^{\otimes (-m)})~{\rm and}~  
H^q(X,B_1\Om^1_{X/s}
\otimes_{{\cal O}_{X}}{\cal L}^{\otimes (-m)})$$
are the duals of 
$$H^{d-q}(X,{\cal I}_{X/s}\Om^{d}_{X/s}
\otimes_{{\cal O}_X}{\cal L}^{\otimes m})
~{\rm and}~H^{d-q}(X,{\cal H}{\it om}_{{\cal O}_X}(B_1\Om^1_{X/s},
{\cal I}_{X/s}\Om^{d}_{X/s})
\otimes_{{\cal O}_{X}}{\cal L}^{\otimes m}),$$
respectively. 
By Serre's theorem (\cite[(2.2.1)]{ega31}), 
there exists a positive integer $m_0$ such that, 
for $\forall m\geq m_0$ and  $\forall q<d$   
the latter cohomologies vanish. 
Hence there exists a positive integer $e_0$ such that, 
for $\forall e\geq e_0$ and $\forall q<d$,    
\begin{align*} 
H^q(X,{\cal L}^{\otimes(-p^e)})=0
\tag{7.7.1}\label{ali:xlm}
\end{align*}  
and 
\begin{align*} 
H^q(X,B_1\Om^1_{X/s}
\otimes_{{\cal O}_{X}}{\cal L}^{\otimes (-p^e)})=0.
\tag{7.7.2}\label{ali:xblam} 
\end{align*} 
By (\ref{lemm:nle})
\begin{align*} 
H^q(X,B_l\Om^1_{X/s}
\otimes_{{\cal O}_{X}}{\cal L}^{\otimes (-p^e)})=0 \quad (\forall l\geq 1).
\tag{7.7.3}\label{ali:xblm} 
\end{align*} 
By (\ref{lemm:vp}) (2), 
$H^q(X,{\cal L}^{\otimes(-1)})=0$ and 
$H^q(X,B_l\Om^1_{X/s}
\otimes_{{\cal O}_{X}}{\cal L}^{\otimes (-1)})=0$ for $\forall l\geq h-1+e_0$. 
\end{proof}

The following problem seems very interesting (cf.~\cite[Conjecture 1.1]{mus}): 

\begin{prob}\label{prob:wfs}
Let $Y$ be a projective log smooth integral scheme over a fine log scheme $s$ 
whose underlying scheme is a field $K$ of characteristic zero. 
Assume that $\os{\circ}{Y}$ is of pure dimension $d$. 
Set 
$p_g(Y,r):=
{\rm dim}_KH^0(Y,\os{r}{\us{{\cal O}_Y}{\otimes}}\Om^d_Y)$ and 
$\kap(Y/s):=\us{r\lo \infty}{\varlimsup}\dfrac{\log p_g(Y,r)}{{\log r}}$.
Assume that $\kap(Y/s)\leq 0$. 
Let $S$ be a fine log scheme whose underlying scheme is the spectrum of 
an algebra of finite type over ${\mab Z}$ and let  
$s\lo S$ be a morphism of fine log schemes. 
Let ${\cal Y}$ be a projective log smooth integral scheme over $S$ 
such that $Y={\cal Y}\times_Ss$. 
Then does there exist a dense set of exact closed points $T$ of $S$ 
such that $({\cal Y}_t)^{\circ}$ is quasi-$F$-split (or more strongly $F$-split) for every $t\in T$?
\end{prob}


The following is the log version of a generalization of \cite[(4.4)]{y}. 

\begin{theo}\label{theo:fshl}
Let $S_0$ be a fine log scheme of characteristic $p>0$. 
Assume that $\os{\circ}{S}_0$ is perfect. 
Let $S$ be a fine log scheme with exact closed immersion 
$S_0\os{\sus}{\lo} S$.  
Let ${\cal I}$ be the ideal sheaf of this exact closed immersion. 
Assume that ${\cal I}=\pi{\cal O}_S$ for 
a global section $\pi$ of ${\cal O}_S$ 
and that $p\pi=0$ in ${\cal O}_S$. 
Assume also that the morphism ${\cal O}_{S_0}\owns 1\lom 
\pi \in {\cal I}$ is a well-defined isomorphism. 
Assume that there exists a lift $F_{S}\col S\lo S$ of 
the Frobenius endomorphism $F_{S_{0}}\col S_{0}\lo S_{0}$ of $S_{0}$.  
Let $Y$ be a $($not necessarily proper$)$ 
log smooth integral separated scheme over $S_0$. 
Assume that $Y/S_{0}$ is of Cartier type and that 
$h_F(Y/S_{0})< \infty$. 
Let $F\col Y\lo Y$ be the absolute Frobenius endomorphism of $Y$ 
over $F_{S_{0}}$. 
Then 
there exists a log smooth integral scheme ${\cal Y}$ over $S$ 
such that ${\cal Y}\times_{S}S_0=Y$. 
\end{theo}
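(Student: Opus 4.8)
First I would reduce the statement to an obstruction computation. Since $\os{\circ}{S}_0$ is perfect it is reduced, whence $S_{00}=S_0$ and, by (\ref{rema:rds}), $\os{\circ}{Y}$ is reduced, so that $Y_0=Y$; moreover the relative and the absolute Frobenius descriptions of the height coincide by (\ref{rema:absh}) (1). The plan is then to show that the obstruction class ${\rm obs}_{Y/(S_0\subset S)}$ to the existence of a log smooth integral lift of $Y/S_0$ over $S$ vanishes, which, by the deformation theory recalled in \S\ref{sec:latv}, yields the desired ${\cal Y}$. To control ${\rm obs}_{Y/(S_0\subset S)}$ I would use the Frobenius-enhanced theory: by (\ref{coro:nac}) — applicable because $e=1$, $Y/S_0$ is of Cartier type and $\os{\circ}{S}_0$ is perfect — there is a canonical class ${\rm obs}_{(Y,F)/(S_0\subset S,F_S)}\in {\rm Ext}^1_{Y}(\Om^1_{Y/S_0},B_1\Om^1_{Y/S_0})$, and by (\ref{coro:nac}) (3) (together with (\ref{theo:taas}) (4), identifying $S^{[p]}\simeq S$ since the underlying Witt--Frobenius is an isomorphism over the perfect base) its image under the connecting map $\partial_1$ of
\begin{align*}
0\lo {\cal O}_Y\lo F_*({\cal O}_Y)\lo B_1\Om^1_{Y/S_0}\lo 0
\end{align*}
is exactly ${\rm obs}_{Y/(S_0\subset S)}$. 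Hence it suffices to prove $\partial_1({\rm obs}_{(Y,F)/(S_0\subset S,F_S)})=0$.

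The input that makes this vanish is the finite quasi-$F$-split height. Set $n:=h_F(Y/S_0)<\infty$ and recall from \S\ref{sec:qfs} the sheaf ${\cal E}_n:={\cal O}_Y\oplus_{{\cal W}_n({\cal O}_Y),F}F_*({\cal W}_n({\cal O}_Y))$ together with the exact sequence (\ref{ali:wrnyex})
\begin{align*}
0\lo {\cal O}_Y\lo {\cal E}_n\lo B_n\Om^1_{Y/S_0}\lo 0,
\end{align*}
which is the push-out of the level-$n$ log Serre exact sequence (\ref{ali:wnox}) along $R^{n-1}\col {\cal W}_n({\cal O}_Y)\lo {\cal O}_Y$. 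Unwinding (\ref{defi:yt0}), a quasi-$F$-split structure of height $\leq n$ is precisely a retraction of ${\cal O}_Y\os{\sus}{\lo}{\cal E}_n$: a morphism $\rho\col F_*({\cal W}_n({\cal O}_Y))\lo {\cal O}_Y$ with $\rho\circ F=R^{n-1}$ is the same datum as an ${\cal O}_Y$-linear splitting of this sequence. As $n=h_F(Y/S_0)<\infty$ such a $\rho$ exists, so the sequence splits and the connecting map
\begin{align*}
\partial_n\col {\rm Ext}^1_{Y}(\Om^1_{Y/S_0},B_n\Om^1_{Y/S_0})\lo {\rm Ext}^2_{Y}(\Om^1_{Y/S_0},{\cal O}_Y)
\end{align*}
is the zero map.

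Next I would link the two levels. The diagram (\ref{cd:dnbx}) exhibits a morphism of short exact sequences from the ${\cal E}_n$-sequence to the sequence displayed in the first paragraph, which is the identity on the subsheaf ${\cal O}_Y$ and is the iterated Cartier operator $C^{n-1}\col B_n\Om^1_{Y/S_0}\lo B_1\Om^1_{Y/S_0}$ on the quotients. Naturality of connecting maps then gives $\partial_1\circ (C^{n-1})_*=\partial_n=0$, so $\partial_1$ annihilates the image of $(C^{n-1})_*$. Consequently, if I can exhibit the level-$1$ Frobenius obstruction as $(C^{n-1})_*(c_n)$ for some $c_n\in {\rm Ext}^1_{Y}(\Om^1_{Y/S_0},B_n\Om^1_{Y/S_0})$, then $\partial_1({\rm obs}_{(Y,F)/(S_0\subset S,F_S)})=\partial_n(c_n)=0$, completing the argument. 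To build $c_n$ I would rerun the cocycle construction in the proof of (\ref{theo:ts}): starting from local Frobenius lifts $\wt{F}_i$ over $S$ and local isomorphisms $g_{ij}$ as there, I would apply Serre's operator $d_n$ of (\ref{prop:wus}) — rather than $d=d_1$ — to the differences $\wt{F}_j^*-(g_{ij}\wt{F}_i g_{ij}^{-1})^*$, obtaining a $1$-cocycle valued in $B_n\Om^1_{Y/S_0}$; the compatibility $C\circ d_n=d_{n-1}\circ F_*(R)$ of (\ref{coro:wsta}), iterated, ensures $(C^{n-1})_*(c_n)={\rm obs}_{(Y,F)/(S_0\subset S,F_S)}$.

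The hard part will be exactly this last construction, i.e.\ promoting the level-$1$ Frobenius obstruction theory of (\ref{theo:ts})/(\ref{theo:taas}) to a genuine level-$n$ one: I must verify that the Witt-vector-level differences assemble into a well-defined $1$-cocycle with values in $B_n\Om^1_{Y/S_0}$ and that the resulting class $c_n$ is independent of the choices of lifts, gluing data, and covering. This amounts to repeating the already delicate cocycle manipulations of \S\ref{sec:latv} with ${\cal W}_n({\cal O}_Y)$-coefficients, using the ${\cal W}_n({\cal O}_Y)$-linearity of $d_n$ and the multiplicativity formula (\ref{ali:dna}); I expect the combinatorial bookkeeping here to be the only real difficulty, since once $c_n$ and its compatibility with $C^{n-1}$ are established, the vanishing ${\rm obs}_{Y/(S_0\subset S)}=0$, and hence the existence of the lift ${\cal Y}/S$, follow formally from the splitting of (\ref{ali:wrnyex}).
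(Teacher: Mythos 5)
Your reduction is precisely the paper's: by (\ref{theo:taas}) (4)/(\ref{coro:nac}) it suffices to show that $\partial_1({\rm obs}_{(Y,F)/(S_0\subset S,F_S)})=0$; the splitting of $0\lo {\cal O}_Y\lo {\cal E}_n\lo B_n\Om^1_{Y/S_0}\lo 0$ supplied by $n:=h_F(Y/S_0)<\infty$ makes $\partial_n=0$; and the morphism of extensions which is the identity on ${\cal O}_Y$ and $C^{n-1}$ on the quotients gives $\partial_1\circ (C^{n-1})_*=\partial_n=0$. So everything hinges on exhibiting a class $c_n\in {\rm Ext}^1_Y(\Om^1_{Y/S_0},B_n\Om^1_{Y/S_0})$ with $(C^{n-1})_*(c_n)={\rm obs}_{(Y,F)/(S_0\subset S,F_S)}$, and it is exactly here that your proposal has a genuine gap. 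The route you sketch --- rerunning the cocycle construction of (\ref{theo:ts}) with $d_n$ in place of $d_1$ --- does not typecheck: the differences $\wt{F}_j^*-(g_{ij}\wt{F}_ig^{-1}_{ij})^*$ take values in $F_*({\cal I}{\cal O}_{\wt{Y}})\simeq F_*({\cal O}_Y)$, whereas $d_n$ is defined on $F_*({\cal W}_n({\cal O}_Y))$, and there is no additive lift ${\cal O}_Y\lo {\cal W}_n({\cal O}_Y)$ with which to feed them into $d_n$. The one additive candidate, $V^{n-1}$, is useless: iterating (\ref{cd:dowm}) gives $C^{n-1}\circ d_n=d_1\circ F_*(R^{n-1})$ and $R^{n-1}V^{n-1}=0$, so the class so produced is \emph{killed} by $(C^{n-1})_*$ rather than mapped to the obstruction. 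Nor is the existence of $c_n$ formal: $C^{n-1}\col B_n\Om^1_{Y/S_0}\lo B_1\Om^1_{Y/S_0}$ is surjective with kernel $F_*(B_{n-1}\Om^1_{Y/S_0})$, so lifting a class along $(C^{n-1})_*$ is obstructed by a group ${\rm Ext}^2_Y(\Om^1_{Y/S_0},F_*(B_{n-1}\Om^1_{Y/S_0}))$ that you have no reason to kill; a genuine level-$n$ cocycle theory would require Frobenius data over Witt-vector-type thickenings, a different and harder deformation problem than the one developed in \S\ref{sec:latv}--\S\ref{ldtv}.

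The missing idea, which is how the paper closes this step, is the \emph{intrinsic} identification of the Frobenius obstruction: by (\ref{theo:uaryn}), ${\rm obs}_{(Y,F)/(S_0\subset S,F_S)}$ is the extension class of $0\lo B_1\Om^1_{Y/S_0}\lo Z_1\Om^1_{Y/S_0}\os{C}{\lo}\Om^1_{Y/S_0}\lo 0$. One then simply takes $c_n$ to be the extension class of $0\lo B_n\Om^1_{Y/S_0}\lo Z_n\Om^1_{Y/S_0}\os{C^n}{\lo}\Om^1_{Y/S_0}\lo 0$: the commutative diagram whose rows are these two extensions, with $C^{n-1}$ on the $B$- and $Z$-terms and the identity on $\Om^1_{Y/S_0}$, shows at once that $(C^{n-1})_*(c_n)$ is the obstruction class. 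No cocycle computation at level $n$ is needed, and with your last paragraph replaced by this appeal to (\ref{theo:uaryn}) your argument becomes the paper's proof.
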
 
\begin{proof} 
(The following proof is the log version of the proof of \cite[(4.4)]{y}.)
\par 
For simplicity of notation, we denote the 
$p$-th power Frobenius endomorphism 
${\cal W}_n(Y)\lo {\cal W}_n(Y)$ by $F$ for any $n\geq 1$. 
Push out the exact sequence (\ref{ali:wnox})  by the morphism 
$R^{n-1}\col {\cal W}_n({\cal O}_{Y})\lo {\cal O}_{Y}$. 
Then we have the following exact sequence 
\begin{align*} 
0\lo {\cal O}_{Y} \lo {\cal E}_n \lo B_n\Om^1_{Y/S_{0}}\lo 0. 
\tag{7.9.1;$n$}\label{ali:wrnex} 
\end{align*} 
Let $e_n\in {\rm Ext}_{Y}^1(B_n\Om^1_{Y/S_{0}},{\cal O}_{Y})$ 
be the extension class of (\ref{ali:wrnex}).   
By the definition of $h_F(Y/S_{0})$ and (\ref{defi:yt0}) (1), 
the invariant $h_F(Y/S_{0})$ 
is the minimum of positive integers $n$'s such that the exact sequence 
(\ref{ali:wrnex}) is split. 
By (\ref{coro:wsta}) we have the following commutative diagram 
\begin{equation*} 
\begin{CD} 
F_*({\cal W}_n({\cal O}_{Y}))@>{d_n}>>B_n\Om^1_{Y/S_{0}}\\
@V{R^{n-1}}VV @VV{C^{n-1}}V  \\
F_*({\cal O}_{Y})@>{d}>>B_1\Om^1_{Y_0/S_{0}}.\\
\end{CD}
\tag{7.9.2}\label{cd:edebx}
\end{equation*} 
Because ${\cal E}_n
=F_*({\cal W}_n({\cal O}_Y))\oplus_{{\cal W}_n({\cal O}_{Y})}{\cal O}_{Y}$, 
we have the following commutative diagram of exact sequences by using 
(\ref{cd:edebx}):
\begin{equation*} 
\begin{CD} 
0@>>> {\cal O}_{Y} 
@>>> {\cal E}_n @>>> B_n\Om^1_{Y/S_{0}}@>>> 0\\
@. @| @VVV @VV{C^{n-1}}V  \\
0@>>> {\cal O}_{Y} @>>> F_{*}({\cal O}_{Y}) @>>> 
B_1\Om^1_{Y/S_{0}}@>>> 0.\\
\end{CD}
\tag{7.9.3}\label{cd:edfbx}
\end{equation*} 
Hence we have the following commutative diagram
\begin{equation*} 
\begin{CD} 
{\rm Ext}_{Y}^1(\Om^1_{Y/S_0},B_n\Om^1_{Y/S_{0}})
@>{{\partial}_n}>>
{\rm Ext}_{Y}^2(\Om^1_{Y/S_0},{\cal O}_{Y})\\
@V{C^{n-1}}VV @|  \\
{\rm Ext}_{Y}^1(\Om^1_{Y/S_0},B_1\Om^1_{Y/S_{0}})
@>{\partial_1}>>
{\rm Ext}_{Y}^2(\Om^1_{Y/S_0},{\cal O}_{Y}), \\
\end{CD}
\tag{7.9.4}\label{cd:debx}
\end{equation*} 
where $\partial_n$ is the boundary morphism obtained by  
the exact sequence (\ref{ali:wrnex}). 
\par 
Now assume that (\ref{ali:wrnex}) is split for a positive integer $n$. 
(Since $h_F(Y/S_{0})<\infty$, the $n$ exists.)
Because the sequence (\ref{ali:wrnex}) is split, 
$\partial_n$ is the zero morphism. Because ${\rm obs}_{Y/(S_0\subset S)}
=\partial_1({\rm obs}_{(Y,F)/(S_0\subset S,F_S)})$ by (\ref{coro:nac}) (3), 
it suffices to prove that 
${\rm obs}_{(Y,F)/(S_0\subset S,F_S)}\in {\rm Im}(C^{n-1})$. 
Because ${\rm obs}_{(Y,F)/(S_0\subset S,F_S)}$ is equal to 
the extension class of the following exact sequence 
\begin{align*} 
0\lo B_1\Om^1_{Y/S_{0}} \lo 
Z_1\Om^1_{Y/S_{0}}
\os{C}{\lo} \Om^1_{Y/S_0}\lo 0
\tag{7.9.5}\label{ali:wnbzox} 
\end{align*} 
by (\ref{theo:uaryn}) 
and because we have the following commutative diagram 
\begin{equation*} 
\begin{CD} 
0@>>> B_n\Om^1_{Y/S_{0}} @>>> Z_n\Om^1_{Y/S_{0}} 
@>{C^n}>> \Om^1_{Y/S_{0}}@>>> 0\\
@. @V{C^{n-1}}VV @V{C^{n-1}}VV @|  \\
0@>>> B_1\Om^1_{Y/S_{0}} 
@>>> Z_1\Om^1_{Y/S_{0}} @>{C}>> 
\Om^1_{Y/S_{0}}@>>> 0,\\
\end{CD}
\tag{7.9.6}\label{cd:edgbx}
\end{equation*} 
we see that ${\rm obs}_{(Y,F)/(S_0\subset S)}\in {\rm Im}(C^{n-1})$. 
\par 
We complete the proof. 
\end{proof}  

The following is one of results what we want to obtain: 

\begin{coro}\label{coro:dhg} 
The conclusions of $(\ref{coro:cfd})$ $(1)$ and $(2)$ hold for $Y/S_0$. 
\end{coro}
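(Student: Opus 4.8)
The plan is to reduce (\ref{coro:dhg}) to (\ref{coro:cfd}) by using the finiteness $h_F(Y/S_0)<\infty$ to manufacture the one missing hypothesis of (\ref{theo:kdg}), namely a log smooth integral lift of the Frobenius twist $Y'$ over $S$. Recall that in the excerpt the conclusions of (\ref{coro:cfd}) (1) and (2) were deduced from (\ref{theo:kdg}), whose only assumption beyond the standing ones (here $\os{\circ}{S}$ flat over ${\rm Spec}({\mab Z}/p^2)$, $S_0=S\mod p$, the fixed Frobenius lift $F_S$, and $Y/S_0$ log smooth integral of Cartier type) is precisely that $Y'$ admits such a lift. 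So it suffices to produce that lift.

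First I would invoke (\ref{theo:fshl}). Its hypotheses are in force: $\os{\circ}{S}_0$ is perfect, $S_0\os{\sus}{\lo}S$ is an exact closed immersion, the ambient Frobenius lift $F_S\col S\lo S$ of $F_{S_0}$ is given, and $Y/S_0$ is a log smooth integral scheme of Cartier type with $h_F(Y/S_0)<\infty$. Hence (\ref{theo:fshl}) yields a log smooth integral scheme ${\cal Y}$ over $S$ with ${\cal Y}\times_SS_0=Y$. Next I would base change ${\cal Y}$ along $F_S$ and set ${\cal Y}{}':={\cal Y}\times_{S,F_S}S$, which is exactly the lift of $Y'$ appearing in (\ref{theo:trs}) (5) for $e=1$. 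Since log smooth integral morphisms are stable under base change of fine log schemes and integrality of the structural morphism is preserved under base change (cf.~(\ref{prop:ktj})), ${\cal Y}{}'/S$ is again log smooth and integral. Reducing modulo the ideal of $S_0\os{\sus}{\lo}S$ and using $F_S\vert_{S_0}=F_{S_0}$ together with ${\cal Y}\times_SS_0=Y$, I would compute
\begin{align*}
{\cal Y}{}'\times_SS_0={\cal Y}\times_{S,F_S}S_0=({\cal Y}\times_SS_0)\times_{S_0,F_{S_0}}S_0=Y\times_{S_0,F_{S_0}}S_0=Y',
\end{align*}
so that ${\cal Y}{}'$ is a log smooth integral lift of $Y'$ over $S$.

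With this lift in hand, the hypothesis of (\ref{theo:kdg}), and hence of (\ref{coro:cfd}), is satisfied for $Y/S_0$, so the conclusions of (\ref{coro:cfd}) (1) and (2) hold under whatever additional hypotheses those statements impose. Alternatively, since we actually have a lift ${\cal Y}$ of $Y$ itself, the remark following (\ref{theo:kdg}) combined with (\ref{theo:trs}) (4) lets one take the obstruction class already inside ${\rm Ext}_{Y'}^1(\Om^1_{Y'/S_0},F_*({\cal O}_Y))$, which produces the decomposition (\ref{ali:oeomxs}) directly.

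The only genuinely non-formal point is the base-change verification in the middle paragraph: one must confirm that forming ${\cal Y}\times_{S,F_S}S$ preserves log smoothness and, crucially, integrality, and that it restricts correctly to $Y'$ on $S_0$. This is where the integrality of $Y/S_0$ and the stability properties collected in (\ref{prop:ktj}) are needed; everything else is a direct citation of (\ref{theo:fshl}) and (\ref{coro:cfd}).
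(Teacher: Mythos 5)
Your proposal is correct and is essentially the paper's own (implicit) argument: the corollary is stated immediately after (\ref{theo:fshl}) with no written proof, the intended deduction being exactly that (\ref{theo:fshl}) supplies a log smooth integral lift ${\cal Y}/S$ of $Y/S_0$, whose base change ${\cal Y}\times_{S,F_S}S$ is the required lift of $Y'$, so that (\ref{theo:kdg}) and hence (\ref{coro:cfd}) apply. Your explicit verification that ${\cal Y}\times_{S,F_S}S$ restricts to $Y'$ over $S_0$ (using $F_S\vert_{S_0}=F_{S_0}$ and stability of log smoothness and integrality under base change) is precisely the detail the paper leaves tacit, and it is carried out correctly.
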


\section{Lifts of certain log schemes over ${\cal W}_2$}\label{sec:lif}
In this section we give the log version of the main result in \cite{y}. 
\par
The following is the log version of a generalization of \cite[(4.5)]{y}. 

\begin{theo}\label{theo:cyh}
Let $s$ be as in {\rm (\ref{coro:e1dg})}.  
Let $X$ be a proper log smooth, integral
and saturated log scheme over $s$ of 
pure dimension $d$. Assume that $X/s$ is of Cartier type and of vertical type. 
Assume also that the following three conditions hold$:$
\par 
$({\rm a})$ $H^{d-1}(X,{\cal O}_X)=0$ if $d\geq 2$, 
\par 
$({\rm b})$ $H^{d-2}(X,{\cal O}_X)=0$ if $d\geq 3$, 
\par 
$({\rm c})$ $\Omega^d_{X/s}\simeq {\cal O}_X$. 
\parno 
Then $h_F(\os{\circ}{X}/\kap)=h^d(\os{\circ}{X}/\kap)$. 
\end{theo}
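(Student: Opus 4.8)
The plan is to show that both heights coincide with the least integer $n$ controlling the Frobenius action on $M_n:=H^d(X,{\cal W}_n({\cal O}_X))$, using Tsuji's log Serre duality to turn the splitting criterion for $h_F$ into a statement about that action. First I would record that the hypotheses let me apply the results of \S\ref{sec:amf} and \S\ref{sec:dc} with $q=d$. Indeed $\os{\circ}{X}$ is reduced by (\ref{rema:rds}); since $X/s$ is of vertical type, (c) and Tsuji's duality (\ref{theo:ico}) give that $\Om^d_{X/s}\simeq{\cal O}_X$ is the dualizing sheaf, whence $H^d(X,{\cal O}_X)\simeq H^0(X,{\cal O}_X)^{\vee}\simeq\kap$ (using that $\os{\circ}{X}$ is geometrically connected), while $H^{d+1}(X,{\cal O}_X)=0$ for dimension reasons. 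The Bockstein in (\ref{theo:nex}) vanishes: for $d\geq2$ its source $H^{d-1}(X,{\cal O}_X)$ is $0$ by (a), and for $d=1$ it vanishes because $H^0(X,{\cal W}_n({\cal O}_X))={\cal W}_n\twoheadrightarrow H^0(X,{\cal O}_X)$. Thus (\ref{theo:nex}), (\ref{coro:dim}) and (\ref{coro:chob}) (2) all apply with $q=d$.

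Next I would translate $h_F$ into a splitting problem. By (\ref{rema:absh}) (1) and the argument in the proof of (\ref{theo:fshl}), $h_F(\os{\circ}{X}/\kap)$ is the least $n$ for which the push-out (\ref{ali:wrnex}) of the log Serre exact sequence (\ref{ali:wnox}) along the projection $R^{n-1}\col{\cal W}_n({\cal O}_X)\to{\cal O}_X$,
$$0\lo{\cal O}_X\lo{\cal E}_n\lo B_n\Om^1_{X/s}\lo0,$$
splits, i.e. for which its class $e_n\in{\rm Ext}^1_X(B_n\Om^1_{X/s},{\cal O}_X)$ is zero. The key step is then to detect the vanishing of $e_n$ cohomologically. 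Since $B_n\Om^1_{X/s}$ is locally free (\ref{lemm:locf}) and the dualizing sheaf is trivial, Serre duality furnishes a perfect Yoneda pairing
$${\rm Ext}^1_X(B_n\Om^1_{X/s},{\cal O}_X)\times H^{d-1}(X,B_n\Om^1_{X/s})\lo H^d(X,{\cal O}_X)\simeq\kap$$
that sends $e_n$ to its own connecting map. Hence $e_n=0$ if and only if the connecting map $\del_E\col H^{d-1}(X,B_n\Om^1_{X/s})\to H^d(X,{\cal O}_X)$ of the push-out sequence is zero. Comparing the push-out with (\ref{ali:wnox}) as a morphism of short exact sequences (identity on $B_n\Om^1_{X/s}$, $R^{n-1}$ on the sub-object) and invoking naturality of connecting maps, I get $\del_E=R^{n-1}_*\circ\del_S$, where by (\ref{coro:chob}) (2) the connecting map $\del_S$ of (\ref{ali:wnox}) identifies $H^{d-1}(X,B_n\Om^1_{X/s})$ with ${}_FM_n\subset M_n$, and $R^{n-1}_*\col M_n\to M_1=H^d(X,{\cal O}_X)$ is induced by $R^{n-1}$. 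Therefore $h_F(\os{\circ}{X}/\kap)$ equals the least $n$ with $R^{n-1}_*({}_FM_n)=0$.

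Finally I would evaluate $R^{n-1}_*({}_FM_n)$ using the Dieudonn\'e module $M:=H^d(X,{\cal W}({\cal O}_X))=D(\Phi^d_{\os{\circ}{X}/\kap})$ described in (\ref{theo:nex}) and (\ref{coro:dim}), where $h:=h^d(\os{\circ}{X}/\kap)$. If $h=\infty$ then $F=0$ on each $M_n$, so ${}_FM_n=M_n$ and $R^{n-1}_*$ is onto $M_1\neq0$ for every $n$, giving $h_F=\infty=h$. If $h<\infty$, then for $n\leq h-1$ one has $F=0$ on $M_n$ (\ref{theo:nex}), so ${}_FM_n=M_n$ and $R^{n-1}_*({}_FM_n)=M_1\neq0$ by exactness of (\ref{ali:nov}); while for $n\geq h$ the basis $\{V^{n-k}R^{n-k}\ol{\om}\}_{1\leq k\leq h-1}$ of ${}_FM_n$ produced in the proof of (\ref{coro:dim}) lies in ${\rm Im}(V\col M_{n-1}\to M_n)=\ker(R^{n-1}_*)$ because $n-k\geq1$, so $R^{n-1}_*({}_FM_n)=0$. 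Hence $R^{n-1}_*({}_FM_n)=0$ precisely when $n\geq h$, and $h_F(\os{\circ}{X}/\kap)=h=h^d(\os{\circ}{X}/\kap)$. I expect the main obstacle to be the third step: a sheaf-theoretic splitting of (\ref{ali:wrnex}) is a priori stronger than the vanishing of a single connecting homomorphism, and it is exactly the perfectness of the Serre-duality pairing that makes the two equivalent and pins $\del_E$ down as $R^{n-1}_*$ restricted to ${}_FM_n$; once this dictionary is secured, the remaining computation is the elementary linear algebra above.
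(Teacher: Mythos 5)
Your proof is correct, but it takes a genuinely different route from the paper's. The paper proves the theorem by dualizing the Cartier-operator exact sequence $0\lo F_*(B_{n-1}\Om^1_{X/s})\lo B_n\Om^1_{X/s}\os{C^{n-1}}{\lo}B_1\Om^1_{X/s}\lo 0$: it computes the dimensions of the three ${\rm Ext}^1$-groups via Tsuji's duality together with (\ref{eqn:pkdppw}) and (\ref{ali:wboex}), uses the relation $C^{(n-1)*}(e_1)=e_n$ coming from the diagram (\ref{cd:edfbx}), and decides the vanishing of $e_n$ by a dimension count in the exact sequence (\ref{ali:wbbneox}), with a separate splitting argument when $e_1=0$. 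You never touch the Cartier operator or that sequence: you convert $e_n=0$ into the vanishing of the connecting homomorphism $\del_E\col H^{d-1}(X,B_n\Om^1_{X/s})\lo H^d(X,{\cal O}_X)$ via perfectness of the Serre-duality (Yoneda) pairing, compute $\del_E=R^{n-1}_*\circ\del_S$ by naturality of connecting maps, identify the source with ${}_FM_n$ by (\ref{coro:chob}) (2), and conclude by explicit Dieudonn\'{e}-module algebra (the basis of ${}_FM_n$ from the proof of (\ref{coro:dim}) together with the exactness of (\ref{ali:nov})); your treatment of the edge cases $h=1$ (where ${}_FM_n=0$) and $h=\infty$ (where $R^{n-1}_*({}_FM_n)=M_1\not=0$ for all $n$) is also sound. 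The trade-offs are as follows. The paper needs Tsuji's duality (\ref{theo:ico}) only as an abstract isomorphism of $\kap$-vector spaces, whereas your load-bearing step is the finer, standard but uncited, fact that this duality is realized by the Yoneda pairing followed by the trace map; this is exactly what makes ``$e_n=0$'' equivalent to ``$\del_E=0$'' and deserves explicit justification (it follows from the construction of Grothendieck duality via the counit $Rg_*g^!\lo{\rm id}$). In exchange your argument buys two things: a concrete description of the obstruction ($e_n\not=0$ exactly when $R^{n-1}_*$ is nonzero on ${}_FM_n$), and, notably, no use of hypothesis (b) at all, since (b) enters the paper's proof only through (\ref{coro:chob}) (3), i.e.\ through the ${\rm Ext}^2$-vanishing that supplies the surjectivity in (\ref{ali:wbbneox}); written out carefully, your argument establishes $h_F(\os{\circ}{X}/\kap)=h^d(\os{\circ}{X}/\kap)$ without condition (b). (Both proofs tacitly require $H^d(X,{\cal O}_X)\simeq\kap$, hence geometric connectedness, in order to invoke \S\ref{sec:amf} and \S\ref{sec:dc}; you at least make this hypothesis explicit.)
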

\begin{proof} 
(The following proof is the log version of the proof of \cite[(4.5)]{y}.)
Set $h=h^d(\os{\circ}{X}/\kap)$. 
Let $F\col X\lo X$ be the Frobenius endomorphism of $X$. 
Consider the following exact sequence of ${\cal O}_X$-modules:  
\begin{align*} 
0\lo F_{*}(B_{n-1}\Om^1_{X/s}) \lo B_n\Om^1_{X/s} 
\os{C^{n-1}}{\lo} B_1\Om^1_{X/s}\lo 0. 
\tag{8.1.1}\label{ali:wnaex} 
\end{align*} 
Here note that the direct image $F_{*}$ is necessary for 
$B_{n-1}\Om^1_{X/s}$ as in [loc.~cit.]. 
Taking ${\rm Ext}_X^{\bul}(*,{\cal O}_X)$ of 
the exact sequence (\ref{ali:wnaex}), 
we have the following exact sequence 
\begin{align*} 
&{\rm Ext}_X^1(B_1\Om^1_{X/s},{\cal O}_X)
\os{C^{{n-1}*}}{\lo} {\rm Ext}_X^1(B_n\Om^1_{X/s},{\cal O}_X) 
\lo {\rm Ext}_X^1(F_*(B_{n-1}\Om^1_{X/s}),{\cal O}_X)
\tag{8.1.2}\label{ali:wbbnex} \\
& \lo {\rm Ext}_X^2(B_1\Om^1_{X/s},{\cal O}_X). 
\end{align*} 
By Tsuji's log Serre duality ((\ref{theo:ico})), 
we have the following isomorphism 
\begin{align*} 
{\rm Ext}_X^q(B_n\Om^1_{X/s},{\cal O}_X)\simeq  
{\rm Ext}_X^q(B_n\Om^1_{X/s},\Om^d_{X/s})
\simeq H^{d-q}(X,B_n\Om^1_{X/s})^*,
\tag{8.1.3}\label{ali:wboex}
\end{align*}  
where $*$ means the dual of a finite dimensional $\kap$-vector space. 
Hence ${\rm Ext}_X^2(B_n\Om^1_{X/s},{\cal O}_X)=
H^{d-2}(X,B_n\Om^1_{X/s})^*=0$ by (\ref{coro:chob}) (3) 
and we have the following exact sequence 
\begin{align*} 
&{\rm Ext}_X^1(B_1\Om^1_{X/s},{\cal O}_X)\os{C^{{n-1}*}}{\lo} 
{\rm Ext}_X^1(B_n\Om^1_{X/s},{\cal O}_X) 
\lo {\rm Ext}_X^1(F_*(B_{n-1}\Om^1_{X/s}),{\cal O}_X)\lo 0
\tag{8.1.4}\label{ali:wbbneox} 
\end{align*} 
of $\kap$-modules. 
By (\ref{eqn:pkdppw}) and (\ref{ali:wboex}), 
\begin{align*} 
\dim_{\kap}{\rm Ext}_X^1(B_1\Om^1_{X/s},{\cal O}_X)\leq 1
\tag{8.1.5}\label{ali:wbx1eox} 
\end{align*}  
and 
$$\dim_{\kap}{\rm Ext}_X^1(B_n\Om^1_{X/s},{\cal O}_X)=
{\rm min}\{n,h-1\}.$$ 
Since $F\col \os{\circ}{X}\lo \os{\circ}{X}$ is a finite morphism, 
we also have the following isomorphism 
\begin{align*} 
{\rm Ext}_X^q(F_*(B_{n-1}\Om^1_{X/s}),{\cal O}_X)&\simeq  
{\rm Ext}_X^q(F_*(B_{n-1}\Om^1_{X/s}),\Om^d_{X/s})
=H^{d-q}(X,F_*(B_{n-1}\Om^1_{X/s}))^*\\
&\simeq H^{d-q}(X,B_{n-1}\Om^1_{X/s})^*. 
\end{align*}  
Hence 
$$\dim_{\kap}{\rm Ext}_X^q(F_*(B_{n-1}\Om^1_{X/s}),{\cal O}_X)
={\rm min}\{n-1,h-1\}.$$ 
\par  
First consider the case $e_1=0$. 
Then the following exact sequence 
\begin{align*}
0\lo {\cal O}_X\os{F}{\lo} F_*({\cal O}_X)\lo B_1\Om^1_{X/s}\lo 0
\end{align*} 
is split. 
Hence 
$H^q(X,F_*({\cal O}_X))=H^q(X,{\cal O}_X)
\oplus H^q(X,B_1\Om^1_{X/s})$ 
$(q\in {\mab N})$. 
Since $\os{\circ}{F}$ is finite, 
$H^q(X,F_*({\cal O}_X))=H^q(X,{\cal O}_X)$. 
Hence  $H^q(X,B_1\Om^1_{X/s})=0$. 
(We can find this argument in \cite[(2.4.1)]{jr} 
in the trivial log case.)
In particular, $H^{d-1}(X,B_1\Om^1_{X/s})=0$.  
By (\ref{eqn:pkdppw}) we see that $h=1$. 
\par 
Next consider the case $e_1\not=0$. 
Then ${\rm Ext}_X^1(B_1\Om^1_X,{\cal O}_X)=\kap e_1$ 
by (\ref{ali:wbx1eox}). 
Because $C^{{n-1}*}(e_1)=e_n$ by (\ref{cd:edfbx}), 
we see that $e_n=0$ if and only if 
the morphism 
${\rm Ext}_X^1(B_n\Om^1_{X/s},{\cal O}_X) 
\lo {\rm Ext}_X^1(B_{n-1}\Om^1_{X/s},{\cal O}_X)$ is an isomorphism by 
(\ref{ali:wbbneox}). 
Hence $e_n=0$ if and only if ${\rm min}\{n-1,h-1\}={\rm min}\{n,h-1\}$. 
This is equivalent to $h\leq n$. 
\end{proof}

\begin{coro}\label{coro:cb}
{\rm (\ref{theo:ny1})} holds. 
\end{coro}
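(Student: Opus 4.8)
The plan is to deduce the two assertions of (\ref{theo:ny1}) directly from the two principal theorems of the body, namely the lifting theorem (\ref{theo:fshl}) for part (1) and the height-equality theorem (\ref{theo:cyh}) for part (2); the only work is to verify that the hypotheses of those theorems are met in the situation of (\ref{theo:ny1}) and to upgrade the resulting lift to a proper one.

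For part (1) I would apply (\ref{theo:fshl}) with $S_0:=s$ and $S:={\cal W}_2(s)$. First I would record that $\os{\circ}{s}={\rm Spec}(\kap)$ is perfect, that $s\os{\sus}{\lo}{\cal W}_2(s)$ is an exact closed immersion, and that the Frobenius endomorphism $F_{{\cal W}_2(s)}$ appearing in (\ref{theo:td}) furnishes the required lift of $F_s$ over ${\cal W}_2(s)$. Since $X/s$ is of Cartier type it is in particular log smooth and integral, and by hypothesis $h_F(\os{\circ}{X})=h_F(\os{\circ}{X}/\os{\circ}{s})<\infty$ (the identification with the absolute invariant of \cite{y} being (\ref{rema:absh}) (1), as $\os{\circ}{s}$ is perfect). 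Thus (\ref{theo:fshl}) produces a log smooth integral log scheme ${\cal X}$ over ${\cal W}_2(s)$ with ${\cal X}\times_{{\cal W}_2(s)}s=X$. It then remains to see that ${\cal X}$ is proper, i.e. that $\os{\circ}{\cal X}\lo {\rm Spec}({\cal W}_2)$ is proper; I would settle this by the standard fact that, because ${\rm Spec}({\cal W}_2)$ is artinian local with residue field $\kap$ and $s\os{\sus}{\lo}{\cal W}_2(s)$ induces a homeomorphism of underlying spaces, finite type, separatedness and universal closedness of $\os{\circ}{\cal X}\lo {\rm Spec}({\cal W}_2)$ are all detected on the special fibre $\os{\circ}{X}\lo {\rm Spec}(\kap)$, which is proper by assumption.

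For part (2) I would invoke (\ref{theo:cyh}) essentially verbatim: the conditions (a), (b), (c) coincide with those imposed there, $\os{\circ}{X}$ has pure dimension $d$, and $X/s$ is proper, log smooth, integral, of Cartier type and of vertical type by hypothesis. The one hypothesis of (\ref{theo:cyh}) not literally present in (\ref{theo:ny1}) is saturation, which I would supply by noting that $\os{\circ}{X}$ is reduced by (\ref{rema:rds}) (since $\os{\circ}{s}={\rm Spec}(\kap)$ is reduced), so that the single fibre of $\os{\circ}{X}\lo \os{\circ}{s}$ is reduced and (\ref{prop:ktj}) (5) forces $X/s$ to be saturated. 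Then (\ref{theo:cyh}) yields exactly $h_F(\os{\circ}{X})=h^d(\os{\circ}{X}/\kap)$, which is the assertion of part (2).

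Because both halves are immediate consequences of theorems already established, there is no genuine obstacle here: (\ref{coro:cb}) is in effect a bookkeeping statement certifying that the introductory theorem (\ref{theo:ny1}) has now been proved. The only points deserving a word of care are the descent of properness along the nilpotent thickening ${\rm Spec}(\kap)\os{\sus}{\lo}{\rm Spec}({\cal W}_2)$ in part (1), and, in part (2), the passage from the Cartier type hypothesis and reducedness of the special fibre to the full saturation condition that underlies the use of Tsuji's log Serre duality (\ref{theo:ico}) inside (\ref{theo:cyh}).
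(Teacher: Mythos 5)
Your proposal is correct and follows exactly the paper's own route: the paper proves (\ref{coro:cb}) in a single line by citing (\ref{theo:fshl}) for part (1) and (\ref{theo:cyh}) for part (2). The extra verifications you supply — properness of the lift across the nilpotent thickening ${\rm Spec}(\kap)\os{\sus}{\lo}{\rm Spec}({\cal W}_2)$, and saturation of $X/s$ obtained from (\ref{rema:rds}) together with (\ref{prop:ktj}) (5) — are precisely the details the paper leaves implicit, and they are sound.
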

\begin{proof} 
(\ref{coro:cb}) immediately follows from 
(\ref{theo:fshl}) and (\ref{theo:cyh}). 
\end{proof} 


By using the degeneration at $E_1$ of the log Hodge spectral sequence 
due to Kato (\cite[(4.12) (1)]{klog1}) or (\ref{theo:kdg}) and (\ref{coro:cfd}), 
we obtain the following: 

\begin{coro}\label{coro:nip}
Let the assumptions be as in {\rm (\ref{theo:cyh})}. 
Then {\rm (\ref{theo:np})} holds. 
\end{coro}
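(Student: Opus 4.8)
The plan is to realize (\ref{theo:np}) as the combination of the equality proved in (\ref{theo:cyh}), the lifting theorem (\ref{theo:fshl}), and the reproof of Kato's $E_1$-degeneration given in \S\ref{sec:app}. First I would use (\ref{theo:cyh}): under the assumptions listed there one has $h_F(\os{\circ}{X})=h^d(\os{\circ}{X}/\kap)$, so the hypothesis $h_F(\os{\circ}{X})<\infty$ of (\ref{theo:np}) is equivalent to the finiteness of the $d$-th Artin--Mazur height $h^d(\os{\circ}{X}/\kap)$. Assuming this finiteness, which is exactly the running hypothesis of (\ref{theo:np}), we obtain $h_F(\os{\circ}{X})<\infty$.

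Next I would produce a lift over ${\cal W}_2(s)$. Since $\os{\circ}{s}={\rm Spec}(\kap)$ is perfect and the log structure of ${\cal W}_2(s)$ provides a lift of the Frobenius of $s$, (\ref{theo:fshl}) applies with $S_0=s$ and $S={\cal W}_2(s)$ and yields a log smooth integral scheme ${\cal X}$ over ${\cal W}_2(s)$ with ${\cal X}\times_{{\cal W}_2(s)}s=X$; this is precisely the content of (\ref{coro:cb}). As in the proof of (\ref{coro:lkan}), the integrality of $X/s$ and the perfectness of $\os{\circ}{s}$ give $\os{\circ}{X}{}'\simeq \os{\circ}{X}$ and $\Om^i_{X'/s}\simeq \Om^i_{X/s}$, so the relative Frobenius twist $X'$ also admits a lift over ${\cal W}_2(s)$.

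Finally I would feed this lift into the degeneration machinery: since $X/s$ is of Cartier type and the underlying scheme of ${\cal W}_2(s)$ is flat over ${\rm Spec}({\mab Z}/p^2)$, (\ref{theo:kdg}) furnishes the decomposition $\bigoplus_{i<p}\Om^i_{X'/s}[-i]\os{\sim}{\lo}\tau_{<p}F_*(\Om^{\bul}_{X/s})$, and (\ref{coro:cfd}), equivalently (\ref{coro:dhg}), then gives the conclusions of (\ref{coro:cfd}) (1) and (2) for the proper scheme $X/s$. These conclusions are exactly the assertions of (\ref{theo:np}): the spectral sequence (\ref{ali:oemxs}) degenerates at $E_1$ when $d<p$, and when $F_{X*}({\cal O}_X)$ is locally free of finite rank and $d\leq p$ it degenerates as well. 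The proof is therefore a citation chain, and I expect no genuine calculation; the one point I would make explicit is the passage between the two Frobenius pictures, since (\ref{theo:np}) is phrased via the absolute Frobenius endomorphism while (\ref{theo:kdg}) is phrased via the relative Frobenius $F\col X\lo X'$. This is reconciled by the identification $\os{\circ}{X}{}'\simeq \os{\circ}{X}$ over the perfect base and, at the level of deformation functors, by the equivalence (\ref{coro:nofs}); the only real work, namely (\ref{theo:fshl}) and (\ref{theo:kdg}), has already been carried out.
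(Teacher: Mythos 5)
Your proposal is correct and follows essentially the same route as the paper: the paper's proof of this corollary is precisely the citation chain you describe, namely the lift over ${\cal W}_2(s)$ coming from (\ref{theo:fshl})/(\ref{coro:cb}) under the finiteness of $h_F$, followed by Kato's $E_1$-degeneration theorem (or its reproof (\ref{theo:kdg})) together with (\ref{coro:cfd}). Your extra care about lifting $X'$ and reconciling the absolute and relative Frobenius pictures via $\os{\circ}{X}{}'\simeq\os{\circ}{X}$ over the perfect base only makes explicit what the paper leaves implicit.
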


\begin{coro}\label{coro:indm}
$(1)$ {\rm (\ref{theo:stk})} holds. 
\par 
$(2)$ Let the notations and the assumptions be as in {\rm (\ref{theo:ray})}. 
Then $H^j(Y,{\cal L}^{-1})=0$ for $j<d$. 
\end{coro}
\begin{proof}
This follows from (\ref{theo:lkdv}) and (\ref{theo:cyh}). 
\end{proof}

The following is of independent interest: 
\begin{coro}\label{coro:no} 
Let the notations and the assumptions be as in 
{\rm (\ref{theo:ny1})}. 
Assume that $h^d(\os{\circ}{X}/\kap)\geq 2$. Then there does not exist a lift 
$\wt{F}\col {\cal X}\lo {\cal X}$ over the Frobenius endomorphism 
$F_{{\cal W}_2(s)}\col {\cal W}_2(s)\lo {\cal W}_2(s)$ 
which is a lift of the Frobenius endomorphism of $X$. 
\end{coro}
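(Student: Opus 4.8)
The plan is to argue by contradiction: assuming a lift $\wt{F}$ of Frobenius exists on ${\cal X}$, I would show that $\os{\circ}{X}$ is forced to be $F$-split, i.e. $h^d(\os{\circ}{X}/\kap)=1$, contradicting the standing hypothesis $h^d(\os{\circ}{X}/\kap)\geq 2$. Recall first that by (\ref{theo:ny1}), equivalently by the equality proved in (\ref{theo:cyh}), the assumptions give $h_F(\os{\circ}{X})=h^d(\os{\circ}{X}/\kap)\geq 2$. At the outset I would reduce the existence of a lift of the \emph{absolute} Frobenius on ${\cal X}/{\cal W}_2(s)$ to that of a lift of the \emph{relative} Frobenius $F\col X\lo X'$, using the equivalence of the two deformation theories established in (\ref{coro:nofs}) and (\ref{rema:ss}), together with the identification $\os{\circ}{X}{}'\simeq\os{\circ}{X}$ coming from the perfectness of $\os{\circ}{s}$.

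The geometric heart is to extract a splitting of the log Cartier operator from $\wt{F}$. Applying the construction in the proof of (\ref{theo:uryn}), namely $\zeta_1:=p^{-1}\wt{F}{}^*$, to the global lift $({\cal X},\wt{F})$, I obtain a global ${\cal O}_X$-linear section $\zeta_1\col\Om^1_{X/s}\lo F_*(Z\Om^1_{X/s})$ of the log Cartier operator $C\col F_*(Z\Om^1_{X/s})\lo\Om^1_{X/s}$ (this applies since $\os{\circ}{X}$ is reduced by (\ref{rema:rds}) and $\os{\circ}{S}={\rm Spec}({\cal W}_2)$ is flat over ${\rm Spec}({\mab Z}/p^2)$). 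Next I would propagate $\zeta_1$ to top degree by wedging: define $\zeta_d\col\Om^d_{X/s}\lo F_*(\Om^d_{X/s})$ on decomposables by $\om_1\wedge\cdots\wedge\om_d\lom\zeta_1(\om_1)\wedge\cdots\wedge\zeta_1(\om_d)$. Since each $\zeta_1(\om_i)$ is a closed $1$-form and the Cartier operator is multiplicative on closed forms, one has $C\circ\zeta_d=\mathrm{id}$; because $Z\Om^d=\Om^d$ in top degree, $\zeta_d$ splits the exact sequence (\ref{ali:odcs}) of (\ref{rema:vo})(2), i.e. $0\lo B\Om^d_{X/s}\lo\Om^d_{X/s}\os{C}{\lo}\Om^d_{X/s}\lo 0$ (with the middle term read as $F_*(\Om^d_{X/s})$).

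From this splitting the induced map $C\col H^0(X,\Om^d_{X/s})\lo H^0(X,\Om^d_{X/s})$ is surjective, hence, being a $\sig$-semilinear endomorphism of a finite-dimensional $\kap$-vector space over the perfect field $\kap$, it is bijective. By Tsuji's log Serre duality (\ref{theo:ico}), as recorded in (\ref{rema:vo})(2), this map is dual to $F\col H^d(X,{\cal O}_X)\lo H^d(X,{\cal O}_X)$, which is therefore injective. By (\ref{coro:ftc}) the injectivity of $F$ on $H^d(X,{\cal W}_1({\cal O}_X))=H^d(X,{\cal O}_X)$ forces $h^d(\os{\circ}{X}/\kap)=1$, the desired contradiction. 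The hypotheses of (\ref{theo:ny1})(2) (vertical type, Cartier type, $\Om^d_{X/s}\simeq{\cal O}_X$, and the vanishings of $H^{d-1}$ and $H^{d-2}$ of ${\cal O}_X$) are precisely what make (\ref{rema:vo}), Tsuji's duality and (\ref{coro:ftc}) applicable here.

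The main obstacle I anticipate is the passage to top degree. One is tempted to set $\zeta_d=p^{-d}\wt{F}{}^*$, but this is illegitimate over ${\cal W}_2(s)$, since a $d$-form pulls back into $p^d\Om^d$ while only a single division by $p$ is available modulo $p^2$; the correct move is to first produce the genuine characteristic-$p$ degree-one section $\zeta_1$ and only afterwards wedge, invoking the multiplicativity of the Cartier operator in place of any further $p$-division (so that, notably, no hypothesis $d<p$ is needed). A secondary point to handle with care is the bookkeeping between absolute and relative Frobenius lifts, and the attendant identifications $\os{\circ}{X}{}'\simeq\os{\circ}{X}$ and $\os{\circ}{F}$ a homeomorphism, so that $\zeta_d$ really splits the top Cartier sequence on $\os{\circ}{X}$ and the duality of (\ref{rema:vo})(2) applies verbatim.
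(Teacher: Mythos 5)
Your proof is correct, but it follows a genuinely different route from the paper's. The paper's own proof is essentially a citation: it invokes \cite[(3.2)]{nil} and \cite[(8.6), (8.8)]{idf} to conclude that the existence of $\wt{F}$ forces $X/s$ to be \emph{log ordinary}, i.e.\ $H^q(X,B\Om^i_{X/s})=0$ for all $i$ and $q$; in particular $H^d(X,B\Om^1_{X/s})=0$, so (\ref{coro:od}) gives $h^d(\os{\circ}{X}/\kap)=1$, contradicting the hypothesis. You instead stay entirely inside the paper's own machinery: from the Frobenius lift you extract, via (\ref{theo:uryn}), the global section $p^{-1}\wt{F}{}^*$ of the degree-one log Cartier operator, propagate it to top degree by wedging (using multiplicativity of $C$ on closed forms rather than any illegitimate division by $p^d$), split the top-degree Cartier sequence of (\ref{rema:vo}) (2), and then pass through Tsuji duality and (\ref{coro:ftc}) to get $F$ injective on $H^d(X,{\cal O}_X)$, hence $h^d(\os{\circ}{X}/\kap)=1$. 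The trade-offs: the paper's argument is two lines but leans on the external ordinarity theorem (Frobenius liftability mod $p^2$ implies ordinarity, in its log form), and it yields the much stronger conclusion of full log ordinarity before specializing; your argument proves only the weaker statement actually needed ($C$ bijective on $H^0(X,\Om^d_{X/s})$, equivalently $F$ injective on $H^d(X,{\cal O}_X)$), is self-contained modulo the duality assertion of (\ref{rema:vo}) (2) (which the paper itself states for SNCL schemes and uses without detailed proof, so you inherit exactly the same reliance), and, as you note, requires no bound of the form $d<p$. Note also that both arguments ultimately feed into the same height criterion, since (\ref{coro:od}) is itself a restatement of (\ref{coro:ftc}); the genuine difference is how the Frobenius lift is converted into cohomological input, by citation to ordinarity in the paper versus by your explicit wedge-power splitting.
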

\begin{proof} 
If there exists the $\wt{F}$ in (\ref{coro:no}), then 
\cite[(3.2)]{nil} and \cite[(8.6), (8.8)]{idf} (cf.~\cite[(4.3)]{cl}) 
tell us that $X/s$ is log ordinary, 
that is, $H^q(X,B\Om^i_{X/s})=0$ for all $q$'s and $i$'s. 
By (\ref{eqn:pkdefpw}) in the case $n=1$,  $h^d(\os{\circ}{X}/\kap)=1$. 
This contradicts the assumption 
$h^d(\os{\circ}{X}/\kap)\geq 2$.  
\end{proof}

\begin{exem}
Let $X/s$ be a log $K3$ surface of type II (\cite[\S3]{nlk3}). 
By \cite[Theorem 1]{rs}, we have the following spectral sequence 
\begin{align*} 
E^{ij}=H^j(\os{\circ}{X}{}^{(i)},{\cal W}({\cal O}_{\os{\circ}{X}{}^{(i)}}))
\Lo H^{i+j}(X,{\cal W}({\cal O}_X))
\tag{8.6.1}\label{ali:xag}
\end{align*}
obtained by the following exact sequence 
\begin{align*} 
0\lo {\cal W}({\cal O}_X)\lo {\cal W}({\cal O}_{\os{\circ}{X}{}^{(0)}})\lo 
{\cal W}({\cal O}_{\os{\circ}{X}{}^{(1)}})\lo 0.  
\end{align*}
By using this spectral sequence, 
it is easy to prove that 
$H^2(X,{\cal W}({\cal O}_X))\simeq H^1(E,{\cal W}({\cal O}_E))$. 
Assume that the double elliptic curve $E$ is supersingular. 
Then $h_F(\os{\circ}{X}/\kap)=h^2(\os{\circ}{X}/\kap)=2$.  
Let ${\cal X}$ be a log smooth lift of $X$ over ${\cal W}_2(s)$. 
Then there does not exist a lift 
$\wt{F}\col {\cal X}\lo {\cal X}$ over $F_{{\cal W}_2(s)}$ 
which is a lift of the Frobenius endomorphism of $X$. 
\end{exem}

\begin{rema} 
Let $X/s$ be as in (\ref{theo:cyh}). 
Assume that $\dim \os{\circ}{X}\geq 2$. 
In the case where $h^d(\os{\circ}{X}/\kap)=1$, 
we do not know whether there does not exist a lift 
$\wt{F}\col {\cal X}\lo {\cal X}$ over $F_{{\cal W}_2(s)}$ which is a lift of 
the Frobenius endomorphism of $X$ in general. 
\par 
In the case where $\dim \os{\circ}{X}=2$, 
there does not exist the lift $\wt{F}$ above 
if the log structures of ${\cal X}$ and $s$ are trivial 
(\cite[(3.3)]{x}). 
(If $h^2(\os{\circ}{X}/\kap)\geq 2$, the proof of (\ref{coro:no}) gives us 
another proof of this fact.) 
\par 
In the case  $\dim \os{\circ}{X}=1$ and the log structure of $X$ is nontrivial, 
one can prove that there exists a lift 
$\wt{F}\col {\cal X}\lo {\cal X}$ over $F_{{\cal W}_2(s)}$ which is a lift of 
the Frobenius endomorphism of $X$ (\cite{nclw}).
\end{rema} 

By using (\ref{theo:np}), we obtain the following as in \cite{ln}.

\begin{coro}\label{coro:lh}
Assume that the log structures of $s$ and $X$ are trivial and that 
${\rm NS}(X)$ is $p$-torsion-free.  
Then $H^0(X,\Om^1_{X/\kap})=0$. 
\end{coro}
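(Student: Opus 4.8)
The plan is to reduce $H^0(X,\Om^1_{X/\kap})=0$ to the triviality of the $p$-torsion of the N\'eron--Severi group, using the Cartier operator. First I would base change to an algebraic closure: since $\Om^1_{X/\kap}$ and all the cohomology groups below commute with the field extension $\kap\to\ol{\kap}$, I may and do assume $\kap=\ol{\kap}$, replacing $X$ by $X_{\ol{\kap}}$. I then record the cohomological input coming from the ambient Calabi--Yau hypotheses of (\ref{theo:cyh}), where $d\geq 2$: by condition (c) we have $\Om^d_{X/\kap}\simeq{\cal O}_X$, so Serre duality gives $H^1(X,{\cal O}_X)\simeq H^{d-1}(X,\Om^d_{X/\kap})^{*}\simeq H^{d-1}(X,{\cal O}_X)^{*}=0$ by condition (a). Since ${\rm Lie}({\rm Pic}^0_{X/\kap})=H^1(X,{\cal O}_X)=0$, the connected group scheme ${\rm Pic}^0_{X/\kap}$ has trivial tangent space at the identity and is therefore trivial, whence ${\rm Pic}(X)={\rm NS}(X)$.

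Next I would deduce from the degeneration that global $1$-forms are closed. As $\os{\circ}{X}$ is smooth over $\kap$, $F_{X*}({\cal O}_X)$ is locally free, so (\ref{theo:np}) (equivalently the range $i+j<p$ of (\ref{coro:cfd}) (1)) yields $E_1^{ij}=E_\infty^{ij}$ for $i+j<p$; in particular the differential $d_1\colon E_1^{1,0}=H^0(X,\Om^1_{X/\kap})\lo E_1^{2,0}=H^0(X,\Om^2_{X/\kap})$ vanishes, i.e. every global $1$-form is $d$-closed. Writing $V:=H^0(X,\Om^1_{X/\kap})$, we thus have $V=H^0(X,Z_1\Om^1_{X/\kap})$, and the log Cartier operator restricts to a $\sig^{-1}$-linear endomorphism $C\colon V\lo V$, where $\sig$ is the Frobenius of $\kap$. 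By the Cartier sequence $0\lo B_1\Om^1_{X/\kap}\lo Z_1\Om^1_{X/\kap}\os{C}{\lo}\Om^1_{X/\kap}\lo 0$ of (\ref{theo:td}) its kernel is $H^0(X,B_1\Om^1_{X/\kap})$. From the level-one sequence (\ref{prop:wus}), namely $0\lo{\cal O}_X\os{F}{\lo}F_{X*}({\cal O}_X)\lo B_1\Om^1_{X/\kap}\lo 0$, together with $H^0(X,{\cal O}_X)=\kap$ and the bijectivity of $F$ on $\kap$, one gets $H^0(X,B_1\Om^1_{X/\kap})\hookrightarrow H^1(X,{\cal O}_X)=0$. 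Hence $C$ is injective, and being an injective $\sig^{-1}$-linear endomorphism of a finite-dimensional $\kap$-vector space, it is bijective.

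Finally I would identify the $C$-fixed forms and conclude. Because all global forms are closed, the subspace $V^{C=1}=\{\om\in V\mid C\om=\om\}$ consists exactly of the global logarithmic $1$-forms $H^0(X,\Om^1_{X,{\rm log}})$. Using the \'etale exact sequence $0\lo{\cal O}_X^{*}\os{p}{\lo}{\cal O}_X^{*}\os{d\log}{\lo}\Om^1_{X,{\rm log}}\lo 0$, the equality $\Gam(X,{\cal O}_X^{*})=\kap^{*}$, and the surjectivity of the $p$-th power map on $\kap^{*}$, the long exact cohomology sequence gives $H^0(X,\Om^1_{X,{\rm log}})\simeq{\rm Pic}(X)[p]={\rm NS}(X)[p]=0$, the last equality by the $p$-torsion-freeness of ${\rm NS}(X)$ and the identification ${\rm Pic}(X)={\rm NS}(X)$. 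Now $C$ is a bijective $\sig^{-1}$-linear operator on the finite-dimensional $\ol{\kap}$-space $V$; by Lang's theorem (equivalently, the structure of unit-root objects) $V$ is spanned over $\ol{\kap}$ by its $C$-fixed vectors, i.e. $V=\ol{\kap}\otimes_{{\mab F}_p}V^{C=1}$. Since $V^{C=1}=0$ we conclude $V=0$, that is $H^0(X,\Om^1_{X/\kap})=0$.

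The step I expect to be the main obstacle is the geometric identification $V^{C=1}\simeq H^0(X,\Om^1_{X,{\rm log}})\simeq{\rm Pic}(X)[p]$, which rests on Cartier's characterization of logarithmic forms as the closed, $C$-fixed forms and on the $d\log$ exact sequence, together with the semilinear-algebra input that a bijective $\sig^{-1}$-linear operator over $\ol{\kap}$ admits an ${\mab F}_p$-rational basis of fixed vectors. One must also take care that the degeneration is genuinely needed so that $C$ is defined on all of $V$ (closedness of global $1$-forms), and that the kernel computation $H^0(X,B_1\Om^1_{X/\kap})=0$ uses both $H^0(X,{\cal O}_X)=\kap$ and $H^1(X,{\cal O}_X)=0$; the hypothesis $h_F(\os{\circ}{X})<\infty$ enters only through (\ref{theo:np}).
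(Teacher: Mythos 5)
Your proof is correct and follows essentially the same route the paper intends: the paper proves this corollary ``as in \cite{ln}'' (Lang--Nygaard) using (\ref{theo:np}), which is exactly your argument — closedness of global $1$-forms via the $E_1$-degeneration, injectivity (hence bijectivity) of the Cartier operator from $H^0(X,B_1\Om^1_{X/\kap})\hookrightarrow H^1(X,{\cal O}_X)=0$, identification of the $C$-fixed forms with $H^0(X,\Om^1_{X,{\rm log}})\simeq{\rm Pic}(X)[p]={\rm NS}(X)[p]=0$ via $d\log$, and semilinear descent. The one point to watch is your opening reduction to $\kap=\ol{\kap}$: coherent cohomology does commute with the extension, but $p$-torsion-freeness of ${\rm NS}(X)$ over $\kap$ does not formally pass to ${\rm NS}(X_{\ol{\kap}})$, so the hypothesis must be read geometrically (as it is in \cite{ln} and \cite{rs}, which work over an algebraically closed field) for this step to be legitimate.
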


\newpage
\parno
\begin{center}
{{\rm \Large{\bf Appendix}}}
\end{center}

\parno
\begin{center}
{{\rm \large{Yukiyoshi~Nakkajima}}}
\end{center}

\section{Weak Lefschetz theorem for isocrystalline cohomologies}\label{sec:wlt} 
In this section we prove the weak Lefschetz theorem in \cite{kme}
(cf.~\cite{bwl}) for crystalline cohomologies of proper smooth schemes 
over $\kap$ by using rigid cohomologies. 
To prove this, we prove the following: 

\begin{theo}\label{theo:be}
Let $K$ be the fraction field of 
a complete discrete valuation ring ${\cal V}$ 
of mixed characteristic with residue field $\kap$. 
Let $X$ be a projective scheme over $\kap$ 
with a closed immersion $X\os{\sus}{\lo} {\mab P}^n_{\kap}$. 
Set $d:=\dim X$. 
Let $H$ be a hypersurface of ${\mab P}^n_{\kap}$. 
Set $Y:=X\cap H$ and $U:=X\setminus Y$. 
Assume that $U$ is smooth over $\kap$. 
Let $\iota \col Y \os{\subset}{\lo} X$ be 
the inclusion morphism. 
Then the pull-back of $\iota$ 
\begin{align*} 
\iota^*\col H^q_{\rm rig}(X/K)\lo H^q_{\rm rig}(Y/K)
\tag{9.1.1}\label{eqn:cseis}
\end{align*} 
is an isomorphism for $q<d-1$ and injective for $q= d-1$. 
\end{theo}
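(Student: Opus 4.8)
Let me sketch how I would prove this.

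The statement is about rigid cohomology of a projective scheme $X$ over $\kappa$ and a hyperplane-type section $Y = X \cap H$. The key hypothesis is that $U = X \setminus Y$ is smooth (an affine smooth variety), and we want $\iota^*: H^q_{\rm rig}(X/K) \to H^q_{\rm rig}(Y/K)$ to be an isomorphism for $q < d-1$ and injective for $q = d-1$.

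The approach: use the long exact sequence relating rigid cohomology with and without supports. Let me sketch the steps.

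First, I would set up the long exact sequence of the pair. For the closed immersion $\iota: Y \hookrightarrow X$ with open complement $j: U \hookrightarrow X$, Berthelot's theory of rigid cohomology with compact support and the localization sequence give a long exact sequence relating $H^q_{\rm rig}(X/K)$, $H^q_{\rm rig}(Y/K)$, and the rigid cohomology with compact supports $H^q_{{\rm rig},c}(U/K)$ of the open complement. The pull-back $\iota^*$ appears in this sequence, so controlling its kernel and cokernel reduces to understanding $H^q_{{\rm rig},c}(U/K)$ in a range of degrees.

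Second — and this is where the smoothness of $U$ is essential — I would invoke Poincaré duality for rigid cohomology of the smooth variety $U$. Since $U$ is smooth of dimension $d$, Berthelot's Poincaré duality identifies $H^q_{{\rm rig},c}(U/K)$ with the dual of $H^{2d-q}_{\rm rig}(U/K)$. Then I would use the fact that $U$ is affine: by the rigid-cohomological analogue of the Artin/Lefschetz vanishing theorem for affine smooth varieties (finite-dimensionality plus the affine vanishing bound), $H^r_{\rm rig}(U/K) = 0$ for $r > d$. Dualizing, $H^q_{{\rm rig},c}(U/K) = 0$ for $q < d$. Feeding this vanishing into the localization long exact sequence forces $\iota^*$ to be an isomorphism for $q < d-1$ and injective at $q = d-1$, exactly as claimed.

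The main obstacle will be assembling the correct localization long exact sequence and verifying that the connecting maps sit in the right degrees, together with citing the precise form of Poincaré duality and the affine vanishing theorem valid for the possibly-singular ambient $X$ (only $U$ is assumed smooth). One must be careful that $X$ and $Y$ need not be smooth, so the duality is applied only to $U$; the cohomologies of $X$ and $Y$ enter merely as terms in the exact sequence, and no duality or vanishing is needed for them directly. I would therefore lean on Berthelot's foundational results (the papers \cite{brc}, \cite{bfi}, \cite{bd} already cited) for: finiteness of rigid cohomology, the excision/localization triangle, Poincaré duality for smooth $U$, and the affine vanishing bound. Once these four ingredients are in place, the degree bookkeeping is routine, and the theorem follows immediately; the deduction of the crystalline weak Lefschetz statement of Katz–Messing then comes from the comparison between rigid and crystalline (isocrystalline) cohomology for the smooth loci, which I would record as a corollary.
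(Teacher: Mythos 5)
Your proposal is correct and follows essentially the same route as the paper's proof: the localization exact sequence of Berthelot reducing the claim to $H^q_{{\rm rig},c}(U/K)=0$ for $q<d$, then Poincar\'{e} duality for the smooth affine $U$, then vanishing of $H^r_{\rm rig}(U/K)$ for $r>d$. The only difference is at the last step, where you cite the affine vanishing bound as a known result, while the paper proves it on the spot via the comparison with Monsky--Washnitzer cohomology (lifting $U$ over ${\cal V}$ by Elkik's theorem, so that the cohomology is computed by a complex with no terms in degrees $>d$) --- which is precisely the standard proof of the bound you invoke.
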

\begin{proof} 
By \cite[(3.1) (iii)]{brc} we have the following exact sequence 
\begin{align*} 
\cdots \lo H^i_{\rm rig,c}(U/K)\lo H^i_{\rm rig}(X/K)\lo H^i_{\rm rig}(Y/K) \lo \cdots.  
\tag{9.1.2}\label{eqn:csexs}
\end{align*} 
Hence it suffices to prove that 
\begin{align*} 
H^i_{\rm rig,c}(U/K)=0 \quad (i< d)
\tag{9.1.3}\label{eqn:csds}
\end{align*} 
Let $H^i_{\rm MW}(U/K)$ be 
the $i$-th Monsky-Washnitzer cohomology of $U/K$. 
We have the following equalities by Berthelot's duality (\cite[(2.4)]{bd}), 
Berthelot's comparison theorem (\cite[(1.10.1)]{bfi}): 
\begin{align*} 
H^i_{\rm rig,c}(U/K)={\rm Hom}_K(H^{2d-i}_{\rm rig}(U/K),K(-d))
={\rm Hom}_K(H^{2d-i}_{\rm MW}(U/K),K(-d)). 
\tag{9.1.4}\label{eqn:csdds}
\end{align*}  
Hence it suffices to prove that 
\begin{align*} 
H^i_{\rm MW}(U/K)=0 \quad (i>d). 
\tag{9.1.5}\label{eqn:csers}
\end{align*} 
Since $U$ is affine, express $U={\rm Spec}(A_0)$. 
Let ${\cal U}$ be a lift of $U$ over ${\cal V}$ (\cite[Th\'{e}or\`{e}me 6]{elk}). 
Express ${\cal U}={\rm Spec}(A)$. 
Then, by the definition of Monsky-Washnitzer cohomology, 
$H^i_{\rm MW}(U/K)=H^i(K\otimes_{\cal V}A^{\dagger}
\otimes_{A}\Om^{\bul}_{A/{\cal V}})$. 
Now the vanishing (\ref{eqn:csers}) is obvious.
\end{proof} 

\begin{rema} 
In \cite{ca} Caro has proved the hard Lefschetz Theorem in $p$-adic cohomologies. 
In particular, he has reproved the hard Lefschetz theorem proved in \cite{kme}. 
However it seems that (\ref{theo:be}) cannot be obtained by the hard Lefschetz theorem 
in \cite{kme} and \cite{ca} because $X$ nor $Y$ is not necessarily smooth. 
\end{rema}

\begin{coro}\label{coro:c}
Let the notations be as in {\rm (\ref{theo:be})}. 
Let ${\cal W}$ be a Cohen ring of $\kap$. Let $K_0$ 
be the fraction field of ${\cal W}$. 
Assume that $X$ and $Y$ are smooth over $\kap$. 
Then 
\begin{align*} 
\iota^*\col H^q_{\rm crys}(X/{\cal W})_{K_0}\lo H^q_{\rm crys}(Y/{\cal W})_{K_0}
\end{align*} 
is an isomorphism for $q<d-1$ and injective for $q= d-1$. 
\end{coro}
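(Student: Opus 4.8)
The plan is to reduce the crystalline statement to the rigid--cohomological statement (\ref{theo:be}), which has already been proved, by means of Berthelot's comparison theorem between rigid and crystalline cohomology. Since everything in the target is rational (we have tensored with $K_0$), this comparison is exactly the right tool: for a proper smooth scheme over $\kap$ the rigid cohomology and the isocrystalline cohomology agree canonically and functorially.

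First I would observe that the Cohen ring ${\cal W}$ of $\kap$ is a complete discrete valuation ring of mixed characteristic $(0,p)$ with residue field $\kap$, so one may apply (\ref{theo:be}) with ${\cal V}={\cal W}$ and $K=K_0$. I would then check that the hypotheses of (\ref{theo:be}) are indeed met under the stronger assumptions of the corollary: $X$ is projective, hence proper, over $\kap$, and $Y=X\cap H$ is a closed subscheme of the projective scheme $X$, hence itself projective and proper over $\kap$. Since both $X$ and $Y$ are now assumed smooth, the open subscheme $U=X\setminus Y$ of the smooth scheme $X$ is smooth over $\kap$, which is precisely the hypothesis required in (\ref{theo:be}). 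Consequently (\ref{theo:be}) yields that
\begin{align*}
\iota^*\col H^q_{\rm rig}(X/K_0)\lo H^q_{\rm rig}(Y/K_0)
\end{align*}
is an isomorphism for $q<d-1$ and injective for $q=d-1$.

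Next I would invoke Berthelot's comparison theorem for proper smooth schemes (\cite{brc}, \cite{bfi}, \cite{bd}): for a proper smooth scheme $Z$ over $\kap$ there is a canonical isomorphism $H^q_{\rm rig}(Z/K_0)\os{\sim}{\lo}H^q_{\rm crys}(Z/{\cal W})_{K_0}$, functorial in $Z$. Applying this to $Z=X$ and $Z=Y$ and using the naturality with respect to the closed immersion $\iota\col Y\os{\sus}{\lo}X$, I obtain a commutative square whose horizontal arrows are the two comparison isomorphisms and whose vertical arrows are $\iota^*$ in rigid and in crystalline cohomology; this identifies the crystalline pull-back with the rigid one, and the conclusion of the corollary follows at once from the previous paragraph. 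The main point to be careful about -- and the only genuine obstacle -- is precisely this functoriality: one must know that the comparison isomorphisms for $X$ and for $Y$ are compatible with the pull-back along $\iota$, i.e. that the square commutes. This is built into Berthelot's construction of the comparison morphism (it is induced by a morphism of the relevant complexes, which is natural in the scheme), so once this naturality is cited the argument is complete.
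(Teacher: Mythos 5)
Your proposal is correct and follows essentially the same route as the paper: apply (\ref{theo:be}) with ${\cal V}={\cal W}$ and then identify rigid with isocrystalline cohomology via Berthelot's comparison theorem \cite[(1.9)]{bfi} for the proper smooth schemes $X$ and $Y$. The paper's proof is just a one-line citation of that comparison theorem; your additional checks (that the Cohen ring qualifies as ${\cal V}$, that $U$ is smooth, and that the comparison isomorphism is functorial in $\iota$) merely make explicit what the paper leaves implicit.
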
 
\begin{proof} 
This follows from the comparison theorem of Berthelot 
(\cite[(1.9)]{bfi}: 
\begin{align*} 
H^q_{\rm crys}(Z/{\cal W})_{K_0}=H^q_{\rm rig}(Z/K_0)
\end{align*}
for a proper smooth scheme $Z/\kap$. 
\end{proof} 

\begin{rema}\label{rema:wle}
Because of the development of theory of rigid cohomology by Berthelot, 
we have been able to 
give a very short proof of the weak Lefshetz theorem for crystalline cohomologies 
of proper smooth schemes over $\kap$ without using the Weil conjecture nor 
the hard Lefschetz theorem for crystalline cohomologies 
(as in the $l$-adic case). 
\end{rema}

\bigskip
\bigskip
\parno
Yukiyoshi Nakkajima 
\parno
Department of Mathematics,
Tokyo Denki University,
5 Asahi-cho Senju Adachi-ku,
Tokyo 120--8551, Japan. 
\parno
{\it E-mail address\/}: 
nakayuki@cck.dendai.ac.jp

\bigskip
\bigskip
\parno
Fuetaro Yobuko
\parno 
Graduate School of Mathematics, Nagoya University, 
Furocho, Chikusaku, Nagoya, 464-8602, Japan. 
\parno
{\it E-mail address\/}: 
soratobumusasabidesu@gmail.com 

\end{document}